\theoremstyle{theorem}
\newtheorem{thm}{Theorem}[section]
\newtheorem{lem}[thm]{Lemma}
\newtheorem{prop}[thm]{Proposition}
\newtheorem{cor}[thm]{Corollary}
\newtheorem{conj}[thm]{Conjecture}
\theoremstyle{definition}
\newtheorem{defn}[thm]{Definition}
\newtheorem{rmk}[thm]{Remark}
\newtheorem*{notation}{Notation}
\numberwithin{equation}{section}
\newcommand{\IBr}{\operatorname{IBr}\nolimits}
\newcommand{\Irr}{\operatorname{Irr}\nolimits}
\newcommand{\PSL}{\operatorname{PSL}\nolimits}
\newcommand{\PSU}{\operatorname{PSU}\nolimits}
\newcommand{\GL}{\operatorname{GL}\nolimits}
\newcommand{\GU}{\operatorname{GU}\nolimits}
\newcommand{\SL}{\operatorname{SL}\nolimits}
\newcommand{\SU}{\operatorname{SU}\nolimits}
\newcommand{\bl}{\operatorname{bl}\nolimits}
\newcommand{\Aut}{\operatorname{Aut}\nolimits}
\newcommand{\Rad}{\operatorname{Rad}\nolimits}
\newcommand{\Res}{\operatorname{Res}\nolimits}
\newcommand{\Ind}{\operatorname{Ind}\nolimits}
\newcommand{\diag}{\operatorname{diag}\nolimits}
\newcommand{\Out}{\operatorname{Out}\nolimits}
\newcommand{\zero}{\mathbf{0}}
\newcommand{\F}{\mathbb{F}}
\newcommand{\barF}{\overline{\mathbb{F}}}
\newcommand{\Z}{\mathbb{Z}}
\newcommand{\cF}{\mathcal{F}}
\newcommand{\bG}{\mathbf{G}}
\newcommand{\bL}{\mathbf{L}}
\newcommand{\bH}{\mathbf{H}}
\newcommand{\fS}{\mathfrak{S}}
\newcommand{\cW}{\mathcal{W}}
\newcommand{\tDelta}{\tilde{\Delta}}
\newcommand{\bc}{\mathbf{c}}
\newcommand{\fb}{\mathfrak{b}}
\newcommand{\cR}{\mathcal{R}}
\newcommand{\sC}{\mathscr{C}}
\newcommand{\cO}{\mathcal{O}}
\begin{document}
	
\title{The blocks and weights of finite special linear and unitary groups\footnote{The author gratefully acknowledges financial support by SFB TRR 195.}}

\author{Zhicheng Feng \\
	Fachbereich Mathematik, Technische Universit\"at Kaiserslautern, Postfach 3049,	\\
	67653 Kaiserslautern,	Germany\\	
	\emph{E-mail address}:   feng@mathematik.uni-kl.de 
	}

\date{}

\maketitle

\begin{abstract}
This paper has two main parts. First, we give a classification of the $\ell$-blocks of finite special linear and unitary groups $\SL_n(\epsilon q)$ in the non-defining characteristic $\ell\ge 3$.
Second, we describe how the $\ell$-weights of $\SL_n(\epsilon q)$ can be obtained from the $\ell$-weights of $\GL_n(\epsilon q)$ when $\ell\nmid\mathrm{gcd}(n,q-\epsilon)$, and verify the Alperin weight conjecture for $\SL_n(\epsilon q)$ under the condition $\ell\nmid\mathrm{gcd}(n,q-\epsilon)$.
As a step to establish the Alperin weight conjecture for all finite groups, we
prove the inductive blockwise Alperin weight condition for any unipotent $\ell$-block of $\SL_n(\epsilon q)$ if $\ell\nmid\mathrm{gcd}(n,q-\epsilon)$.
\end{abstract}

\emph{2010 Mathematics Subject Classification.} 20C20, 20C33.

\emph{Key words and phrases.}  blocks; Alperin weight conjecture; inductive blockwise Alperin weight condition; special linear group; special unitary group.

\section{Introduction}

Let $q=p^f$ be a power of a prime $p$ and $\SL_n(\epsilon q)$ with $\epsilon=\pm1$ be the
finite special linear~(when $\epsilon=1$)~ and unitary~(when $\epsilon=-1$)~ group~($\SL_n(-q)$ is understood as $\SU_n(q)$,
for definitions, see Section \ref{notations-and-conventions}).
Let $\ell$ be a prime number different from $p$.
We are interested in parametrizing $\ell$-blocks of $\SL_n(\epsilon q)$.
It seems natural to proceed through the $\ell$-blocks of of general linear and unitary groups, which had been classified by Fong and Srinivasan \cite{FS82} for odd prime $\ell$ and by Brou\'e \cite{Br86} for $\ell=2$.
For arbitrary finite groups of Lie type, Cabanes and Enguehard \cite{CE99} gave a label for their $\ell$-blocks when $\ell\ge 7$ and this result was generalised by Kessar and Malle \cite{KM15} to its largest possible generality.

It is natural to try to relate the label in \cite{FS82} and \cite{Br86} to the label in \cite{CE99} and \cite{KM15} for an $\ell$-block $B$ of $\GL_n(\epsilon q)$.
In this paper, we compare these two kinds of labeling and then give the number of $\ell$-blocks of $\SL_n(\epsilon q)$ covered by $B$.
The proof here relies on some lemmas given in \cite{KM15} to investigate the relationship between the labeling of $\ell$-blocks of $\GL_n(\epsilon q)$ and $\SL_n(\epsilon q)$.
In this way we obtain a corresponding parametrization of $\ell$-blocks of $\SL_n(\epsilon q)$ when $\ell$ is odd~(see Remark \ref{blocksofslsu}).

One of the most important conjectures in the modular representation theory of finite groups is the Alperin weight conjecture,
which relates for a prime $\ell$ information about a finite group $G$ to properties of $\ell$-local subgroups of $G$, that is, normalizers of $\ell$-subgroups of $G$.
For a finite group $G$ and a prime $\ell$,
we write ${\rm Irr}(G)$ for the set of ordinary irreducible  characters of $G$, and ${\rm IBr}_\ell(G)$ for
the set of irreducible $\ell$-Brauer characters of $G$.
Moreover, ${\rm Irr}(B)$ and  ${\rm IBr}_\ell(B)$ denote the sets of ordinary irreducible characters and irreducible
$\ell$-Brauer characters of $B$, respectively,  where $B$ is an $\ell$-block of $G$.
An \emph{$\ell$-weight} of $G$ means a pair $(R,\varphi)$, where $R$ is an $\ell$-subgroup of $G$ and $\varphi\in\Irr(N_G(R))$ with $R\subseteq\ker\varphi$ is of $\ell$-defect zero viewed as a character of $N_G(R)/R$.
When such a character $\varphi$ exists, $R$ is necessarily an $\ell$-radical subgroup of $G$.
For an $\ell$-block $B$ of $G$, a weight $(R,\varphi)$ is called a \emph{$B$-weight} if $\bl_\ell(\varphi)^G=B$, where $\bl_\ell(\varphi)$ is the $\ell$-block of $N_G(R)$ containing $\varphi$. We denote by $\cW_\ell(B)$  the set of all $G$-conjugacy classes of $B$-weights.
In \cite{Al87}, Alperin gave the following conjecture.

\begin{conj}[Alperin]
\label{weiconj}
Let $G$ be a finite group, $\ell$ a prime.
If $B$ is an $\ell$-block of $G$, then $|\cW_\ell(B)|=|\IBr_\ell(B)|$.
\end{conj}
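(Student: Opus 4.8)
The plan is to prove Conjecture~\ref{weiconj} by the now-standard reduction to the finite simple groups. Since the statement is already blockwise, no passage between a global and a blockwise form is needed: the goal is to verify the blockwise Alperin weight conjecture for every finite group $G$, every prime $\ell$, and every $\ell$-block $B$. The engine is the reduction theorem of Navarro and Tiep, which asserts that if every finite non-abelian simple group satisfies the \emph{inductive blockwise Alperin weight} (iBAW) condition, then $|\cW_\ell(B)|=|\IBr_\ell(B)|$ holds unconditionally. Thus the entire problem is reorganised as a verification, type by type, over the classification of finite simple groups.

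Next I would dispose of the cases outside non-defining characteristic type. For the alternating groups, the sporadic groups and the Tits group, the iBAW condition is checked using the known ordinary and modular character tables together with the explicit description of radical $\ell$-subgroups and their normalisers; much of this is available. For simple groups of Lie type in their own defining characteristic, the radical $\ell$-subgroups are essentially only the trivial subgroup and a Sylow $\ell$-subgroup, and the iBAW condition collapses to a statement about the principal block that is largely understood. What remains --- and carries the weight of the whole conjecture --- is the case of a simple group $S$ of Lie type in non-defining characteristic $\ell$.

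For such $S$ the programme is: (i) classify the radical $\ell$-subgroups $R$ of a quasi-simple cover and describe $N(R)$; (ii) for each block $B$, count the $\ell$-defect-zero characters of $N(R)/R$ whose induced block is $B$, thereby computing $|\cW_\ell(B)|$; and (iii) produce a bijection $\cW_\ell(B)\leftrightarrow\IBr_\ell(B)$ that is equivariant under $\Aut(S)$ and compatible with the Clifford theory of central extensions --- this compatibility, not the mere equality of cardinalities, being the real content of the iBAW condition. The natural route is to work first in a group with connected centre, e.g.\ $\GL_n(\epsilon q)$ in type $A$, where blocks and weights are classical (Fong--Srinivasan \cite{FS82}, Brou\'e \cite{Br86}, and the weight computations of Alperin--Fong and An), and then to descend to $\SL_n(\epsilon q)$ by Clifford theory, controlling simultaneously the stabilisers of Brauer characters and of weights and the obstruction cocycles governing extension. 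This is precisely what the present paper carries out for $\SL_n(\epsilon q)$: the block classification of the first part yields (i)--(ii), and the comparison of the $\ell$-weights of $\SL_n(\epsilon q)$ with those of $\GL_n(\epsilon q)$ yields the equivariant bijection of (iii), at least when $\ell\nmid\gcd(n,q-\epsilon)$, where the covering $\GL_n(\epsilon q)\to\GL_n(\epsilon q)/\SL_n(\epsilon q)$ behaves well modulo $\ell$.

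The main obstacle is exactly this descent step in non-defining characteristic, and it is where the conjecture is still open: one needs a uniform construction of the equivariant, block-preserving, Clifford-compatible bijection for \emph{all} Lie types and \emph{all} primes $\ell$, in particular for the delicate cases in which $\ell$ divides the order of the relevant component group (for type $A$, when $\ell\mid\gcd(n,q-\epsilon)$), where the Clifford theory of $\GL_n(\epsilon q)$ over $\SL_n(\epsilon q)$ is genuinely non-split and weights and Brauer characters must be matched character-by-character inside each stabiliser. Until the iBAW condition has been verified for every finite simple group the statement of Conjecture~\ref{weiconj} cannot be asserted in full; the contribution of this paper is to settle the unipotent blocks of $\SL_n(\epsilon q)$ under $\ell\nmid\gcd(n,q-\epsilon)$, removing one more family from the list that the reduction theorem leaves open.
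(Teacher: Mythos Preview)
The statement you were asked to prove is a \emph{conjecture}, not a theorem: the paper states it as Alperin's blockwise weight conjecture and does not attempt to prove it in general. There is therefore no ``paper's own proof'' to compare against. Your write-up is not a proof either, and you say so yourself in the final paragraph: the iBAW condition has not been verified for all finite simple groups, so the reduction machinery you describe does not yet yield the conjecture. What you have written is an accurate survey of the reduction programme and of where the difficulties lie, but it should not be labelled a proof proposal.

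One factual correction: you attribute the blockwise reduction theorem to Navarro--Tiep. Their paper \cite{NT11} reduces the \emph{non-blockwise} Alperin weight conjecture to the inductive condition; the blockwise version of the reduction, which is what you need for Conjecture~\ref{weiconj} as stated, is due to Sp\"ath \cite{Sp13}. The paper makes this distinction explicitly in the introduction. Also, your sketch of the defining-characteristic case is too casual: the result there is due to Cabanes \cite{Ca88} and does not reduce to ``a statement about the principal block''; rather, the point is that in defining characteristic the only radical $\ell$-subgroup providing weights is the Sylow $\ell$-subgroup, whose normaliser is a Borel, and the weights are then counted directly.
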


The blockwise Alperin weight Conjecture \ref{weiconj} (BAWC) was proved by Isaacs and Navarro \cite{IN95} for $\ell$-solvable groups.
It was also shown to hold for groups of Lie type in defining characteristic by Cabanes \cite{Ca88}.
By work of Alperin, An and Fong, there is a combinatorial description for the $\ell$-weights of general linear and unitary groups if $\ell$ is not the defining characteristic
and from this (BAWC) holds for general linear and unitary groups for any prime, see \cite{AF90}, \cite{An92}, \cite{An93} and \cite{An94}.
In this paper we give a description of the $\ell$-weights of special linear and unitary groups $\SL_n(\epsilon q)$ with the assumption $\ell\nmid\mathrm{gcd}(n,q-\epsilon)$~(see Remark \ref{weightofslsu}).
Here, the $\ell$-weights of $\SL_n(\epsilon q)$ are obtained from the $\ell$-weights of $\GL_n(\epsilon q)$.
We will prove the following statement.

\begin{thm} \label{alpofsl}
Let $X\in\{\SL_n(q),\SU_n(q)\}$ and $\ell\nmid |Z(X)|$.
Then the blockwise Alperin weight Conjecture \ref{weiconj} holds for $X$.
\end{thm}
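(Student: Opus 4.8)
The plan is to reduce the blockwise Alperin weight conjecture for $X\in\{\SL_n(q),\SU_n(q)\}$ with $\ell\nmid|Z(X)|$ to the already-known case of $\GL_n(\epsilon q)$ and $\GU_n(\epsilon q)$, using the Clifford-theoretic relationship between a group and a normal subgroup of $\ell'$-index. Write $G=\GL_n(\epsilon q)$ and $X=\SL_n(\epsilon q)$, so that $X\trianglelefteq G$ with $G/X$ abelian of order $q-\epsilon$, hence $\ell\nmid[G:X]$ by the hypothesis $\ell\nmid\gcd(n,q-\epsilon)=|Z(X)|$ together with the fact that the relevant $\ell$-part of $q-\epsilon$ only enters through this gcd. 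The first step is to recall, from the work of Alperin--Fong and An (\cite{AF90}, \cite{An92}, \cite{An93}, \cite{An94}), the combinatorial classification of $\ell$-weights and of $\IBr_\ell$ for $G$, together with the compatibility of both sides with the action of $\tG=\GL_n(\epsilon q)$-type diagonal automorphisms. The key point is that when $\ell\nmid[G:X]$, every $\ell$-radical subgroup of $X$ is $\ell$-radical in $G$ and conversely (an $\ell$-subgroup $R$ is $\ell$-radical in $X$ iff it is $\ell$-radical in $G$), and $N_G(R)=N_X(R)Z$ where the relevant central/diagonal part has $\ell'$-order.

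The second step is to set up the Clifford theory on both sides simultaneously. On the character-theoretic side: since $\IBr_\ell(X)$ is obtained from $\IBr_\ell(G)$ by restriction (each $\varphi\in\IBr_\ell(G)$ restricts to a sum of $G$-conjugate irreducible Brauer characters of $X$, all in $\ell'$-index situation), the blocks of $X$ covered by a block $B$ of $G$ and the Brauer characters in them are controlled by the stabilizer $G_{\psi}$ of a constituent $\psi\in\IBr_\ell(X)$ and a cocycle which, by the hypothesis on $\ell$, can be taken trivial after a suitable choice. On the weight side: a $B$-weight $(R,\varphi)$ of $G$ restricts via $N_G(R)\to N_X(R)$ to a set of $X$-weights, and the same stabilizer-and-orbit bookkeeping applies because $N_G(R)/N_X(R)$ embeds in $G/X$ and hence has $\ell'$-order, so defect-zero characters of $N_G(R)/R$ restrict to defect-zero characters of $N_X(R)/R$. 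The third step is then to match these two Clifford correspondences: for a fixed block $b$ of $X$ covered by $B$, one shows that the number of $X$-conjugacy classes of $b$-weights equals $|\IBr_\ell(b)|$ by transporting the equality $|\cW_\ell(B)|=|\IBr_\ell(B)|$ (known for $G$) through the orbit--stabilizer dictionary, using that the stabilizer in $G/X$ of an $\ell$-weight of $X$ coincides with the stabilizer in $G/X$ of the corresponding Brauer character — this compatibility is exactly what the explicit combinatorial descriptions recalled in the earlier remarks (\ref{weightofslsu} and the block classification) are designed to give.

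The main obstacle I expect is precisely this last compatibility: showing that the $G/X$-action (equivalently, the action of the diagonal outer automorphisms) on the set of $\ell$-weights of $X$ lines up orbit-by-orbit and stabilizer-by-stabilizer with its action on $\IBr_\ell(X)$, block by block. This requires a genuinely explicit understanding of both actions in terms of the labels — the weights of $\GL_n(\epsilon q)$ are indexed by certain arrays of partitions attached to $\ell$-cores and $\ell$-quotients, and one must track how the determinant map (which realizes $G/X$) permutes these labels, then compare with the analogous action on the Brauer-character labels coming from Fong--Srinivasan/Broué. A secondary technical point is handling the cohomological obstruction: a priori a projective-representation cocycle could obstruct the clean Clifford correspondence, and one must verify it is trivial (or can be trivialized) here; this is where $\ell\nmid|Z(X)|$ is used decisively, since it forces all the relevant index and stabilizer-quotient groups to be $\ell'$-groups, so their Schur multipliers contribute nothing $\ell$-locally. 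Once these compatibilities are in place, the counting identity for each block of $X$ follows formally, and summing over the blocks covered by each $B$ gives the theorem.
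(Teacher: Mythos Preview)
Your overall strategy matches the paper's: reduce from $G=\GL_n(\epsilon q)$ to $X$ via Clifford theory, and verify that the bijection between $\IBr_\ell(B)$ and $\cW_\ell(B)$ for $G$ is equivariant for the action of (the $\ell'$-part of) $G/X\cong\mathfrak Z$ on the explicit combinatorial labels, so that orbit and stabilizer counts match block by block. That is exactly what the paper does (see Remarks~\ref{brasuerofslsu}, \ref{weightofslsu}, \ref{brauerchartoblock}, \ref{weighforblocks}, and Proposition~\ref{weightrestritoslsu}). The paper also first disposes of the defining-characteristic case $\ell=p$ by citing \cite{Ca88}, which you omit.

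There is, however, a genuine error in your setup which would cause several intermediate claims to fail as written. You assert that $\ell\nmid[G:X]$; but $[G:X]=q-\epsilon$, and the hypothesis $\ell\nmid\gcd(n,q-\epsilon)$ does \emph{not} imply $\ell\nmid q-\epsilon$ (take $n=4$, $q=4$, $\epsilon=1$, $\ell=3$). Consequently:
\begin{itemize}
\item It is not true that an $\ell$-subgroup $R$ is $\ell$-radical in $X$ iff it is $\ell$-radical in $G$. Radical $\ell$-subgroups of $G$ always contain $\mathcal O_\ell(Z(G))$, which is nontrivial whenever $\ell\mid q-\epsilon$ and generally not contained in $X$. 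The correct statement is Proposition~\ref{relaofrad}: $R\mapsto R\cap X$ is a bijection $\Rad_\ell(G)\to\Rad_\ell(X)$ with inverse $S\mapsto S\,\mathcal O_\ell(Z(G))$, and this uses $\ell\nmid\gcd(n,q-\epsilon)$ essentially.
\item The quotient $N_G(R)/N_X(S)$ is isomorphic to $G/X$ (Lemma~\ref{gexingz}), hence need not be an $\ell'$-group. So your claim that defect-zero characters of $N_G(R)/R$ automatically restrict to defect-zero characters of $N_X(R)/R$ because the index is $\ell'$ is unjustified; one needs the argument of Lemma~\ref{corrofweights} and Proposition~\ref{weightofslsubyres} instead.
\item The cocycle discussion is a red herring here: what makes the Clifford theory clean is not $\ell'$-index but the cyclicity of $G/X$ (Lemma~\ref{cliffordthm}), together with the fact that only the action of $\mathcal O_{\ell'}(\mathfrak Z)$ on the labels matters, which is what the paper tracks.
\end{itemize}
Once these points are corrected, your plan goes through and is the same as the paper's.
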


Even though (BAWC) has been verified in many particular instances, it has not been possible so far to find a general proof for arbitrary finite groups.
In the recent past, the conjecture has been reduced to certain (stronger) statements about finite (quasi-)simple groups by Navarro and Tiep \cite{NT11} for original version and by Sp\"ath \cite{Sp13} for blockwise version.
More precisely, it was shown that in order for the (BAWC) to hold for all finite groups, it is sufficient that all non-abelian finite simple groups satisfy a system of conditions, which is called the \emph{inductive blockwise Alperin weight (iBAW) condition}.
In this paper we use another version of the (iBAW) condition~(see Definition \ref{induc})~ which was given by Koshitani and Sp\"ath  \cite{KS16}.

The (iBAW) condition has been verified for some cases, such as many of the sporadic groups, simple alternating groups and any prime, simple groups of Lie type and the defining characteristic.
But for non-defining characteristic, only a few simple groups of Lie type have been proved to satisfy the (iBAW) condition~(see \cite{CS13}, \cite{Ma14}, \cite{Sc16} and \cite{Sp13}).

It seems that there is no general method yet to verify the (iBAW) condition for arbitrary finite simple groups of Lie type and the non-defining characteristic, even for simple groups of type $A$.
Using the description of $\ell$-weights of $\GL_n(\epsilon q)$ in \cite{AF90}, \cite{An92}, \cite{An93} and \cite{An94}, Li and Zhang  \cite{LZ18} proved that if
the pair $(n,q)$ is such chosen that the outer automorphism group of $\PSL_n(\epsilon q)$ is cyclic, then the simple group $\PSL_n(\epsilon q)$ satisfies the (iBAW) condition for any prime.
In this paper, we consider the (iBAW) condition for the unipotent blocks of $\SL_n(\epsilon q)$ without any restriction for $n$ and $q$.
Our results are the following:

\begin{thm}\label{ibawmain}
Let $X\in\{\SL_n(q),\SU_n(q)\}$ and $\ell\nmid |Z(X)|$.
Suppose that $b$ is a unipotent $\ell$-block of $X$,
then the inductive blockwise Alperin weight~(iBAW) condition~(cf. Definition \ref{induc})~ holds for $b$.
\end{thm}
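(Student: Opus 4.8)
The plan is to verify the conditions of Definition~\ref{induc} for $b$ directly, realising the automorphisms of $X$ through $\widetilde X:=\GL_n(\epsilon q)$ (so that $X=\SL_n(\epsilon q)\trianglelefteq\widetilde X$) and exploiting that unipotent characters and unipotent weights of $\widetilde X$ restrict irreducibly to $X$. Concretely, set $E:=\widetilde X\rtimes D$, where $D$ is generated by a field automorphism and, for $\epsilon=1$ and $n\ge 3$, the transpose-inverse graph automorphism, chosen so that the canonical map $E\to\Aut(X)$ has kernel $C_{\widetilde X}(X)=Z(\widetilde X)$ and image $\Aut(X)$ (up to finitely many exceptional pairs $(n,q)$). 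Although $|\widetilde X:X|=q-\epsilon$ need not be coprime to $\ell$, the prime $\ell$ only enters through $Z(\widetilde X)$, which acts trivially on $X$, and unipotent characters of $\widetilde X$ are trivial on $Z(\widetilde X)$; hence the relevant part of $\widetilde X/X$ is the diagonal quotient $\widetilde X/XZ(\widetilde X)\cong C_{\gcd(n,q-\epsilon)}$, which is an $\ell'$-group by the hypothesis $\ell\nmid|Z(X)|$. Let $\widetilde b$ be the unipotent $\ell$-block of $\widetilde X$ covering $b$; it is $E$-stable, since unipotent blocks are stable under all automorphisms, and therefore $b$ is $E$-stable and $E_b=E$.

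Next I would construct the required bijection. On the character side, each $\widetilde\varphi\in\IBr_\ell(\widetilde b)$ restricts irreducibly to $X$, and distinct members stay distinct on $X$---this is classical for the ordinary unipotent characters, since any two of them differ by a non-unipotent $\det$-linear twist, and it passes to $\IBr_\ell(\widetilde b)$ because the unipotent characters form a basic set of the unipotent blocks---so $\widetilde\varphi\mapsto\widetilde\varphi|_X$ is an $E$-equivariant bijection $\IBr_\ell(\widetilde b)\to\IBr_\ell(b)$ and every $\varphi\in\IBr_\ell(b)$ is $\widetilde X$-invariant. On the weight side, the comparison of the $\ell$-weights of $\SL_n(\epsilon q)$ and $\GL_n(\epsilon q)$ carried out in this paper (Remark~\ref{weightofslsu})---which is exactly where $\ell\nmid\gcd(n,q-\epsilon)$ is used---shows that the $b$-weights of $X$ arise from the $\widetilde b$-weights of $\widetilde X$ by restriction: one takes the radical subgroup $R$ inside $X$, the unipotent weight character $\widetilde\vartheta\in\Irr(N_{\widetilde X}(R)\mid\widetilde b)$ restricts irreducibly to $N_X(R)$, and $(R,\widetilde\vartheta)\mapsto(R,\widetilde\vartheta|_{N_X(R)})$ yields an $E$-equivariant bijection $\cW_\ell(\widetilde b)\to\cW_\ell(b)$. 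Composing with the Alperin--An--Fong parametrization of $\cW_\ell(\widetilde b)$---purely combinatorial, in terms of $\ell$-cores and $\ell$-quotients of the partition of $n$ labelling $\widetilde b$, hence compatible with the action of field and graph automorphisms---produces an $E$-equivariant bijection $\widetilde\Omega\colon\IBr_\ell(b)\to\cW_\ell(b)$; the equality $|\IBr_\ell(b)|=|\cW_\ell(b)|$ needed here is contained in Theorem~\ref{alpofsl}.

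It then remains, for each $\varphi\in\IBr_\ell(b)$ with $(R,\vartheta):=\widetilde\Omega(\varphi)$, to establish the isomorphism of block character triples required by Definition~\ref{induc}, which in the usual notation reads $(E_\varphi,X,\varphi)\geq_b(N_E(R)_\vartheta,N_X(R),\vartheta)$. Here the unipotent hypothesis is decisive: unipotent characters of $\GL_n(\epsilon q)$ extend to their stabiliser in $\widetilde X\rtimes\Aut(\widetilde X)$ by the known extendibility results in type $A$, so $\widetilde\varphi$ extends to $\widetilde XE_{\widetilde\varphi}$, and the analogous statement for $\widetilde\vartheta$ and $N_{\widetilde X}(R)$ should be obtained in the same spirit from the explicit structure of $N_{\widetilde X}(R)$. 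Since $\varphi=\widetilde\varphi|_X$ and $\vartheta=\widetilde\vartheta|_{N_X(R)}$ are irreducible and invariant under $\widetilde X$, respectively $N_{\widetilde X}(R)$, both character triples split over these normal subgroups; $E$-equivariance of $\widetilde\Omega$ identifies $E_\varphi$ with $X\,N_E(R)_\vartheta$; and the comparison reduces to matching the chains $X\trianglelefteq\widetilde X\trianglelefteq E_\varphi$ and $N_X(R)\trianglelefteq N_{\widetilde X}(R)\trianglelefteq N_E(R)_\vartheta$, together with the $\det$-linear character data attached to $\varphi$ and to $\vartheta$, and checking $\ell$-block compatibility via $\bl_\ell(\widetilde\vartheta)^{\widetilde X}=\widetilde b$ (the descent from $\widetilde b$ to $b$ using that $\widetilde X/XZ(\widetilde X)$ is an $\ell'$-group). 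I expect this last point---proving the $\geq_b$ relation of character triples, that is, reconciling the global and the local Clifford theory over the diagonal quotient $\widetilde X/X$, rather than merely exhibiting the numerical bijection---to be the main obstacle, since it is precisely the feature that has so far resisted a uniform treatment in type $A$; the unipotent assumption keeps it within reach because the relevant obstruction cocycles already trivialise over $\widetilde X$. Finally, the finitely many pairs $(n,q)$ for which $\SL_n(\epsilon q)$ is not the full covering group of $\PSL_n(\epsilon q)$, or for which $\PSL_n(\epsilon q)$ fails to be simple, are dealt with separately using known results and, where necessary, explicit computation.
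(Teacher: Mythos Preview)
Your plan matches the paper's architecture almost exactly: the paper also realises $\Aut(X)$ via $G\rtimes D$ with $G=\GL_n(\epsilon q)$, uses that unipotent Brauer characters and unipotent weights of $G$ restrict irreducibly to $X$ (Lemma~\ref{restrforuni} and Lemma~\ref{resofweightsuni}), obtains an $\Aut(X)$-equivariant bijection $\IBr_\ell(b)\to\cW_\ell(b)$ from the $D$-equivariant bijection at the level of $G$ (Corollary~\ref{bijection}), and then verifies the extension and block-compatibility conditions of Definition~\ref{induc}(iii) with $A=(G\rtimes D)/\cO_\ell(Z(G))Z$ (Proposition~\ref{iii1-3} and Lemma~\ref{Corr}). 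Two small inaccuracies in your write-up: the radical subgroups of $\widetilde X$ do not literally lie in $X$; rather $R\mapsto R\cap X$ is a bijection $\Rad_\ell(\widetilde X)\to\Rad_\ell(X)$ with inverse $S\mapsto S\cO_\ell(Z(\widetilde X))$ (Proposition~\ref{relaofrad}); and the paper does not treat exceptional Schur covers separately.

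Where you underestimate the difficulty is the sentence ``the analogous statement for $\widetilde\vartheta$ and $N_{\widetilde X}(R)$ should be obtained in the same spirit from the explicit structure of $N_{\widetilde X}(R)$.'' This is not a mere variant of the extendibility of unipotent characters of $G$; it occupies all of Section~\ref{extenofcharwei} and is the genuine technical core. The point is that $N_G(R)/C_G(R)$ is far from a group of Lie type, so one cannot simply invoke extendibility of unipotent characters. The paper exploits that for $\Gamma=x-1$ the canonical character $\theta_\Gamma$ is \emph{trivial}, reduces to the case $\epsilon=1$ and $f$ even (otherwise $D$ is cyclic and extension is automatic), constructs an explicit subgroup $D'_0\le D'$ acting trivially on each $R_{m,\alpha,\gamma}$ with $D'/(D'\cap G)D'_0$ cyclic (Lemma~\ref{defofdo}), and then applies an ad hoc extension lemma (Lemma~\ref{extlemma}) that splits $\varphi=\varphi_0\times\varphi_+$ and glues an extension of the unipotent $\varphi_0$ to a trivial-over-$C_+$ extension of $\varphi_+$. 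None of this is visible from the global extendibility results in type $A$, and it is precisely here---not in the block-compatibility step you flag as the main obstacle---that the unipotent hypothesis is used in an essential and non-formal way.
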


This paper is built up as follows.
In Section \ref{preliminaries}, we introduce the general notation around characters, weights and general linear and unitary groups.
In Section \ref{Brauercharacterofslsu}, we recall the results of \cite{KT09} and \cite{De17} about irreducible Brauer characters of special linear and unitary groups.
Then we determine when the labeling of blocks of general linear and unitary groups in \cite{FS82} and \cite{Br86} is the labeling given in \cite{CE99} and \cite{KM15},
and then classify the blocks of special linear and unitary groups in non-defining characteristic in Section \ref{blocksofslsuchap}.
In Section \ref{chapweightsofslsu} we give a description of weights of special linear and unitary groups in non-defining characteristic and prove Theorem \ref{alpofsl}.
Section \ref{extenofcharwei} gives the extendiblility of weight characters of unipotent blocks of special linear and unitary groups in non-defining characteristic, while Section \ref{proofof1.3} proves Theorem \ref{ibawmain}.

\section{Notation and preliminaries}\label{preliminaries}

In this section we establish the notation around groups and characters that is used
throughout this paper.

\begin{notation}
	The cardinality of a set, or the order of a finite group, $X$, is denoted by $|X|$.
	If a group $A$ acts on a finite set $X$, we denote by $A_x$ the stabilizer of $x\in X$ in $A$, analogously we denote by $A_{X'}$ the setwise stabilizer of $X'\subseteq X$.
	
	Let $\ell$ be a prime.
		If $A$ acts on a finite group $H$ by automorphisms, then there is a natural action of $A$ on $\Irr(H)\cup\IBr_\ell(H)$ given by ${}^{a^{-1}}\chi(g)=\chi^a(g)=\chi(g^{a^{-1}})$ for $g\in G$, $a\in A$ and $\chi\in\Irr(H)\cup\IBr_\ell(H)$.
	For $P\le H$ and $\chi\in\Irr(H)\cup\IBr_\ell(H)$, we denote by $A_{P,\chi}$ the stabilizer of $\chi$ in $A_P$.

	We denote the restriction of $\chi\in\Irr(H)\cup\IBr_\ell(H)$ to some subgroup $L\le H$ by $\Res^H_L\chi$, while $\Ind^H_L\psi$ denotes the character induced from $\psi\in\Irr(L)\cup\IBr_\ell(L)$ to $H$.
	For $N\unlhd H$ we sometimes identity the characters of $H/N$ with the characters of $H$ whose kernel contains $N$.
	
	For $N\unlhd H$, and $\chi\in\Irr(H)\cup\IBr_\ell(H)$, we denote by $\kappa^H_N(\chi)$ the number of irreducible constituents of $\Res^H_N(\chi)$ forgetting multiplicities.
	Let $B$ be an $\ell$-block of $H$, we denote by $\kappa^H_N(B)$ the number of $\ell$-blocks of $N$ covered by $B$.	
\end{notation}

\subsection{Clifford theory}

\begin{lem}\label{cliffordthm}
	Suppose that $H$ is a finite group and $N\unlhd H$ satisfies that $H/N$ is cyclic.
	\begin{enumerate}
		\item[(i)] Let $\chi\in\Irr(H)$ and $\theta\in\Irr(N\ |\ \chi)$, then every character in
		$\Irr(H\ |\ \theta)$ has the form $\chi\eta$ for some $\eta\in\Irr(H/N)$, and
		$\kappa^H_N(\chi)$ is equal to the cardinality of the set $\{\eta\in\Irr(H/N)\ |\ \chi\eta=\chi \}.$
		\item[(ii)]
		Let $\psi\in\IBr_\ell(H)$ and $\varphi\in\IBr_\ell(N\ |\ \psi)$,
		then every $\ell$-Brauer character in
		$\IBr_\ell(H\ |\ \varphi)$ has the form $\psi\tau$ for some $\tau\in\IBr_\ell(H/N)$ and
		the $\ell'$-part of $\kappa^H_N(\psi)$ is equal to the cardinality of the set $\{\tau\in\IBr_\ell(H/N)\ |\ \psi\tau=\psi \}.$
	\end{enumerate}
\end{lem}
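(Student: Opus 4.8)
The statement is a standard consequence of Clifford theory for a normal subgroup with cyclic quotient, and I would prove both parts in parallel, treating the ordinary-character case (i) first and then indicating the modifications needed for the $\ell$-Brauer-character case (ii).

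For part (i): Let $\chi\in\Irr(H)$ and $\theta\in\Irr(N\mid\chi)$. Since $H/N$ is cyclic (hence abelian), Clifford theory tells us that $\Res^H_N\chi=e\sum_{i}\theta_i$ where the $\theta_i$ run over an $H$-orbit on $\Irr(N)$ and, crucially, because $H/N$ is cyclic the multiplicity $e$ equals $1$: indeed $e^2\cdot(\text{orbit length})\cdot\text{divides }[H:N]$, and more precisely the projective representation of the inertia group $H_\theta/N$ obstructing extension is trivial because $H_\theta/N$ is cyclic, so $\theta$ extends to $H_\theta$ and $e=1$. Now if $\eta\in\Irr(H/N)$ is a linear character of $H/N$ inflated to $H$, then $\chi\eta\in\Irr(H)$ and $\Res^H_N(\chi\eta)=\Res^H_N\chi$, so $\chi\eta\in\Irr(H\mid\theta)$. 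Conversely, given $\chi'\in\Irr(H\mid\theta)$, the characters $\chi$ and $\chi'$ lie over the same $N$-character $\theta$; by Gallagher's theorem (applicable since $\theta$ extends to $H_\theta$ and $H/N$ is abelian, or directly by the cyclic Clifford correspondence) every element of $\Irr(H\mid\theta)$ is of the form $\chi\eta$ with $\eta\in\Irr(H/N)$. This gives the first assertion. For the count: $\Irr(H/N)$ acts on $\Irr(H\mid\theta)$ by multiplication, and this action is transitive by what we just showed; the stabilizer of $\chi$ is exactly $\{\eta\in\Irr(H/N)\mid\chi\eta=\chi\}$, so $|\Irr(H\mid\theta)|=[H/N:\mathrm{stab}]$. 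On the other hand $\Irr(H\mid\theta)$ is in bijection with the $N$-constituents of $\Res^H_N\chi$ (since $e=1$, the number of such constituents equals $|\Irr(H\mid\theta)|$ when we note each $\chi'\in\Irr(H\mid\theta)$ restricts to the same orbit sum — one must be slightly careful here and instead count via: $\kappa^H_N(\chi)$ is the orbit length of $\theta$ under $H$, which by the orbit–stabilizer applied inside $H/N$ equals $[H/N:H_\theta/N]$, and $H_\theta/N$ is precisely the group of $\eta$ fixing $\chi$). This identifies $\kappa^H_N(\chi)=|\{\eta\in\Irr(H/N)\mid\chi\eta=\chi\}|$.

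For part (ii): The structure is identical, working in the Grothendieck group of $\ell$-Brauer characters. The key input is the Brauer-character version of Clifford theory together with Gallagher's theorem for Brauer characters over a normal subgroup with cyclic quotient; one subtlety is that the ``multiplicity one'' phenomenon and the extension of $\varphi$ to its inertia subgroup go through because $H/N$ being cyclic forces the relevant obstruction (a class in $H^2$ of a cyclic group over an appropriate coefficient group) to vanish, but now the analogous count can pick up an $\ell$-part discrepancy: the number of $\ell$-Brauer characters of $N$ covered, or equivalently the ratio $[H_\varphi:N]$, need only be correct up to its $\ell'$-part because $\IBr_\ell(H/N)$ sees only the $\ell'$-part of the cyclic group $H/N$ (Brauer characters of an $\ell$-group are trivial). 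Hence one obtains that the $\ell'$-part of $\kappa^H_N(\psi)$ equals $|\{\tau\in\IBr_\ell(H/N)\mid\psi\tau=\psi\}|$.

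The main obstacle is keeping the bookkeeping between three sets straight — the $H$-orbit of $\theta$ on $\Irr(N)$, the fibre $\Irr(H\mid\theta)$, and the stabilizer subgroup of $\chi$ inside $\Irr(H/N)$ — and in part (ii) correctly isolating where the $\ell'$-part (rather than the full index) is the best one can say, which is exactly because $\IBr_\ell$ of a cyclic group only detects the $\ell'$-quotient. Everything else is a routine invocation of Clifford theory and Gallagher's theorem for the cyclic-quotient case, so no lengthy computation is needed.
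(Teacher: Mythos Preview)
Your approach is correct and is exactly what the paper does: its proof is simply a citation to standard Clifford theory (\cite[\S19]{Hu98}, \cite[Chap.~8]{Na98}, and for (ii) also \cite[Lem.~3.3 and 3.8]{KT09}), and you are spelling out precisely that argument. One small slip to fix in your parenthetical for (i): you write ``$H_\theta/N$ is precisely the group of $\eta$ fixing $\chi$,'' but the stabilizer of $\chi$ in $\Irr(H/N)$ is actually $\Irr(H/H_\theta)$ (the characters trivial on $H_\theta$), which has order $[H:H_\theta]=\kappa^H_N(\chi)$, so your conclusion is unaffected.
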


\begin{proof}
This is a direct consequence of Clifford theory~(see, for example, \cite[\S 19]{Hu98} and \cite[Chap. 8]{Na98}).
	For (ii), see also \cite[Lem. 3.3 and 3.8]{KT09}.
\end{proof}

For a finite group $H$, we denote by $\Rad_\ell(H)$ the set of all $\ell$-radical subgroups of $H$ and $\Rad_\ell(H)/\thicksim_H$ a complete set of representatives of $H$-conjugacy classes of $\ell$-radical subgroups of $H$.

\begin{lem}\label{relationofradicalgroups}
	Let $H$ be a finite group, $N\unlhd H$ and $\ell$ a prime.
	\begin{enumerate}
		\item[(i)]  If $R$ is an $\ell$-radical subgroup of $H$, then $R\cap N$ is an $\ell$-radical subgroup of $N$.
		\item[(ii)] The map $ \Rad_\ell(H)\to \Rad_\ell(N)$, $R\mapsto R\cap N$ is surjective.
		\item[(iii)] Let $S$ be an $\ell$-radical subgroup of $N$. Assume that there is only one $\ell$-radical subgroup $R$ of $H$ such that $R\cap N=S$.
		Then $R=\mathcal O_\ell(N_H(S))$ and $N_H(S)=N_H(R)$.
	\end{enumerate}
\end{lem}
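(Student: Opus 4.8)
The three parts are tightly linked, and the common engine is an elementary fact: a radical $\ell$-subgroup of a finite group contains the $\ell$-core of that group. Concretely, if $P=\cO_\ell(N_G(P))$, then $P\,\cO_\ell(G)$ is an $\ell$-subgroup (since $\cO_\ell(G)\unlhd G$) which is normalized by $N_G(P)$ (as $N_G(P)$ stabilizes both $P$ and the normal subgroup $\cO_\ell(G)$); hence $P\,\cO_\ell(G)$ is a normal $\ell$-subgroup of $N_G(P)$, so $P\,\cO_\ell(G)\le\cO_\ell(N_G(P))=P$ and $\cO_\ell(G)\le P$. I would record this first. I will also use the trivial remark that any $x\in H$ normalizing a subgroup $R$ also normalizes $R\cap N$ (because $N\unlhd H$), so that $N_H(R)\le N_H(R\cap N)$.

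For (i), put $S=R\cap N$. Then $S\unlhd R$, so $R\le N_H(S)$, and moreover $R$ is a radical $\ell$-subgroup of $N_H(S)$: by the remark $N_H(R)\le N_H(S)$, hence $N_{N_H(S)}(R)=N_H(S)\cap N_H(R)=N_H(R)$ and $\cO_\ell(N_{N_H(S)}(R))=\cO_\ell(N_H(R))=R$. Applying the engine inside $G=N_H(S)$ gives $\cO_\ell(N_H(S))\le R$. On the other hand $N_N(S)=N_H(S)\cap N\unlhd N_H(S)$, so $\cO_\ell(N_N(S))$ is a normal $\ell$-subgroup of $N_H(S)$ contained in $N$, whence $\cO_\ell(N_N(S))\le\cO_\ell(N_H(S))\cap N\le R\cap N=S$; since $S\le\cO_\ell(N_N(S))$ always holds, $S=\cO_\ell(N_N(S))$ is radical in $N$.

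For (ii) and (iii) I would show that, for a radical $\ell$-subgroup $S$ of $N$, the subgroup $R_0:=\cO_\ell(N_H(S))$ is a radical $\ell$-subgroup of $H$ with $R_0\cap N=S$. The equality $R_0\cap N=S$ follows exactly as in (i): $R_0\cap N$ is a normal $\ell$-subgroup of $N_N(S)$, hence $\le\cO_\ell(N_N(S))=S$, while $S=\cO_\ell(N_N(S))\le\cO_\ell(N_H(S))=R_0$. Next, since $R_0$ is characteristic in $N_H(S)$ we have $N_H(S)\le N_H(R_0)$, and conversely any $x\in N_H(R_0)$ normalizes $R_0\cap N=S$ by the remark, so $N_H(R_0)=N_H(S)$; therefore $\cO_\ell(N_H(R_0))=\cO_\ell(N_H(S))=R_0$, i.e.\ $R_0$ is radical. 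This yields the surjectivity in (ii) (the map $R\mapsto R\cap N$ being well defined by (i)). For (iii), $R_0$ is then a radical $\ell$-subgroup of $H$ with $R_0\cap N=S$, so the uniqueness assumption forces $R=R_0=\cO_\ell(N_H(S))$, whence also $N_H(R)=N_H(R_0)=N_H(S)$.

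The only genuinely delicate point is (i). The naive attempt — to push $\cO_\ell(N_N(S))$ up to a normal $\ell$-subgroup of $N_H(R)$ and invoke $R=\cO_\ell(N_H(R))$ — breaks down because $\cO_\ell(N_N(S))$ need not normalize $R$. Replacing $H$ by $N_H(S)$, where $R$ is still radical and therefore contains $\cO_\ell(N_H(S))$, is the device that gets around this; parts (ii) and (iii) are then routine.
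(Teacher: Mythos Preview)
Your proof is correct and, for parts (ii) and (iii), coincides almost verbatim with the paper's argument: both set $R_0:=\cO_\ell(N_H(S))$, check $R_0\cap N=S$ via $\cO_\ell(N_N(S))=S$, observe $N_H(R_0)=N_H(S)$ (one inclusion from $R_0\cap N=S$, the other from $R_0\unlhd N_H(S)$), and deduce that $R_0$ is radical; (iii) then follows by uniqueness. The only difference is in (i): the paper simply cites \cite[(2.1)]{OU95}, whereas you give a self-contained proof via the observation that $R$ remains radical in $N_H(S)$ and hence contains $\cO_\ell(N_H(S))\ge\cO_\ell(N_N(S))$. Your device of passing to $N_H(S)$ is exactly the right one and is the standard way to prove the Olsson--Uno result.
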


\begin{proof}
	(i) is \cite[(2.1)]{OU95}.
	For (ii),
	if $S$ is an $\ell$-radical subgroup of $N$, let $R=\mathcal O_\ell(N_H(S))$, then we claim that $R$ is an $\ell$-radical subgroup of $H$ with $R\cap N=S$.
	Indeed, $R\cap N$ is a normal $\ell$-subgroup of $N_N(S)$ and then $R\cap N\le S$ since $S=\mathcal O_\ell(N_N(S))$.
	Obviously $S\le R\cap N$. Thus $S= R\cap N$.
	Then $N_H(R)\le N_H(S)$.
	Now $R\unlhd N_H(S)$, so $N_H(R)= N_H(S)$.
	Then $R$ is an $\ell$-radical subgroup of $H$.
	Thus the claim holds and then (ii) holds and (iii) easily follows.
\end{proof}

\begin{lem}\label{corrofweights}
	Let $H$ be a finite group, $N\unlhd H$ and $\ell$ a prime.
	Assume that $H/N$ is cyclic and the map $ \Rad_\ell(H)\to \Rad_\ell(N)$, $R\mapsto R\cap N$ is bijective.
	\begin{enumerate}
		\item[(i)] If $(R,\varphi)$ is an $\ell$-weight of $H$, then $(S,\psi)$ is an $\ell$-weight of $N$ for $S=R\cap N$ and any $\psi\in\Irr(N_N(S)\ |\ \varphi)$.
		\item[(ii)] Let $(S,\psi)$ be an $\ell$-weight of $N$, and $R\in\Rad_\ell(H)$ such that $R\cap N=S$.
		Assume further that $\ell\nmid|N_H(R)_{\psi}/N_N(S)R|$.
		Then there exists an $\ell$-weight $(R,\varphi)$ of $H$ such that $\varphi\in\Irr(N_H(R)\ |\ \psi)$.
	\end{enumerate}
\end{lem}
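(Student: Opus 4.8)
The plan is to treat the two parts separately, using Lemma \ref{cliffordthm} for the character-theoretic bookkeeping and Lemma \ref{relationofradicalgroups}(iii) for the structural facts about normalizers. For part (i): given an $\ell$-weight $(R,\varphi)$ of $H$, set $S=R\cap N$, which is $\ell$-radical in $N$ by Lemma \ref{relationofradicalgroups}(i). Since by hypothesis the map $R'\mapsto R'\cap N$ is a bijection $\Rad_\ell(H)\to\Rad_\ell(N)$, $R$ is the unique $\ell$-radical subgroup of $H$ lying over $S$, so Lemma \ref{relationofradicalgroups}(iii) gives $R=\mathcal O_\ell(N_H(S))$ and $N_H(S)=N_H(R)$. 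In particular $N_N(S)=N_H(S)\cap N=N_H(R)\cap N=N_N(R)$, and $N_H(R)/N_N(S)$ embeds into $H/N$, hence is cyclic. Now $\varphi\in\Irr(N_H(R))$ is of $\ell$-defect zero as a character of $N_H(R)/R$; I need to check that any $\psi\in\Irr(N_N(S)\mid\varphi)$ is of $\ell$-defect zero as a character of $N_N(S)/S$. Since $R\le\ker\varphi$ and $S=R\cap N\le R$, certainly $S\le\ker\psi$. For the defect-zero condition, I would pass to the quotient $\bar H:=N_H(R)/R$ with normal subgroup $\bar N:=N_N(S)R/R\cong N_N(S)/S$; then $\varphi$ has $\ell$-defect zero in $\bar H$, $\bar H/\bar N$ is cyclic, and a standard block-theoretic fact (a character covered by a defect-zero character of $\bar H$ lies in a block of $\bar N$ of defect zero, because $\bar H/\bar N$ being an $\ell$-group would force $\bar H$ to have a normal defect group, contradicting defect zero unless... — more carefully: the block of $\bar N$ covered by $\bl(\varphi)$ has defect group $D\cap\bar N$ for a suitable defect group $D$ of $\bl(\varphi)$, and $D=1$) forces $\psi$ to have $\ell$-defect zero. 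So $(S,\psi)$ is an $\ell$-weight of $N$.

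For part (ii): let $(S,\psi)$ be an $\ell$-weight of $N$ and $R\in\Rad_\ell(H)$ with $R\cap N=S$; by the same uniqueness argument $R=\mathcal O_\ell(N_H(S))$ and $N_H(S)=N_H(R)$, so $N_N(S)=N_N(R)$ and $N_N(S)R\unlhd N_H(R)$ with cyclic quotient inside $H/N$. We are told $\ell\nmid|N_H(R)_\psi/N_N(S)R|$. First extend $\psi$ (viewed on $N_N(S)/S\cong N_N(S)R/R$) to its stabilizer: since $N_H(R)_\psi/N_N(S)R$ is cyclic and of order prime to $\ell$, and $\psi$ has $\ell$-defect zero, I would like an extension $\tilde\psi\in\Irr(N_H(R)_\psi\mid\psi)$ of $\psi$ that still has $\ell$-defect zero. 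The cyclicity of the quotient means $\psi$ does extend to $N_H(R)_\psi$ (cyclic quotients impose no cohomological obstruction); among the $|N_H(R)_\psi/N_N(S)R|$ extensions, I must find one of defect zero, and here the coprimality $\ell\nmid|N_H(R)_\psi/N_N(S)R|$ is exactly what guarantees the defect-zero property is preserved (the index being prime to $\ell$ means defect zero is inherited upward and a defect-zero extension exists — this is where Lemma \ref{cliffordthm}(ii), or rather an ordinary-character analogue of it, enters to count). Then induce: $\varphi:=\Ind_{N_H(R)_\psi}^{N_H(R)}\tilde\psi\in\Irr(N_H(R))$ is irreducible by Clifford theory, lies over $\psi$, and has $R$ in its kernel; since the index $|N_H(R):N_H(R)_\psi|$ is prime to $\ell$ (it divides $|H/N|/|\text{something}|$... actually it divides $|N_H(R)/N_N(S)R|$ and we only assumed the $\psi$-stabilizer index is prime to $\ell$ — so I should instead argue that $\varphi$ has defect zero directly from $\tilde\psi$ having defect zero and the index $|N_H(R):N_H(R)_\psi|$ being prime to $\ell$), $\varphi$ has $\ell$-defect zero as a character of $N_H(R)/R$. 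Hence $(R,\varphi)$ is the desired $\ell$-weight of $H$ with $\varphi\in\Irr(N_H(R)\mid\psi)$.

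The main obstacle I anticipate is the defect-zero bookkeeping in part (ii): showing that among the extensions of $\psi$ to its inertia group there is one of $\ell$-defect zero, and that this property survives induction to $N_H(R)$, using precisely the hypothesis $\ell\nmid|N_H(R)_\psi/N_N(S)R|$. Concretely, working inside $\bar H=N_H(R)/R$ with $\bar N=N_N(S)R/R$, I want: if $\bar\psi\in\Irr(\bar N)$ has defect zero and $\bar H_{\bar\psi}/\bar N$ is cyclic of order prime to $\ell$, then $\bar\psi$ has an extension of defect zero to $\bar H_{\bar\psi}$, and its induction to $\bar H$ has defect zero. The first point follows because $\bar\psi(1)_\ell=|\bar N|_\ell$ and any extension $\tilde\psi$ satisfies $\tilde\psi(1)=\bar\psi(1)$, so $\tilde\psi(1)_\ell=|\bar N|_\ell=|\bar H_{\bar\psi}|_\ell$ (the last equality because the quotient has order prime to $\ell$) — so \emph{every} extension already has defect zero, which is cleaner than I first thought. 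For the induction step, $\varphi(1)=|\bar H:\bar H_{\bar\psi}|\tilde\psi(1)$, and $|\bar H:\bar H_{\bar\psi}|$ divides $|\bar H:\bar N|$ which need not be prime to $\ell$ in general — but $|\bar H:\bar H_{\bar\psi}|$ is precisely $|N_H(R):N_H(R)_\psi|$, and this is a divisor of $|N_H(R):N_N(S)R|$ with the complementary factor $|N_H(R)_\psi:N_N(S)R|$ assumed prime to $\ell$; combined with the fact that $|N_H(R):N_N(S)R|$ divides $|H:N|$, one extracts that $\varphi(1)_\ell=|N_H(R)|_\ell$, i.e. $\varphi$ has defect zero. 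I would present this last $\ell$-part computation carefully as it is the crux; everything else is a routine application of Lemmas \ref{cliffordthm} and \ref{relationofradicalgroups}.
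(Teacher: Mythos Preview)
Your proposal is correct and follows essentially the same route as the paper: set up $N_H(S)=N_H(R)$ via Lemma~\ref{relationofradicalgroups}(iii), pass to the quotient $\bar H=N_H(R)/R$ with normal subgroup $\bar N=N_N(S)R/R\cong N_N(S)/S$, and then verify the defect-zero condition in each direction. The only small difference is in part~(i): the paper argues by a direct degree computation (using that $\Res^{N_H(R)}_{N_N(S)}\varphi$ is multiplicity-free since the quotient is cyclic, so $\varphi(1)=t\psi(1)$ with $t\mid[N_H(R):N_N(S)R]$, whence $\psi(1)_\ell\ge|N_N(S)/S|_\ell$), whereas you invoke the block-theoretic fact that a block covered by a defect-zero block has trivial defect group. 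Both are standard and equally short; your explicit treatment of part~(ii) is exactly what the paper's ``similar to~(i)'' unpacks to.
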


\begin{proof}
	Let $R$ be an $\ell$-radical subgroup of $H$, $S=R\cap N$. By Lemma \ref{relationofradicalgroups} (iii), $N_H(R)= N_H(S)$ and then
	$N_N(S)=N_H(R)\cap N$.
	By the assumption, $N_H(R)/N_N(S)$ is cyclic.	
	Now $N_N(S)R/R\cong N_N(S)/S$, so there is a bijection $\Psi:\Irr(N_N(S)\ |\ 1_S)\to \Irr(N_N(S)R\ |\ 1_R)$ such that if $\psi\in \Irr(N_N(S)\ |\ 1_S)$ and
	$\psi'=\Psi(\psi)$, then $\psi'$ is an extension of $\psi$.
	Obviously, every character in $\Irr(N_N(S)\ |\ 1_S)$ is $R$-invariant.
	
	(i). Let $(R,\varphi)$ be an $\ell$-weight of $H$ and $\psi\in\Irr(N_N(S)\ |\ \varphi)$,
	then $\Res^{N_H(R)}_{N_N(S)}\varphi$ is multiplicity-free.
	So $\varphi(1)=t\psi(1)$ with $t=[N_H(R):N_H(R)_\psi]$.
	Hence $t\mid [N_H(R):N_N(S)R]$.
	Notice that $\varphi(1)_\ell=|N_H(R)/R|_\ell$, so $\psi(1)_\ell\ge|N_N(S)/S|_\ell$.
	Thus $\psi(1)_\ell=|N_N(S)/S|_\ell$.
	Hence $\psi$ is of $\ell$-defect zero as a character of $N_N(S)/S$ and then $(S,\psi)$ is an $\ell$-weight of $N$.
	
	(ii). Let $(S,\psi)$ be an $\ell$-weight of $N$, $\psi'=\Psi(\psi)$ and
	$\varphi\in\Irr(N_H(R)\ |\ \psi')$.
	Then the proof is similar to (i).
\end{proof}

\subsection{Blocks}
Let $H$ be a finite group, $\chi\in\Irr(H)$, $\ell$ a prime,
we denote by $\chi^\circ$ the restriction of $\chi$ to the set of all $\ell'$-elements of $H$.
Let $\theta$ be a linear character of $H$. Then $\theta\chi$ is an irreducible character of $H$ and the map $\chi\mapsto \theta\chi$ is a permutation on $\Irr(H)$.
Moreover, this permutation respects $\ell$-blocks.
The following is elementary.

\begin{lem}\label{per-block}
Suppose that $H$ is a finite group and $B$ is an $\ell$-block of $H$.
Let $\theta$ be a linear character of $H$ of $\ell'$-order.
Then there is an $\ell$-block of $H$, say $\theta\otimes B$, such that $\Irr(\theta\otimes B)=\{\theta\chi\mid \chi\in\Irr(B)\}$.
Moreover, $\IBr_\ell(\theta\otimes B)=\{\theta^\circ\phi\mid\phi\in\IBr_\ell(B)\}$.	
\end{lem}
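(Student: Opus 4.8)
The plan is to realise the bijections $\chi\mapsto\theta\chi$ on $\Irr(H)$ and $\phi\mapsto\theta^{\circ}\phi$ on $\IBr_\ell(H)$ as being induced by one and the same algebra automorphism of the group algebra, one which preserves the centre and hence permutes the block idempotents. Fix a large enough $\ell$-modular system $(\mathbb{K},\mathcal{O},k)$ for $H$, so that $\mathcal{O}$ contains all values of $\theta$. Since $\theta$ has $\ell'$-order, these values are roots of unity of order prime to $\ell$, hence units in $\mathcal{O}$ whose reductions modulo the maximal ideal are units in $k$. As $\theta\colon H\to\mathcal{O}^{\times}$ is a group homomorphism, the $\mathcal{O}$-linear map $\tau$ defined on the basis $H$ by $h\mapsto\theta(h)h$ is an $\mathcal{O}$-algebra automorphism of $\mathcal{O}H$ (its inverse is the automorphism attached to $\theta^{-1}$), and it reduces to a $k$-algebra automorphism $\bar\tau$ of $kH$. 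On a class sum $\hat{C}=\sum_{x\in C}x$ one computes $\tau(\hat{C})=\theta(g_{C})\hat{C}$ for any $g_{C}\in C$, since $\theta$ is constant on $C$; hence $\tau$ stabilises $Z(\mathcal{O}H)$ and therefore permutes its primitive idempotents, i.e.\ the block idempotents of $H$, and likewise $\bar\tau$ permutes the block idempotents of $kH$.

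Given the block $B$ with block idempotent $e_{B}$, let $\theta\otimes B$ be the unique $\ell$-block of $H$ whose block idempotent $e'$ satisfies $\tau(e')=e_{B}$. If $V$ denotes the simple $\mathbb{K}H$-module affording $\chi\in\Irr(H)$, then $V\otimes\mathbb{K}_{\theta}$ affords $\theta\chi$, and when this module is identified with $V$ as a vector space, the action of any $a\in\mathcal{O}H$ on it coincides with the action of $\tau(a)$ on $V$. Hence $e'$ acts on $V\otimes\mathbb{K}_{\theta}$ as $\tau(e')=e_{B}$ acts on $V$, which is the identity exactly when $\chi\in\Irr(B)$; this yields $\Irr(\theta\otimes B)=\{\theta\chi\mid\chi\in\Irr(B)\}$. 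For the Brauer characters one argues identically over $k$: the $\ell'$-order hypothesis ensures that the reduction $\bar\theta$ of $\theta$ is a genuine $k$-linear character whose value at an $\ell'$-element lifts back to that of $\theta$, so $\theta^{\circ}$ is the Brauer character of the one-dimensional $kH$-module $k_{\bar\theta}$, and $\theta^{\circ}\phi$ is the Brauer character of $S\otimes k_{\bar\theta}$ whenever $S$ is the simple $kH$-module affording $\phi$. Repeating the idempotent computation with $\bar\tau$ in place of $\tau$, and using that the block idempotents of $\mathcal{O}H$ reduce to those of $kH$ with the same partition of $\IBr_\ell(H)$, gives $\IBr_\ell(\theta\otimes B)=\{\theta^{\circ}\phi\mid\phi\in\IBr_\ell(B)\}$.

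There is no serious obstacle in this argument: the only points requiring attention are the bookkeeping of the twist ($\theta$ versus $\theta^{-1}$) in the module-theoretic step and the systematic use of the $\ell'$-order hypothesis (without it $\tau$ need not restrict to $kH$, and $\theta^{\circ}$ need not be afforded by a one-dimensional module). If one prefers to bypass the module computation, the identity $\overline{\omega_{\theta\chi}}=\overline{\omega_{\chi}}\circ\bar\tau$ between the reductions of central characters shows directly that $\chi\mapsto\theta\chi$ preserves the partition of $\Irr(H)$ into $\ell$-blocks, and the analogous statement holds for $\IBr_\ell(H)$, so that this bijection carries each $\ell$-block onto an $\ell$-block — which is all that is needed.
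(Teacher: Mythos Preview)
Your proof is correct and follows essentially the same approach as the paper: both construct the algebra automorphism of the group algebra given by $h\mapsto\theta(h)h$ (the paper uses $\theta^{-1}$ and works only over $k$, while you work over $\mathcal{O}$ and then reduce), observe that it permutes block idempotents, and check that the induced permutation on characters is multiplication by $\theta$. The only differences are cosmetic—your version over $\mathcal{O}$ handles ordinary and Brauer characters slightly more uniformly, and you spell out the module-theoretic verification where the paper leaves it implicit.
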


\begin{proof}
	Let $(K,O,k)$ be a splitting $\ell$-modular system for $H$
	where $K$ is an extension of the $\ell$-adic field $\mathbb{Q}_\ell$.
	Let $J(O)$ be the maximal ideal of $O$ and ${}^*:O\to k=O/J(O)$	the canonical homomorphism.
	We denote by $\theta'$ be the linear character of $H$ such that $\theta'(h)=\theta(h)^{-1}$ for every $h\in H$.
	Now $\theta$ is of $\ell'$-order, so is $\theta'$.
	From this $\theta'$ induces a group homomorphism $\theta'^*:H\to k$.
	Define $\sigma:kH\to kH$ by $h\mapsto \theta'^*(h)h$. Then it is easy to check that $\sigma$ is an automorphism of $k$-algebra $kH$.	
	
	Now let $B'$ be the $\ell$-block of $H$ which is the image of $B$ under $\sigma$.
Then $\Irr(B)=\{\chi^{\sigma}\mid \chi\in\Irr(B)\}$ and $\IBr_\ell(B')=\{\phi^{\sigma}\mid \phi\in\IBr_\ell(B)\}$.
	For any $\chi\in \Irr(B)$ and $\phi\in\IBr_\ell(B)$,
	we see at once that $\chi^{\sigma}=\theta \chi$ and $\phi^{\sigma}=\theta^\circ \phi$.
	Now we take $\theta\otimes B=B'$, and we complete the proof.
\end{proof}

Let $Y\subseteq \IBr_\ell(H)$.
A subset $X \subseteq \Irr(H)$ is called a \emph{basic set} of $Y$ if
$\{\chi^\circ\mid \chi\in X\}$ is a $\mathbb Z$-basis of $\mathbb ZY$.
Let $\mathcal B$ be a union of some $\ell$-blocks of $H$.
If $Y=\IBr_\ell(\mathcal B)$, then we also say $X$ a  basic set of $\mathcal B$.

\begin{lem}\label{fromordtomod}
	Let $N\unlhd H$ be arbitrary finite groups, $B$ be an $\ell$-block of $N$ and $X\subseteq \Irr(B)$ a basic set of $B$. Suppose that the $\ell$-decomposition matrix associated with $X$ and $\IBr_\ell(B)$ is unitriangular with respect to a suitable ordering. Assume that every character in $X$ is invariant under $H$. Then every irreducible $\ell$-Brauer character of $B$ is invariant under $H$. Moreover, if every character in $X$ extends to $H$, then every irreducible $\ell$-Brauer character of $B$ extends to $H$.
\end{lem}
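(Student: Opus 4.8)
The plan is to exploit the unitriangular shape of the decomposition matrix to transfer the $H$-action from ordinary characters in the basic set $X$ to the irreducible $\ell$-Brauer characters of $B$, proceeding by induction along the ordering that witnesses unitriangularity. Write $X=\{\chi_1,\dots,\chi_m\}$ and $\IBr_\ell(B)=\{\phi_1,\dots,\phi_m\}$ (the cardinalities agree since $X$ is a basic set), ordered so that $\chi_i^\circ=\phi_i+\sum_{j<i}d_{ij}\phi_j$ with $d_{ij}\in\Z_{\ge 0}$. Since $H$ acts on $\Irr(N)$ and on $\IBr_\ell(N)$ and this action commutes with restriction to $\ell'$-elements, $h\in H$ sends $\chi_i^\circ$ to $(\chi_i^h)^\circ$; the hypothesis that each $\chi_i$ is $H$-invariant forces $(\chi_i^h)^\circ=\chi_i^\circ$ for all $h$. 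Also, $B$ being $H$-invariant as a set (each $\chi_i\in\Irr(B)$ is fixed), the $H$-action permutes $\IBr_\ell(B)$.

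First I would prove $H$-invariance of each $\phi_i$ by induction on $i$. For $i=1$ we have $\chi_1^\circ=\phi_1$, so $\phi_1^h=(\chi_1^\circ)^h=(\chi_1^h)^\circ=\chi_1^\circ=\phi_1$. For the inductive step, from $\chi_i^\circ=\phi_i+\sum_{j<i}d_{ij}\phi_j$ and the fact that each $\phi_j$ with $j<i$ is $H$-invariant by induction, applying $h$ gives $\chi_i^\circ=(\chi_i^h)^\circ=\phi_i^h+\sum_{j<i}d_{ij}\phi_j$, whence $\phi_i^h=\phi_i$. Thus every $\phi_i\in\IBr_\ell(B)$ is $H$-invariant.

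For the second assertion, suppose in addition each $\chi_i$ extends to a character $\widehat\chi_i\in\Irr(H)$. Again argue by induction on $i$ along the same ordering, aiming to produce extensions $\widehat\phi_i\in\IBr_\ell(H)$ of $\phi_i$. One convenient way: since $\phi_i$ is $H$-invariant (just proved), it suffices by \cite[Chap. 8]{Na98} to exhibit, for each $h$, a compatible extension, but the cleanest route is to pass to the projective/twisted group algebra and show the cohomological obstruction to extending $\phi_i$ vanishes. Concretely, restricting $\widehat\chi_i$ to $\ell'$-elements of $H$ gives an $H$-stable virtual Brauer character $\widehat\chi_i^\circ$ of $N$-constituents $\phi_i+\sum_{j<i}d_{ij}\phi_j$; the $\ell'$-restriction $\widehat\chi_i^\circ$ is a genuine $\ell$-Brauer character of $H$ extending $\chi_i^\circ$, and subtracting $\sum_{j<i}d_{ij}\widehat\phi_j$ (extensions obtained by induction, each restricting to $\phi_j$) yields a virtual Brauer character $\psi_i$ of $H$ with $\Res^H_N\psi_i=\phi_i$. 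A virtual Brauer character of $H$ restricting to an irreducible Brauer character $\phi_i$ of $N$ must be $\pm$ an irreducible Brauer character of $H$ lying over $\phi_i$ (by the standard argument that $\langle\psi_i,\psi_i\rangle$ of the projective indecomposable pairing equals $1$, or via Clifford theory of $\IBr_\ell$); adjusting the sign gives the desired $\widehat\phi_i\in\IBr_\ell(H)$ with $\Res^H_N\widehat\phi_i=\phi_i$. This closes the induction and proves every $\phi\in\IBr_\ell(B)$ extends to $H$.

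The main obstacle is the second part: showing that the virtual Brauer character $\psi_i$ of $H$ obtained by the triangular cancellation is, up to sign, actually irreducible — i.e. that no spurious constituents enter when one subtracts the lower extensions $\widehat\phi_j$. This requires care because the $\widehat\phi_j$ are only determined up to twisting by linear Brauer characters of $H/N$, so one must either fix the extensions coherently or argue irreducibility intrinsically (for instance, by computing that $\Res^H_N\psi_i=\phi_i$ is irreducible and $H$-invariant, hence $\psi_i$ lies in a single block-theoretic Clifford class over $\phi_i$ and has norm one against the corresponding projective cover, forcing $\psi_i=\pm\widehat\phi_i$). Everything else is a routine unitriangular induction.
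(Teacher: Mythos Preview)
The paper itself does not prove this lemma; it simply cites \cite[Lem.~1.27 and Prop.~1.29]{Sc15}. So there is no detailed argument in the paper to compare against, and your proposal must stand on its own.

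Your first part (invariance of each $\phi_i$ by unitriangular induction) is correct and is the standard argument.

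Your second part has a genuine gap, and the gap is precisely the one you flag but do not close. The virtual Brauer character $\psi_i=\widehat\chi_i^{\,\circ}-\sum_{j<i}d_{ij}\widehat\phi_j$ need \emph{not} be $\pm$(irreducible): a virtual Brauer character of $H$ whose restriction to $N$ is irreducible can perfectly well contain constituents lying over other $\phi_j$ that cancel upon restriction (two irreducible $\eta,\eta'$ over the same $H$-invariant $\phi_j$ with ramification indices $e,e'$ and coefficients $a,-a'$ satisfying $ae=a'e'$). Your sketched repairs---``fix the extensions coherently'' or ``norm one against the projective cover''---do not obviously control this cancellation, and the claim that $\psi_i$ lies in a single Clifford class over $\phi_i$ is exactly what fails.

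The clean fix is to avoid the subtraction entirely and exploit positivity. Since $\widehat\chi_i\in\Irr(H)$, the class function $\widehat\chi_i^{\,\circ}$ is a \emph{genuine} Brauer character of $H$, i.e.\ a non-negative integer combination $\sum_\eta m_\eta\,\eta$ with $\eta\in\IBr_\ell(H)$. Each $\phi_j$ appearing in $\chi_i^{\,\circ}$ is $H$-invariant by the first part, so for every $\eta$ with $m_\eta>0$ there is a unique $j\le i$ and an integer $e_\eta\ge 1$ with $\Res^H_N\eta=e_\eta\phi_j$. Comparing the coefficient of $\phi_i$ on both sides of $\Res^H_N\widehat\chi_i^{\,\circ}=\chi_i^{\,\circ}=\phi_i+\sum_{j<i}d_{ij}\phi_j$ gives $\sum_{\eta\text{ over }\phi_i} m_\eta e_\eta=1$, which forces exactly one such $\eta$, with $m_\eta=e_\eta=1$. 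That $\eta$ is the desired extension of $\phi_i$. Note that no inductive hypothesis on the extensions $\widehat\phi_j$ is needed here; only the invariance of the $\phi_j$ (already established) is used.
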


\begin{proof}
	This is \cite[Lem. 1.27 and Prop. 1.29]{Sc15}.
\end{proof}

We will make use of the following result.
\begin{lem}[{\cite[Lem. 2.3]{KS15}}]
	\label{covers}
	Let $K$ be a normal subgroup of finite group $H$ and $L$ a subgroup of $H$. Let $M=K\cap L$.
	Suppose that $b$ is an $\ell$-block of $M$ and $c$ is an $\ell$-block of $L$ such that $c$ covers $b$.
	If both $b^K$ and $c^H$ are defined, then $c^H$ covers $b^K$.
\end{lem}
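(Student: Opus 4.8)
The plan is to prove the statement by comparing central characters over a splitting $\ell$-modular field $k$ (of characteristic $\ell$). For a subgroup $U\le G$ write $\tau^G_U\colon Z(kG)\to Z(kU)$ for Brauer's $k$-linear map sending a conjugacy class sum $\widehat C$ of $G$ to $\widehat{C\cap U}=\sum_{x\in C\cap U}x$, which lies in $Z(kU)$ because $C\cap U$ is a union of $U$-conjugacy classes. I will use the standard facts that: when $b^G$ is defined its central character is $\omega_{b^G}=\omega_b\circ\tau^G_U$; for $N\unlhd G$ a block $B$ of $G$ covers a block $b'$ of $N$ if and only if $e_Be_{b'}\neq0$; and the $G$-orbit sum $f_{b'}:=\sum_{g\in G/G_{b'}}{}^ge_{b'}\in Z(kG)$ (here $G_{b'}$ is the stabilizer of $b'$ in $G$) is an idempotent equal to the sum of $e_B$ over the blocks $B$ of $G$ covering $b'$; consequently $B$ covers $b'$ if and only if $\omega_B(f_{b'})=1$. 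Since $K\unlhd H$ we have $M=K\cap L\unlhd L$, so ``$c$ covers $b$'' and ``$c^H$ covers $b^K$'' make sense, and it suffices to show $\omega_{c^H}(f_{b^K})=1$.

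The key lemma is: if a block $c$ of $L$ covers a block $b$ of $M$ with $M\unlhd L$, then $\omega_c$ and $\omega_b$ agree on $(kM)^L=kM\cap Z(kL)$. To see this, fix $z\in(kM)^L$ (note $z\in Z(kM)\cap Z(kL)$). In the finite-dimensional commutative local $k$-algebra $Z(kL)e_c$ the element $ze_c-\omega_c(z)e_c$ lies in the radical and is therefore nilpotent; since the algebra has characteristic $\ell$, raising to an $\ell$-th power is additive, so $z^{\ell^n}e_c=(ze_c)^{\ell^n}=\omega_c(z)^{\ell^n}e_c$ for all sufficiently large $n$. The same argument in $Z(kM)e_b$ gives $z^{\ell^n}e_b=\omega_b(z)^{\ell^n}e_b$ for large $n$. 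Multiplying the two identities for a common large $n$ yields $\omega_c(z)^{\ell^n}e_ce_b=\omega_b(z)^{\ell^n}e_ce_b$; since $c$ covers $b$ we have $e_ce_b\neq0$, and since $k$ is perfect we may take $\ell^n$-th roots to conclude $\omega_c(z)=\omega_b(z)$.

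Now I assemble the argument, using additionally that $\tau^H_L$ and $\tau^K_M$ agree on elements $w\in(kK)^H=kK\cap Z(kH)$: writing $w=\sum_D a_D\widehat D$ over conjugacy classes $D$ of $H$, only classes $D\subseteq K$ occur; for such $D$ one has $D\cap L=D\cap K\cap L=D\cap M$; and the coefficients $a_D$ are constant on the $K$-classes contained in a single $H$-class, so regrouping gives $\tau^H_L(w)=\sum_{D\subseteq K}a_D\widehat{D\cap M}=\tau^K_M(w)$, an element supported on $M$ that therefore lies in $Z(kL)\cap kM=(kM)^L$. Apply this with $w=f_{b^K}$. Because $b^K$ is defined, $\omega_b\circ\tau^K_M=\omega_{b^K}$, so $\omega_b\bigl(\tau^K_M(e_B)\bigr)=\omega_{b^K}(e_B)$ equals $1$ if $B=b^K$ and $0$ for every other block $B$ of $K$; summing over the $H$-orbit of $b^K$, in which $b^K$ occurs exactly once, gives $\omega_b\bigl(\tau^K_M(f_{b^K})\bigr)=1$. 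Because $c^H$ is defined, $\omega_{c^H}=\omega_c\circ\tau^H_L$, hence
\[\omega_{c^H}(f_{b^K})=\omega_c\bigl(\tau^H_L(f_{b^K})\bigr)=\omega_c\bigl(\tau^K_M(f_{b^K})\bigr)=\omega_b\bigl(\tau^K_M(f_{b^K})\bigr)=1,\]
the third equality being the key lemma applied to $z=\tau^K_M(f_{b^K})\in(kM)^L$ (valid since $c$ covers $b$). Thus $\omega_{c^H}(f_{b^K})\neq0$, i.e.\ $c^H$ covers $b^K$.

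I expect the main obstacle to be the key lemma in the second paragraph: one cannot simply write $ze_c=\omega_c(z)e_c$, so the nilpotent part of $ze_c$ in the block algebra has to be eliminated by passing to $\ell$-power exponents and invoking perfectness of $k$. The remaining steps are routine bookkeeping with Brauer's $\tau$ maps and orbit-sum idempotents, but one should be careful that the identifications $kM\cap Z(kL)=(kM)^L$ and $kK\cap Z(kH)=(kK)^H$ genuinely rely on $M\unlhd L$ and $K\unlhd H$, and that $\tau^G_U$ is merely $k$-linear, so it must be manipulated through its definition on class sums rather than as a ring map.
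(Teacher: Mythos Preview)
Your argument is correct. The paper does not give its own proof of this lemma; it simply quotes it as \cite[Lem.~2.3]{KS15} and uses it as a black box, so there is nothing to compare against at the level of strategy.

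For the record, what you call the ``key lemma'' --- that $\omega_c$ and $\omega_b$ agree on $(kM)^L$ when $c$ covers $b$ --- is a standard fact (it appears, for instance, in Navarro's \emph{Characters and Blocks of Finite Groups}), so you could also have cited it rather than proving it from scratch. Your proof of it via $\ell$-th powers to kill the nilpotent part is clean and correct; the only cosmetic point is that the step ``$\omega_c(z)^{\ell^n}e_ce_b=\omega_b(z)^{\ell^n}e_ce_b$ and $e_ce_b\neq0$ imply $\omega_c(z)^{\ell^n}=\omega_b(z)^{\ell^n}$'' tacitly uses that the two quantities are scalars in $k$, so their difference annihilating a nonzero element forces it to vanish --- worth saying explicitly. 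The bookkeeping showing $\tau^H_L=\tau^K_M$ on $(kK)^H$ and that the image lands in $(kM)^L$ is handled carefully, and the final chain of equalities yielding $\omega_{c^H}(f_{b^K})=1$ is exactly what is needed. This is essentially how the result is proved in \cite{KS15} as well.
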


\subsection{Cuspidal pairs}\label{defcuspidalpairs}

We will make use of the classification of the blocks of finite groups of Lie type in non-defining characteristic given by Cabanes-Enguehard \cite{CE99} and Kessar-Malle \cite{KM15}.
Algebraic groups are usually denoted by boldface letters.
Let $q$ be a power of some prime number $p$ and $\mathbb F_q$ the field of $q$ elements.
Suppose that $\mathbf G$ is a connected reductive linear algebraic group over the algebraic closure of $\mathbb F_q$ and $F:\mathbf G\to\mathbf G$ a Frobenius endomorphism endowing $\mathbf G$ with an $\mathbb F_q$-structure.
The group of rational points $\mathbf G^F$ is finite.
Let $\mathbf G^*$ be dual to $\mathbf G$ with corresponding Frobenius endomorphism also denoted $F$.

Let $d$ be a positive integer. We will make use of the terminology of Sylow $d$-theory~(see for instance \cite{BM92}).
For an $F$-stable maximal torus $\mathbf T$ of $\mathbf G$, denotes $(\mathbf T)_d$ its Sylow $d$-torus.
An $F$-stable Levi subgroup $\mathbf L$ of $\mathbf G$ is called \emph{$d$-split} if $\mathbf L=C_{\mathbf G}(Z^\circ (\mathbf L)_d)$, and $\zeta\in\Irr (\mathbf L^F)$ is called \emph{$d$-cuspidal} if ${}^*R^{\mathbf L}_{\mathbf M\subseteq \mathbf P}(\zeta)=0$ for all proper $d$-split Levi subgroups $\mathbf M < \mathbf L$ and any parabolic subgroup $\mathbf P$ of $\mathbf L$ containing $\mathbf M$ as Levi complement.

Let $s\in {\mathbf G^*}^F$ be semisimple.
Following \cite[Def.~2.1]{KM15},
we say $\chi\in\mathcal E(\mathbf G^F,s)$ is \emph{$d$-Jordan-cuspidal} if
\begin{itemize}
	\item $Z^\circ(C^\circ_{\mathbf G^*}(s))_d=Z^\circ(\mathbf G^*)_d$, and
	\item $\chi$ corresponds under Jordan decomposition~(see, for example, \cite[Prop. 5.1]{Lu88}) to the $C_{\mathbf G^*}(s)^F$-orbit of a $d$-cuspidal unipotent character of $C^\circ_{\mathbf G^*}(s)^F$.
\end{itemize}
If $\mathbf L$ is a $d$-split Levi subgroup of $\mathbf G$ and $\zeta\in\Irr(\mathbf L^F)$
is $d$-Jordan-cuspidal, then $(\mathbf L,\zeta)$ is called a \emph{$d$-Jordan-cuspidal pair} of $\mathbf G$.

Let $\ell$ be a prime number different from $p$.
Now we define an integer $e_0=e_0(q,\ell)$, which is denoted by ``$e$'' in \cite{KM15}~(in this paper, we will use ``$e$'' for another integer, see Section \ref{notations-and-conventions}):
\begin{equation}\label{definitionofe0}
e_0=e_0(q,\ell)= \text{multiplicative order of}\ q \ \text{modulo}\ \left\{\begin{array}{cl} \ell, & \quad \mbox{if}\ \ell>2,   \\
4, & \quad \mbox{if}\ \ell=2.
\end{array} \right.
\end{equation}

For a semisimple $\ell'$-element $s$ of ${\mathbf G^*}^F$,
we denote by $\mathcal E_{\ell}(\mathbf G^F,s)$ the union of all Lusztig series $\mathcal E(\mathbf G^F,st)$, where $t\in{\mathbf G^*}^F$ is a semisimple $\ell$-element commuting with $s$.
By \cite{BM89}, the set $\mathcal E_{\ell}(\mathbf G^F,s)$ is a union of $\ell$-blocks of $\mathbf G^F$.

Also, we denote by $\mathcal E(\mathbf G^F,\ell')$ the set of irreducible characters of $\mathbf G^F$ lying in a Lusztig series $\mathcal E(\mathbf G^F,s)$, where $s\in{\mathbf G^*}^F$ is a semisimple $\ell'$-element.

The paper \cite{CE99} gave a label for arbitrary $\ell$-blocks of finite groups of Lie type for $\ell\ge 7$ and it was generalised in \cite{KM15} to its largest possible generality.
Under the condition of \cite[Thm. A (e)]{KM15}, the set of $\mathbf G^F$-conjugacy classes of $e_0$-Jordan-cuspidal pairs $(\mathbf L,\zeta)$ of $\mathbf G$ such that $\zeta\in\mathcal E(\mathbf L^F,\ell')$, is a labeling set of the $\ell$-blocks of $\mathbf G^F$.

\subsection{The inductive blockwise Alperin weight conditions}

\begin{notation}
	For a finite group $H$ and a prime $\ell$, we denote by
\begin{itemize}
\item $\mathrm{dz}_\ell(H)$ the set of $\ell$-defect zero characters of $H$, and
\item $\mathrm{bl}_\ell(\varphi)$ the $\ell$-block of $H$ containing $\varphi$, for $\varphi\in\Irr(H)\cup\IBr_\ell(H)$.
\end{itemize}
If $Q$ is a radical $\ell$-subgroup of $H$ and $B$ an $\ell$-block of $H$, then we define the set
$$\mathrm{dz}_\ell(N_H(Q)/Q,B)=\{ \chi\in\mathrm{dz}_\ell(N_H(Q)/Q)\ | \ \mathrm{bl}_\ell(\chi)^H=B  \},$$
where we regard $\chi$ as an irreducible character of $N_G (Q)$ containing $Q$ in its kernel when considering the induced $\ell$-block $\mathrm{bl}_\ell(\chi)^H$.
\end{notation}

There are several versions of the (iBAW) condition.
Apart from the original version given in \cite[Def. 4.1]{Sp13}, there is also a version treating only blocks with defect groups involved in certain sets of $\ell$-groups \cite[Def. 5.17]{Sp13}, or a version handling single blocks \cite[Def. 3.2]{KS16}.
We shall consider the inductive condition for a single block here~(in order to consider unipotent $\ell$-blocks of special linear or unitary groups).

\begin{defn}[{\cite[Def. 3.2]{KS16}}]
	\label{induc}
	Let $\ell$ be a prime, $S$ a finite non-abelian simple group and $X$ the universal $\ell'$-covering group of $S$. Let $b$ be an $\ell$-block of $X$. We say the \emph{inductive blockwise Alperin weight (iBAW) condition} holds for $b$ if the following statements hold:
	\begin{enumerate}
		\item[(i)] There exist subsets $\IBr_\ell(b\ |\ Q)\subseteq \IBr_\ell(b)$ for $Q\in \Rad_\ell(X)$ with the following properties:
		\begin{enumerate}
			\item[(1)] $\IBr_\ell(b\ |\ Q)^a=\IBr_\ell(b\ |\ Q^a)$ for every $Q\in \Rad_\ell(X)$, $a\in \Aut(X)_b$,
			
			\item[(2)] $\IBr_\ell(b)=\dot{\bigcup}_{Q\in \Rad_\ell(X)/\thicksim_X}\IBr_\ell(b\ |\ Q)$.
		\end{enumerate}
		\item[(ii)] For every $Q\in \Rad_\ell(X)$ there exists a bijection $$\Omega^X_Q:\IBr_\ell(b\ |\ Q)\to \mathrm{dz}_\ell(N_X(Q)/Q,b)$$such that $\Omega^X_Q(\phi)^a=\Omega^X_{Q^a}(\phi^a)$ for every $\phi\in \IBr_\ell(b\ |\ Q)$ and $a\in \Aut(X)_b$.
		
		\item[(iii)] For every $Q\in \Rad_\ell(X)$ and every $\phi\in \IBr_\ell(b\ |\ Q)$ there exist a finite group $A:=A(\phi, Q)$ and $\tilde{\phi}\in \IBr_\ell(A)$ and $\tilde{\phi'}\in \IBr_\ell(N_A(\overline{Q}))$, where we use the notation $$\overline{Q}:=QZ/Z\ \text{and}\ Z:=Z(X)\cap \ker(\phi)$$ with the following properties:
		\begin{enumerate}
			\item[(1)] for $\overline{X}:=X/Z$ the group $A$ satisfies $\overline{X}\unlhd A$, $A/C_A(\overline{X})\cong \Aut(X)_\phi$, $C_A(\overline{X})=Z(A)$ and $\ell \nmid |Z(A)|$,
			
			\item[(2)] $\tilde{\phi}\in \IBr_\ell(A)$ is an extension of the $\ell$-Brauer character of $\overline{X}$ associated with $\phi$,
			
			\item[(3)] $\tilde{\phi'}\in \IBr_\ell(N_A(\overline{Q}))$ is an extension of the $\ell$-Brauer character of $N_{\overline{X}}(\overline{Q})$ associated with the inflation of $\Omega^X_Q(\phi)^\circ\in \IBr_\ell(N_X(Q)/Q)$ to $N_X(Q)$,
			
			\item[(4)] $\rm{bl}_\ell(\Res^A_J(\tilde{\phi}))=\rm{bl}_\ell(\Res^{N_A(\overline{Q})}_{N_J(\overline{Q})}(\tilde{\phi'}))^J$ for every subgroup $J$ satisfying $\overline{X}\le J \le A$.
		\end{enumerate}
	\end{enumerate}
\end{defn}

\subsection{Some notations and conventions for $\GL_n(\epsilon q)$}\label{notations-and-conventions}

From now on to the end of this paper, we always assume that $p$ is a prime, $q=p^f$ with a positive integer $f$, and $\ell$ is a prime number different from $p$.

We follow mainly the notation from \cite{FS82}, \cite{Br86}, \cite{An92},  \cite{An93} and \cite{An94}.
We first give some notation and conventions used throughout this paper.

For a positive integer $d$, we denote by $I_{(d)}$ the identity matrix of degree $d$
and by $I_{d}$ the identity matrix of degree $\ell^d$.
Let $\epsilon=\pm 1$ and $G=\GL_n(\epsilon q)$, where $\GL_n(-q)$ denotes the general unitary group $\GU_n(q)=\{ A\in \GL_n(q^2)\ |\ F_q(A)^{tr}A=I_{(n)} \}$, where $F_q(A)$ is the matrix whose entries are the $q$-th powers of the corresponding entries of $A$, and ${}^{tr}$ denotes the transpose operation of matrices.

Denote $X=\SL_n(\epsilon q)$, where $\SL_n(-q)=\SU_n(q)=\GU_n(q)\cap\SL_n(q^2)$.
We also use the notation $\GL(n,\epsilon q)$~(and $\SL(n,\epsilon q)$, respectively) for $\GL_n(\epsilon q)$~(and $\SL_n(\epsilon q)$, respectively).
Let $F_p$ be the automorphism of $G$ defined by $F_p((g_{ij}))=(g_{ij}^p)$ and $\gamma$ the automorphism of $G$ defined by $\gamma(A)=(A^{-1})^{tr}$.
Denote $D=\langle F_p, \gamma \rangle$.
Then $D$ is an abelian group of order $2f$ and the group $G\rtimes D$ is well-defined.
For the unitary groups, $D$ is cyclic.
By \cite[Thm. 2.5.1]{GLS98},
the automorphisms of $X$ induced by $G\rtimes D$ equal  $\Aut(X)$.
If $n=2$, $\gamma$ is an inner automorphism.
If $n\ge 3$, then
$\Aut(X)\cong G/Z(G)\rtimes D$.
We denote by $\mathbb F=\mathbb F_{\epsilon q}=\mathbb F_q$ or $\mathbb F_{q^2}$ the field of $q$ or $q^2$ elements when $\epsilon=1$ or $\epsilon=-1$ respectively.
Let $e$ be the multiplicative order of $\epsilon q$ modulo $\ell$.

For a positive integer $d$, we denote by $\F_{q^d}[x]$ ($\Irr(\F_{q^d}[x])$, respectively) the set of all polynomials (all monic irreducible polynomials, respectively) over the field $\F_{q^d}$.
For a polynomial $$\Delta(x)=x^m+a_{m-1}x^{m-1}+\cdots+a_0$$ in $\F_{_{ q^{2d}}}[x],$ we define $\tDelta(x)=x^ma_0^{-{q^d}}\Delta^{q^d}(x^{-1})$, where $\Delta^{q^d}(x)$ means the polynomial in $x$ whose coefficients are the $q^d$-th powers of the corresponding coefficients of $\Delta(x)$.
Then $\alpha$ is a root of $\Delta$ if and only if $\alpha^{-q^d}$ is a root of $\tDelta$.
Now, we denote by
\begin{align*}
\cF_{0}(d)&=\left\{ \Delta\in\Irr(\F_{q^d}[x])\mid \Delta\neq x ~\right\},\\
\cF_{1}(d)&=\left\{ \Delta\in\Irr(\F_{q^{2d}}[x])\mid \Delta\neq x,\Delta=\tDelta ~\right\},\\
\cF_{2}(d)&=\left\{~ \Delta\tDelta ~|~ \Delta\in\Irr(\F_{q^{2d}}[x]),\Delta\neq x,\Delta\neq\tDelta ~\right\}.
\end{align*}
Let
\begin{equation} \label{def-cfd}
\cF(d)=\left\{ \begin{array}{ll} \cF_0(d) &\quad \textrm{if}~\epsilon^d=1,\\
\cF_1(d)\cup\cF_2(d) &\quad \textrm{if}~\epsilon^d=-1. \end{array}
\right.
\end{equation}
In particular, we abbreviate $\cF:=\cF(1)$ and $\cF_{i}:=\cF_{i}(1)$ for $i=0,1,2$.
We denote by $d_\Gamma$ the degree of any polynomial $\Gamma$.
For unitary groups, the polynomials in $\cF_1\cup\cF_2$ serve as the ``elementary divisors'' as polynomials in $\cF_0$ serve for linear groups~(see, for example, \cite[p.111-112]{FS82}).
For  $\Gamma\in\cF$, if $\sigma$ is a root of $\Gamma$, then $\sigma^{(\epsilon q)^h}$ is also a root of $\Gamma$ for any positive integer $h$.
So $d_\Gamma$ is the smallest integer $d$ such that $\sigma^{(\epsilon q)^d-1}=1$
and all the roots of $\Gamma$ are $\sigma$, $\sigma^{\epsilon q}$, $\ldots$, $\sigma^{(\epsilon q)^{d_\Gamma-1}}$.

Note that the meaning of our notation here for unitary groups, such as $e$ and $\GU_n(q)$, is the same as those in \cite{An93} and \cite{An94} which is slightly different from that in \cite{FS82}~(for details, see \cite[p.6]{An94}).
In particular, with the notation adopted here, there is no need to introduce the reduced degrees $\delta_\Gamma$ for the unitary groups.~(For the results in \cite{FS82} for unitary groups where $\delta_\Gamma$ appears, it is easy to reformulate them with the notation adopted here and $d_\Gamma$ replacing $\delta_\Gamma$ as in \cite{An94}).

Let $\barF$ be the algebraic closure of $\F_q$.
As usual, we denote $\bG=\GL_n(\barF)$~(a connected reductive linear algebraic group).
Define $F_q:=F_p^f$ and $F=\gamma^{\frac{1-\epsilon}{2}} \circ F_q$ which is a Frobenius endomorphism over $\bG$ defining an $\F_q$ structure on it.
We write $\bG^F$ for the group of fixed points,
then $G=\bG^F$.

Now, for $\Gamma\in\cF$, let $(\Gamma)$ be the companion matrix of $\Gamma$.
Let $s$ be a semisimple element of $G$ and $s=\prod_\Gamma s_\Gamma$ is the primary decomposition of $s$~(see, for example,  \cite[p.112]{FS82}).
If the multiplicity $m_\Gamma(s)$ of $\Gamma$ in $s_\Gamma$ is not zero, we call $\Gamma$ an ``elementary divisor'' of $s$ although $\Gamma$ may not be irreducible in the unitary case.
Then there exists $g_\Gamma(s)$ such that $s_\Gamma^{g_\Gamma(s)}=I_{(m_\Gamma(s))}\otimes \diag(\sigma_\Gamma, \sigma_\Gamma^{\epsilon q}, \cdots, \sigma_\Gamma^{(\epsilon q)^{d_\Gamma-1}} )$ where $\sigma_\Gamma, \sigma_\Gamma^{\epsilon q}, \cdots, \sigma_\Gamma^{(\epsilon q)^{d_\Gamma-1}}$ are all the roots of $\Gamma$, and $v_\Gamma(s)=g_\Gamma(s)^{-1}F(g_\Gamma(s))$ is a blockwise permutation matrix corresponding to a $d_\Gamma$-cycle.
Now let $\bH=C_{\bG}(s)$, then $\bH=\prod_\Gamma\bH_\Gamma$, where $\bH_\Gamma=C_{\bG_\Gamma}(s_\Gamma)$ with $\bG_\Gamma = \GL(m_{\Gamma}(s)d_\Gamma,\barF)$.
Let $\bH_{\Gamma,0}:=\bH_\Gamma^{g_\Gamma(s)}$, then $\bH_{\Gamma,0}=\GL(m_{\Gamma}(s),\barF)\times\cdots\times\GL(m_{\Gamma}(s),\barF)$ with $d_\Gamma$ factors and $F$ acts on $\bH_\Gamma$ in the same way as $v_\Gamma(s)F$ acts on $\bH_{\Gamma,0}$.
Let $H_\Gamma=\bH_\Gamma^F$, then by \cite[Prop. (1A)]{FS82}, $H_\Gamma\cong \bH_{\Gamma,0}^{v_\Gamma(s)F}\cong \GL(m_{\Gamma}(s),(\epsilon q)^{d_\Gamma})$.
Also, $C_G(s)=\bH^F=\prod_{\Gamma}H_\Gamma$.
Let $\mathcal P(s)$ be the set of the symbols $\mu=\prod_{\Gamma}\mu_\Gamma$, such that $\mu_\Gamma$ is a partition of $m_\Gamma(s)$.
Then the unipotent characters of $C_G(s)$ are in bijection with $\Irr\left(\prod_\Gamma\fS(m_{\Gamma}(s)\right)$ and consequently with $\mathcal P(s)$~(see, for example, \cite[\S4.B2]{Br86}).
For $\mu\in\mathcal P(s)$, we denote by $\chi_\mu=\prod_{\Gamma}\chi_{\mu_\Gamma}$ the unipotent character of $C_G(s)$ corresponding to $\mu$.

\section{The characters and Brauer characters of $\SL_n(\epsilon q)$}
\label{Brauercharacterofslsu}

With the parametrization of pairs involving semisimple elements above, the irreducible
characters of $G$ can be constructed by the Jordan decomposition.
The irreducible characters of $G$ are in bijection with $G$-conjugacy classes of pairs $(s,\mu)$, where $s$ is a semisimple element of $G$ and $\mu\in\mathcal P(s)$.
The bijection is given as $$\chi_{s,\mu}=\epsilon_{\mathbf G} \epsilon_{C_{\mathbf{G}}(s)} \mathrm R ^{\mathbf G}_{C_{\mathbf{G}}(s)} (\hat s \chi_\mu),$$
where $\chi_\mu$ is a unipotent character of $H=C_{\mathbf G^F}(s)$ described as in the end of previous section,
and
$\hat s$ is the image of $s$ under the isomorphism~(see \cite[(1.16)]{FS82})
\begin{equation}\label{isomorphsimhat}
Z(H)\cong {\rm Hom}(H/[H,H],\overline{\mathbb{Q}}^\times_\ell).
\end{equation}
Here, $\overline{\mathbb{Q}}_\ell$ is an algebraic closure of the $\ell$-adic field $\mathbb{Q}_\ell$.

Let
\begin{equation}\label{def-frakZ}
\mathfrak Z:=\{ z\in \mathbb F^\times\ |\ z^{q-\epsilon}=1\}.
\end{equation}
Then we may identify the elements of $\mathfrak Z$ with the elements of $Z(G)$.
For $\Gamma\in\cF$, let $\xi$ be a root of $\Gamma$.
For $z\in \mathfrak Z$, define $z.\Gamma$ to be the unique polynomial in $\cF$ such that $z\xi$ is a root of $z.\Gamma$.
Note that $d_{\Gamma}=d_{z.\Gamma}$.
In fact, since all the roots of $\Gamma$ are $\xi,\xi^{\epsilon q},\ldots, \xi^{(\epsilon q)^{d_\Gamma-1}}$, we know that all the roots of $z.\Gamma$ are $z\xi,z\xi^{\epsilon q},\ldots, z\xi^{(\epsilon q)^{d_\Gamma-1}}$.
Now we define an action of $\mathfrak Z$ on the set of pairs $(s,\mu)$ with $\mu\in\mathcal P(s)$.
For  $z\in \mathfrak Z$,
define $z\mu=\prod_{\Gamma}(z\mu)_\Gamma$ with $(z\mu)_{z.\Gamma}=\mu_\Gamma$.
Then $z\mu\in\mathcal P(zs) $.

By Lemma \ref{cliffordthm}, for $\chi\in\Irr(G)$, in order to compute the number of irreducible constituents of $\Res^G_X(\chi)$~(recall that $X=\SL_n(\epsilon q)$ is defined as in Section \ref{notations-and-conventions}), we need to know when $\chi\zeta=\chi$, for $\zeta\in\Irr(G/X)$.
Note that the group $Z(G)$~(or $\mathfrak Z$)~ is isomorphic via $\ \hat{}\ $ to the group of linear characters of $G/X$.
The following proposition follows from \cite[Prop. 3.5]{De17}.

\begin{prop}\label{restrofordi}
$\hat z\chi_{s,\mu}=\chi_{zs,z\mu}$ for $z\in\mathfrak Z$.
\end{prop}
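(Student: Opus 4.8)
The plan is to reduce the statement to the effect of twisting by a linear character at the level of the Jordan decomposition, and then invoke the corresponding statement over $\GL_n(\epsilon q)$. First I would recall that by \cite[Prop. 3.5]{De17} (or the analogous computation in \cite{CE99,KM15,Bon06}) one knows how multiplication by $\hat z$ interacts with Lusztig induction and with the Jordan decomposition map, namely that $\hat z$ corresponds, under duality, to a central element $z^*$ of ${\mathbf G^*}^F$, and that tensoring a character $\chi_{s,\mu}$ by the linear character $\hat z$ of $G$ sends the Lusztig series $\mathcal E(\mathbf G^F,s)$ to $\mathcal E(\mathbf G^F,z^*s)$. Since $\mathbf G=\GL_n$, the dual group $\mathbf G^*$ is again (isomorphic to) $\GL_n$, and the central element $z^*$ is identified with $z\in\mathfrak Z$ acting by scalar multiplication on the semisimple class; this is precisely the identification already set up in the paragraph preceding the proposition via the isomorphism \eqref{isomorphsimhat} and the bijection $\mathfrak Z\cong Z(G)$.

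The key computation is then to track the unipotent part. With $H=C_{\mathbf G}(s)$ and $H'=C_{\mathbf G}(zs)$, scalar multiplication by $z$ is an isomorphism $H^F\xrightarrow{\sim}H'^F$ (indeed $C_{\mathbf G}(zs)=zC_{\mathbf G}(s)z^{-1}=C_{\mathbf G}(s)$ as subgroups, but the primary decompositions are shuffled by $\Gamma\mapsto z.\Gamma$), and under the identification of unipotent characters of $C_G(s)$ with $\mathcal P(s)=\prod_\Gamma\mathcal P(m_\Gamma(s))$ this isomorphism carries $\chi_\mu$ to $\chi_{z\mu}$, precisely because $(z\mu)_{z.\Gamma}=\mu_\Gamma$ and $m_{z.\Gamma}(zs)=m_\Gamma(s)$, as noted in the text. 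Thus I would check that
\[
\hat z\,\chi_{s,\mu}=\hat z\cdot \epsilon_{\mathbf G}\epsilon_{C_{\mathbf G}(s)}\mathrm R^{\mathbf G}_{C_{\mathbf G}(s)}(\hat s\,\chi_\mu)
=\epsilon_{\mathbf G}\epsilon_{C_{\mathbf G}(zs)}\mathrm R^{\mathbf G}_{C_{\mathbf G}(zs)}\big(\widehat{zs}\,\chi_{z\mu}\big)=\chi_{zs,z\mu},
\]
where the middle equality uses that $\hat z\cdot\mathrm R^{\mathbf G}_{\mathbf L}(-)=\mathrm R^{\mathbf G}_{\mathbf L}(\operatorname{Res}\hat z\cdot-)$ (the projection formula for Lusztig induction, valid since $\hat z$ is a character of $G=\mathbf G^F$ itself), together with the compatibility $\operatorname{Res}^G_{C_G(s)}\hat z\cdot\hat s=\widehat{zs}$ coming from \eqref{isomorphsimhat} and the identification of $\hat z$ with the linear character attached to the central element $z$, and finally $\operatorname{Res}^G_{C_G(s)}\hat z\cdot\chi_\mu$ being matched with $\chi_{z\mu}$ under the isomorphism $C_G(s)\cong C_G(zs)$ above. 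Equivalently, one simply quotes \cite[Prop. 3.5]{De17}, which already records exactly the equality $\hat z\chi_{s,\mu}=\chi_{zs,z\mu}$ (possibly in slightly different notation), and observes that the translation of notation is the one fixed in Section~\ref{notations-and-conventions} and in the paragraph defining the action of $\mathfrak Z$ on pairs $(s,\mu)$.

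The main obstacle I anticipate is bookkeeping rather than conceptual: matching the sign conventions $\epsilon_{\mathbf G},\epsilon_{C_{\mathbf G}(s)}$ and checking that they are unaffected (here $\epsilon_{C_{\mathbf G}(s)}=\epsilon_{C_{\mathbf G}(zs)}$ since the two centralizers are conjugate in $\mathbf G$ and $F$-twisted in the same way), and, more delicately, verifying that the identification of the linear character $\hat z$ of $G$ with the dual central element acts on $\hat s$ by $\hat s\mapsto\widehat{zs}$ in a way consistent with the chosen isomorphism \eqref{isomorphsimhat}; this is where one must be careful that the ``hat'' on elements of $Z(H)$ and the ``hat'' giving linear characters of $G/X$ from $\mathfrak Z$ are compatible under restriction. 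Once this identification is pinned down, the proposition is immediate, and in the paper's logical structure it suffices to cite \cite[Prop. 3.5]{De17} after remarking that the action of $\mathfrak Z$ on pairs defined just above is the one appearing there.
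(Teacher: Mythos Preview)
Your proposal is correct and aligns with the paper's own treatment: the paper simply records that the proposition follows from \cite[Prop.~3.5]{De17}, without giving any further argument. Your sketch of how the projection formula for Lusztig induction and the identification $\operatorname{Res}\hat z\cdot\hat s=\widehat{zs}$ yield the result is a faithful unpacking of that citation, so there is nothing to add or correct.
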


Thus, for a semisimple element $s\in\mathbf G^F$ and $z\in\mathfrak Z$,
if we write $\mathcal E(\mathbf G^F,s)=\{  \chi_1,\dots,\chi_k   \}$,
then
\begin{equation}\label{act-z-series}
\mathcal E(\mathbf G^F,zs)=\{  \hat z\chi_1,\dots,\hat z\chi_k  \}.
\end{equation}

If $z\in\mathcal O_{\ell'}(\mathfrak Z)$, we may regard $\hat z$ as an irreducible $\ell$-Brauer character of $G/X$.
Then the group $\mathcal O_{\ell'}(\mathfrak Z)$ is isomorphic via $\ \hat{}\ $ to the group of linear $\ell$-Brauer characters of $G$.
Recall that for a semisimple $\ell'$-element $s$ of $\mathbf G^F$, $\mathcal E_{\ell}(\mathbf G^F,s)$ is a union of $\ell$-blocks of $\mathbf G^F$~(cf. \cite{BM89}).
Then by (\ref{act-z-series}) and Lemma \ref{per-block}, we have:

\begin{cor}\label{corrofbrauseries}
	Let $s$ be a semisimple $\ell'$-element of $\mathbf G^F$.
	Suppose that $\IBr_\ell(\mathcal E_{\ell}(\mathbf G^F,s))=\{  \phi_1,\dots,\phi_k  \}$,
	then $\IBr_\ell(\mathcal E_{\ell}(\mathbf G^F,zs))=\{ \hat z \phi_1,\dots,\hat z\phi_k  \}$ for any $z\in\mathcal O_{\ell'}(\mathfrak Z)$.
\end{cor}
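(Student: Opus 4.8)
The plan is to combine the action of $\mathfrak Z$ on ordinary Lusztig series recorded in (\ref{act-z-series}) with the block-theoretic translation functor of Lemma \ref{per-block}. First I would observe that since $z\in\mathcal O_{\ell'}(\mathfrak Z)$, the linear character $\hat z$ of $G/X$ pulls back to a linear character of $G$ of $\ell'$-order, so Lemma \ref{per-block} applies: for every $\ell$-block $B$ of $G$ there is an $\ell$-block $\hat z\otimes B$ with $\Irr(\hat z\otimes B)=\{\hat z\chi\mid\chi\in\Irr(B)\}$ and $\IBr_\ell(\hat z\otimes B)=\{\hat z^\circ\phi\mid\phi\in\IBr_\ell(B)\}$. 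Thus tensoring with $\hat z$ permutes the $\ell$-blocks of $G$ and, on Brauer characters, it is exactly multiplication by $\hat z$ (viewed as a linear $\ell$-Brauer character, which is legitimate precisely because $z$ is an $\ell'$-element).

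The key step is to identify the image of the union of blocks $\mathcal E_\ell(\mathbf G^F,s)$ under this functor. By definition $\mathcal E_\ell(\mathbf G^F,s)$ is the union of the Lusztig series $\mathcal E(\mathbf G^F,st)$ over semisimple $\ell$-elements $t\in{\mathbf G^*}^F$ commuting with $s$; applying (\ref{act-z-series}) with the semisimple element $st$ gives that $\hat z$ sends $\mathcal E(\mathbf G^F,st)$ bijectively onto $\mathcal E(\mathbf G^F,zst)$. Since $z$ has $\ell'$-order and $t$ has $\ell$-order, $zs$ is again a semisimple $\ell'$-element (its $\ell'$-part is the $\ell'$-part of $zs$; more precisely $zs$ and $s$ have the same set of commuting $\ell$-elements since multiplication by the central element $z$ does not change centralizers), and $t$ runs over exactly the same set of commuting $\ell$-elements for $zs$ as for $s$. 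Hence the $\hat z$-translate of the family $\{\mathcal E(\mathbf G^F,st)\}_t$ is the family $\{\mathcal E(\mathbf G^F,(zs)t)\}_t$, so $\hat z$ maps the union $\mathcal E_\ell(\mathbf G^F,s)$ onto $\mathcal E_\ell(\mathbf G^F,zs)$. Because $\mathcal E_\ell(\mathbf G^F,s)$ is a union of $\ell$-blocks (by \cite{BM89}), the block-translation functor $\hat z\otimes -$ carries this union of blocks onto the union of blocks $\mathcal E_\ell(\mathbf G^F,zs)$.

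Combining the two steps: the Brauer characters of the block-union $\mathcal E_\ell(\mathbf G^F,zs)$ are obtained from those of $\mathcal E_\ell(\mathbf G^F,s)$ by multiplying with $\hat z$, so $\IBr_\ell(\mathcal E_\ell(\mathbf G^F,zs))=\{\hat z\phi_1,\dots,\hat z\phi_k\}$, as claimed. I expect the only genuinely delicate point to be the bookkeeping in the middle step — checking that passing from $s$ to $zs$ does not alter the indexing set of commuting semisimple $\ell$-elements and that $zs$ remains an $\ell'$-element — but this is immediate since $z\in\mathcal O_{\ell'}(\mathfrak Z)\subseteq Z(\mathbf G^F)$ has order prime to $\ell$, so $\langle zs\rangle_{\ell'}$ and $C_{\mathbf G^*}(zs)=C_{\mathbf G^*}(s)$ are unchanged. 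The rest is a formal consequence of Lemma \ref{per-block} and (\ref{act-z-series}).
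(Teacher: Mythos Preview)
Your proposal is correct and follows exactly the approach the paper takes: the paper simply says the corollary follows ``by (\ref{act-z-series}) and Lemma \ref{per-block}'', and you have faithfully unpacked that two-line justification, including the routine checks that $zs$ is still an $\ell'$-element and that $C_{\mathbf G^*}(zs)=C_{\mathbf G^*}(s)$ because $z$ is central.
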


\begin{rmk}\label{decompositionmtx}
By \cite[Thm.~5.1]{GH91}, $\mathcal E(\mathbf G^F,s)$ is a basic set of $ \mathcal E_{\ell}(\mathbf G^F,s)$.
By the proof of Lemma \ref{per-block}, with a suitable ordering, the decomposition matrices associated with the basic sets $\mathcal E(\mathbf G^F,s)$ and $\mathcal E(\mathbf G^F,zs)$
of $ \mathcal E_{\ell}(\mathbf G^F,s)$ and $\mathcal E_{\ell}(\mathbf G^F,zs)$, respectively, are the same.
\end{rmk}

Now we may use the parameterisation $(s,\mu)$ of irreducible characters in $\mathcal E(\mathbf G^F,s)$ for the irreducible $\ell$-Brauer characters of $ \mathcal E_{\ell}(\mathbf G^F,s)$.
Let $\phi_{s,\mu}$ denote the irreducible $\ell$-Brauer characters corresponding to $(s,\mu)$.
Then it is convenient to assume that $\hat z\phi_{s,\mu}=\phi_{zs,z\mu}$ for all $z\in\mathcal O_{\ell'}(\mathfrak Z)$ by Corollary \ref{corrofbrauseries}.~(For $\epsilon=1$, this is just \cite[Lem. 4.1]{KT09}.)

The number of irreducible constituents of the restriction of irreducible $\ell$-Brauer characters of $G$ to $X$ was obtained by Kleshchev and  Tiep for $\epsilon=1$~(see {\cite[Thm. 1.1 and Cor. 1.2]{KT09}}),
and generalized by Denoncin for $\epsilon=\pm 1$~ (see \cite[Prop. 3.5, 4.2 and 4.9]{De17}).
We will state it as the following remark.

\begin{rmk}\label{brasuerofslsu}
We introduce the notations of the combinatorial description of irreducible $\ell$-Brauer characters of $G$ used in \cite{KT09}. 	For a partition $\mu=(\mu_1,\mu_2,\ldots)$, denote $|\mu|=\mu_1+\mu_2+\cdots$ and write $\mu'$ for the transposed partition.
Set $\Delta(\mu)=\mathrm{gcd}(\mu_1,\mu_2,\ldots)$.
	
For $\sigma\in\overline{\mathbb F}^\times$, we denote by $[\sigma]$ the set of all roots of the polynomial in $\cF$ which has $\sigma$ as a root.
Denote by $\mathrm{deg}(\sigma)$ the cardinality of $[\sigma]$.
Then $\mathrm{deg}(\sigma)$ is the minimal integer $d$ such that $\sigma^{(\epsilon q)^d-1}=1$
and $$[\sigma]=\{\ \sigma, \sigma^{\epsilon q}, \sigma^{(\epsilon q)^2},\ldots,  \sigma^{(\epsilon q)^{\mathrm{deg}(\sigma)-1}}    \ \}.$$
An \emph{$(n,\ell)$-admissible tuple} is a tuple
\begin{equation}\label{admtup}
(([\sigma_1],\mu^{(1)}),\dots,([\sigma_a],\mu^{(a)}))
\end{equation}
 of pairs, where $\sigma_1,\dots,\sigma_a\in \overline{\mathbb F}^\times$ are $\ell'$-elements, and $\mu^{(1)}, \dots, \mu^{(a)}$ are partitions such that
\begin{itemize}
	\item $[\sigma_i]\ne [\sigma_j]$ for all $i\ne j$, and
	\item $\sum\limits^{a}_{i=1}\mathrm{deg}(\sigma_i)|\mu^{(i)}|=n$.
\end{itemize}

An equivalence class of the $(n,\ell)$-admissible tuple (\ref{admtup}) up to a permutation of pairs $$([\sigma_1],\mu^{(1)}),\ldots,([\sigma_a],\mu^{(a)})$$ is called an \emph{$(n,\ell)$-admissible symbol} and is denoted as
\begin{equation}\label{admsym}
\mathfrak s=[([\sigma_1],\mu^{(1)}),\dots,([\sigma_a],\mu^{(a)})].
\end{equation}
The set of $(n,\ell)$-admissible symbols is the labeling set for irreducible $\ell$-Brauer characters of $G$.
Denote by $\phi_{\mathfrak s}$ the irreducible $\ell$-Brauer character corresponding to the $(n,\ell)$-admissible symbol $\mathfrak s$.

The group $\mathcal O_{\ell'}(\mathfrak Z)$ acts on the set of $(n,\ell)$-admissible symbols via
$$z\cdot [([\sigma_1],\mu^{(1)}),\dots,([\sigma_a],\mu^{(a)})]=[([z\sigma_1],\mu^{(1)}),\dots,([z\sigma_a],\mu^{(a)})]$$
for $z\in\mathcal O_{\ell'}(\mathfrak Z)$.
We denote by $\kappa_{\ell'}(\mathfrak s)$ the order of the stabilizer group in $\mathcal O_{\ell'}(\mathfrak Z)$ of an $(n,\ell)$-admissible symbol $\mathfrak s$.
Next, for an $(n,\ell)$-admissible symbol $\mathfrak s$ as (\ref{admsym}), let
$\kappa_{\ell}(\mathfrak s)$ be the $\ell$-part of $$\mathrm{gcd}(n,q-1,\Delta((\mu^{(1)})'),\cdots, \Delta((\mu^{(a)})')).$$

Let $\kappa(\mathfrak s)=\kappa_{\ell}(\mathfrak s)\kappa_{\ell'}(\mathfrak s)$.
By \cite{KT09} and \cite{De17},
$\kappa^G_X(\phi_{\mathfrak s})=\kappa(\mathfrak s)$~(\emph{i.e.} $\Res^{G}_{X}\phi_{\mathfrak s}$ is a sum of $\kappa(\mathfrak s)$ irreducible constituents).
For two $(n,\ell)$-admissible symbols $\mathfrak s$ and $\mathfrak s'$,
if they are in the same $\mathcal O_{\ell'}(\mathfrak Z)$-orbit, then $\Res^{G}_{X}\phi_{\mathfrak s}=\Res^{G}_{X}\phi_{\mathfrak s'}$.
	
If moreover, we write the decomposition $\Res^{G}_{X}\phi_{\mathfrak s}=\bigoplus^{\kappa(\mathfrak s)}_{j=1} (\phi_{\mathfrak s})_j$, then the set $\{(\phi_{\mathfrak s})_j\}$, where $\mathfrak s$ runs through the $\mathcal O_{\ell'}(\mathfrak Z)$-orbit representatives of $(n,\ell)$-admissible symbols and $j$ runs through the integers between $1$ and  $\kappa(\mathfrak s)$,
is a complete set of the irreducible $\ell$-Brauer characters of $X$.
\end{rmk}

Notice that Remark \ref{brasuerofslsu} also holds for complex irreducible characters if we set $\ell=0$ by Proposition \ref{restrofordi}. ~(For $\epsilon=1$, the complex irreducible characters of $\SL_n(q)$ were obtained in \cite{Le73}.)

For an $\ell$-block $B$ of $G$ and an $(n,\ell)$-admissible symbol $\mathfrak s$,
if $\phi_{\mathfrak s}\in\IBr_\ell(B)$,
then we say that $\mathfrak s$ \emph{belongs to} $B$.

\section{The blocks of $\SL_n(\epsilon q)$}
\label{blocksofslsuchap}

Let $\mathbf X=\SL_n(\overline{\F})$, then $\mathbf X=[\mathbf G,\mathbf G]$.
The labeling of $\ell$-blocks of $\mathbf G^F$ and $\mathbf X^F$~(using $e_0$-Jordan-cuspidal pairs)~ described in  \cite{CE99} and \cite{KM15} can be stated as following.

\begin{thm}\label{ecuspidalofblock}
	Let $\mathbf H\in\{\mathbf G, \mathbf X\}$ and $e_0=e_0(q,\ell)$ is defined as in Equation (\ref{definitionofe0}).
	\begin{enumerate}
		\item[(i)] For any $e_0$-Jordan-cuspidal pair $(\mathbf L,\zeta)$ of $\mathbf H$ such that
		$\zeta\in\mathcal E(\mathbf L^F,\ell')$, there exists a unique $\ell$-block $b_{\mathbf H^F}(\mathbf L,\zeta)$ of $\mathbf H^F$ such that all irreducible constituents of $R_{\mathbf L}^{\mathbf H}(\zeta)$ lie in $b_{\mathbf H^F}(\mathbf L,\zeta)$.
		\item[(ii)] Moreover, the map $\Xi: (\mathbf L,\zeta)\mapsto b_{\mathbf H^F}(\mathbf L,\zeta)$ is a surjection from the set of $\mathbf H^F$-conjugacy classes of $e_0$-Jordan-cuspidal pairs $(\mathbf L,\zeta)$ of $\mathbf H$ such that
		$\zeta\in\mathcal E(\mathbf L^F,\ell')$ to the $\ell$-blocks of $\mathbf H^F$.
		\item[(iii)] If $\ell$ is odd, then $\Xi$ is bijective.
	\end{enumerate}
\end{thm}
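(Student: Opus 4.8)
The plan is to obtain all three assertions as the specialization to $\mathbf{H}\in\{\mathbf{G},\mathbf{X}\}$ of the general $\ell$-block classification of Kessar--Malle, \cite[Thm.~A]{KM15} (which extends \cite{CE99}). Since $\mathbf{G}=\GL_n(\barF)$ and $\mathbf{X}=\SL_n(\barF)$ are connected reductive groups whose root system is of type $A_{n-1}$, every prime is good for $\mathbf{H}$, so the standing hypotheses of \cite[Thm.~A]{KM15} hold for $\mathbf{H}=\mathbf{G}$ and for $\mathbf{H}=\mathbf{X}$ and every prime $\ell\neq p$. I would first record the dictionary between the two sets of conventions: the integer denoted ``$e$'' in \cite{KM15} is precisely our $e_0=e_0(q,\ell)$ from (\ref{definitionofe0}), and the notions of $e_0$-split Levi subgroup and $e_0$-Jordan-cuspidal pair set up in Section~\ref{defcuspidalpairs} are, by construction, the ``$e$-split'' and ``$e$-Jordan-cuspidal'' notions of \cite{KM15}. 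With these identifications in place, the theorem becomes essentially a transcription of that result.

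Statement (i) is the existence-and-uniqueness part of \cite[Thm.~A]{KM15}: for any $e_0$-Jordan-cuspidal pair $(\mathbf{L},\zeta)$ of $\mathbf{H}$ with $\zeta\in\mathcal{E}(\mathbf{L}^F,\ell')$, the virtual character $R_{\mathbf{L}}^{\mathbf{H}}(\zeta)$, which is nonzero, has all its irreducible constituents in a single $\ell$-block, and $b_{\mathbf{H}^F}(\mathbf{L},\zeta)$ is declared to be that block; in type $A$ the twisted induction $R_{\mathbf{L}}^{\mathbf{H}}$ is independent of the chosen parabolic, so the notation is unambiguous. Statement (ii) is the surjectivity part of \cite[Thm.~A]{KM15}, which also contains the assertion that $\mathbf{H}^F$-conjugate pairs produce the same block, so that $\Xi$ is well defined on $\mathbf{H}^F$-conjugacy classes; both hold for every $\ell\neq p$ in our setting.

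For statement (iii), the injectivity of $\Xi$ when $\ell$ is odd is exactly what \cite[Thm.~A(e)]{KM15} provides, subject to the hypotheses of that part. Here I would check that those hypotheses are satisfied for $\mathbf{H}\in\{\GL_n,\SL_n\}$ whenever $\ell$ is odd: they amount to conditions on the simple components of $\mathbf{H}$ excluding certain small primes attached to components of exceptional or triality type, none of which occurs for type $A$; in particular $\ell=3$ requires no special treatment, and the non-connectedness of $Z(\mathbf{X})$ for $\mathbf{X}=\SL_n$ poses no obstruction to the bijectivity statement. Hence for odd $\ell$ the surjection $\Xi$ is a bijection, which yields the claimed labeling of the $\ell$-blocks of $\mathbf{G}^F$ and of $\mathbf{X}^F$ by $\mathbf{H}^F$-conjugacy classes of $e_0$-Jordan-cuspidal pairs $(\mathbf{L},\zeta)$ with $\zeta\in\mathcal{E}(\mathbf{L}^F,\ell')$.

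The one step that genuinely needs care is this last verification: extracting the precise form of the hypotheses of \cite[Thm.~A(e)]{KM15} and confirming that they hold for $\mathbf{G}=\GL_n(\epsilon q)$ and $\mathbf{X}=\SL_n(\epsilon q)$ with $\ell$ odd. For $\mathbf{G}$, which has connected centre, statement (iii) can alternatively be read off from \cite{FS82} together with \cite{CE99}; but for $\mathbf{X}=\SL_n$ one really does need the general injectivity statement of \cite{KM15}, because the block classification does not descend transparently from $\GL_n$ to $\SL_n$. Everything else is a direct specialization and involves no further computation.
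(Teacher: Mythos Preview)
Your proposal is correct and matches the paper's approach: the paper does not give an independent proof of this theorem but simply introduces it as a restatement of the block classification from \cite{CE99} and \cite{KM15}, with the accompanying remark that Bonnaf\'e's Mackey formula in type $A$ makes $R_{\mathbf L}^{\mathbf H}$ independent of the parabolic. Your slightly more detailed verification that the hypotheses of \cite[Thm.~A(e)]{KM15} are met for $\GL_n$ and $\SL_n$ when $\ell$ is odd is exactly the content implicit in the paper's citation.
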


\begin{rmk}
By a result of Bonnaf\'e \cite{Bo00}, the Mackey formula holds for type $A$, hence the Lusztig induction in Theorem \ref{ecuspidalofblock} (i) is independent of the ambient parabolic subgroup~(containing $\mathbf L$).
Also, throughout this paper we always omit the parabolic subgroups when considering Lusztig inductions.	
\end{rmk}

Note that we let $e$ be the multiplicative order of $\epsilon q$ modulo $\ell$ throughout this paper.
Here, $e_0$ and $e$ may not equal.
In fact,
\begin{enumerate}
	\item[(i)] when $\ell$ is odd,
	\begin{itemize}
		\item if $\epsilon=1$, then $e=e_0$, and
		\item if $\epsilon=-1$, then $e=2e_0$, $e_0/2$, $e_0$ if $e_0$ is respectively odd, congruent to $2$ modulo $4$, or divisible by $4$, and
	\end{itemize}
	\item[(ii)]  when $\ell=2$, we have $e=1$ while $e_0=1$ or $2$ if $4\mid q-1$ or $4\mid q+1$ respectively.
\end{enumerate}

For a positive integer $d$, we let $\Phi_d(x) \in \mathbb Z[x]$ be the $d$-th cyclotomic polynomial over $\mathbb Q$, \emph{i.e.}, the monic irreducible polynomial whose roots are the primitive $d$-th roots of unity.
So if $\ell$ is odd, then $\Phi_e(\epsilon x)=\pm\Phi_{e_0}(x)$.

We will use the following lemma.

\begin{lem}\label{fortheblock}
	Assume that $\ell$ is odd.
Let $\lambda$ be an $e$-core of a partition of $n$, and $w=e^{-1}(n-|\lambda|)$.
Let $\mathbf T^{(e)}$ be a Coxeter torus of $(\GL(e,\overline{\mathbb F}),F)$, $\mathbf T= (\mathbf T^{(e)})^w\times I_{(|\lambda|)}$,
and $\mathbf L=C_{\mathbf G}(\mathbf T)=(\mathbf T^{(e)})^w\times \GL(|\lambda|,\overline{\mathbb F})$.
Let $\phi_\lambda$ be the unipotent character of $\GL(|\lambda|,\epsilon q)$ corresponding to $\lambda$ and $\phi=1_{\mathbf T^F}\times\phi_\lambda\in\Irr(\mathbf L^F)$.
Then every irreducible constituent of $R^{\mathbf G}_{\mathbf L}(\phi)$ has the form $\chi_\mu$ such that $\lambda$ is the $e$-core of $\mu$.
\end{lem}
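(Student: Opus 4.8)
The plan is to work entirely inside the general linear/unitary group $\GL(n,\epsilon q)$, where unipotent characters are indexed by partitions of $n$ and the decomposition of Lusztig induction of unipotent characters is governed by the combinatorics of $e$-cores and $e$-quotients. The key input is the classical description (going back to Fong--Srinivasan, and in the form used in \cite{FS82}, \cite{Br86}, \cite{An93}, \cite{An94}) of the unipotent $\ell$-blocks of $\GL(n,\epsilon q)$: the unipotent character $\chi_\mu$ lies in the unipotent block determined by the $e$-core of $\mu$, and the Jordan--cuspidal pair attached to that block has Levi complement $\mathbf L$ exactly of the shape $(\mathbf T^{(e)})^w \times \GL(|\lambda|,\overline{\mathbb F})$ with $w = e^{-1}(n-|\lambda|)$, carrying the character $1_{\mathbf T^F}\times\phi_\lambda$. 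Thus $R^{\mathbf G}_{\mathbf L}(\phi)$ is, up to sign, the Lusztig induction that defines the $e_0$-block $b_{\mathbf G^F}(\mathbf L,\phi)$ of Theorem \ref{ecuspidalofblock}, and by part (i) of that theorem all its irreducible constituents lie in that single block.

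First I would set up the translation between $e$ and $e_0$: since $\ell$ is odd, $\Phi_e(\epsilon x) = \pm\Phi_{e_0}(x)$, so a Coxeter torus of $\GL(e,\overline{\mathbb F})$ (relative to $F$) is precisely a $\Phi_{e_0}$-torus of rank $1$, and $\mathbf L$ as defined is an $e_0$-split Levi subgroup of $\mathbf G$; moreover $\phi = 1_{\mathbf T^F}\times\phi_\lambda$ is $e_0$-Jordan-cuspidal because $\phi_\lambda$ is a unipotent character of a group $\GL(|\lambda|,\epsilon q)$ in which $\lambda$, being an $e$-core, is $\Phi_{e_0}$-cuspidal, and the torus factor is trivially cuspidal. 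Hence $(\mathbf L,\phi)$ is an $e_0$-Jordan-cuspidal pair with $\phi\in\mathcal E(\mathbf L^F,\ell')$ (indeed $\phi$ is unipotent), so Theorem \ref{ecuspidalofblock}(i) applies and produces the unipotent block $B := b_{\mathbf G^F}(\mathbf L,\phi)$ containing every constituent of $R^{\mathbf G}_{\mathbf L}(\phi)$.

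Next I would identify which unipotent characters lie in $B$. Every constituent of $R^{\mathbf G}_{\mathbf L}(\phi)$ is unipotent (Lusztig induction of a unipotent character is a $\mathbb{Z}$-combination of unipotent characters), so each such constituent is some $\chi_\mu$ with $\mu\vdash n$. The Fong--Srinivasan classification (see \cite{FS82}, \cite{Br86}; in the form recalled in Section \ref{defcuspidalpairs} and via \cite{KM15}) says the unipotent $\ell$-blocks of $\GL(n,\epsilon q)$ are parametrized by $e$-cores $\kappa$ of partitions of $n$, and $\chi_\mu$ lies in the block labelled by $\kappa$ if and only if $\kappa$ is the $e$-core of $\mu$. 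Since the block $B$ is by construction the one labelled by the $e$-core $\lambda$ (its Jordan--cuspidal datum has unipotent part $\phi_\lambda$ on the $\GL(|\lambda|,\epsilon q)$-factor, and $|\lambda| + ew = n$), it follows that $\chi_\mu\in B$ forces $\lambda$ to be the $e$-core of $\mu$, which is exactly the claim.

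The main obstacle is the precise bookkeeping in the previous paragraph: one must make sure that the $e_0$-Jordan-cuspidal pair $(\mathbf L,\phi)$ as defined here maps under $\Xi$ to the same unipotent block that the Fong--Srinivasan theory attaches to the $e$-core $\lambda$, i.e. that the two labelings (Jordan--cuspidal pairs à la \cite{KM15} versus $e$-cores à la \cite{FS82}) agree on unipotent blocks of $\GL(n,\epsilon q)$. This is a compatibility statement between \cite{KM15} and \cite{FS82}; for type $A$ and unipotent blocks it is essentially the statement that the $e_0$-cuspidal unipotent character of an $e_0$-split Levi $(\mathbf T^{(e)})^w\times\GL(|\lambda|,\overline{\mathbb F})$ corresponds to the Brauer correspondent datum of Fong--Srinivasan, which is by now standard (and is implicitly used throughout Section \ref{blocksofslsuchap}). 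Granting this identification, which I would cite rather than reprove, the argument closes; alternatively one can avoid it by working directly with the $\ell$-block idempotent: $R^{\mathbf G}_{\mathbf L}(\phi)$ has all constituents in one block by \cite{BM89}/\cite{CE99}, and then compare central characters on the $\ell'$-section using the known parametrization of unipotent blocks by $e$-cores.
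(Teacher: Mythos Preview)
Your approach is genuinely different from the paper's, but in the context of this paper it is circular. The paper's proof is direct and combinatorial: it factors the Lusztig induction through an intermediate $F$-stable Levi $\mathbf H = \GL(ew,\overline{\mathbb F}) \times \GL(|\lambda|,\overline{\mathbb F})$, observes that every constituent of $R^{\mathbf H}_{\mathbf L}(\phi)$ has the form $\phi_\nu \times \phi_\lambda$ with $\nu\vdash ew$, and then invokes \cite[(2.12)]{FS82} (a Murnaghan--Nakayama-type formula for $R^{\mathbf G}_{\mathbf H}$) to conclude that each constituent of $R^{\mathbf G}_{\mathbf H}(\phi_\nu\times\phi_\lambda)$ is a $\chi_\mu$ with $e$-core $\lambda$. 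No block theory is invoked.

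Your argument instead uses Theorem~\ref{ecuspidalofblock}(i) to place all constituents of $R^{\mathbf G}_{\mathbf L}(\phi)$ into a single block $B=b_{\mathbf G^F}(\mathbf L,\phi)$, and then needs the identification of $B$ with the Fong--Srinivasan block $B(1,\lambda)$. But that identification is exactly Proposition~\ref{corrofblockspara} (specialized to $s=1$), and the paper proves Proposition~\ref{corrofblockspara} \emph{using} Lemma~\ref{fortheblock}. In other words, the ``main obstacle'' you flag is not something already available to cite in this paper: the compatibility between the Kessar--Malle labeling and the Fong--Srinivasan labeling is precisely what Section~\ref{blocksofslsuchap} sets out to establish, with this lemma as its computational core. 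Your claim that this compatibility is ``implicitly used throughout Section~\ref{blocksofslsuchap}'' has it backwards --- Section~\ref{blocksofslsuchap} is where it is being proved. Your fallback suggestion (compare central characters on $\ell'$-sections) would likewise amount to reproving that compatibility from scratch. If you want a route that bypasses the paper's combinatorial argument, the Remark immediately following the lemma points to \cite[Thm.~3.2 and 3.3]{BMM93} via $e_0$-Harish-Chandra series; that is a legitimate alternative, but it is not the block-theoretic argument you sketched.
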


\begin{proof}
	Let $\mathbf H=\GL(ew,\overline{\mathbb F})\times\GL(|\lambda|,\overline{\mathbb F})$,
	then $\mathbf H$ is an $F$-stable Levi subgroup of $\mathbf G$~(moreover, there exists a semisimple element $\rho\in\mathbf G^F$ such that $\mathbf H=C_{\mathbf G}(\rho)$).
	Then every irreducible constituent of $R^{\mathbf H}_{\mathbf L}(\phi)$ has the form $\phi_\nu\times \phi_\lambda$, where $\phi_\nu$ is a unipotent character of $\GL(ew,\epsilon q)$ corresponding to some $\nu \vdash we$.
	Since $R^{\mathbf G}_{\mathbf L}(\phi)=R^{\mathbf G}_{\mathbf H}(R^{\mathbf H}_{\mathbf L}(\phi))$,
	it suffices to prove that every irreducible constituent of $R^{\mathbf G}_{\mathbf H}(\phi_\nu\times \phi_\lambda)$ has the form $\chi_\mu$ such that $\lambda$ is the $e$-core of $\mu$ and then the result follows by \cite[(2.12)]{FS82}~(a result from the Murnaghan-Nakayama formula)~ and the remark following it.
\end{proof}

\begin{rmk}
In fact, with the hypothesis and setup of Lemma \ref{fortheblock}, the pair $(\mathbf L,\phi)$ is an
 $e_0$-cuspidal pair~(note that $\mathbf L=C_{\mathbf G}(\mathbf T_{e_0})$), and
the set of the irreducible constituents of $R^{\mathbf G}_{\mathbf L}(\phi)$ is exactly the $e_0$-Harish-Chandra series above $(\mathbf L,\phi)$.
So Lemma \ref{fortheblock} also follows from the proof of \cite[Thm. 3.2 and 3.3]{BMM93}.
\end{rmk}

Now we give the relationship between the  $e_0$-cuspidal pairs of $\mathbf G$  and the  $e_0$-cuspidal pairs of $\mathbf X$.
\begin{prop}\label{restrofesplit}
\begin{enumerate}
	\item[(i)] Let $(\mathbf L,\zeta)$ be an $e_0$-cuspidal pair of $\mathbf G$ and $b$ an $\ell$-block of $X$ covered by $B=b_{\mathbf G^F}(\mathbf L,\zeta)$, then $b=b_{\mathbf X^F}(\mathbf L_0,\zeta_0)$, where $\mathbf L_0=\mathbf L\cap \mathbf X$ and $\zeta_0$ is an irreducible constituent of $\Res^{\mathbf L^F}_{\mathbf L_0^F}\zeta$.
	\item[(ii)] Let $(\mathbf L_0,\zeta_0)$ be an $e_0$-cuspidal pair of $\mathbf X$ and $B$ an $\ell$-block of $G$ which covers $b=b_{\mathbf X^F}(\mathbf L_0,\zeta_0)$, then $B=b_{\mathbf G^F}(\mathbf L,\zeta)$ where the $e_0$-cuspidal pair $(\mathbf L,\zeta)$ satisfies that $\mathbf L_0=\mathbf L\cap \mathbf X$ and $\zeta_0$ is an irreducible constituent of $\Res^{\mathbf L^F}_{\mathbf L_0^F}\zeta$.
\end{enumerate}
\end{prop}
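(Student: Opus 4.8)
The plan is to transfer the $e_0$-cuspidal labelling of Theorem~\ref{ecuspidalofblock} between $\mathbf G^F$ and $\mathbf X^F$ across the restriction map $\Res^{\mathbf G^F}_{\mathbf X^F}$. I would use two structural inputs. First, the correspondence $\mathbf L\leftrightarrow\mathbf L_0:=\mathbf L\cap\mathbf X$: since every Levi subgroup of $\mathbf G=\GL_n(\overline{\mathbb F})$ contains $Z(\mathbf G)$ and $\mathbf G=\mathbf X Z(\mathbf G)$, the map $\mathbf L\mapsto\mathbf L_0$ is an inclusion-preserving bijection from the $F$-stable Levi subgroups of $\mathbf G$ onto those of $\mathbf X$, with inverse $\mathbf L_0\mapsto\mathbf L_0Z(\mathbf G)=C_{\mathbf G}(Z^\circ(\mathbf L_0))$, restricting to a bijection between $e_0$-split Levi subgroups; moreover, by the lemmas of \cite{KM15} on $e_0$-(Jordan-)cuspidal pairs under a regular embedding, if $(\mathbf L,\zeta)$ is an $e_0$-cuspidal pair of $\mathbf G$ with $\zeta\in\mathcal E(\mathbf L^F,\ell')$ then every irreducible constituent $\zeta_0$ of $\Res^{\mathbf L^F}_{\mathbf L_0^F}\zeta$ is $e_0$-cuspidal and lies in $\mathcal E(\mathbf L_0^F,\ell')$, so that $(\mathbf L_0,\zeta_0)$ is an $e_0$-cuspidal pair of $\mathbf X$ to which Theorem~\ref{ecuspidalofblock}(i) applies (conversely, from $(\mathbf L_0,\zeta_0)$ and $\mathbf L:=\mathbf L_0Z(\mathbf G)$, any $\zeta\in\Irr(\mathbf L^F\ |\ \zeta_0)$ gives an $e_0$-cuspidal pair $(\mathbf L,\zeta)$ of $\mathbf G$). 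Second, the commutation formula $\Res^{\mathbf G^F}_{\mathbf X^F}\circ R^{\mathbf G}_{\mathbf L}=R^{\mathbf X}_{\mathbf L_0}\circ\Res^{\mathbf L^F}_{\mathbf L_0^F}$, valid since $\SL_n\hookrightarrow\GL_n$ is a regular embedding and the Mackey formula holds in type~$A$ by \cite{Bo00}.

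For part~(i), write $B=b_{\mathbf G^F}(\mathbf L,\zeta)$. Since $\mathbf L^F/\mathbf L_0^F$ embeds into $\overline{\mathbb F}^\times$ via $\det$ and is therefore cyclic, $\Res^{\mathbf L^F}_{\mathbf L_0^F}\zeta=\sum_j\zeta_0^{(j)}$ is multiplicity-free with the $\zeta_0^{(j)}$ forming a single $\mathbf L^F$-orbit, and the commutation formula gives $\Res^{\mathbf G^F}_{\mathbf X^F}R^{\mathbf G}_{\mathbf L}(\zeta)=\sum_j R^{\mathbf X}_{\mathbf L_0}(\zeta_0^{(j)})$. Because $(\mathbf L,\zeta)$ and each $(\mathbf L_0,\zeta_0^{(j)})$ are $e_0$-cuspidal, the Lusztig inductions $\epsilon_{\mathbf G}\epsilon_{\mathbf L}R^{\mathbf G}_{\mathbf L}(\zeta)$ and $\epsilon_{\mathbf X}\epsilon_{\mathbf L_0}R^{\mathbf X}_{\mathbf L_0}(\zeta_0^{(j)})$ are genuine (nonzero) characters; restricting the first identity and comparing shows that the sign $\epsilon_{\mathbf G}\epsilon_{\mathbf L}\epsilon_{\mathbf X}\epsilon_{\mathbf L_0}$ equals $+1$ and hence that no cancellation occurs in $\Res^{\mathbf G^F}_{\mathbf X^F}R^{\mathbf G}_{\mathbf L}(\zeta)=\sum_j R^{\mathbf X}_{\mathbf L_0}(\zeta_0^{(j)})$. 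Now fix an irreducible constituent $\chi_0$ of $R^{\mathbf G}_{\mathbf L}(\zeta)$, so $\chi_0\in\Irr(B)$: every irreducible constituent of $\Res^{\mathbf G^F}_{\mathbf X^F}\chi_0$ is then a constituent of some $R^{\mathbf X}_{\mathbf L_0}(\zeta_0^{(j)})$, hence lies in $b_{\mathbf X^F}(\mathbf L_0,\zeta_0^{(j)})$ by Theorem~\ref{ecuspidalofblock}(i) for $\mathbf X$; conversely each $b_{\mathbf X^F}(\mathbf L_0,\zeta_0^{(j)})$ contains a constituent of $R^{\mathbf X}_{\mathbf L_0}(\zeta_0^{(j)})$, hence of $\Res^{\mathbf G^F}_{\mathbf X^F}$ of a character of $B$, so is covered by $B$ (alternatively, Lemma~\ref{covers} can be used for the covering statement). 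Finally, the $\ell$-blocks of $X$ covered by $B$ form a single $\mathbf G^F$-orbit, and the irreducible constituents of $\Res^{\mathbf G^F}_{\mathbf X^F}\chi_0$ form a single $\mathbf G^F$-orbit in $\Irr(X)$; comparing these two orbits forces $\{b_{\mathbf X^F}(\mathbf L_0,\zeta_0^{(j)})\}_j$ to be exactly the set of $\ell$-blocks of $X$ covered by $B$. Since $b$ is one of them, $b=b_{\mathbf X^F}(\mathbf L_0,\zeta_0)$ for a suitable constituent $\zeta_0$, proving~(i).

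For part~(ii) I would bootstrap from~(i): given $(\mathbf L_0,\zeta_0)$ and an $\ell$-block $B$ of $G$ covering $b:=b_{\mathbf X^F}(\mathbf L_0,\zeta_0)$, Theorem~\ref{ecuspidalofblock}(ii) for $\mathbf G$ provides an $e_0$-cuspidal pair $(\mathbf M,\eta)$ of $\mathbf G$ with $B=b_{\mathbf G^F}(\mathbf M,\eta)$; by part~(i) the covered block $b$ equals $b_{\mathbf X^F}(\mathbf M_0,\eta_0)$ for $\mathbf M_0=\mathbf M\cap\mathbf X$ and some irreducible constituent $\eta_0$ of $\Res^{\mathbf M^F}_{\mathbf M_0^F}\eta$; since $\ell$ is odd, Theorem~\ref{ecuspidalofblock}(iii) for $\mathbf X$ makes $(\mathbf M_0,\eta_0)$ and $(\mathbf L_0,\zeta_0)$ conjugate by some $x\in\mathbf X^F\le\mathbf G^F$, and replacing $(\mathbf M,\eta)$ by $({}^{x^{-1}}\mathbf M,{}^{x^{-1}}\eta)$---which does not change $B$, conjugation by $x$ fixing every $\ell$-block of $\mathbf G^F$---yields the required $e_0$-cuspidal pair $(\mathbf L,\zeta):=({}^{x^{-1}}\mathbf M,{}^{x^{-1}}\eta)$ of $\mathbf G$, with $\mathbf L\cap\mathbf X=\mathbf L_0$ and $\zeta_0$ a constituent of $\Res^{\mathbf L^F}_{\mathbf L_0^F}\zeta$. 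I expect the main obstacle to be the two type-$A$ inputs feeding into part~(i): the stability of $e_0$-cuspidality and of membership in $\mathcal E(-,\ell')$ under $\Res^{\mathbf L^F}_{\mathbf L_0^F}$ (so that Theorem~\ref{ecuspidalofblock} is legitimately available on the $\mathbf X$-side), and the positivity---up to the global sign $\epsilon_{\mathbf G}\epsilon_{\mathbf L}$---of the Lusztig induction of an $e_0$-cuspidal character, which is what rules out cancellation in the commutation identity; everything else is then formal Clifford theory.
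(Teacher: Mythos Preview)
Your argument for (i) is correct but takes a substantially longer route than the paper. The paper simply observes that, by \cite[Lem.~2.3]{KM15}, $(\mathbf L,\zeta)$ is $e_0$-Jordan-cuspidal for $\mathbf G$ if and only if $(\mathbf L_0,\zeta_0)$ is $e_0$-Jordan-cuspidal for $\mathbf X$, and then quotes \cite[Lem.~3.7]{KM15} to conclude. What you wrote is essentially an in-line reproof of the latter lemma via the commutation formula and Clifford theory; it works, but it also requires you to import the positivity statement that $\epsilon_{\mathbf G}\epsilon_{\mathbf L}R^{\mathbf G}_{\mathbf L}(\zeta)$ and $\epsilon_{\mathbf X}\epsilon_{\mathbf L_0}R^{\mathbf X}_{\mathbf L_0}(\zeta_0^{(j)})$ are honest characters for $e_0$-cuspidal $\zeta,\zeta_0^{(j)}$, which is itself a nontrivial piece of the $e_0$-Harish-Chandra theory. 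Citing \cite{KM15} directly sidesteps this.

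Your argument for (ii) has a genuine gap: you invoke Theorem~\ref{ecuspidalofblock}(iii) for $\mathbf X$ to conjugate $(\mathbf M_0,\eta_0)$ into $(\mathbf L_0,\zeta_0)$, but that injectivity is only asserted for odd $\ell$, whereas the proposition is stated (and later used, e.g.\ in Remark~\ref{blocksofslsu}) without restriction on $\ell$. For $\ell=2$ the map $\Xi$ need not be injective, so your bootstrap from (i) does not go through. The paper avoids this entirely by a direct construction: it sets $\mathbf L:=\mathbf L_0 Z(\mathbf G)$, checks by hand that $C_{\mathbf G}(Z^\circ(\mathbf L)_{e_0})\subseteq\mathbf L$ so that $\mathbf L$ is $e_0$-split in $\mathbf G$, and then applies \cite[Lem.~3.8]{KM15}. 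This produces the required $(\mathbf L,\zeta)$ with $\mathbf L\cap\mathbf X=\mathbf L_0$ for all $\ell$, without ever appealing to uniqueness of the $e_0$-cuspidal label on the $\mathbf X$-side.
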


\begin{proof}
	Note that if $\mathbf L_0=\mathbf L\cap \mathbf X$ and $\zeta_0$ is an irreducible constituent of $\Res^{\mathbf L^F}_{\mathbf L_0^F}\zeta$, then by \cite[Lem. 2.3]{KM15}, $(\mathbf L,\zeta)$ is an $e_0$-cuspidal pair of $\mathbf G$ if and only if $(\mathbf L_0,\zeta_0)$ is an $e_0$-cuspidal pair of $\mathbf X$.
	Thus (i) follows by \cite[Lem. 3.7]{KM15}.
	
	For (ii), set $\mathbf L=\mathbf L_0Z(\mathbf G)$, then $\mathbf L_0=\mathbf L\cap \mathbf  X$.
	Also, $Z(\mathbf L)=Z(\mathbf L_0)Z(\mathbf G)$ and $Z(\mathbf L_0)_{e_0}\subseteq Z(\mathbf L)_{e_0}$ since $\mathbf G=Z(\mathbf G)\mathbf X$.
	Hence $C_{\mathbf G}(Z(\mathbf L)_{e_0})=Z(\mathbf G)C_{\mathbf X}(Z(\mathbf L)_{e_0})\subseteq Z(\mathbf G)C_{\mathbf X}(Z(\mathbf L_0)_{e_0})=Z(\mathbf G)\mathbf L_0=\mathbf L$, and then $\mathbf L$ is an $e_0$-split Levi subgroup of $\mathbf G$.
	Thus (ii) follow by \cite[Lem. 3.8]{KM15}.
\end{proof}

\begin{rmk}
	Proposition \ref{restrofesplit} is not restricted to the case of type $A$.
	In fact, it holds for any connected reductive linear algebraic group $\mathbf G$ and $\mathbf X=[\mathbf G,\mathbf G]$.
\end{rmk}

\begin{lem}\label{conofcharoflevi}
Let $\mathbf L$ be an $F$-stable Levi subgroup of $\mathbf G$, $\zeta\in\Irr(\mathbf L^F)$ and $\mathbf L_0=\mathbf L\cap \mathbf X$.
Let $\Delta:=\Irr(\mathbf L_0^F\mid \zeta)$, then  $N_{\mathbf X^F}(\mathbf L_0)_\Delta$ acts trivially on $\Delta$.
\end{lem}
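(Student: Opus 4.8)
The plan is to exploit the fact that $\mathbf{G}^F = Z(\mathbf{G}^F)\mathbf{X}^F\cdot(\text{something small})$—more precisely, that $\mathbf{G}^F/(\mathbf{X}^F Z(\mathbf{G}^F))$ is cyclic and that the whole action of $N_{\mathbf{X}^F}(\mathbf{L}_0)$ on the constituents of $\Res^{\mathbf{L}^F}_{\mathbf{L}_0^F}\zeta$ is realized by conjugation inside $\mathbf{L}^F$, where $\zeta$ is fixed. First I would observe that $\mathbf{L}_0^F = \mathbf{L}^F\cap\mathbf{X}^F$ is normal in $\mathbf{L}^F$ with abelian (indeed, a quotient of $\mathfrak{Z}$, hence cyclic-by-small) quotient, so by Clifford theory (Lemma \ref{cliffordthm}, or the general Clifford correspondence) $\mathbf{L}^F$ acts transitively on $\Delta = \Irr(\mathbf{L}_0^F\mid\zeta)$. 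The key point is that $N_{\mathbf{X}^F}(\mathbf{L}_0)\subseteq N_{\mathbf{G}^F}(\mathbf{L}_0)$ and I would want to compare this with $N_{\mathbf{G}^F}(\mathbf{L})$; since $\mathbf{L} = \mathbf{L}_0 Z(\mathbf{G})$ (as in the proof of Proposition \ref{restrofesplit}(ii), and in general $\mathbf{L}$ is determined by $\mathbf{L}_0$ via $\mathbf{L} = C_{\mathbf{G}}(Z^\circ(\mathbf{L}_0))$ or similar), normalizing $\mathbf{L}_0$ forces normalizing $\mathbf{L}$. Hence $N_{\mathbf{X}^F}(\mathbf{L}_0)\subseteq N_{\mathbf{G}^F}(\mathbf{L})\cap\mathbf{X}^F = N_{\mathbf{L}^F\mathbf{X}^F}(\mathbf{L})\cap\ldots$—I need to check that an element of $\mathbf{X}^F$ normalizing $\mathbf{L}$ already lies in $\mathbf{L}^F\mathbf{X}^F$ modulo the stabilizer of $\zeta$, but more directly: any $x\in N_{\mathbf{X}^F}(\mathbf{L}_0)$ sends $\zeta$ to another irreducible character of $\mathbf{L}^F$ lying over the same $\mathbf{X}^F$-orbit; since $\zeta$ is assumed fixed in the hypothesis... wait, it is not—let me reconsider.

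The honest approach: fix $\zeta\in\Irr(\mathbf{L}^F)$ (no invariance assumed) and let $x\in N_{\mathbf{X}^F}(\mathbf{L}_0)_\Delta$, meaning $x$ normalizes $\mathbf{L}_0^F$ and permutes $\Delta$. I want $x$ to fix every element of $\Delta$. The standard trick is: because $\mathbf{L}^F\mathbf{X}^F/\mathbf{X}^F$ is abelian and contains the image of $\mathbf{L}^F$, the group $\mathbf{L}^F$ acts on $\Delta$ transitively via characters of $\mathbf{L}^F/\mathbf{L}_0^F$ (tensoring). So for $\theta_1,\theta_2\in\Delta$ there is $\eta\in\Irr(\mathbf{L}^F/\mathbf{L}_0^F)$ extending to... no. Let me use instead: $\Delta = \{\theta^g : g\in\mathbf{L}^F\}$ and the stabilizer $\mathbf{L}^F_\theta$ satisfies $\mathbf{L}^F_\theta \supseteq \mathbf{L}_0^F$, with $\mathbf{L}^F/\mathbf{L}^F_\theta \cong$ the orbit. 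Now $x\in\mathbf{X}^F$ acts on $\mathbf{L}_0^F$; extend this to an action on $\mathbf{L}^F$? This is where I'd invoke that $x\in\mathbf{X}^F$ and $\mathbf{L}^F\subseteq C_{\mathbf{G}^F}(\ldots)$... Actually the cleanest route: since $\mathbf{G} = Z(\mathbf{G})\mathbf{X}$, we have $N_{\mathbf{G}^F}(\mathbf{L}_0) = N_{\mathbf{X}^F}(\mathbf{L}_0)Z(\mathbf{G}^F)\cdot$(diagonal part), and the action of $N_{\mathbf{G}^F}(\mathbf{L}_0)$ on $\Delta$ factors through $N_{\mathbf{G}^F}(\mathbf{L}_0)/\mathbf{L}_0^F$; but I claim $N_{\mathbf{G}^F}(\mathbf{L}_0) = N_{\mathbf{G}^F}(\mathbf{L})$ and the latter contains $\mathbf{L}^F$, whose action is by tensoring with linear characters trivial on $\mathbf{L}_0^F$. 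So it remains to see $N_{\mathbf{X}^F}(\mathbf{L}_0)$'s image in $N_{\mathbf{G}^F}(\mathbf{L}_0)/\mathbf{L}^F$ acts trivially—equivalently, $N_{\mathbf{X}^F}(\mathbf{L}_0)\subseteq \mathbf{L}^F\cdot C$, where $C$ fixes all of $\Delta$. Since $N_{\mathbf{G}^F}(\mathbf{L})/\mathbf{L}^F$ embeds in $N_{\mathbf{G}}(\mathbf{L})/\mathbf{L} = $ a subquotient of the Weyl group acting on unipotent-type data, and for type $A$ the relative Weyl groups are symmetric groups whose action on cuspidal data is understood—but this is overcomplicating.

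**The real argument**, which I would actually write: By Lemma \ref{cliffordthm}(i) (applied to $\mathbf{L}_0^F\unlhd\mathbf{L}^F$, cyclic quotient), every $\theta\in\Delta$ has the same stabilizer $T := (\mathbf{L}^F)_\zeta$ is irrelevant—rather, $\Delta$ has size $\kappa^{\mathbf{L}^F}_{\mathbf{L}_0^F}(\zeta) = |\{\eta\in\Irr(\mathbf{L}^F/\mathbf{L}_0^F): \zeta\eta = \zeta\}|$, and $\mathbf{L}^F$ acts transitively on $\Delta$. Now take $x\in N_{\mathbf{X}^F}(\mathbf{L}_0)_\Delta$. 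Since $\mathbf{G}^F = \mathbf{L}^F\mathbf{X}^F$ is false in general, but $\mathbf{G}^F = Z(\mathbf{G}^F)\mathbf{X}^F$ IS false too (the issue is $\gcd(n,q-\epsilon)$)—however, $x$ normalizes $\mathbf{L}_0$, hence normalizes $Z^\circ(\mathbf{L}_0)$, hence (since $\mathbf{L} = C_{\mathbf{G}}(Z^\circ(\mathbf{L}_0)_{e_0})$ when $(\mathbf{L}_0,\zeta_0)$ comes from a cuspidal pair, or more generally $\mathbf{L} = \mathbf{L}_0 Z(\mathbf{G})$) normalizes $\mathbf{L}$. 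Therefore $x$ induces an automorphism of $\mathbf{L}^F$ stabilizing $\mathbf{L}_0^F$, so $\zeta^x\in\Irr(\mathbf{L}^F)$ lies over $\theta^x\in\Delta$, i.e. over some element of $\Delta$, which means $\zeta^x\in\Irr(\mathbf{L}^F\mid\theta')$ for $\theta'\in\Delta$; by Clifford theory over $\mathbf{L}_0^F$ this forces $\zeta^x = \zeta\eta$ for some $\eta\in\Irr(\mathbf{L}^F/\mathbf{L}_0^F)$. Now the crucial step: $x\in\mathbf{X}^F$ and $\det$-type considerations—since $x$ has determinant $1$ (it lies in $\mathbf{X}^F = \mathbf{L}^F$... no)—I would argue that conjugation by $x\in\mathbf{X}^F$ on $\mathbf{L}^F$ acts trivially on $\mathbf{L}^F/\mathbf{L}_0^F\mathbf{[}\mathbf{L}^F,\mathbf{L}^F\mathbf{]}$ because $\mathbf{X}^F$ centralizes this abelian quotient modulo... hmm, $\mathbf{X}^F\cap\mathbf{L}^F = \mathbf{L}_0^F$ acts trivially on $\mathbf{L}^F/\mathbf{L}_0^F$, and $\mathbf{X}^F/\mathbf{L}_0^F$—no, $x\notin\mathbf{L}^F$ in general. **The main obstacle**, and the heart of the lemma, is precisely showing $\zeta^x = \zeta$ (not merely $\zeta^x = \zeta\eta$): I would do this by noting $\Res^{\mathbf{L}^F}_{\mathbf{L}_0^F}(\zeta^x) = \Res^{\mathbf{L}^F}_{\mathbf{L}_0^F}(\zeta)$ as multisets (since $x$ fixes $\Delta$ setwise and the multiplicities in a Clifford restriction are all equal), so $\zeta$ and $\zeta^x$ restrict identically to $\mathbf{L}_0^F$; combined with $\zeta^x = \zeta\eta$ we get $\zeta\eta\mid_{\mathbf{L}_0^F} = \zeta\mid_{\mathbf{L}_0^F}$, i.e. $\eta$ is trivial on $\mathbf{L}_0^F$—true by construction—so this alone does not conclude. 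The finishing blow is to use that $x$ \emph{fixes each} $\theta\in\Delta$ individually, not just setwise: $\mathbf{L}^F_\theta$ has index $|\Delta|$ in $\mathbf{L}^F$, $x$ normalizes $\mathbf{L}^F_\theta\mathbf{L}_0^F = \mathbf{L}^F_\theta$, and the relative Weyl-type computation (for type $A$: $N_{\mathbf{G}^F}(\mathbf{L})/\mathbf{L}^F\cdot N_{\mathbf{X}^F}(\mathbf{L}_0)$ centralizes $\mathbf{L}^F/\mathbf{L}^F_\theta$) shows $x$ acts trivially on the orbit—this is where I would cite or reprove the relevant $N_{\mathbf{X}^F}(\mathbf{L}_0) = N_{\mathbf{G}^F}(\mathbf{L})\cap\mathbf{X}^F$ together with $\mathbf{L}^F$ acting transitively, giving $N_{\mathbf{X}^F}(\mathbf{L}_0) = \mathbf{L}_0^F\cdot(N_{\mathbf{X}^F}(\mathbf{L}_0)\cap \text{stab})$, hence trivial action. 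I expect the bookkeeping around $N_{\mathbf{G}^F}(\mathbf{L})$ versus $N_{\mathbf{X}^F}(\mathbf{L}_0)$ and the transitivity of $\mathbf{L}^F$ on $\Delta$ to be the genuinely delicate part; the character theory is then routine.
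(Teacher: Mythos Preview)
Your setup is correct: $N_{\mathbf X^F}(\mathbf L_0)=N_{\mathbf G^F}(\mathbf L)\cap\mathbf X^F$, the group $\mathbf L^F$ acts transitively on $\Delta$, and any $x\in N_{\mathbf X^F}(\mathbf L_0)_\Delta$ satisfies $\zeta^x=\zeta\eta$ for some $\eta\in\Irr(\mathbf L^F/\mathbf L_0^F)$. But the proof stalls precisely at the point you flag as ``the genuinely delicate part'': from $\zeta^x=\zeta\eta$ you cannot conclude that $x$ fixes each $\theta\in\Delta$ by Clifford theory alone. Your proposed finish, namely $N_{\mathbf X^F}(\mathbf L_0)=\mathbf L_0^F\cdot\bigl(N_{\mathbf X^F}(\mathbf L_0)\cap\text{stab}\bigr)$, is a restatement of the conclusion rather than an argument for it, and the claim that the relative Weyl group ``centralizes $\mathbf L^F/\mathbf L^F_\theta$'' is unjustified (and not obviously true in general---the normalizer can permute isomorphic $\GL$-factors of $\mathbf L^F$ nontrivially).

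The missing ingredient is a \emph{stabilizer splitting}: for a suitable constituent $\zeta_0$ of $\Res^{\mathbf L^F}_{[\mathbf L^F,\mathbf L^F]}\zeta$, one has
\[
N_{\mathbf G^F}(\mathbf L)_{\zeta_0}=\mathbf L^F_{\zeta_0}\,N_{\mathbf X^F}(\mathbf L)_{\zeta_0}.
\]
This is not formal; it is \cite[Thm.~4.1]{CS17}, a substantial input from the verification of the inductive McKay condition in type $A$. The paper's proof proceeds by writing $\mathbf L^F$ explicitly as a product of factors $\GL(n_i,(\epsilon q)^{a_i})$ and $N_{\mathbf G^F}(\mathbf L)$ as the corresponding wreath product with field and permutation parts, observing that $\Out_{N_{\mathbf G^F}(\mathbf L)}(\mathbf L_0^F)=\Out_{\mathbf L^F}(\mathbf L_0^F)\rtimes\Out_{N_{\mathbf X^F}(\mathbf L)}(\mathbf L_0^F)$, and then reducing first to a single factor (where the Cabanes--Sp\"ath splitting applies directly, yielding a $\zeta_0$ fixed by $N_{\mathbf X^F}(\mathbf L_0)_\Delta$, whence all of $\Delta_0$ is fixed since $\mathbf L^F$ is transitive) and second to the wreath case (handled by an explicit coset computation showing that permutation of factors can be absorbed into $\mathbf L_0^F$). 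Without invoking \cite{CS17} or an equivalent, your argument has a genuine gap at exactly the step you identified.
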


\begin{proof}
Let $L=\mathbf L^F$ and $L_0=\mathbf L_0^F$.
Note that there exist integers $n_i$, $a_i$, $b_i$ ~($1\le i\le s$)~and $r$ such that $n_i\ne n_j$ for $i\ne j$ and
$L=L_0\times L_1^{b_1}\times \cdots \times L_s^{b_s}$ where $L_0\cong \GL(r,\epsilon q)$
and $L_i\cong \GL(n_i,(\epsilon q)^{a_i})$.
Then $N_{\mathbf G^F}(\mathbf L)= L_0\times\prod\limits_{1\le i\le s} N_i\wr \fS(b_i)$, where $N_i=\langle L_i,\sigma_i  \rangle$, $o(\sigma_i)=a_i$, and $\sigma_i$ act on $L_i\cong \GL(n_i,(\epsilon q)^{a_i})$ as a field automorphism of order $a_i$.
We denote by $\Out_{N_{\mathbf G^F}(\mathbf L)}(\mathbf L_0^F)$ the the subgroup of $\Out(\mathbf L_0^F)$ induced by $N_{\mathbf G^F}(\mathbf L)$~(\emph{i.e.} $\Out_{N_{\mathbf G^F}(\mathbf L)}(\mathbf L_0^F)\cong N_{\mathbf G^F}(\mathbf L)/\mathbf L_0^F Z(\mathbf L^F)$).
By comparing orders, we have $\Out_{N_{\mathbf G^F}(\mathbf L)}(\mathbf L_0^F)=\Out_{\mathbf L^F}(\mathbf L_0^F)\rtimes \Out_{N_{\mathbf X^F}(\mathbf L)}(\mathbf L_0^F)$ since $Z(\mathbf L_0^F)=Z(\mathbf L^F)\cap \mathbf L_0^F$.
Let $\Delta_0:=\Irr([\mathbf L^F,\mathbf L^F]\mid \zeta)$.

First, we consider the case $L=L_i=\GL(n_i,(\epsilon q)^{a_i})$. Then $N_{\mathbf G^F}(\mathbf L)=N_i$ and $\Out_{\mathbf L^F}([\mathbf L^F,\mathbf L^F])$ and $\Out_{N_{\mathbf X^F}(\mathbf L)}([\mathbf L^F,\mathbf L^F])$
commute.
Now by \cite[Thm. 4.1]{CS17}, there exists $\zeta_0\in \Delta_0$ such that $N_{\mathbf G^F}(\mathbf L)_{\zeta_0}=\mathbf L^F_{\zeta_0} N_{\mathbf X^F}(\mathbf L)_{\zeta_0}$.
So $\zeta_0$ is invariant under $N_{\mathbf X^F}(\mathbf L_0)_{\Delta}$ since $$\Out_{N_{\mathbf G^F}(\mathbf L)}([\mathbf L^F,\mathbf L^F])=\Out_{\mathbf L^F}([\mathbf L^F,\mathbf L^F])\times \Out_{N_{\mathbf X^F}(\mathbf L)}([\mathbf L^F,\mathbf L^F]).$$
Now $\mathbf L^F$ acts transitively on $\Delta_0$, then $N_{\mathbf X^F}(\mathbf L_0)_{\Delta_0}$ acts trivially on $\Delta_0$.
Hence $N_{\mathbf X^F}(\mathbf L_0)_{\Delta}$ acts trivially on $\Delta$
since the restriction of $\zeta$ to $[ L, L]$ is multiplicity-free.

Now we consider the case $L=L_i^{b_i}\cong \GL(n_i,(\epsilon q)^{a_i})^{b_i}$.
Then $N_{\mathbf G^F}(\mathbf L)=N_i\wr \fS(b_i)$.
Let $\zeta=\zeta_1\times \cdots\times\zeta_{b_i}$, where $\zeta_k\in \Irr(L_i)$ for $1\le k\le b_i$.
Then $\Delta_0=\prod_{1\le k\le b_i}\Delta_{0,k}$, where $\Delta_{0,k}=\Irr([L_i,L_i]\mid \zeta_k)$ for $1\le k\le b_i$.
Let $\zeta_0\in \Delta_0$ and $\zeta_0=\zeta_{0,1}\times\cdots\times \zeta_{0,b_i}$ where $\zeta_{0,k}\in \Delta_{0,k}$ for $1\le k\le b_i$.
Let $g\in N_{\mathbf G^F}(\mathbf L)$.
If $\zeta_0^g\in\Delta_0$, then without loss of generality, we may assume that $g=(\sigma_1,\ldots,\sigma_{b_i};\tau)$, where $\sigma_k\in N_i$, $\tau\in \fS(b_i)$ and $\tau=(1,\ldots, b_i)$.
Then $\zeta_0^g=\zeta_{0,b_i}^{\sigma_{b_i}}\times \zeta_{0,1}^{\sigma_1}\times\cdots \zeta_{0,b_i-1}^{\sigma_{b_i-1}}$.
Hence
there exist $l_1,\ldots, l_{b_i-1}\in L_i$ such that
$\zeta_{0,1}^{l_1}=\zeta_{0,b_i}^{\sigma_{b_i}}$ and $\zeta_{0,k}^{l_k}=\zeta_{0,k-1}^{\sigma_{k-1}}$ for $2\le k\le b_i-1$.
By the argument of above paragraph, it is easy to check that $\zeta_{0,b_i}^{l_{b_i}}=\zeta_{0,1}^{\sigma_1}$ for $l_{b_i}=(\prod\limits_{1\le k\le b_i-1}l_k)^{-1}$.
Now let $l=\diag(l_1,\ldots,l_{b_i})$, then $l\in L_0$ and $\zeta_0^l=\zeta_0^g$.
Then there exists $\zeta_0'\in \Delta$, such that $\zeta_0,\zeta_0^g\in\Irr([L,L]\mid \zeta_0')$ since $\Res_{[L, L]}^{L}\zeta$ is multiplicity-free.
So $N_{\mathbf X^F}(\mathbf L_0)_{\Delta}$ acts trivially on $\Delta$.

The assertion in general case now follows by reduction to the preceding cases.
\end{proof}

	Let $J$ be a subgroup of some general linear or unitary group $\GL_m(\epsilon q)$, we denote
	\begin{equation}\label{def-mathcalD}
	\mathcal D(J):=\{ \mathrm{det}(M)\ |\   M\in J \}.
	\end{equation}
	Then $\mathcal D(J)$ is a subgroup of $\mathfrak Z$~(where $\mathfrak Z$ is defined as in (\ref{def-frakZ})) and $J/(J\cap \SL_m(\epsilon q))\cong \mathcal D(J)$.

\begin{rmk}\label{esplitofslsu}
	Let $\mathbf L$ a Levi subgroup of $\mathbf G$, and $\mathbf L_0=\mathbf L\cap \mathbf X$.
	Note that $\mathcal D(\mathbf L^F)=\mathfrak Z$. Then $\mathbf G^F=\mathbf X^FN_{\mathbf G^F}(\mathbf L)$ and $\mathbf L^F/\mathbf L_0^F\cong \mathbf G^F/\mathbf X^F$.
	So the $\mathbf G^F$-conjugacy classes of $e_0$-split Levi subgroups of $\mathbf G$ are just the $\mathbf X^F$-conjugacy classes of $e_0$-split Levi subgroups of $\mathbf G$.
	
We denote by $\widetilde{\mathcal L}$ a complete set of representatives of the $\mathbf G^F$-conjugacy classes of $e_0$-Jordan-cuspidal pairs of $\mathbf G$ such that
	$\zeta\in\mathcal E(\mathbf L^F,\ell')$.
	We may assume that for $(\mathbf L,\zeta)$, $(\mathbf L',\zeta')\in \widetilde{\mathcal L}$, if  $\mathbf L$ and $\mathbf L'$ are $\mathbf G^F$-conjugacy, then $\mathbf L=\mathbf L'$.
	
	Define an equivalence relation on $\widetilde{\mathcal L}$ :  $(\mathbf L,\zeta)\sim (\mathbf L',\zeta')$ if and only if $\mathbf L=\mathbf L'$ and $\Res^{\mathbf L^F}_{\mathbf L_0^F}\zeta=\Res^{\mathbf L^F}_{\mathbf L_0^F}\zeta'$ where $\mathbf L_0=\mathbf L\cap \mathbf X$.
	Then by Lemma \ref{cliffordthm}, \ref{conofcharoflevi} and Proposition \ref{restrofesplit}, $\{(\mathbf L\cap \mathbf X,\zeta_0)\}$ is a complete set of representatives of $\mathbf X^F$-conjugacy classes of $e_0$-Jordan-cuspidal pairs of $\mathbf X$ such that
	$\zeta_0\in\mathcal E((\mathbf L\cap\mathbf X )^F,\ell')$, where $(\mathbf L,\zeta)$ runs through a complete set of representatives of the equivalence classes of $\widetilde{\mathcal L}/\sim$ and $\zeta_0$ runs through $\Irr( (\mathbf L\cap \mathbf X)^F \ |\ \zeta)$.
\end{rmk}

The $\ell$-blocks of $\mathbf G^F$ were classified in \cite{FS82} and \cite{Br86}.
For $\Gamma\in\cF$, we denote by $e_\Gamma$ the multiplicative order of $(\epsilon q)^{d_\Gamma}$ modulo $\ell$.
Obviously, $e_\Gamma=\frac{e}{\mathrm{gcd}(e,d_\Gamma)}$.
Note that for $\ell=2$, $e=e_\Gamma=1$.
Given a semisimple element $s$ of $\mathbf G^F$, let
$\mathcal{C}_\Gamma(s)$ be the set of $e_\Gamma$-cores of partitions of $m_\Gamma(s)$, and let
$\mathcal{C}(s)=\prod\limits_{\Gamma}\mathcal{C}_\Gamma(s)$. The following result is a combination of \cite[(5D) and (7A)]{FS82} and \cite[(3.2) and (3.9)]{Br86}.

\begin{thm} \label{blockcorres}
There is a bijection from the set of $\ell$-blocks of $G$ onto the set of $G$-conjugacy classes of pairs $(s, \lambda)$, where $s$ is a semisimple $\ell'$-element of $G$ and $\lambda\in \mathcal{C}(s)$.

Moreover, let $B$ be an $\ell$-block of $G$ with label $(s,\lambda)$. Then an irreducible character of $G$ of the form $\chi_{t, \mu}$ belongs to $B$ if and only if the $\ell'$-part of $t$ is $G$-conjugacy to $s$ and for every $\Gamma\in\mathcal F$, $\mu_\Gamma$ has $e_\Gamma$-core $\lambda_\Gamma$.
\end{thm}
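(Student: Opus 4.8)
The plan is to prove Theorem \ref{blockcorres} by combining the two known classifications of $\ell$-blocks of $G = \GL_n(\epsilon q)$ --- namely the Fong--Srinivasan labeling from \cite{FS82} (for odd $\ell$) together with its $\ell=2$ analogue from \cite{Br86}, both phrased in terms of semisimple $\ell'$-elements $s$ and tuples of $e_\Gamma$-cores --- and directly quoting the statements \cite[(5D), (7A)]{FS82} and \cite[(3.2), (3.9)]{Br86}. So at the level of this excerpt, the theorem is essentially a \emph{reformulation} of those results in the notation fixed in Section \ref{notations-and-conventions}, and the real content of the ``proof'' is a translation of conventions. First I would recall that \cite{FS82} parametrizes a block $B$ by a pair consisting of the conjugacy class of a semisimple $\ell'$-element $s$ (equivalently, the primary decomposition data $(\Gamma, m_\Gamma(s))_\Gamma$) and, for each elementary divisor $\Gamma$ of $s$, an $e_\Gamma$-core $\lambda_\Gamma$ of a partition of $m_\Gamma(s)$; these assemble into $\lambda \in \mathcal{C}(s) = \prod_\Gamma \mathcal{C}_\Gamma(s)$.

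The main subtlety --- and the step I expect to be the real (if modest) obstacle --- is to check that the quantity controlling the block partition in \cite{FS82} and \cite{Br86} is exactly $e_\Gamma = e/\gcd(e,d_\Gamma)$, the multiplicative order of $(\epsilon q)^{d_\Gamma}$ modulo $\ell$, under our conventions. Recall from Section \ref{notations-and-conventions} that for unitary groups we are using An's conventions (so $\GU_n(q)$, $e$, and $d_\Gamma$ have the meanings of \cite{An93,An94}, which differ from the ``reduced degree'' $\delta_\Gamma$ and the conventions of \cite{FS82}); the excerpt itself flags this at the end of Section \ref{notations-and-conventions}. So I would verify that, with $d_\Gamma$ replacing $\delta_\Gamma$ throughout, the Fong--Srinivasan block distribution criterion reads: $\chi_{t,\mu}$ and $\chi_{t',\mu'}$ lie in the same block iff $t,t'$ have $G$-conjugate $\ell'$-parts and, for every $\Gamma$, the partitions $\mu_\Gamma$ and $\mu'_\Gamma$ have the same $e_\Gamma$-core. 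For $\ell$ odd this is \cite[(7A)]{FS82} after the notational substitution; for $\ell=2$ one invokes \cite[(3.9)]{Br86}, noting that then $e = e_\Gamma = 1$ for all $\Gamma$, so the core condition degenerates to $\mu_\Gamma$ being trivial-$1$-core (always true) and the block depends only on the $\ell'$-part of $t$ --- consistent with Brou\'e's classification of $2$-blocks of $\GL_n(\epsilon q)$.

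With the criterion in hand, the bijection is then built as follows. Given a block $B$, pick any $\chi_{t,\mu} \in \mathrm{Irr}(B)$; set $s$ to be the $\ell'$-part of $t$ and $\lambda_\Gamma$ to be the $e_\Gamma$-core of $\mu_\Gamma$ for each $\Gamma \in \mathcal{F}$ (where we read $m_\Gamma(s) = m_\Gamma(t)$ since passing to the $\ell'$-part of $t$ does not change the elementary divisors' multiplicities --- here one uses that $\ell \neq p$ and that $t$'s $\ell$-part commutes with, hence refines the decomposition of, its $\ell'$-part; more precisely $C_G(s) \supseteq C_G(t)$ and $\mu \in \mathcal{P}(t)$ gives $\mu_\Gamma \vdash m_\Gamma(t) \le m_\Gamma(s)$, and one checks the relevant $\mu_\Gamma$ sits inside the $\Gamma$-block of $m_\Gamma(s)$, or simply quotes that \cite{FS82} already phrases cores relative to $m_\Gamma(s)$). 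The block distribution criterion shows $(s,\lambda)$ is independent of the chosen $\chi_{t,\mu}$ up to $G$-conjugacy, giving a well-defined map $B \mapsto (s,\lambda)$; its injectivity is immediate from the criterion, and its surjectivity follows because for any semisimple $\ell'$-element $s$ and any $\lambda \in \mathcal{C}(s)$ one can choose, for each $\Gamma$, a partition $\mu_\Gamma \vdash m_\Gamma(s)$ with $e_\Gamma$-core $\lambda_\Gamma$ (such a partition always exists: add an $e_\Gamma$-hook chain to $\lambda_\Gamma$), so that $\chi_{s,\mu}$ lies in a block with label $(s,\lambda)$. Finally, the ``moreover'' statement is exactly the block distribution criterion re-read: $\chi_{t,\mu} \in \mathrm{Irr}(B)$ with $B$ labeled $(s,\lambda)$ iff the $\ell'$-part of $t$ is $G$-conjugate to $s$ and $\mu_\Gamma$ has $e_\Gamma$-core $\lambda_\Gamma$ for every $\Gamma \in \mathcal{F}$. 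I would close by remarking that no further argument is needed since \cite[(5D)]{FS82} and \cite[(3.2)]{Br86} already identify the set of irreducible characters in a given block, and our contribution is purely the uniform restatement across $\epsilon = \pm 1$ and across the parities of $\ell$.
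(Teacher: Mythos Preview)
Your proposal is correct and takes essentially the same approach as the paper: the paper does not supply its own proof of this theorem at all, but simply records it as ``a combination of \cite[(5D) and (7A)]{FS82} and \cite[(3.2) and (3.9)]{Br86}'' in the notation fixed in Section~\ref{notations-and-conventions}. Your write-up is therefore strictly more detailed than the paper's treatment, spelling out the convention translation (in particular the $d_\Gamma$ versus $\delta_\Gamma$ point for the unitary case and the degeneration at $\ell=2$) that the paper leaves implicit.
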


We denote by $B(s,\lambda)$ the $\ell$-block of $G$ with label $(s,\lambda)$.
Note that, for $\ell=2$, $(s,\lambda)$ is always of the form $(s,-)$~(here, ``$-$" denotes the empty partition).

\vspace{2ex}
Now we give an $e_0$-Jordan-cuspidal pair for the $\ell$-block $B(s,\lambda)$.
Let $s\in\mathbf G^F$ be a semisimple element and $\lambda\in\mathcal C(s)$.
Take the primary decomposition $s=\prod_{\Gamma}s_\Gamma$ with $s_\Gamma=m_\Gamma(s)(\Gamma)$.
Then $C_{\mathbf G}(s)=\prod_{\Gamma}C_{\mathbf G_\Gamma}(s_\Gamma)$ with $\mathbf G_\Gamma=\GL(m_\Gamma(s)d_\Gamma,\overline {\mathbb F})$.
Let $w_\Gamma(s)=e_\Gamma^{-1}(m_\Gamma(s)-|\lambda_\Gamma|)$.

First, we assume that $\ell$ is odd.
Let $\mathbf T_{e_\Gamma,0}$ be a Coxeter torus of $(\GL(e_\Gamma, \overline{\F}),F)$,
$\mathbf M_{\Gamma,1}=(\mathbf T_{e_\Gamma,0})^{w_\Gamma(s)}\times\GL(|\lambda_\Gamma|,\overline {\mathbb F})$
and $\mathbf H_{\Gamma,0}=\mathbf M_{\Gamma,1}\times\cdots\times \mathbf M_{\Gamma,1}$ with $d_\Gamma$ factors.
Let $\mathbf H_\Gamma={}^{g_\Gamma(s)}\mathbf H_{\Gamma,0}\le C_{\mathbf G_\Gamma}(s_\Gamma)$.
Then $\mathbf H_\Gamma^F \cong \mathbf H_{\Gamma,0}^{v_\Gamma(s)F}\cong (\GL(1,(\epsilon q)^{e_\Gamma d_\Gamma}))^{w_\Gamma(s)}\times \GL(|\lambda_\Gamma|,(\epsilon q)^{d_\Gamma})$.
Let  $\mathbf H=\Pi_\Gamma \mathbf H_\Gamma$.
Obviously $s\in \mathbf H^F$.

Now let $\mathbf T_{\Gamma,0}=((\mathbf T_{e_\Gamma,0})^{w_\Gamma(s)}\times I_{(|\lambda_\Gamma|)})^{d_\Gamma}$.
Then ${}^{g_\Gamma(s)}\mathbf T_{\Gamma,0}$ is a torus of $C_{\mathbf G_\Gamma}(s_\Gamma)$.
Now let $\mathbf T_\Gamma$ be the Sylow $e_0$-torus of ${}^{g_\Gamma(s)}\mathbf T_{\Gamma,0}$.
Then $\mathbf T=\Pi_\Gamma \mathbf T_\Gamma$ is an $e_0$-torus of $\mathbf G$.
Let $\mathbf L=C_{\mathbf G}(\mathbf T)$, then $\mathbf L$ is an $e_0$-split Levi subgroups of $\mathbf G$.
Also, $s\in\mathbf L$ and $\mathbf H=C_{C_{\mathbf G}(s)}(\mathbf T)=C_{\mathbf L}(s)$.

Let $\phi_{\lambda_\Gamma}$ be the unipotent character of $\GL(|\lambda_\Gamma|,(\epsilon q)^{d_\Gamma})$ corresponding to $\lambda_\Gamma$ and
$\phi_\Gamma=1_{(\GL(1,(\epsilon q)^{e_\Gamma d_\Gamma}))^{w_\Gamma(s)}}\times\phi_{\lambda_\Gamma}\in\Irr(\mathbf H_\Gamma^F)$.
Then $\phi=\Pi_\Gamma \phi_{_\Gamma}$ is an $e_0$-cuspidal unipotent character of $\mathbf H^F$.
Let $\zeta\in\mathcal E(\mathbf L^F,s)$ correspond under Jordan decomposition to
$\phi\in\mathcal E(\mathbf H^F,1)$.
Then $\zeta=\varepsilon_{\mathbf L}\varepsilon_{\mathbf H}R_{\mathbf H}^{\mathbf L}(\hat s\phi)$ is $e_0$-Jordan-cuspidal.
We denote $\mathbf L=\mathbf L_{s,\lambda}$ and $\zeta=\zeta_{s,\lambda}$.

Now we assume that $\ell=2$. Then $\lambda$ is empty.

Let $\mathbf T_{\Gamma,0}$ be the maximal torus of $(\GL(m_\Gamma(s),\overline{\F}),F)$ satisfying that
\begin{enumerate}
\item[(1)] if $4\mid q-\epsilon$ or $d_\Gamma$ is even, $\mathbf T_{\Gamma,0}$ consists of all diagonal matrices,
\item[(2)] if $4\mid q+\epsilon$ and $d_\Gamma$ is odd,
\begin{itemize}
\item $\mathbf T_{\Gamma,0}^F\cong \GL(1,q^2)^{\frac{m_\Gamma(s)}{2}}$ if $m_\Gamma(s)$ is even,
\item $\mathbf T_{\Gamma,0}^F\cong \GL(1,q^2)^{\frac{m_\Gamma(s)-1}{2}}\times \GL(1,\epsilon q)$ if $m_\Gamma(s)$ is odd.
\end{itemize}
\end{enumerate}

Let $\mathbf H_{\Gamma,0}=\mathbf T_{\Gamma,0}\times\cdots\times \mathbf T_{\Gamma,0}$ with $d_\Gamma$ factors.
Let $\mathbf H_\Gamma={}^{g_\Gamma(s)}\mathbf H_{\Gamma,0}\le C_{\mathbf G_\Gamma}(s_\Gamma)$.
Then $\mathbf H_\Gamma$ is a maximal torus of $C_{\mathbf G_\Gamma}(s_\Gamma)$.
Let  $\mathbf H=\Pi_\Gamma \mathbf H_\Gamma$,
then $s\in\mathbf H^F$.
Let $\mathbf T_\Gamma$ be the Sylow $e_0$-torus of $\mathbf H_\Gamma$, then  $\mathbf T=\Pi_\Gamma \mathbf T_\Gamma$ is an $e_0$-torus of $\mathbf G$.
Let $\mathbf L=C_{\mathbf G}(\mathbf T)$, then $\mathbf L$ is an $e_0$-split Levi subgroup of $\mathbf G$.
Also, $s\in\mathbf L^F$ and $\mathbf H=C_{C_{\mathbf G}(s)}(\mathbf T)=C_{\mathbf L}(s)$.
Let $\zeta\in\mathcal E(\mathbf L^F,s)$ correspond under Jordan decomposition to the ~(unique)~ $e_0$-cuspidal unipotent character
$\phi=1_{\mathbf H^F}\in\mathcal E(\mathbf H^F,1)$.
Then $\zeta$ is $e_0$-Jordan-cuspidal.
We denote $\mathbf L=\mathbf L_{s,\lambda}$ and $\zeta=\zeta_{s,\lambda}$.

\vspace{2ex}

Thus $(\mathbf L_{s,\lambda}, \zeta_{s,\lambda})$ is an $e_0$-Jordan-cuspidal pair of $\mathbf G$ in both the cases $\ell$ is odd and the case $\ell=2$.

\begin{prop}\label{corrofblockspara}
Suppose that $s\in\mathbf G^F$ is a semisimple $\ell'$-element, $\lambda\in\mathcal C(s)$
and $B(s,\lambda)$ is an $\ell$-block of $\mathbf G^F$.
Then
$b_{\mathbf G^F}(\mathbf L_{s,\lambda}, \zeta_{s,\lambda})=B(s,\lambda)$.
\end{prop}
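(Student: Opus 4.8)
The plan is to verify that the $e_0$-Jordan-cuspidal pair $(\mathbf L_{s,\lambda},\zeta_{s,\lambda})$ constructed above produces exactly the block $B(s,\lambda)$ of Theorem \ref{blockcorres}, by showing that some irreducible constituent of $R^{\mathbf G}_{\mathbf L_{s,\lambda}}(\zeta_{s,\lambda})$ is of the form $\chi_{t,\mu}$ with $t$ having $\ell'$-part $G$-conjugate to $s$ and with $\mu_\Gamma$ having $e_\Gamma$-core $\lambda_\Gamma$ for all $\Gamma$; by Theorem \ref{ecuspidalofblock}(i) all constituents of $R^{\mathbf G}_{\mathbf L_{s,\lambda}}(\zeta_{s,\lambda})$ lie in the single block $b_{\mathbf G^F}(\mathbf L_{s,\lambda},\zeta_{s,\lambda})$, so identifying one constituent suffices. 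First I would reduce to a statement about Lusztig series: since $\zeta_{s,\lambda}\in\mathcal E(\mathbf L^F,s)$ corresponds under Jordan decomposition to the unipotent character $\phi=\prod_\Gamma\phi_\Gamma$ of $\mathbf H^F=C_{\mathbf L}(s)^F$, and since $\mathbf H=C_{C_{\mathbf G}(s)}(\mathbf T)=C_{\mathbf L}(s)$, transitivity of Lusztig induction together with compatibility of Jordan decomposition with Lusztig induction (from $\mathbf L$ to $\mathbf G$, dually from $\mathbf H=C_{\mathbf L}(s)$ to $C_{\mathbf G}(s)$) shows that the constituents of $R^{\mathbf G}_{\mathbf L}(\zeta_{s,\lambda})$ lie in $\mathcal E(\mathbf G^F,s)$ and correspond to the constituents of $R^{C_{\mathbf G}(s)}_{\mathbf H}(\phi)$ in $\mathcal E(C_{\mathbf G}(s)^F,1)$.

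Next I would compute $R^{C_{\mathbf G}(s)}_{\mathbf H}(\phi)$. Here $C_{\mathbf G}(s)=\prod_\Gamma C_{\mathbf G_\Gamma}(s_\Gamma)$ with $C_{\mathbf G_\Gamma}(s_\Gamma)^F\cong\GL(m_\Gamma(s),(\epsilon q)^{d_\Gamma})$, the subgroup $\mathbf H_\Gamma^F\cong(\GL(1,(\epsilon q)^{e_\Gamma d_\Gamma}))^{w_\Gamma(s)}\times\GL(|\lambda_\Gamma|,(\epsilon q)^{d_\Gamma})$ is obtained from a $w_\Gamma(s)$-fold Coxeter torus times a $\GL(|\lambda_\Gamma|)$-factor, and $\phi_\Gamma=1\times\phi_{\lambda_\Gamma}$. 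So the computation factors over $\Gamma$, and for each $\Gamma$ I would apply Lemma \ref{fortheblock} to the group $\GL(m_\Gamma(s),(\epsilon q)^{d_\Gamma})$ (with the role of ``$e$'' there played by $e_\Gamma$, which is the multiplicative order of $(\epsilon q)^{d_\Gamma}$ modulo $\ell$): this shows every constituent of $R_{\mathbf H_\Gamma}^{C_{\mathbf G_\Gamma}(s_\Gamma)}(\phi_\Gamma)$ is a unipotent character $\phi_{\mu_\Gamma}$ with $e_\Gamma$-core $\lambda_\Gamma$. Taking the product over $\Gamma$ gives that every constituent of $R^{C_{\mathbf G}(s)}_{\mathbf H}(\phi)$ corresponds to some $\mu\in\mathcal P(s)$ with $\mu_\Gamma$ having $e_\Gamma$-core $\lambda_\Gamma$; translating back through Jordan decomposition, every constituent of $R^{\mathbf G}_{\mathbf L}(\zeta_{s,\lambda})$ is of the form $\chi_{s,\mu}$ with this property. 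Such a $\chi_{s,\mu}$ belongs to $B(s,\lambda)$ by Theorem \ref{blockcorres}, since $s$ is an $\ell'$-element so its $\ell'$-part is $s$ itself; hence $b_{\mathbf G^F}(\mathbf L_{s,\lambda},\zeta_{s,\lambda})=B(s,\lambda)$.

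For the case $\ell=2$ the argument is the same but simpler: $\lambda$ is empty, $\phi=1_{\mathbf H^F}$, and the only unipotent character appearing in $R^{C_{\mathbf G}(s)}_{\mathbf H}(1)$ whose $e_\Gamma$-core is empty — actually all constituents of $R^{\mathbf G}_{\mathbf L}(\zeta)$ lie in $\mathcal E(\mathbf G^F,s)$, and $B(s,-)$ is by Theorem \ref{blockcorres} precisely the block containing all $\chi_{t,\mu}$ with $t$ having $2$-part conjugate to $s$; since $\mathcal E(\mathbf G^F,s)\subseteq\mathcal E_\ell(\mathbf G^F,s)\subseteq B(s,-)$ (as $e=e_\Gamma=1$ forces a single $2$-block above each $s$, by Fong--Srinivasan/Broué), the identification is immediate. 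The main obstacle I anticipate is the bookkeeping in the second paragraph: making the reduction over the primary decomposition $s=\prod_\Gamma s_\Gamma$ fully rigorous requires carefully tracking how $F$ (equivalently $v_\Gamma(s)F$) twists the factors $\mathbf H_{\Gamma,0}$, $\mathbf T_{\Gamma,0}$, and $C_{\mathbf G_\Gamma}(s_\Gamma)$, so that one may genuinely invoke \cite[Prop.~(1A)]{FS82} to replace $\mathbf H_\Gamma^F$ and $C_{\mathbf G_\Gamma}(s_\Gamma)^F$ by honest general linear/unitary groups over extension fields before applying Lemma \ref{fortheblock}; and one must check that Jordan decomposition is compatible with Lusztig induction in this type-$A$ setting (which holds, e.g.\ by the Mackey formula of Bonnaf\'e for type $A$ noted after Theorem \ref{ecuspidalofblock}, together with the standard properties of Jordan decomposition for $\GL$).
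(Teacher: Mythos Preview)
Your proposal is correct and follows essentially the same approach as the paper: reduce via transitivity of Lusztig induction to computing $R^{C_{\mathbf G}(s)}_{\mathbf H}(\phi)$, then apply Lemma~\ref{fortheblock} componentwise over $\Gamma$ to see that every constituent is $\chi_\mu$ with $\mu_\Gamma$ having $e_\Gamma$-core $\lambda_\Gamma$, and invoke Theorem~\ref{blockcorres}. The only cosmetic difference is that where you speak of ``compatibility of Jordan decomposition with Lusztig induction'', the paper simply writes out $\zeta=\varepsilon_{\mathbf L}\varepsilon_{\mathbf H}R_{\mathbf H}^{\mathbf L}(\hat s\phi)$ explicitly (which is the Jordan decomposition in type~$A$) and then applies transitivity $R^{\mathbf G}_{\mathbf L}\circ R^{\mathbf L}_{\mathbf H}=R^{\mathbf G}_{\mathbf H}=R^{\mathbf G}_{C_{\mathbf G}(s)}\circ R^{C_{\mathbf G}(s)}_{\mathbf H}$ directly; this sidesteps the need to invoke any separate compatibility statement.
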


\begin{proof}
Abbreviate $\zeta=\zeta_{s,\lambda}$.
It is obvious for the case that $\ell=2$.
Now we assume that $\ell$ is odd and it suffices to prove that every irreducible constituent of $R^{\mathbf G}_{\mathbf L}\zeta$ lies in $B(s,\lambda)$ by Theorem \ref{ecuspidalofblock}.
First, with the notation above,
$$R_{\mathbf L}^{\mathbf G}(\zeta)=\varepsilon_{\mathbf L}\varepsilon_{\mathbf H}R_{\mathbf L}^{\mathbf G}(R_{\mathbf H}^{\mathbf L}(\hat s \phi))=\varepsilon_{\mathbf L}\varepsilon_{\mathbf H}R_{C_{\mathbf G}(s)}^{\mathbf G}(\hat s R^{C_{\mathbf G}(s)}_{\mathbf H}(\phi)).$$
We note that $\mathbf H=C_{\mathbf L}(s)$ is an $F$-stable Levi subgroup of $C_{\mathbf G}(s)$ since $\mathbf H=C_{C_{\mathbf G}(s)}(\mathbf T)$.
Hence it suffices to prove that every irreducible constituent of $R^{C_{\mathbf G}(s)}_{\mathbf H}(\phi)$ has the form $\chi_\mu$ where $\mu=\prod_{\Gamma}\mu_\Gamma\in\mathcal P(s)$ satisfies that $\mu_\Gamma$ has $e_\Gamma$-core $\lambda_\Gamma$ for all $\Gamma$ and this follows by Lemma \ref{fortheblock}.
\end{proof}

Thus according to Theorem \ref{ecuspidalofblock}, \ref{blockcorres} and Proposition \ref{corrofblockspara}, if $\ell\ge3$,
then the set $\{(\mathbf L_{s,\lambda}, \zeta_{s,\lambda})\}$,
where $s$ runs through a complete set of representatives of $\mathbf G^F$-conjugacy classes of
the semisimple $\ell'$-elements of $\mathbf G^F$ and $\lambda$ runs through $\mathcal C(s)$,
is a complete set of representatives of $\mathbf G^F$-conjugacy classes of $e_0$-Jordan-cuspidal pairs of $\mathbf G$.

For an $e_0$-Jordan-cuspidal pair $(\mathbf L, \zeta)$ of $\mathbf G$, let $\mathbf L_0=\mathbf L\cap \mathbf X$.
Now we consider the number of irreducible constituents of $\Res^{\mathbf L^F}_{\mathbf L_0^F}\zeta$.
Note that $\mathbf L^F/\mathbf L_0^F\cong \mathbf G^F/\mathbf X^F$ by Remark \ref{esplitofslsu}.
So $\Irr(\mathbf L^F/\mathbf L_0^F)$ can be identified to $\Irr(\mathbf G^F/\mathbf X^F)$ which is isomorphic to $Z(G)$~(hence to $\mathfrak Z$).
So we may regard $\hat z$ as a character of $\Irr(\mathbf L^F/\mathbf L_0^F)$ for $z\in\mathfrak Z$.

We define the action of $\mathfrak Z$ on $\mathcal C(s)$.
For $\lambda=\prod_{\Gamma}\lambda_\Gamma\in \mathcal C(s)$ and $z\in \mathfrak Z$,
define $z\lambda=\prod_{\Gamma}(z\lambda)_\Gamma$ with $(z\lambda)_{z.\Gamma}=\lambda_\Gamma$.
Then by the definition, for every $z\in \mathfrak Z$,
$\mathbf L_{z,\lambda}=\mathbf L_{zs,z\lambda}$ and $(\mathbf L, \zeta_{zs,z\lambda})$ is also an $e_0$-Jordan cuspidal pair for $\mathbf G$.

\begin{prop}\label{actiononlabel}
With the notation above, $\hat z\zeta_{s,\lambda}=\zeta_{zs,z\lambda}$ for $z\in \mathfrak Z$.
\end{prop}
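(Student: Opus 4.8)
The plan is to reduce the statement to the two ingredients that describe how $\mathfrak Z$ acts: the action on Lusztig series recorded in Proposition~\ref{restrofordi} (equivalently \eqref{act-z-series}), and the explicit combinatorial recipe that produced $(\mathbf L_{s,\lambda},\zeta_{s,\lambda})$ just before the statement. First I would note that the equality $\mathbf L_{s,\lambda}=\mathbf L_{zs,z\lambda}$ was already observed in the paragraph preceding the proposition, so only the claim $\hat z\zeta_{s,\lambda}=\zeta_{zs,z\lambda}$ remains, and both sides live in $\Irr(\mathbf L^F)$ for the \emph{same} Levi $\mathbf L=\mathbf L_{s,\lambda}$. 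Since $\zeta_{s,\lambda}$ is, by construction, the irreducible character in $\mathcal E(\mathbf L^F,s)$ corresponding under Jordan decomposition to the fixed $e_0$-cuspidal unipotent character $\phi=\prod_\Gamma\phi_\Gamma$ of $\mathbf H^F=C_{\mathbf L}(s)^F$, the statement will follow once I check that multiplying by $\hat z$ sends the Jordan-decomposition data for $s$ to that for $zs$ \emph{and} carries the unipotent label $\phi$ through the identification $\mathbf H=C_{\mathbf L}(s)\mapsto C_{\mathbf L}(zs)$ in the way prescribed by the action $\lambda\mapsto z\lambda$.

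Concretely, I would argue in the following order. (1) Recall the explicit formula $\zeta_{s,\lambda}=\varepsilon_{\mathbf L}\varepsilon_{\mathbf H}R^{\mathbf L}_{\mathbf H}(\hat s\,\phi)$ from the construction, and the analogous one $\zeta_{zs,z\lambda}=\varepsilon_{\mathbf L}\varepsilon_{\mathbf H'}R^{\mathbf L}_{\mathbf H'}(\widehat{zs}\,\phi')$, where $\mathbf H'=C_{\mathbf L}(zs)$ and $\phi'$ is the $e_0$-cuspidal unipotent character of $\mathbf H'^F$ built from $z\lambda$. (2) Observe that multiplication by $z\in\mathfrak Z\subseteq Z(\mathbf G)^F$ conjugates $C_{\mathbf G}(s)$ (hence $\mathbf H=C_{\mathbf L}(s)$) onto $C_{\mathbf G}(zs)$ (hence $\mathbf H'$); under this identification the primary decompositions match via $\Gamma\leftrightarrow z.\Gamma$, the multiplicities satisfy $m_{z.\Gamma}(zs)=m_\Gamma(s)$, and the factor $\GL(|\lambda_\Gamma|,(\epsilon q)^{d_\Gamma})$ of $\mathbf H^F$ is carried to the corresponding factor $\GL(|(z\lambda)_{z.\Gamma}|,(\epsilon q)^{d_{z.\Gamma}})$ of $\mathbf H'^F$, so that $\phi$ is carried precisely to $\phi'$ (this is exactly what the definition $(z\lambda)_{z.\Gamma}=\lambda_\Gamma$, $d_{z.\Gamma}=d_\Gamma$ encodes). (3) Use that $\hat z$, as a character of $\mathbf L^F/\mathbf L_0^F$, satisfies $\hat z\cdot R^{\mathbf L}_{\mathbf H}(\xi)=R^{\mathbf L}_{\mathbf H}(\widehat{z|_{\mathbf H^F}}\cdot\xi)$ for $\xi\in\Irr(\mathbf H^F)$ (projection formula for Lusztig induction against a central linear character), together with the compatibility of the isomorphism $\hat{}$ in \eqref{isomorphsimhat} under $Z(\mathbf H)\to Z(\mathbf H')$, to conclude that $\hat z(\hat s\,\phi)$ transported to $\mathbf H'^F$ equals $\widehat{zs}\,\phi'$; applying $R^{\mathbf L}_{\mathbf H}$ and the sign factors (which are unchanged since $\mathbf H$ and $\mathbf H'$ have the same relative rank) then gives $\hat z\zeta_{s,\lambda}=\zeta_{zs,z\lambda}$. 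For $\ell=2$ the same argument applies verbatim with $\lambda$ empty and $\phi=1_{\mathbf H^F}$, which is manifestly transported to $1_{\mathbf H'^F}$. Alternatively, one can invoke Proposition~\ref{restrofordi} directly: $\hat z\zeta_{s,\lambda}$ and $\zeta_{zs,z\lambda}$ are both characters in $\mathcal E(\mathbf L^F,zs)$, and matching up the Jordan-decomposition parameters $\mu$ there via $\mu\mapsto z\mu$ (as in Proposition~\ref{restrofordi}, applied inside $\mathbf L^F$ rather than $\mathbf G^F$) pins them down to be equal.

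The main obstacle I anticipate is bookkeeping rather than conceptual: one must be careful that the identification $C_{\mathbf L}(s)\cong C_{\mathbf L}(zs)$ induced by conjugation by $z$ is compatible, \emph{simultaneously}, with (a) the labelling of unipotent characters by $\mathcal P(s)$ versus $\mathcal P(zs)$, (b) the isomorphism $\hat{}$ of \eqref{isomorphsimhat} used to define $\hat s$ and $\widehat{zs}$, and (c) the choice of $g_\Gamma(s)$ versus $g_{z.\Gamma}(zs)$ conjugating $s_\Gamma$ and $(zs)_{z.\Gamma}$ to standard form, so that the Coxeter-type tori $\mathbf T_{\Gamma,0}$ (or the $\ell=2$ tori) and hence $\mathbf L_{s,\lambda}$ and $\mathbf L_{zs,z\lambda}$ genuinely coincide and not merely up to conjugacy. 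Once those identifications are set up carefully — and they are forced, since $z$ is central — the rest is the projection formula for Lusztig induction and the functoriality of Jordan decomposition, both of which are available in type $A$ (Mackey formula, Bonnaf\'e).
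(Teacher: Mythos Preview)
Your approach is essentially the paper's: use the explicit formula $\zeta_{s,\lambda}=\varepsilon_{\mathbf L}\varepsilon_{\mathbf H}R^{\mathbf L}_{\mathbf H}(\hat s\,\phi)$, pull $\hat z$ through Lusztig induction via the projection formula (the paper cites \cite[Prop.~12.6]{DM91}), and identify the result with $\zeta_{zs,z\lambda}$. The one place you make life harder than necessary is the ``bookkeeping obstacle'': since $z\in Z(\mathbf G)^F$ is central, conjugation by $z$ is trivial and $\mathbf H=C_{\mathbf L}(s)=C_{\mathbf L}(zs)$ are \emph{literally equal} as subgroups (not merely isomorphic), and likewise $\phi=\phi'$ as characters of $\mathbf H^F$ (the relabelling $\Gamma\mapsto z.\Gamma$ just permutes the indexing of the same factors), so the paper's proof collapses to the single line $\hat z\,R^{\mathbf L}_{\mathbf H}(\hat s\,\phi)=R^{\mathbf L}_{\mathbf H}(\widehat{zs}\,\phi)=\zeta_{zs,z\lambda}$ with no transport or compatibility check needed.
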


\begin{proof}
Note that $\zeta_{s,\lambda}=\varepsilon_{\mathbf L}\varepsilon_{\mathbf H}R_{\mathbf H}^{\mathbf L}(\hat s\phi)$.
Then by \cite[Prop. 12.6]{DM91},
$$\hat z\zeta_{s,\lambda}=\varepsilon_{\mathbf L}\varepsilon_{\mathbf H}\hat zR_{\mathbf H}^{\mathbf L}(\hat s\phi)=\varepsilon_{\mathbf L}\varepsilon_{\mathbf H}R_{\mathbf H}^{\mathbf L}(\widehat{zs}\phi),$$
since $\mathbf H=C_{\mathbf L}(zs)$.
Obviously,
$\zeta_{zs,z\lambda}=\varepsilon_{\mathbf L}\varepsilon_{\mathbf H}R_{\mathbf H}^{\mathbf L}(\widehat{zs}\phi)$.
So $\hat z\zeta_{s,\lambda}=\zeta_{zs,z\lambda}$.
\end{proof}

For a positive integer $d$
 and $\Gamma\in\cF$, let $\Gamma_{(d)}$ be a polynomial in $\cF(d)$ such that $\Gamma_{(d)}$ and $\Gamma$ have a common root in $\overline{\F}$~(where $\cF(d)$ is defined as in (\ref{def-cfd})).
 Thus $\Gamma_{(d)}$ has degree $\frac{d_\Gamma}{\mathrm{gcd}(d,d_\Gamma)}$.
Moreover,
if the roots of $\Gamma$ are $\sigma$, $\sigma^{\epsilon q}$, $\ldots$, $\sigma^{(\epsilon q)^{d_\Gamma}}$, then we may take $\Gamma_{(d)}$ to be the polynomial in $\cF(d)$ whose roots are $\sigma$, $\sigma^{(\epsilon q)^d}$, $\ldots$, $\sigma^{(\epsilon q)^{d(\frac{d_\Gamma}{\mathrm{gcd}(d,d_\Gamma)}-1)}}$.

For a semisimple $\ell'$-element of $\mathbf G^F$, we denote by $E(s):=\{\ \Gamma\in \cF\mid m_\Gamma(s)> 0\}$ the set of all elementary divisors of $s$.
When $\ell$ is odd, we let $\Gamma^{(\ell)}:=\Gamma_{(e)}$ and $E_\ell(s):=\{~\Gamma\in E(s)\mid w_\Gamma(s)>0 ~\}$.
When $\ell=2$, we let $\Gamma^{(2)}:=\Gamma$ if $4\mid q-\epsilon$ or $d_\Gamma$ is even, and $\Gamma^{(2)}:=\Gamma_{(2)}$ if $4\mid q+\epsilon$ or $d_\Gamma$ is odd.
Also we define $E_2(s):=E(s)$ if $4\mid q-\epsilon$, and $E_2(s):=\{ \ \Gamma\in E(s) \mid d_\Gamma \ \text{is even or}\ m_\Gamma(s)>1  \ \}$.
Obviously, the degree of $\Gamma^{(\ell)}$ is $e_\Gamma d_\Gamma/e$ in all cases above.

\begin{cor}\label{forconjugateinl}
Let $z\in \mathfrak Z$.
Then $\hat z\zeta_{s,\lambda}=\zeta_{s,\lambda}$ if and only if $(s,\lambda)$ and $(zs,z\lambda)$ are $\mathbf G^F$-conjugate and $z.\Gamma^{(\ell)}=\Gamma^{(\ell)}$ for all $\Gamma\in E_\ell(s)$.
\end{cor}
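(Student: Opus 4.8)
The plan is to convert the equality of the two $e_0$-Jordan-cuspidal characters into a statement about Lusztig series of $\mathbf L^F$, and then to read off the answer from the explicit shape of the Levi $\mathbf L=\mathbf L_{s,\lambda}$.

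First I would record that, with the choices made in the construction, $\mathbf L_{s,\lambda}=\mathbf L_{zs,z\lambda}=:\mathbf L$ and $C_{\mathbf L}(s)=C_{\mathbf L}(zs)=:\mathbf H$, and that the unipotent character $\phi$ of $\mathbf H^F$ entering $\zeta_{zs,z\lambda}$ is the one entering $\zeta_{s,\lambda}$ (this is contained in the discussion around Proposition~\ref{actiononlabel}). Thus $\zeta_{s,\lambda}\in\mathcal E(\mathbf L^F,s)$ and $\zeta_{zs,z\lambda}=\hat z\zeta_{s,\lambda}\in\mathcal E(\mathbf L^F,zs)$ are the Jordan correspondents of the same unipotent character $\phi$ in the Lusztig series attached to $s$ and to $zs$, respectively. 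Since these two series are disjoint unless $s$ and $zs$ are $(\mathbf L^*)^F$-conjugate, and since multiplication by the linear character $\hat z|_{\mathbf L^F}$ — which corresponds to a central element of $\mathbf L^*$ — leaves the unipotent part of a character unchanged under Jordan decomposition, I obtain
\[
\hat z\zeta_{s,\lambda}=\zeta_{s,\lambda}\quad\Longleftrightarrow\quad s\ \text{and}\ zs\ \text{are}\ (\mathbf L^*)^F\text{-conjugate}.
\]
Moreover, the forward implication already forces $b_{\mathbf G^F}(\mathbf L,\zeta_{s,\lambda})=b_{\mathbf G^F}(\mathbf L,\zeta_{zs,z\lambda})$, hence $B(s,\lambda)=B(zs,z\lambda)$ by Proposition~\ref{corrofblockspara}, hence $(s,\lambda)$ and $(zs,z\lambda)$ are $\mathbf G^F$-conjugate by Theorem~\ref{blockcorres}.

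It then remains to match ``$s\sim_{(\mathbf L^*)^F}zs$'' against the two stated conditions. For $\ell$ odd, the torus $\mathbf T=\prod_\Gamma\mathbf T_\Gamma$ is trivial on the block $\mathbf G_\Gamma$ when $w_\Gamma(s)=0$ and, for $\Gamma\in E_\ell(s)$, on the ``$e_\Gamma$-core piece'' of $\mathbf G_\Gamma$; accordingly I would write $\mathbf L^F$ explicitly as a direct product of a general linear and unitary group over $\F_{\epsilon q}$ carrying those parts of the $\mathbf G_\Gamma$ on which $s$ retains the elementary divisor $\Gamma$ (with the appropriate multiplicity $m_\Gamma(s)$ or $|\lambda_\Gamma|$), and, for each $\Gamma\in E_\ell(s)$, of a Coxeter-type factor of multiplicity $w_\Gamma(s)$ on which $s$ has elementary divisor $\Gamma^{(\ell)}=\Gamma_{(e)}$ over $\F_{(\epsilon q)^{e_\Gamma d_\Gamma}}$; passing from $s$ to $zs=z\cdot s$ replaces $\Gamma$ by $z.\Gamma$ on the first factor (using $z^{\epsilon q}=z$) and $\Gamma^{(\ell)}$ by $z.\Gamma^{(\ell)}$ on the Coxeter factors. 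Therefore $s$ and $zs$ are $(\mathbf L^*)^F$-conjugate exactly when $z$ permutes the elementary divisors of the first factor compatibly with their multiplicities and attached $e_\Gamma$-cores $\lambda_\Gamma$ — which by Theorem~\ref{blockcorres} is precisely the $\mathbf G^F$-conjugacy of $(s,\lambda)$ and $(zs,z\lambda)$ — and fixes the elementary divisor of each Coxeter factor, i.e.\ $z.\Gamma^{(\ell)}=\Gamma^{(\ell)}$ for all $\Gamma\in E_\ell(s)$. The case $\ell=2$ runs identically with $\Gamma^{(2)}$ in place of $\Gamma^{(\ell)}$ and $\lambda$ empty. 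Feeding these equivalences into the criterion above gives the corollary in both directions.

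The main obstacle is the explicit description of $\mathbf L^F$ (and $\mathbf H^F$) as a product of linear and unitary groups over the extension fields $\F_{(\epsilon q)^{d_\Gamma}}$, $\F_{(\epsilon q)^{e_\Gamma d_\Gamma}}$, and the factor-by-factor identification of the elementary divisor carried by $s$ — in particular the verification that on the Coxeter factor attached to $\Gamma\in E_\ell(s)$ this datum is exactly $\Gamma^{(\ell)}=\Gamma_{(e)}$, of degree $e_\Gamma d_\Gamma/e$, so that $(\mathbf L^*)^F$-conjugacy of $s$ and $zs$ only sees a root of $\Gamma$ up to the $(\epsilon q)^e$-power action, while the remaining part of $\mathbf L$ retains exactly the $e_\Gamma$-core data governed by the block parametrisation. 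Once this bookkeeping is available, separating the two kinds of factors and reversing the equivalences is routine.
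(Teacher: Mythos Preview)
Your proposal is correct and follows essentially the same route as the paper. The paper reduces, via Proposition~\ref{actiononlabel}, to the criterion that $(s,\lambda)$ and $(zs,z\lambda)$ be $\mathbf L^F$-conjugate, then writes $\mathbf L^F=L_0\times L_1$ with $L_0\cong\GL(\sum_\Gamma|\lambda_\Gamma|d_\Gamma,\epsilon q)$ and $L_1\cong\prod_\Gamma\GL(e_\Gamma d_\Gamma/e,(\epsilon q)^e)^{w_\Gamma(s)}$, and reads off the two stated conditions factor-wise---exactly the bookkeeping you identify as the main obstacle. Your use of $(\mathbf L^*)^F$-conjugacy is the same thing (for type $A$ the dual identifies with $\mathbf L$), and your detour through Proposition~\ref{corrofblockspara} and Theorem~\ref{blockcorres} for the forward implication is redundant since the factor-wise analysis already gives both directions, but it is not wrong.
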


\begin{proof}
First we assume that $\ell$ is odd.
Abbreviate $\mathbf L_{s,\lambda}=\mathbf L$.
By Proposition \ref{actiononlabel}, $\hat z\zeta_{s,\lambda}=\zeta_{s,\lambda}$ if and only if $(s,\lambda)$ and $(zs,z\lambda)$ are $\mathbf L^F$-conjugate.
Note that $\mathbf L^F=L_0\times L_1$, where
$L_0\cong\GL(\sum_\Gamma |\lambda_\Gamma|d_\Gamma  ,\epsilon q)$ and
$L_1\cong \prod_{\Gamma}\GL(\frac{e_\Gamma d_\Gamma}{e},(\epsilon q)^e)^{w_\Gamma(s)}$.
We write $s=s_0\times s_1$ the corresponding decomposition such that $s_0\in L_0$ and $s_1\in L_1$.
Then $(s,\lambda)$ and $(zs,z\lambda)$ are $\mathbf L^F$-conjugate if and only if $(s_0,\lambda)$ and $(zs_0,z\lambda)$ are $L_0$-conjugate and $s_1$ and $zs_1$ are $L_1$-conjugate.
The semisimple element of $\GL(\frac{e_\Gamma d_\Gamma}{e},(\epsilon q)^e)$ corresponding to the part of $(s_1)_\Gamma$ has a unique elementary divisor which may be assumed to be $\Gamma_{(e)}$.
So $s_1$ and $zs_1$ are $L_1$-conjugate if and only if $z.\Gamma_{(e)}=\Gamma_{(e)}$ for all $\Gamma\in E_\ell(s)$.
Hence $(s,\lambda)$ and $(zs,z\lambda)$ are $\mathbf L^F$-conjugate if and only if
$(zs,z\lambda)$ are $\mathbf G^F$-conjugate and $z.\Gamma_{(e)}=\Gamma_{(e)}$ for all $\Gamma\in E_\ell(s)$.
This proves the assertion for the case that $\ell$ is odd.

For $\ell=2$, the proof is entirely similar to the above.
\end{proof}

Suppose that $s\in\mathbf G^F$ is a semisimple $\ell'$-element.
By Proposition \ref{actiononlabel} and Corollary \ref{forconjugateinl}, if $\hat z\zeta_{s,\lambda}=\zeta_{s,\lambda}$, then $z\in\mathcal O_{\ell'}(\mathfrak Z)$.
Also, $\hat z\zeta_{s,\lambda}\in\mathcal{E}(\bL_{s,\lambda}^F,\ell')$ if and only if $z\in\mathcal O_{\ell'}(\mathfrak Z)$.
So in order to compute $\kappa^{\bL^F_{s,\lambda}}_{(\bL_{s,\lambda}\cap \mathbf X)^F}(\zeta_{s,\lambda})$,
we only need to consider the action of $\mathcal O_{\ell'}(\mathfrak Z)$ on the $\bG^F$-conjugacy classes of  pairs $(s,\lambda)$, where $s$ is a semisimple $\ell'$-element of $\bG^F$ and $\lambda\in\mathcal C(s)$.

\begin{rmk}\label{blocksofslsu}
Analogously with the description for irreducible $\ell$-Brauer characters of $X$ in Remark \ref{brasuerofslsu},
now we give a description for $\ell$-blocks of $X=\SL_n(\epsilon q)$ by summarizing the argument above.
We call a tuple
\begin{equation}\label{admissibletuples}
(([\sigma_1], m_1, \lambda^{(1)}),\dots,([\sigma_a], m_a, \lambda^{(a)}))
\end{equation}
of triples
an \emph{$(n,\ell)$-admissible block tuple},
if
\begin{itemize}
\item for every $1\le i\le a$, $\sigma_i\in \overline{\mathbb F}^\times$ is an $\ell'$-element, and
$m_i$ is a positive integer
such that $\lambda^{(i)}$ is the $e_i$-core of some partition of $m_i$,
where $e_i$ is the multiplicative order of $(\epsilon q)^{\mathrm{deg}(\sigma_i)}$ modulo $\ell$,
\item $[\sigma_i]\ne[\sigma_j]$ if $i\ne j$, and
\item $\sum\limits^{a}_{i=1}m_i\mathrm{deg}(\sigma_i)=n$.
\end{itemize}

An equivalence class of the $(n,\ell)$-admissible block tuple (\ref{admissibletuples}) up to a permutation of triples $$([\sigma_1], m_1, \lambda^{(1)}), \ldots, ([\sigma_a], m_a, \lambda^{(a)})$$ is called an \emph{$(n,\ell)$-admissible block symbol} and is denoted as
\begin{equation}\label{admissiblesyms}
\mathfrak b=[([\sigma_1],  m_1, \lambda^{(1)}),\dots,([\sigma_a], m_a, \lambda^{(a)})].
\end{equation}

Thus by Theorem \ref{blockcorres}, the set of $(n,\ell)$-admissible block symbols is a labeling set for $\ell$-blocks of $G$.
Denote by $B_{\mathfrak b}$ the $\ell$-block of $G$ corresponding to the $(n,\ell)$-admissible block symbol $\mathfrak b$.

The group $\mathcal O_{\ell'}(\mathfrak Z)$ acts on the set of $(n,\ell)$-admissible block symbols via
$$z\cdot [([\sigma_1], m_1, \lambda^{(1)}),\dots,([\sigma_a], m_a, \lambda^{(a)})]=[([z\sigma_1], m_1, \lambda^{(1)}),\dots,([z\sigma_a], m_a, \lambda^{(a)})]$$
for $z\in\mathcal O_{\ell'}(\mathfrak Z)$.
Now we denote by $C_1(\mathfrak b)$ the stabilizer group in $\mathcal O_{\ell'}(\mathfrak Z)$ of the $(n,\ell)$-admissible block symbol $\mathfrak b$.

For a positive integer $d$ and
 $\sigma\in\overline{\mathbb F}^\times$,
if $[\sigma]=\{\sigma, \sigma^{\epsilon q}, \ldots,\sigma^{(\epsilon q)^{\mathrm{deg}(\sigma)}}\}$, then we let
$$[\sigma]_{(d)}:=\{ \sigma, \sigma^{(\epsilon q)^d}, \sigma^{(\epsilon q)^{2d}}, \ldots, \sigma^{(\epsilon q)^{d(\frac{\mathrm{deg}(\sigma)}{\mathrm{gcd}(d,\mathrm{deg}(\sigma))}-1)}}\  \}.$$
We also define $$z[\sigma]_{(d)}:=\{~z\sigma, z\sigma^{(\epsilon q)^{d}}, z\sigma^{(\epsilon q)^{2d}},\ldots, z\sigma^{(\epsilon q)^{d(\frac{\mathrm{deg}(\sigma)}{\mathrm{gcd}(d,\mathrm{deg}(\sigma))}-1)}}~ \}$$ for $z\in\mathfrak Z$.

For an $(n,\ell)$-admissible block symbol $\mathfrak b$ as (\ref{admissiblesyms}), we define the sets $[\sigma_i]_{\mathfrak b}$ for $1\le i\le a$ as follows:
\begin{enumerate}
\item[(i)] When $\ell$ is odd, if $|\lambda^{(i)}|=m_i$, then $[\sigma_i]_{\mathfrak b}$ is empty, and if $|\lambda^{(i)}|<m_i$, then $[\sigma_i]_{\mathfrak b}=[\sigma_i]_{(e)}$.
\item[(ii)] When $\ell=2$,
\begin{itemize}
\item if $4\mid q-\epsilon$ or $\mathrm{deg}(\sigma_i)$ is even, then $[\sigma_i]_{\mathfrak b}=[\sigma_i]$,
\item if $4\mid q+\epsilon$, $\mathrm{deg}(\sigma_i)$ is odd and $m_i=1$, then $[\sigma_i]_{\mathfrak b}$ is empty, and
\item if $4\mid q+\epsilon$, $\mathrm{deg}(\sigma_i)$ is odd and $m_i>1$, then $[\sigma_i]_{\mathfrak b}=[\sigma_i]_{(2)}$.
\end{itemize}
\end{enumerate}

Obviously, if $[\sigma_i]_{\mathfrak b}$ is not empty, then it has cardinality $e_i\mathrm{deg}(\sigma_i)/e$.
If $[\sigma_i]_{\mathfrak b}$ is empty, we define the set $z[\sigma_i]_{\mathfrak b}$ to be the empty set for $z\in\mathfrak Z$.
Now we denote $$C_2(\mathfrak b):=\{~z\in\mathcal O_{\ell'}(\mathfrak Z)\mid z[\sigma_i]_{\mathfrak b}= [\sigma_i]_{\mathfrak b}\ \text{for all}\ 1\le i\le a~ \},$$
and let $\kappa(\mathfrak b):=|C_1(\mathfrak b)\cap C_2(\mathfrak b)|$.

Assume that $\ell$ is odd.
By Lemma \ref{cliffordthm}, Proposition \ref{restrofesplit} and Corollary \ref{forconjugateinl}, $\kappa^G_X(B_{\mathfrak b})=\kappa(\mathfrak b)$
~(\emph{i.e.} the number of $\ell$-blocks of $X$ covered by $B_{\mathfrak b}$ is $\kappa(\mathfrak b)$).
For two $(n,\ell)$-admissible block symbols $\mathfrak b$ and $\mathfrak b'$, if they are in the same $\mathcal O_{\ell'}(\mathfrak Z)$-orbit, then the sets of the $\ell$-blocks of $X$ covered by $B_{\mathfrak b}$ and $B_{\mathfrak b'}$ are the same.

If moreover, we let $(B_{\mathfrak b})_1$, $(B_{\mathfrak b})_2,\dots, (B_{\mathfrak b})_{\kappa(\mathfrak b)}$
the $\ell$-blocks of $X$ covered by $B_{\mathfrak b}$,
then by Remark \ref{esplitofslsu},
the set $\{(B_{\mathfrak b})_j\}$, where $\mathfrak b$ runs through the $\mathcal O_{\ell'}(\mathfrak Z)$-orbit representatives of $(n,\ell)$-admissible block symbols and $j$ runs through the integers between $1$ and  $\kappa(\mathfrak b)$,
is a complete set of the $\ell$-blocks of $X$.	

If $\ell=2$, then $\kappa^G_X(B_{\mathfrak b})\le\kappa(\mathfrak b)$, for any $(n,\ell)$-admissible block symbol $\mathfrak b$.
\end{rmk}

\begin{rmk}\label{brauerchartoblock}
Suppose that $\mathfrak b=[([\sigma_1], m_1, \lambda^{(1)}),\dots,([\sigma_a], m_a, \lambda^{(a)})]$ is an $(n,\ell)$-admissible block symbol.
Then the set of $\ell$-Brauer characters $\{\ \phi_{\mathfrak s}\ \}$, where $$\mathfrak s=[([\sigma_1],\mu^{(1)}),\dots,([\sigma_a],\mu^{(a)})]$$ runs through the $(n,\ell)$-admissible symbols such that
$|\mu_i|=m_i$ and
$\lambda^{(i)}$ is an $e_i$-core of $\mu^{(i)}$ where $e_i$ is the multiplicative order of $(\epsilon q)^{\mathrm{deg}(\sigma_i)}$ modulo $\ell$ for every $1\le i\le a$, is a complete set of irreducible $\ell$-Brauer characters of $B_{\mathfrak b}$.
Let $b$ be an $\ell$-block of $X$ covered by $B_{\mathfrak b}$, then $\Res^G_X\phi_{\mathfrak s}$ has exactly $\kappa(\mathfrak s)/\kappa(\mathfrak b)$ irreducible constituents lying in $b$ when $\ell$ is odd.

Moreover, if we write $\IBr_\ell(B_{\mathfrak b})=\{\ \phi_1,\dots,\phi_k \ \}$,
then by Corollary \ref{corrofbrauseries}, $\IBr_\ell(B_{z \mathfrak b})=\{\ \hat z \phi_1,\dots,\hat z\phi_k \ \}$ for all $z\in\mathcal O_{\ell'}(\mathfrak Z) $.
\end{rmk}

\begin{rmk}
Let $s$ be a semisimple $\ell'$-element of $G$, $\lambda\in\mathcal C(s)$, and $B$ the $\ell$-block of $G$ with label $(s,\lambda)$.
Suppose that $z\in\mathcal O_{\ell'}(\mathfrak Z)$ and $B'$ is the $\ell$-block of $G$ with label $(zs,z\lambda)$.
Then by Remark \ref{decompositionmtx} and \ref{brauerchartoblock}, with a suitable ordering, the decomposition matrices associated with the basic sets $\mathcal E(G,s)\cap \Irr(B)$ and $\mathcal E(G,zs)\cap \Irr(B')$ of $B$ and $B'$, respectively, are the same.
\end{rmk}

Now we consider the unipotent blocks.
The following result follows by Remark \ref{brasuerofslsu}, \ref{blocksofslsu} and \ref{brauerchartoblock} immediately~(also by \cite[Thm. C]{Ge93}).

\begin{lem}\label{restrforuni}
	Assume that $\ell\nmid \mathrm{gcd}(n,q-\epsilon)$.
	\begin{enumerate}
		\item[(i)] The restriction of $\ell$-Brauer characters gives a bijection from the set of irreducible $\ell$-Brauer characters in unipotent $\ell$-blocks of $G$ to the set of irreducible $\ell$-Brauer characters in unipotent $\ell$-blocks of $X$.
		\item[(ii)] Let $b$ be a unipotent $\ell$-block of $X$, then there exists a unique unipotent $\ell$-block $B$ of $G$ which covers $b$.
		Moreover, $\Res^G_X:\IBr_\ell(B)\to\IBr_\ell(b)$ is a bijection.
	\end{enumerate}	
\end{lem}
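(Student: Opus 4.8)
The plan is to read off both assertions from the explicit combinatorial parametrisations collected in Remarks \ref{brasuerofslsu}, \ref{blocksofslsu} and \ref{brauerchartoblock}, after checking that every ``unipotent'' parameter is rigid under the action of $\mathcal O_{\ell'}(\mathfrak Z)$ and that the hypothesis $\ell\nmid\gcd(n,q-\epsilon)$ kills the remaining ramification.

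First I would pin down the unipotent parameters. The polynomial in $\cF$ having $1$ as a root is $x-1$, whose only root is $1$, so $[1]=\{1\}$ and $[\sigma]=[1]$ forces $\sigma=1$; hence an irreducible $\ell$-Brauer character of $G$ is unipotent exactly when its $(n,\ell)$-admissible symbol has the shape $\mathfrak s=[([1],\mu)]$ with $\mu\vdash n$, and a unipotent $\ell$-block of $G$ has block symbol $\mathfrak b=[([1],n,\lambda)]$ with $\lambda$ an $e$-core of a partition of $n$. For such parameters $z\cdot\mathfrak s=[([z],\mu)]$, which equals $\mathfrak s$ only for $z=1$; thus $\kappa_{\ell'}(\mathfrak s)=1$, $C_1(\mathfrak b)=1$, and distinct unipotent symbols lie in distinct $\mathcal O_{\ell'}(\mathfrak Z)$-orbits (each being a singleton). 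Moreover $\kappa_\ell(\mathfrak s)$ divides the $\ell$-part of $\gcd(n,q-\epsilon)=|Z(X)|$, which is $1$ by hypothesis, so $\kappa(\mathfrak s)=\kappa(\mathfrak b)=1$. By Remark \ref{brasuerofslsu} every $\Res^G_X\phi_{\mathfrak s}$ with $\mathfrak s$ unipotent is therefore irreducible, and these restrictions are pairwise distinct since distinct orbits give disjoint restricted constituents; by Remark \ref{blocksofslsu} every unipotent $\ell$-block $B_{\mathfrak b}$ of $G$ covers a unique $\ell$-block $(B_{\mathfrak b})_1$ of $X$, and distinct unipotent $\mathfrak b$ give distinct such blocks.

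Next I would identify the unipotent side of $X$. By the $\ell=0$ case of Remark \ref{brasuerofslsu} (equivalently Proposition \ref{restrofordi}), the unipotent characters of $G$ restrict irreducibly to $X$ and their restrictions are precisely the unipotent characters of $X$; hence the unipotent $\ell$-blocks of $X$ are exactly the blocks covered by unipotent $\ell$-blocks of $G$, namely the $(B_{\mathfrak b})_1$ with $\mathfrak b$ unipotent, and the irreducible $\ell$-Brauer characters lying in them are, by Remark \ref{brauerchartoblock} together with $\kappa(\mathfrak s)/\kappa(\mathfrak b)=1$, exactly the $\Res^G_X\phi_{\mathfrak s}$ with $\mathfrak s$ unipotent. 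This yields (i): restriction is a well-defined map from the unipotent $\IBr_\ell$ of $G$ to the unipotent $\IBr_\ell$ of $X$, injective by the previous paragraph and surjective by this identification. For (ii), a unipotent $\ell$-block $b$ of $X$ equals $(B_{\mathfrak b})_1$ for a unique unipotent $\mathfrak b$, using that the parametrisation of the $\ell$-blocks of $X$ in Remark \ref{blocksofslsu} is a genuine partition; then $B:=B_{\mathfrak b}$ is a unipotent $\ell$-block of $G$ covering $b$, it is the only one because distinct unipotent $\mathfrak b$ cover distinct blocks of $X$, and $\Res^G_X\colon\IBr_\ell(B)\to\IBr_\ell(b)$ is the bijection supplied by (i).

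The substantive content is entirely in the two reductions $\kappa(\mathfrak s)=1$ and $\kappa(\mathfrak b)=1$ under the hypothesis; the only step requiring a little care is the bookkeeping that ties ``unipotent $\ell$-block of $X$'' to the parameter $(B_{\mathfrak b})_1$ and that uses the disjointness of the complete sets of irreducible $\ell$-Brauer characters and of $\ell$-blocks of $X$ described in Remarks \ref{brasuerofslsu} and \ref{blocksofslsu}, so that the covering relation between $G$- and $X$-blocks can be read off unambiguously. (Alternatively, once the unipotent $\ell$-Brauer characters of $G$ are seen to restrict irreducibly, both statements may be quoted from \cite[Thm. C]{Ge93}.)
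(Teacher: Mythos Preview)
Your proposal is correct and takes essentially the same approach as the paper, which simply states that the lemma ``follows by Remark \ref{brasuerofslsu}, \ref{blocksofslsu} and \ref{brauerchartoblock} immediately (also by \cite[Thm.~C]{Ge93})'' without further detail. You have spelled out precisely the combinatorial verification the paper leaves implicit: that for a unipotent symbol $\mathfrak s=[([1],\mu)]$ one has $\kappa_{\ell'}(\mathfrak s)=1$ (the $\mathcal O_{\ell'}(\mathfrak Z)$-orbit is a singleton) and $\kappa_\ell(\mathfrak s)=1$ (by the hypothesis $\ell\nmid\gcd(n,q-\epsilon)$), and similarly $\kappa(\mathfrak b)=1$ for unipotent block symbols; and you correctly note the alternative route via Geck's result, exactly as the paper does.

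One small caveat worth flagging: your use of the full block parametrisation from Remark \ref{blocksofslsu} to conclude that distinct unipotent $\mathfrak b$ yield distinct blocks $(B_{\mathfrak b})_1$ of $X$ is only asserted there for odd $\ell$; for $\ell=2$ that remark gives only the inequality $\kappa^G_X(B_{\mathfrak b})\le\kappa(\mathfrak b)$. This still yields $\kappa^G_X(B_{\mathfrak b})=1$ in the unipotent case, so each unipotent block of $G$ covers a unique block of $X$, but the injectivity of $B\mapsto b$ needs a word more (e.g.\ deduce it from part (i) together with the fact that $\Res^G_X$ maps $\IBr_\ell(B)$ bijectively onto the Brauer characters of the single block it covers, and these images for distinct $B$ are disjoint). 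Since the paper itself gives no detail here and also falls back on \cite[Thm.~C]{Ge93}, your parenthetical alternative already covers this point.
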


We can  consider the extendibility of the irreducible $\ell$-Brauer characters in unipotent $\ell$-blocks of $X=\SL_n(\epsilon q)$ now.

\begin{prop}\label{extenofchar}
	Let $\chi\in\Irr(G)$, then $\chi$  extends to $(G\rtimes D)_\chi$.
\end{prop}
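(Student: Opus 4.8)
The plan is to reduce the extension of $\chi = \chi_{s,\mu}$ to $(G\rtimes D)_\chi$ to two separate extension problems: one for the semisimple part $\hat s$ and one for the unipotent part $\chi_\mu$, exploiting that $D = \langle F_p,\gamma\rangle$ is abelian of order $2f$ and that Lusztig induction $R^{\mathbf G}_{C_{\mathbf G}(s)}$ commutes with the action of $D$ up to the natural relabeling of pairs. Since $G\rtimes D$ is (up to $Z(G)$) essentially $\widetilde G \rtimes D$ for $\widetilde G = \GL_n(\epsilon q)$ itself, and $\widetilde G/Z(\widetilde G)$ together with $D$ generates $\Aut(X)$ when $n\ge 3$, the key point is that $\GL_n(\epsilon q)$ has a very clean character theory: by a theorem of Lusztig (or Digne--Michel), every $\chi\in\Irr(G)$ is determined by a $G$-conjugacy class of pairs $(s,\mu)$ with $\mu\in\mathcal P(s)$, and the stabilizer $D_\chi$ permutes these pairs. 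First I would describe the action of $d\in D$ on $(s,\mu)$: $F_p$ and $\gamma$ act on the set of elementary divisors $\Gamma\in\mathcal F$ and hence on semisimple classes and on $\mathcal P(s)$, compatibly with Jordan decomposition, so $\chi_{s,\mu}^d = \chi_{s^d,\mu^d}$ for an appropriate transported pair.

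The main step is to handle the case where $\chi$ is $D$-stable (the general case follows because $D$ is abelian: if $D_\chi < D$ one extends first to $G\rtimes D_\chi$ and then uses that $D/D_\chi$ is cyclic-by-cyclic — actually abelian — so one can build the extension to $G\rtimes D$ stepwise, or simply observe it suffices to extend to $(G\rtimes D)_\chi = G\rtimes D_\chi$). For $D$-stable $\chi_{s,\mu}$, one wants a $G\rtimes D$-stable extension. The cleanest route is: (1) the field automorphism $F_p$ acts on $\GL_n(\epsilon q)$ and, because $\GL_n$ has connected center and is of type $A$ with no graph-field issues obstructing extendibility, every irreducible character of $\GL_n(\epsilon q)$ extends to $\GL_n(\epsilon q)\rtimes\langle F_p\rangle$ — this is essentially the statement that the relevant cohomological obstruction vanishes, which for $\GL$ is known (one can cite the explicit construction of characters via Deligne--Lusztig theory, where $R^{\mathbf G}_{\mathbf T}$ is $F_p$-equivariant on the nose, giving equivariant extensions of Gelfand--Graev and regular characters and hence, via Jordan decomposition as parametrized in Section \ref{Brauercharacterofslsu}, of all of $\Irr(G)$); (2) the graph automorphism $\gamma$ is handled similarly — for unitary groups $\gamma$ acts as a field automorphism (it is already absorbed, as $F = \gamma^{(1-\epsilon)/2}\circ F_q$), and for linear groups $\gamma = $ transpose-inverse sends $\chi_{s,\mu}$ to $\chi_{s^{-1},\mu}$ (inverse-transpose on semisimple classes, trivial on unipotent labels since unipotent characters of $\GL$ are $\gamma$-stable), and again the Deligne--Lusztig construction is $\gamma$-equivariant.

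Concretely I would argue: let $\widehat{\mathfrak Z}$ act on $\Irr(G)$ by multiplication by linear characters $\hat z$; by Proposition \ref{restrofordi}, $\hat z\chi_{s,\mu} = \chi_{zs,z\mu}$, so the $G\rtimes D$-orbit structure on $\Irr(G)$ and the linear-character structure are both understood. Since $\GL_n(\epsilon q)$ is self-normalizing up to its diagonal torus inside $\GL_n(\overline{\F}_q)\rtimes D$ in the relevant sense, and $H^2$-obstructions for cyclic (indeed abelian) quotients like $D_\chi$ vanish whenever one exhibits a single equivariant construction of $\chi$, the heart of the matter is producing that equivariant construction. The hard part will be step (1)–(2): showing the Deligne--Lusztig / Jordan-decomposition parametrization can be made simultaneously $F_p$- and $\gamma$-equivariant, i.e. that the choices of representatives $g_\Gamma(s)$, the isomorphism \eqref{isomorphsimhat}, and the labeling of unipotent characters can all be taken compatibly with $D$. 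I expect this to be citable — it is implicit in the literature on $\Aut$-equivariant Jordan decomposition for groups with connected center (and is one reason the "inductive" conditions are tractable for type $A$) — but if a self-contained argument is wanted, one reduces to the extendibility of the Steinberg character and the trivial character (both canonically $D$-stable and extendible by uniqueness), the extendibility of semisimple/regular characters $\hat s$ (linear characters of tori, handled directly), and then propagates through $R^{\mathbf G}_{C_{\mathbf G}(s)}(\hat s\chi_\mu)$ using that Lusztig induction commutes with $D$. Once a $D_\chi$-stable extension to $G\rtimes D_\chi = (G\rtimes D)_\chi$ is in hand, the proof is complete.
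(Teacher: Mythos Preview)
Your outline is aimed at a genuinely different and much heavier machinery than what the paper uses, and as written it has a real gap precisely at the point you flag as ``the hard part.''

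The paper's proof is short and elementary. Since $D$ is abelian, $(G\rtimes D)_\chi=G\rtimes D_\chi$. If $D_\chi$ is cyclic we are done. If not, then $D_\chi=\langle\gamma,F_p^i\rangle$ for some $i\mid f$. By Bonnaf\'e's version of Shintani descent \cite[Thm.~4.3.1, Lem.~4.3.2]{Bo99} there exists an extension $\tilde\chi$ of $\chi$ to $G\rtimes\langle F_p^i\rangle$ with $\tilde\chi(F_p^i)\ne 0$. Since $\gamma$ commutes with $F_p$ and fixes $\chi$, the conjugate $\tilde\chi^\gamma$ is another extension of $\chi$ with the same nonzero value at $F_p^i$; by Gallagher (any two extensions differ by a linear character of the cyclic quotient $\langle F_p^i\rangle$, and a nontrivial such character would change the value at $F_p^i$) this forces $\tilde\chi^\gamma=\tilde\chi$. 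Hence $\tilde\chi$ is $\gamma$-invariant and extends further over the cyclic quotient $\langle\gamma\rangle$. No Jordan decomposition, no equivariant Lusztig induction, no case analysis on $(s,\mu)$.

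Your approach, by contrast, tries to build the extension functorially through $R^{\mathbf G}_{C_{\mathbf G}(s)}(\hat s\chi_\mu)$. The gap is that you never actually kill the $H^2(D_\chi,\mathbb C^\times)$ obstruction: for $D_\chi\cong C_2\times C_f$ this group is nontrivial, so ``$D$ is abelian'' is not enough, and your sentence ``$H^2$-obstructions for cyclic (indeed abelian) quotients like $D_\chi$ vanish whenever one exhibits a single equivariant construction'' begs the question. The fallback you sketch---extend $\hat s$ and $\chi_\mu$ separately on the Levi side and push through Lusztig induction---runs into the same problem one level down (you need $\chi_\mu$ to extend to its stabilizer in $N_{G\rtimes D}(C_G(s))$, which is the original question for a product of smaller $\GL$'s, and you also need a form of Lusztig induction for the disconnected group $G\rtimes D$). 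This program can be made to work (it is essentially the content of the Cabanes--Sp\"ath equivariance results for type $A$), but it is far more than what is needed here, and your write-up does not supply the missing arguments. The paper's nonvanishing-value trick is the idea you are missing.
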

\begin{proof}
	First, $(G\rtimes D)_\chi=G\rtimes D_\chi$. If $D_\chi$ is cyclic, then $\chi$ extends to $(G\rtimes D)_\chi$.
	If $D_\chi$ is not cyclic, then $D_\chi=\langle \gamma, F_p^i \rangle$ for some $i\ |\ f$.
	By \cite[Thm. 4.3.1 and Lem. 4.3.2]{Bo99}, there exists an extension $\tilde{\chi}$ of $\chi$ to $G\rtimes \langle  F_p^i \rangle$ such that $\tilde\chi (F_p^i)\ne 0$.
	Since $\gamma$ fixes $\chi$,  $\tilde{\chi}^\gamma$ is also an extension of $\chi$.
	Also we have $\tilde{\chi}^\gamma(F_p^i)=\tilde\chi (F_p^i)\ne 0$ since $\gamma$ and $F_p$ commute.
	By a direct consequence of Gallagher's theorem~(see \cite[Rmk. 9.3(a)]{Sp09}), we have $\tilde{\chi}^\gamma=\tilde\chi$, hence $\tilde{\chi}$ is $\gamma$-invariant.
	So $\tilde{\chi}$ has an extension to $G\rtimes D_\chi$ which is also an extension of $\chi$.
\end{proof}

\begin{cor}\label{extenforuG}
	Let $\phi\in\IBr_\ell(G)$ in a unipotent $\ell$-block of $G$, then $\phi$ extends to $G\rtimes D$.
\end{cor}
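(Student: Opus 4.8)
The plan is to obtain this from Proposition \ref{extenofchar} and Lemma \ref{fromordtomod}, using that the unipotent characters provide a basic set with unitriangular decomposition matrix and are invariant under all automorphisms in $D$.

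Concretely, let $B$ be the unipotent $\ell$-block of $G$ with $\phi\in\IBr_\ell(B)$, and put $X:=\mathcal E(\mathbf G^F,1)\cap\Irr(B)$, the set of unipotent characters of $G$ lying in $B$. By \cite{GH91} (see Remark \ref{decompositionmtx}), $X$ is a basic set of $B$, and by the results on unipotent decomposition matrices in type $A$ (cf.\ \cite{Ge93}) the $\ell$-decomposition matrix of $B$ with respect to $X$ is unitriangular for a suitable ordering of $X$.

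Next I would check that every $\chi\in X$ is $D$-invariant. The automorphisms $F_p$ and $\gamma$ permute the unipotent characters $\mathcal E(G,1)$ and preserve their degrees and their Lusztig families; since in type $A$ every family of unipotent characters is a singleton, each unipotent character of $G$ is fixed by $D$. Hence $(G\rtimes D)_\chi=G\rtimes D_\chi=G\rtimes D$, and Proposition \ref{extenofchar} yields that $\chi$ extends to $G\rtimes D$.

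Finally I would apply Lemma \ref{fromordtomod} with $N=G\unlhd H=G\rtimes D$ and the block $B$: the basic set $X$ of $B$ has unitriangular decomposition matrix and consists of characters that extend to $G\rtimes D$, so every irreducible $\ell$-Brauer character of $B$ extends to $G\rtimes D$; in particular $\phi$ does. The only slightly delicate point is the invariance of the unipotent characters under the graph automorphism $\gamma$, but this is standard in type $A$ (each family being a singleton), and the remaining steps are a direct assembly of results already available.
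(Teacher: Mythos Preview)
Your proposal is correct and follows essentially the same route as the paper: use that the unipotent characters form a basic set with unitriangular decomposition matrix, that they are $D$-invariant, extend them via Proposition~\ref{extenofchar}, and then apply Lemma~\ref{fromordtomod}. The only cosmetic differences are that the paper works with the full basic set $\mathcal E(\mathbf G^F,\ell')$ rather than restricting to a single block (which amounts to the same thing here, since $\mathcal E(\mathbf G^F,\ell')\cap\Irr(B)=\mathcal E(\mathbf G^F,1)\cap\Irr(B)$ for a unipotent block), cites \cite{Ge91} rather than \cite{Ge93} for unitriangularity, and simply invokes \cite{Ma08} for the $D$-invariance of unipotent characters instead of arguing via singleton families.
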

\begin{proof}
	It is well-known that every unipotent character of $G$ is $D$-invariant~(see, for example, \cite[Thm. 2.5]{Ma08}).
	By \cite[Thm.~5.1]{GH91}, $\mathcal E(\mathbf G^F, \ell')$ is a basic set of $\IBr_\ell(G)$
	and by \cite{Ge91}, after a suitable arrangement, the decomposition matrix of $G$ with respect to $\mathcal E(\mathbf G^F, \ell')$ is unitriangular.	
	Then the claim follows by Proposition \ref{extenofchar} and Lemma \ref{fromordtomod}.
\end{proof}

Thus by Lemma \ref{restrforuni} and Corollary \ref{extenforuG}, we have:

\begin{cor}\label{extofunpofsl}
	Let $\ell\nmid\mathrm{gcd} (n,q-\epsilon)$, and $\theta\in\IBr_\ell(X)$ in a unipotent $\ell$-block of $X$, then $\theta$ extends to $G\rtimes D$.
\end{cor}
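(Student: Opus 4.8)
The plan is to deduce the statement directly from the two immediately preceding results, with only a short Clifford-theoretic bookkeeping step in between. Lemma \ref{restrforuni} transports the problem from $X$ to $G$, and Corollary \ref{extenforuG} solves the extension problem on the $G$-side; combining them gives the extension to $G\rtimes D$ after restricting back down.

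First I would invoke Lemma \ref{restrforuni}~(i): since $\ell\nmid\mathrm{gcd}(n,q-\epsilon)$, restriction of $\ell$-Brauer characters is a bijection from the set of irreducible $\ell$-Brauer characters in unipotent $\ell$-blocks of $G$ onto the corresponding set for $X$. Hence, for the given $\theta\in\IBr_\ell(X)$ lying in a unipotent $\ell$-block of $X$, there is a (unique) $\phi\in\IBr_\ell(G)$ lying in a unipotent $\ell$-block of $G$ with $\Res^G_X\phi=\theta$; in particular $\Res^G_X\phi$ is already irreducible. Next I would apply Corollary \ref{extenforuG} to this $\phi$: since $\phi$ lies in a unipotent $\ell$-block of $G$, it extends to some $\tilde\phi\in\IBr_\ell(G\rtimes D)$.

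It remains to check that $\tilde\phi$ restricts to $\theta$ on $X$. Here one observes that $X\unlhd G\rtimes D$: indeed $X\unlhd G$, and $D=\langle F_p,\gamma\rangle$ stabilises $X$ since both $F_p$ and $\gamma$ preserve the condition $\det=1$. Transitivity of restriction then gives $\Res^{G\rtimes D}_X\tilde\phi=\Res^G_X\bigl(\Res^{G\rtimes D}_G\tilde\phi\bigr)=\Res^G_X\phi=\theta$, so $\tilde\phi$ is the desired extension of $\theta$ to $G\rtimes D$ (and in particular $\theta$ is $G\rtimes D$-invariant, as it must be).

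I do not expect a genuine obstacle: the substantive content—the bijection on Brauer characters under the hypothesis $\ell\nmid\mathrm{gcd}(n,q-\epsilon)$, and the extendibility of unipotent $\ell$-Brauer characters of $G$ to $G\rtimes D$—has already been established in Lemma \ref{restrforuni} and Corollary \ref{extenforuG} (resting in turn on Proposition \ref{extenofchar}, the unitriangular decomposition matrix of \cite{Ge91}, Lemma \ref{fromordtomod}, and the $D$-invariance of unipotent characters). The only point requiring a word of care is the normality of $X$ in $G\rtimes D$ together with the factoring $\Res^{G\rtimes D}_X=\Res^G_X\circ\Res^{G\rtimes D}_G$, which makes the identification of $\Res^{G\rtimes D}_X\tilde\phi$ with $\theta$ immediate once one knows $\phi$ restricts irreducibly to $X$.
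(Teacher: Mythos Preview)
Your proposal is correct and is precisely the argument the paper intends: the paper simply states that the corollary follows from Lemma~\ref{restrforuni} and Corollary~\ref{extenforuG}, and you have spelled out the evident details (lift $\theta$ to the unique $\phi\in\IBr_\ell(G)$ via Lemma~\ref{restrforuni}, extend $\phi$ to $G\rtimes D$ via Corollary~\ref{extenforuG}, and restrict back using $X\unlhd G\rtimes D$).
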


\section{Weights of $\SL_n(\epsilon q)$}\label{chapweightsofslsu}

\subsection{Radical subgroups of $\GL_n(\epsilon q)$}\label{RadicalsubgroupsofGLn}

First, we consider the case that $\ell$ is an odd prime and let $a=v_\ell((\epsilon q)^e-1)$.
We first recall the basic constructions in \cite{AF90} and \cite{An94}.
Let $\alpha,\gamma$ be non-negative integers, $Z_\alpha$ be the cyclic group of order $\ell^{a+\alpha}$ and $E_\gamma$ be an extraspecial $\ell$-group of order $\ell^{2\gamma+1}$.
We may assume the exponent of $E_\gamma$ is $\ell$ by \cite[(4A)]{AF90} and \cite[(1B)]{An94}.
Denote by $Z_\alpha E_\gamma$ the central product of $Z_\alpha$ and $E_\gamma$ over $\Omega_1(Z_\alpha)=Z(E_\gamma)$.
Assume $Z_\alpha E_\gamma = \langle z,x_j,y_j \mid j=1,\ldots,\gamma \rangle$ with $\langle z\rangle=Z_\alpha$, $E_\gamma = \langle x_j,y_j \mid j=1,\ldots,\gamma \rangle$, $o(z)=\ell^{a+\alpha}$, $o(x_j)=o(y_j)=\ell~(1\leqslant j\leqslant\gamma)$,
$[x_i,x_j]=[y_i,y_j]=[x_i,y_j]=1$ if $i\ne j$, and
$[x_j,y_j]=x_jy_jx_j^{-1}y_j^{-1}=z^{\ell^{a+\alpha-1}}$.

The group $Z_\alpha E_\gamma$ can be embedded into $\GL(\ell^\gamma,(\epsilon q)^{e\ell^\alpha})$ uniquely up to conjugacy in the sense that $Z_\alpha$ is identified with $\cO_\ell(Z(\GL(\ell^\gamma,(\epsilon q)^{e\ell^\alpha})))$.
We denote by $R_{\alpha,\gamma}$ the image of $Z_\alpha E_\gamma$ in $\GL(\ell^\gamma,(\epsilon q)^{e\ell^\alpha})$.
Then by \cite[(1C)]{An94}, $R_{\alpha,\gamma}$ is unique up to conjugacy in $\GL(e\ell^{\alpha+\gamma},\epsilon q)$ in the sense that $Z(R_{\alpha,\gamma})$ is primary.

Let $R_{m,\alpha,\gamma}=R_{\alpha,\gamma}\otimes I_{(m)}$.
For each positive integer $c$, let $A_c$ denote the elementary abelian group of order $\ell^c$.
For a sequence of positive integers $\bc=(c_1,\ldots,c_t)$ with $t\geqslant0$,
we denote by $A_\bc=A_{c_1}\wr\cdots\wr A_{c_t}$ and $|\bc|=c_1+\cdots+c_t$.
Then $A_\bc$ can be regarded as an $\ell$-subgroup of the symmetric group $\mathfrak S(\ell^{|\bc|})$.
Groups of the form $R_{m,\alpha,\gamma,\bc}=R_{m,\alpha,\gamma}\wr A_\bc$ are called the basic subgroups.
$R_{m,\alpha,0,\zero}$ is just $R^{m,\alpha}$ in \cite{FS82} which we will write as $R_{m,\alpha}$ here.
By \cite[(4A)]{AF90} and \cite[(2B)]{An94}, any $\ell$-radical subgroup $R$ of $ G$ is conjugate to $R_0 \times R_1 \times \cdots \times R_u$, where $R_0$ is a trivial group and $R_i$ ($i\geqslant1$) is a basic subgroup.

Let $G_{m,\alpha}=\GL(me\ell^\alpha,\epsilon q)$,
$G_{m,\alpha,\gamma}=\GL(me\ell^{\alpha+\gamma},\epsilon q)$,
$C_{m,\alpha}=C_{G_{m,\alpha}}(R_{m,\alpha})$  and
$C_{m,\alpha,\gamma}=C_{G_{m,\alpha,\gamma}}(R_{m,\alpha,\gamma})$,
then $C_{m,\alpha,\gamma}=C_{m,\alpha}\otimes I_\gamma$.
Let $G_{m,\alpha,\gamma,\bc}=\GL(me\ell^{\alpha+\gamma+|\bc|},\epsilon q)$
and $C_{m,\alpha,\gamma,\bc}=C_{G_{m,\alpha,\gamma,\bc}}(R_{m,\alpha,\gamma,\bc})$.
Then $C_{m,\alpha,\gamma,\bc}=C_{m,\alpha}\otimes I_\gamma \otimes I_{\bc}$.
We will also use the notation that $N_{m,\alpha,\gamma}=N_{G_{m,\alpha,\gamma}}(R_{m,\alpha,\gamma})$.

Now we consider the case that $\ell=2$.
Assume that $q$ is odd and let $a$ be the positive integer such that $2^{a+1}=(q^2-1)_2$. We will use the following conventions:
\begin{compactitem}
	\item \textbf{Case 1} \quad ``$4~|~q-\epsilon$'' or ``$4~|~q+\epsilon$ and $\alpha\geqslant1$'',
	\item \textbf{Case 2} \quad ``$4~|~q+\epsilon$ and $\alpha=0$''.
\end{compactitem}
We first recall the basic constructions in \cite{An92} and \cite{An93}.

Let $\alpha,\gamma$ be non-negative integers.
We denote by $Z_\alpha$ the cyclic group of order $2^{a+\alpha}$ in \textbf{Case 1} and of order $2$ in \textbf{Case 2}.
Let $E_\gamma$ be an extraspecial group of order $2^{2\gamma+1}$.
Denote by $Z_\alpha E_\gamma$ the central product of $Z_\alpha$ and $E_\gamma$ over $\Omega_1(Z_\alpha)=Z(E_\gamma)$.
Thus in \textbf{Case 2}, $Z_\alpha E_\gamma=E_\gamma$.
Assume $Z_\alpha E_\gamma = \langle z,x_j,y_j \mid j=1,\ldots,\gamma \rangle$ with $\langle z\rangle=Z_\alpha$, $E_\gamma = \langle x_j,y_j \mid j=1,\ldots,\gamma \rangle$, $[x_j,y_j]=x_jy_jx_j^{-1}y_j^{-1}$ is $z^{2^{a+\alpha-1}}$ in \textbf{Case 1} and $z$ in \textbf{Case 2}.
Assume furthermore that $o(x_j)=o(y_j)=2$ for $j\geqslant2$ and $o(x_1)=o(y_1)=2$ or $2^2$ when $E_\gamma$ is of plus type or minus type respectively, which means that $\langle x_1,y_1\rangle$ is isomorphic to $D_8$ or $Q_8$.
We may assume $E_\gamma$ is of plus type in \textbf{Case 1}.

The group $Z_\alpha E_\gamma$ can be embedded into $\GL(2^\gamma,(\epsilon q)^{2^\alpha})$ uniquely up to conjugacy in the sense that $Z_\alpha$ is identified with $\cO_2(Z(\GL(2^\gamma,(\epsilon q)^{2^\alpha})))$ by \cite[p.509]{An92} and \cite[p.266]{An93}.
We denote by $R_{\alpha,\gamma}$ the image of $Z_\alpha E_\gamma$ in $\GL(2^\gamma,(\epsilon q)^{2^\alpha})$.
Then by \cite[p.510]{An92} and \cite[p.266]{An93}, $R_{\alpha,\gamma}$ is unique up to conjugacy in $\GL(2^{\alpha+\gamma},\epsilon q)$ in the sense that $Z(R_{\alpha,\gamma})$ is primary.
Set $R_{m,\alpha,\gamma}=R_{\alpha,\gamma}\otimes I_{(m)}$.

Now assume $4~|~q+\epsilon$, then $\GL(2,\epsilon q)$ has a Sylow $2$-subgroup isomorphic to the semi-dihedral group $S_{a+2}$ of order $2^{a+2}$, thus $S_{a+2}$ is unique up to conjugacy in $\GL(2,\epsilon q)$.
Denote by $S_{a+2} E_\gamma$ the central product of $S_{a+2}$ and $E_\gamma$ over $Z(S_{a+2})=Z(E_\gamma)$.
We may assume $E_\gamma$ is of plus type by \cite[(1F)]{An92} and \cite[(1I)]{An93}.
Also, $S_{a+2}E_\gamma$ can be embedded into $\GL(2^{\gamma+1},\epsilon q)$ and we denote by $S_{1,\gamma}$ the image of $S_{a+2}E_\gamma$.
By \cite[(1F)]{An92} and \cite[(1I)]{An93}, $S_{1,\gamma}$ is unique up to conjugacy in $\GL(2^{\gamma+1},\epsilon q)$.
Set $S_{m,1,\gamma}=S_{1,\gamma}\otimes I_m$.

For each $\alpha\geqslant0$, $\gamma\geqslant0$, $m\geqslant1$ and $1\leqslant i\leqslant2$, define
$$R^i_{m,\alpha,\gamma}=\left\{ \begin{array}{ll} S_{m,1,\gamma-1} & \textrm{in \textbf{Case 2} and}~\gamma\geqslant1,i=2,\\ R_{m,\alpha,\gamma} & \textrm{otherwise}. \end{array} \right.$$
For each positive integer $c$, let $A_c$ denote the elementary abelian group of order $2^c$.
For a sequence of positive integers $\bc=(c_1,\ldots,c_t)$ with $t\geqslant0$,
we denote by $A_\bc=A_{c_1}\wr\cdots\wr A_{c_t}$ and $|\bc|=c_1+\cdots+c_t$.
Set $R^i_{m,\alpha,\gamma,\bc}=R^i_{m,\alpha,\gamma}\wr A_\bc$.

Groups of the form $R^i_{m,\alpha,\gamma,\bc}$ are called the basic subgroups except in \textbf{Case 2} and $\gamma=0,c_1=1$.
By \cite[(2B)]{An92} and \cite[(2B)]{An93}, any $2$-radical subgroup $R$ of $ G$ is conjugate to $R_1 \times \cdots R_s \times R_{s+1} \times \cdots \times R_u$, where $R_i=\{\pm I_{m_i}\}$ for $1\leqslant i\leqslant s$ and $R_i$ ($i\geqslant s+1$) are basic subgroups.
Moreover, if $4~|~q-\epsilon$, then $s=0$.

When considering further the weights instead of only radical subgroups, we can exclude some basic subgroups which do not afford any weight by the remark on \cite[p.518]{An92} and \cite[p.275]{An93}.
Thus as in \cite{An92} and \cite{An93}, we may assume every component of a $2$-radical subgroup is of the form $D_{m,\alpha,\gamma,\bc}$ defined as follows:
\begin{equation}\label{equation:basic_subgp-2}
D_{m,\alpha,\gamma,\bc}=\left\{ \begin{array}{ll}
R_{m,\alpha,\gamma,\bc} & \textrm{in ``\textbf{Case 1}" or ``\textbf{Case 2} and}~\gamma=0, c_1\neq1",\\
S_{m,1,\gamma-1,\bc} & \textrm{in \textbf{Case 2} and}~\gamma\geqslant1,\\
R_{m,0,1,\bc'} & \textrm{in \textbf{Case 2} and}~\gamma=0, c_1=1,
\end{array} \right.
\end{equation}
where $\bc'=(c_2,\ldots,c_t)$ for $\bc=(c_1,\ldots,c_t)$ and in \textbf{Case 2} and $\gamma=0, c_1=1$, $R_{m,0,1}$ is a quaternion group.
We will use some obvious simplification of the notations, such as $D_{\alpha,\gamma}=D_{0,\alpha,\gamma,\zero}$.
Note also that $D_{m,0,0,\zero}$ in \textbf{Case 2} is just the group $\{\pm I_m\}$.

In order to deal with the two cases that $\ell$ is odd and $\ell=2$ simultaneously, we use the notation $D_{m,\alpha,\gamma,\bc}$ standing for the basic subgroups, so for an odd prime $\ell$, $D_{m,\alpha,\gamma,\bc}=R_{m,\alpha,\gamma,\bc}$, and for $\ell=2$, $D_{m,\alpha,\gamma,\bc}$ is as in (\ref{equation:basic_subgp-2}).

\vspace{2ex}

\begin{lem}\label{detofC}
	 Assume that $\ell\nmid\mathrm{gcd}(n,q-\epsilon)$.
	Let $R$ be an $\ell$-radical subgroup of $G$, then
	$\mathcal D(RC_G(R))=\mathcal D(N_G(R))=\mathfrak Z$~(where $\mathcal D$ is defined as in (\ref{def-mathcalD})).
\end{lem}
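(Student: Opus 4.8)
The plan is to use the trivial inclusions $\mathcal{D}(RC_G(R)) \subseteq \mathcal{D}(N_G(R)) \subseteq \mathfrak{Z}$ (the determinant of any subgroup of $G$ lands in $\mathfrak{Z}$, and $RC_G(R) \le N_G(R)$), so that everything reduces to proving $\mathfrak{Z} \subseteq \mathcal{D}(RC_G(R))$. Since determinants are invariant under conjugation, I may replace $R$ by a conjugate and assume, using the classification of $\ell$-radical subgroups recalled in Section~\ref{RadicalsubgroupsofGLn}, that $R = R_1 \times \cdots \times R_v$ lies block-diagonally in $G_1 \times \cdots \times G_v$ with $G_j := \GL(n_j, \epsilon q)$, $\sum_j n_j = n$ and all $n_j \ge 1$, and each $R_j \le G_j$ either trivial, or $\{\pm I_{n_j}\}$ (only when $\ell = 2$), or a basic subgroup $D_{m_j,\alpha_j,\gamma_j,\bc_j}$. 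As the determinant of a block-diagonal matrix is the product of the blocks' determinants, $\mathcal{D}(RC_G(R))$ contains the subgroup of $\mathfrak{Z}$ generated by the $\mathcal{D}(R_jC_{G_j}(R_j))$, and it suffices to find one $j$ with $\mathcal{D}(R_jC_{G_j}(R_j)) = \mathfrak{Z}$. I will repeatedly use that if $R_j$ is a \emph{scalar} subgroup of $G_j$ (in particular if $R_j$ is trivial, or $\{\pm I_{n_j}\}$, or of the form $\mathcal{O}_\ell(\mathfrak{Z}) \otimes I_{(m)}$), then $R_jC_{G_j}(R_j) = G_j$ and hence $\mathcal{D}(R_jC_{G_j}(R_j)) = \mathcal{D}(\GL(n_j,\epsilon q)) = \mathfrak{Z}$; likewise $\mathcal{D}(N_G(R)) = \mathcal{D}(G) = \mathfrak{Z}$ when $R$ is trivial.

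Suppose first that $\ell \mid q - \epsilon$; this covers all cases with $\ell = 2$, forces $e = 1$, and the hypothesis $\ell \nmid \gcd(n, q - \epsilon)$ then gives $\ell \nmid n$. Reading the degrees off from Section~\ref{RadicalsubgroupsofGLn} (with $e = 1$) shows that every component $R_j$ with $\ell \nmid n_j$ is scalar in $G_j$: trivial and $\{\pm I_{n_j}\}$ components are scalar, while a basic component with $\ell \nmid n_j$ must equal $\mathcal{O}_\ell(\mathfrak{Z}) \otimes I_{(m_j)}$, every other basic component (semidihedral, quaternion, wreathed, or $R_{m,\alpha,\gamma,\zero}$ with $\alpha+\gamma>0$) having degree divisible by $\ell$. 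Since $\ell \nmid n = \sum_j n_j$, there is an index $j$ with $\ell \nmid n_j$, and for this $j$ the observation above gives $\mathcal{D}(R_jC_{G_j}(R_j)) = \mathfrak{Z}$.

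Suppose instead that $\ell \nmid q - \epsilon$; then $\ell$ is odd and $|\mathfrak{Z}| = q - \epsilon$ is prime to $\ell$. If $R$ is trivial we are done; otherwise $R$ has a basic component $R_j = R_{m_j,\alpha_j,\gamma_j,\bc_j}$, for which $C_{G_j}(R_j) = C_{m_j,\alpha_j} \otimes I_{\ell^{\gamma_j + |\bc_j|}}$ with $C_{m_j,\alpha_j} \cong \GL(m_j,(\epsilon q)^{e\ell^{\alpha_j}})$. Now $C_{m_j,\alpha_j}$ contains a maximal torus of $\GL(m_j e\ell^{\alpha_j},\epsilon q)$, and the determinant restricts to a surjection from any maximal torus of a general linear or unitary group onto $\mathfrak{Z}$ (a direct consequence of Lang's theorem, since $\SL$ has connected intersection with each maximal torus); hence $\mathcal{D}(C_{m_j,\alpha_j}) = \mathfrak{Z}$, and therefore $\mathcal{D}(C_{G_j}(R_j)) = \mathfrak{Z}^{\ell^{\gamma_j + |\bc_j|}} = \mathfrak{Z}$, the last step because raising to an $\ell$-th power is an automorphism of the $\ell'$-group $\mathfrak{Z}$. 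Thus $\mathcal{D}(R_jC_{G_j}(R_j)) = \mathfrak{Z}$, as required.

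I expect the main difficulty to be organisational rather than conceptual: reading the block decomposition of $R$ off correctly from the rather case-heavy description in Section~\ref{RadicalsubgroupsofGLn} — keeping $\ell$ odd separate from $\ell = 2$, dealing with the two sub-cases of the latter and with the semidihedral and quaternion components — and checking in each case that the components $R_j$ with $\ell \nmid n_j$ are precisely the ones sitting as scalar subgroups of some $\GL(m_j,\epsilon q)$. The only non-combinatorial input, surjectivity of $\det$ on maximal tori, is standard.
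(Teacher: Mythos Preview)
Your proof is correct, but it takes a somewhat more case-heavy route than the paper's. The paper avoids your split into $\ell\mid q-\epsilon$ versus $\ell\nmid q-\epsilon$ by first observing that $\mathcal O_\ell(Z(G))\le R$ (as $R$ is radical) and hence $\mathcal D(R)=\mathcal O_\ell(\mathfrak Z)$ under the hypothesis $\ell\nmid\gcd(n,q-\epsilon)$; it then only needs $\mathcal O_{\ell'}(\mathfrak Z)\le\mathcal D(C_G(R))$, and this follows uniformly from $\mathcal D(C_{m,\alpha})=\mathfrak Z$ together with the fact that raising to an $\ell$-power is a bijection on $\mathcal O_{\ell'}(\mathfrak Z)$. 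Your Case~2 is essentially this same argument (indeed there $\mathfrak Z=\mathcal O_{\ell'}(\mathfrak Z)$), with your appeal to Lang's theorem for surjectivity of $\det$ on maximal tori replacing the paper's explicit norm computation $\det((\Delta))=c^{((\epsilon q)^{e\ell^\alpha}-1)/(\epsilon q-1)}$. Your Case~1 is genuinely different: rather than separating the $\ell$- and $\ell'$-parts of $\mathfrak Z$, you use $\ell\nmid n$ to locate a block $R_j$ of degree prime to $\ell$, check from the degree formulas that such a block must be scalar, and read off $\mathcal D(R_jC_{G_j}(R_j))=\mathcal D(G_j)=\mathfrak Z$ in one stroke. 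This is a perfectly good alternative; the price is the inspection of the various basic-subgroup shapes for $\ell=2$, which the paper's decomposition into $\ell$- and $\ell'$-parts sidesteps.
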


\begin{proof}\label{detofrad}
Note that $\mathcal O_\ell(Z(G))\le R$, hence $\mathcal D(R)=\mathcal O_{\ell}(\mathfrak Z)$ since $\ell\nmid\mathrm{gcd}(n,q-\epsilon)$.
So it suffices to show	 that
$\mathcal O_{\ell'}(\mathfrak Z)\le \mathcal D(C_G(R)).$
By the structure of $\ell$-radical subgroups,
	it suffices to show that for a basic subgroup $D_{m,\alpha,\gamma,\mathbf c}$ of $G$, we have
$\mathcal O_{\ell'}(\mathfrak Z)\le \mathcal D(C_{m,\alpha,\gamma,\mathbf c})$.

	By \cite{AF90}, \cite{An92}, \cite{An93} and \cite{An94}, $C_{m,\alpha,\gamma,\mathbf c}\cong C_{m,\alpha}\otimes I_{\gamma+|\mathbf c|}$ where $C_{m,\alpha}\cong \GL(m,q^{e\ell^\alpha})$.
	The elements of $C_{m,\alpha,\gamma,\mathbf c}$ have the form $\mathrm{diag}(g,\dots, g)$ where $g\in C_{m,\alpha}$.	
	Also,  $C_{m,\alpha}$ is the image under the embedding $\GL(m,(\epsilon q)^{e\ell^\alpha})\hookrightarrow \GL(me\ell^\alpha,\epsilon q)$.
	Let $c$ be a generator of the group $\{x\in\mathbb F_{(\epsilon q)^{e\ell^\alpha}}^\times\mid x^{(\epsilon q)^{e\ell^\alpha}-1}=1\}$ and $\Delta\in\cF$ such that $c$ is a root of $\Delta$.
	Then the roots of $\Delta$ are $c,c^{\epsilon q},\dots, c^{(\epsilon q)^{e\ell^\alpha-1}}$ and then $\mathrm{det}((\Delta))=c^{\frac{(\epsilon q)^{e\ell^\alpha}-1}{\epsilon q-1}}$.	
	From this $\mathrm{det}((\Delta))$ is a generator of the group $\mathfrak Z$.
	Thus $\mathcal D(C_{m,\alpha})=\mathfrak Z$.
	So $\mathcal O_{\ell'}(\mathfrak Z)\le \mathcal D(C_{m,\alpha,\gamma,\mathbf c})$ since $C_{m,\alpha,\gamma,\mathbf c}\cong C_{m,\alpha}\otimes I_{\gamma+|\mathbf c|}$.
	This completes the proof.
\end{proof}

\subsection{Radical subgroups of $\SL_n(\epsilon q)$}

Now we consider the $\ell$-radical subgroups of $X$.
Let $\hat X=XZ(G)$.
We will always assume $\ell \nmid \mathrm{gcd}(n,q-\epsilon)$ from now on to the end of this section.

By Lemma \ref{relationofradicalgroups}, the map $\Rad_\ell(G)\to\Rad_\ell(X)$ given by $R\mapsto R\cap X$ is surjective.
In fact, we have:

\begin{prop}\label{relaofrad}
$R\mapsto R\cap X$ gives a bijection from  $\Rad_\ell(G)$ to $\Rad_\ell(X)$ with inverse given by $S\mapsto S\mathcal O_\ell(Z(G))$.
\end{prop}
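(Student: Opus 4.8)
The surjectivity of $R\mapsto R\cap X$ is already given by Lemma \ref{relationofradicalgroups}(ii), so the main task is injectivity together with the identification of the inverse map. By Lemma \ref{relationofradicalgroups}(iii), injectivity amounts to showing that for every $\ell$-radical subgroup $S$ of $X$ there is \emph{only one} $\ell$-radical subgroup $R$ of $G$ with $R\cap X=S$; and once that is known, that same lemma gives $R=\mathcal O_\ell(N_G(S))$ and $N_G(S)=N_G(R)$. So the plan is: first show that $S\mathcal O_\ell(Z(G))$ is an $\ell$-radical subgroup of $G$ whose intersection with $X$ is $S$ (this handles existence of a canonical preimage), and then show that \emph{any} $R\in\Rad_\ell(G)$ with $R\cap X=S$ must equal $S\mathcal O_\ell(Z(G))$.

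\textbf{Step 1: the candidate preimage.} Set $R_0=S\mathcal O_\ell(Z(G))$. Since $\ell\nmid\mathrm{gcd}(n,q-\epsilon)=|Z(X)|$, the group $\mathcal O_\ell(Z(G))$ is a central $\ell$-subgroup of $G$ meeting $X$ trivially (its order is the $\ell$-part of $q-\epsilon$, coprime to $|Z(X)|$), so $R_0\cap X=S$ and $R_0/S\cong\mathcal O_\ell(Z(G))$. We have $N_G(R_0)=N_G(S)$: one inclusion is clear since $\mathcal O_\ell(Z(G))$ is central, and conversely anything normalizing $R_0$ normalizes $R_0\cap X=S$. Now $R_0$ is a normal $\ell$-subgroup of $N_G(S)$, so $R_0\le\mathcal O_\ell(N_G(S))$; for the reverse inclusion one checks $\mathcal O_\ell(N_G(S))\cap X$ is a normal $\ell$-subgroup of $N_X(S)$, hence contained in $S=\mathcal O_\ell(N_X(S))$, and that $\mathcal O_\ell(N_G(S))/( \mathcal O_\ell(N_G(S))\cap X)$ embeds (via $\mathcal D$, i.e. the determinant) into $\mathcal D(N_G(S))$ which by Lemma \ref{detofC} equals $\mathfrak Z$, whose $\ell$-part is $\mathcal D(\mathcal O_\ell(Z(G)))$; combining, $\mathcal O_\ell(N_G(S))\le S\mathcal O_\ell(Z(G))=R_0$. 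Thus $R_0=\mathcal O_\ell(N_G(S))$ is $\ell$-radical in $G$.

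\textbf{Step 2: uniqueness.} Let $R\in\Rad_\ell(G)$ with $R\cap X=S$. Then $R$ normalizes $S$, so $R\le N_G(S)$, and $R$ being a normal $\ell$-subgroup of $N_G(R)=N_G(S)$ (the equality again from Lemma \ref{relationofradicalgroups}(iii)-type reasoning, or directly since $R\cap X=S$ forces $N_G(R)\le N_G(S)$ and $R\unlhd N_G(S)$ gives the reverse) yields $R\le\mathcal O_\ell(N_G(S))=R_0$. Conversely $\mathcal O_\ell(Z(G))\le R$ always holds for any $\ell$-radical $R$ (it is central of $\ell$-power order, hence in $\mathcal O_\ell(G)\le R$), and $S=R\cap X\le R$, so $R_0=S\mathcal O_\ell(Z(G))\le R$. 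Hence $R=R_0$, proving injectivity, and the inverse map is indeed $S\mapsto S\mathcal O_\ell(Z(G))$.

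\textbf{Main obstacle.} The delicate point is Step 1's reverse inclusion $\mathcal O_\ell(N_G(S))\le S\mathcal O_\ell(Z(G))$, i.e. controlling how much larger than $S$ a normal $\ell$-subgroup of $N_G(S)$ can be once we pass from $X$ up to $G$. This is exactly where the hypothesis $\ell\nmid\mathrm{gcd}(n,q-\epsilon)$ and Lemma \ref{detofC} (giving $\mathcal D(N_G(R))=\mathfrak Z$ with $\ell$-part realized by the central torus) are essential: they pin down the $\ell$-part of $N_G(S)/N_X(S)R$-type quotients and prevent extra $\ell$-contributions from the determinant direction. I would expect the author to invoke the explicit structure of the basic subgroups and their centralizers/normalizers from Section \ref{RadicalsubgroupsofGLn} to make this airtight, rather than the purely group-theoretic argument sketched above.
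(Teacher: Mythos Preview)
Your overall strategy is sound and leads to a correct proof, but there is a genuine gap in Step~2 as written, and the route differs from the paper's.

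\textbf{The gap.} In Step~2 you assert $N_G(R)=N_G(S)$, justified either by ``Lemma~\ref{relationofradicalgroups}(iii)-type reasoning'' or by ``$R\unlhd N_G(S)$''. The first is circular: Lemma~\ref{relationofradicalgroups}(iii) \emph{assumes} uniqueness of the preimage, which is exactly what you are proving. The second is unjustified: you only know $R\unlhd N_G(R)\le N_G(S)$, not $R\unlhd N_G(S)$. The fix is easy and avoids normalizers entirely: you already have $R_0\le R$ and $R\cap X=S=R_0\cap X$, so $R/S$ embeds in $G/X\cong\mathfrak Z$ and hence in $\mathcal O_\ell(\mathfrak Z)$; but $R_0/S$ already has order $|\mathcal O_\ell(\mathfrak Z)|$, forcing $R=R_0$. (Equivalently, $R/R_0$ injects into $G/X\mathcal O_\ell(Z(G))$, an $\ell'$-group.) Also note that your invocation of Lemma~\ref{detofC} in Step~1 is misplaced: that lemma is stated for $\ell$-radical subgroups of $G$, not of $X$, and in any case you do not need it---the only facts used are $G/X\cong\mathfrak Z$ and that $\mathcal O_\ell(Z(G))$ surjects onto $\mathcal O_\ell(G/X)$ (since $\ell\nmid|Z(X)|$).

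\textbf{Comparison with the paper.} The paper's proof is quite different and shorter: it factors through the intermediate group $\hat X=XZ(G)$. First $\Rad_\ell(G)=\Rad_\ell(\hat X)$ because $|G/\hat X|$ is prime to $\ell$; then, since $\hat X/Z(X)\cong X/Z(X)\times Z(G)/Z(X)$ is a direct product and $Z(X)$ is a central $\ell'$-subgroup, the bijection $\Rad_\ell(\hat X)\to\Rad_\ell(X)$ follows by the argument of \cite[Lem.~4.3--4.5]{FLL17}. So the paper leverages a structural direct-product decomposition and an external reference, whereas your approach is a hands-on verification inside $G$. Your method is more self-contained; the paper's is more conceptual and immediately generalizes to any situation where the analogous direct-product splitting holds. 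Your prediction that the author would ``invoke the explicit structure of the basic subgroups'' is not what happens.
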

\begin{proof}
First, we have
$\Rad_\ell(G)=\Rad_\ell(\hat X)$,
since $\ell\nmid |G/\hat X|$.

Since $\hat X/Z(X)\cong X/Z(X)\times Z(G)/Z(X)$ and $Z(X)$ is a central $\ell'$-subgroup of $\hat X$,	
by the same argument as the proof of \cite[Lem. 4.5]{FLL17}~(use \cite[Lem. 4.3 and 4.4]{FLL17}), there is a bijection $\Rad_\ell(\hat X)\to \Rad_\ell(X)$ given by $R\mapsto R\cap X$ with inverse given by $S\mapsto S\mathcal O_\ell(Z(G))$.
\end{proof}

\begin{lem}\label{gexingz}
Let $R$ be an $\ell$-radical subgroup of $G$ and $S=R\cap X$. Then
\begin{enumerate}
\item[(i)] $C_X(S)=C_G(R)\cap X$, $SC_X(S)=RC_G(R)\cap X$, $N_X(S)=N_G(R)\cap X$,
\item[(ii)] $RC_G(R)/SC_X(S)\cong N_G(R)/N_X(S)\cong G/X$.
\end{enumerate}
\end{lem}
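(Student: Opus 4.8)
The plan is to reduce everything to Proposition~\ref{relaofrad} and Lemma~\ref{detofC}. By Proposition~\ref{relaofrad} we have $R=S\mathcal O_\ell(Z(G))$; write $Z_0=\mathcal O_\ell(Z(G))$, so $Z_0\le Z(G)$ is central in $G$ and $R=SZ_0$. First I would record the centralizer/normalizer identities inside $G$ itself. Since $Z_0$ is central, $C_G(R)=C_G(SZ_0)=C_G(S)$, and consequently $RC_G(R)=SZ_0C_G(S)=SC_G(S)$ because $Z_0\le C_G(S)$. For normalizers: if $g\in N_G(S)$ then $R^g=S^gZ_0^g=SZ_0=R$; conversely, if $g\in N_G(R)$ then, using that $X\unlhd G$ so conjugation commutes with intersecting with $X$, we get $S^g=R^g\cap X^g=R\cap X=S$. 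Hence $N_G(R)=N_G(S)$ and likewise $RC_G(R)=SC_G(S)$.

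Next I would intersect with $X$ to obtain part~(i). Because $X\unlhd G$, we have $C_G(R)\cap X=C_G(S)\cap X=C_X(S)$ and $N_G(R)\cap X=N_G(S)\cap X=N_X(S)$. For the product group, $RC_G(R)\cap X=SC_G(S)\cap X$, and the inclusion $SC_X(S)\subseteq SC_G(S)\cap X$ is immediate; for the reverse, an element of $SC_G(S)\cap X$ written as $sc$ with $s\in S\le X$ and $c\in C_G(S)$ satisfies $c=s^{-1}(sc)\in X$, so $c\in C_G(S)\cap X=C_X(S)$ and thus $sc\in SC_X(S)$. This yields $SC_X(S)=RC_G(R)\cap X$, completing~(i).

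Finally, for part~(ii): by the remark following~(\ref{def-mathcalD}), every subgroup $J\le G$ satisfies $J/(J\cap X)\cong\mathcal D(J)$, and $\mathcal D(G)=\mathfrak Z$ gives $G/X\cong\mathfrak Z$ via the determinant. Applying this with $J=N_G(R)$ and with $J=RC_G(R)$, and invoking Lemma~\ref{detofC} (which gives $\mathcal D(N_G(R))=\mathcal D(RC_G(R))=\mathfrak Z$ under the standing hypothesis $\ell\nmid\mathrm{gcd}(n,q-\epsilon)$), together with the identities $N_G(R)\cap X=N_X(S)$ and $RC_G(R)\cap X=SC_X(S)$ from~(i), we obtain $N_G(R)/N_X(S)\cong\mathfrak Z\cong G/X$ and $RC_G(R)/SC_X(S)\cong\mathfrak Z\cong G/X$, as desired.

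I do not expect a genuine obstacle here: the only points needing care are the equality $N_G(R)=N_G(S)$ (which uses the normality of $X$ so that conjugation and intersection with $X$ commute) and the surjectivity of the determinant on $N_G(R)$ and on $RC_G(R)$, but the latter is precisely the content of Lemma~\ref{detofC}, whose proof already carried out the determinant computation on centralizers of basic subgroups. Everything else is routine group theory.
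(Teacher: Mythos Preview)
Your proof is correct and follows essentially the same approach as the paper: both reduce to Proposition~\ref{relaofrad} (to write $R=S\mathcal O_\ell(Z(G))$) and Lemma~\ref{detofC} (for determinant surjectivity), with the rest being routine group theory. The paper phrases part~(ii) via the second isomorphism theorem after observing $G=XRC_G(R)$, whereas you compute $\mathcal D(N_G(R))$ directly, but these are equivalent; your write-up simply spells out more of the intermediate steps (such as $N_G(R)=N_G(S)$) that the paper leaves implicit.
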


\begin{proof}
By Proposition \ref{relaofrad}, $R=S\mathcal O_\ell (Z(G))$, so we have $C_X(S)=C_G(R)\cap X$, $N_X(S)=N_G(R)\cap X$.
Also $RC_G(R)\cap X=SC_G(R)\cap X=S(C_G(R)\cap X)=SC_X(S)$ and then we obtain (i).
By Lemma \ref{detofrad}, we have $G=XRC_G(R)$ and then $G=XN_G(R)$.
Thus (ii) follows.
\end{proof}

Let $R$ be an $\ell$-radical subgroup of $G$,
by Lemma \ref{detofrad},
$G=XN_G(R)$.
So if two $\ell$-radical subgroups of $G$ are $G$-conjugate, then they are $X$-conjugate.
Thus by Proposition \ref{relaofrad} and Lemma \ref{gexingz}, we have:

\begin{cor}\label{relaofradicalconj}
	$R\mapsto R\cap X$ gives a bijection from  $\Rad_\ell(G)/\thicksim_G$ to $\Rad_\ell(X)/\thicksim_X$.
\end{cor}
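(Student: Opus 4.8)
The plan is to bootstrap the set-level bijection of Proposition~\ref{relaofrad} up to the level of conjugacy classes, the extra ingredient being the factorisation $G=XN_G(R)$ that comes from Lemma~\ref{detofC} (that is, from $\mathcal D(N_G(R))=\mathfrak Z$). First I would record that, since $X\unlhd G$, for every $g\in G$ and every $R\in\Rad_\ell(G)$ one has ${}^gR\cap X={}^g(R\cap X)$. Hence the map $R\mapsto R\cap X$ sends $G$-conjugate subgroups to $X$-conjugate subgroups, so it descends to a well-defined map $\Rad_\ell(G)/\thicksim_G\ \to\ \Rad_\ell(X)/\thicksim_X$.

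Next I would upgrade this observation: two $\ell$-radical subgroups of $G$ that are $G$-conjugate are in fact already $X$-conjugate. Indeed, if $R_2={}^gR_1$ with $g\in G$, write $g=xn$ with $x\in X$ and $n\in N_G(R_1)$, which is possible because $G=XN_G(R_1)$ by Lemma~\ref{detofC} (compare Lemma~\ref{gexingz}); then $R_2={}^{xn}R_1={}^xR_1$. Together with the surjectivity of $R\mapsto R\cap X$ from Proposition~\ref{relaofrad}, this already shows that the induced map on classes is surjective.

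For injectivity on classes, suppose $R_1\cap X$ and $R_2\cap X$ are $X$-conjugate, say $R_2\cap X={}^x(R_1\cap X)$ with $x\in X$. By Proposition~\ref{relaofrad} the inverse of the set-level bijection is $S\mapsto S\mathcal O_\ell(Z(G))$, so $R_i=(R_i\cap X)\mathcal O_\ell(Z(G))$ for $i=1,2$. Since $\mathcal O_\ell(Z(G))$ is central it is fixed under conjugation, and therefore ${}^xR_1={}^x(R_1\cap X)\cdot\mathcal O_\ell(Z(G))=(R_2\cap X)\mathcal O_\ell(Z(G))=R_2$, so $R_1$ and $R_2$ are $G$-conjugate. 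This yields the asserted bijection.

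I do not expect a genuine obstacle here: the substantive input — surjectivity of $R\mapsto R\cap X$ and, crucially, the equality $G=XN_G(R)$ — has already been established in Proposition~\ref{relaofrad}, Lemma~\ref{gexingz} and Lemma~\ref{detofC}, so the remaining work is just the routine bookkeeping above with the two conjugation actions and the normality of $X$ in $G$.
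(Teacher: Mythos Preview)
Your argument is correct and follows essentially the same route as the paper: the key point is that $G=XN_G(R)$ (from Lemma~\ref{detofC}), so $G$-conjugacy and $X$-conjugacy coincide on $\Rad_\ell(G)$, and then Proposition~\ref{relaofrad} does the rest. One small expository remark: the claim in your first paragraph that $G$-conjugate subgroups map to $X$-conjugate subgroups does not follow from ${}^gR\cap X={}^g(R\cap X)$ alone (that only gives $G$-conjugacy in $X$); it is precisely your second paragraph, via $g=xn$ with $n\in N_G(R_1)$, that supplies this, so you may want to reorder the exposition accordingly.
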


\subsection{Weights of $\SL_n(\epsilon q)$}

Now we consider the $\ell$-weights of $X$ with $\ell\nmid\mathrm{gcd}(n,q-\epsilon)$.
By Lemma \ref{corrofweights} and Proposition \ref{relaofrad} and Lemma \ref{gexingz}, we have:

\begin{prop}\label{weightofslsubyres}
Assume that $\ell\nmid\mathrm{gcd}(n,q-\epsilon)$.
Let $(R,\varphi)$ be an $\ell$-weight of $G$ and $S=R\cap X$, then $(S,\psi)$ is an $\ell$-weight of $X$ for every $\psi\in\Irr(N_X(S)\ | \ \varphi)$.

Conversely, let $(S,\psi)$ be an $\ell$-weight of $X$ and $R=S\mathcal O_\ell (Z(G))$, then there exists $\varphi\in\Irr(N_G(R)\ | \ \psi)$ such that $(R,\varphi)$ is an $\ell$-weight of $G$.
\end{prop}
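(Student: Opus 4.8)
The proposition is essentially an application of the abstract machinery already assembled, namely Lemma \ref{corrofweights}, together with the structural facts about radical subgroups of $G$ and $X$ proved in this section. First I would note that the hypothesis $\ell\nmid\mathrm{gcd}(n,q-\epsilon)$ is in force, so all of Proposition \ref{relaofrad}, Lemma \ref{gexingz} and Lemma \ref{detofrad} apply. By Proposition \ref{relaofrad}, the map $\Rad_\ell(G)\to\Rad_\ell(X)$, $R\mapsto R\cap X$, is a bijection with inverse $S\mapsto S\mathcal O_\ell(Z(G))$; and since $\mathbf G^F/\mathbf X^F=G/X$ is cyclic, the pair $(G,X)$ satisfies exactly the hypotheses ``$H/N$ cyclic and $R\mapsto R\cap N$ bijective'' required in Lemma \ref{corrofweights}. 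This reduces the first assertion to a direct quotation of Lemma \ref{corrofweights}(i).

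For the converse I would invoke Lemma \ref{corrofweights}(ii): given an $\ell$-weight $(S,\psi)$ of $X$ and $R=S\mathcal O_\ell(Z(G))$ (so that $R\cap X=S$ by Proposition \ref{relaofrad}), it produces an $\ell$-weight $(R,\varphi)$ of $G$ with $\varphi\in\Irr(N_G(R)\mid\psi)$ provided the extra hypothesis $\ell\nmid|N_G(R)_\psi/N_X(S)R|$ is verified. This is the one point that needs a real argument rather than a citation. Here I would use Lemma \ref{gexingz}: it gives $N_X(S)=N_G(R)\cap X$ and $N_G(R)/N_X(S)\cong G/X$, so $N_X(S)R=N_X(S)$ (as $R=S\mathcal O_\ell(Z(G))\le RC_G(R)\cap\text{stuff}$; more precisely $\mathcal O_\ell(Z(G))\le Z(G)\le Z(N_G(R))$, and in fact one checks $R\le N_X(S)\mathcal O_\ell(Z(G))$ so that $N_X(S)R$ differs from $N_X(S)$ only by a central $\ell$-group). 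Consequently $N_G(R)_\psi/N_X(S)R$ is a subquotient of the cyclic group $N_G(R)/N_X(S)\cong G/X$, hence cyclic, and its order divides $|G/X|=\mathrm{gcd}(n,q-\epsilon)$. By hypothesis $\ell\nmid\mathrm{gcd}(n,q-\epsilon)$, so this index is an $\ell'$-number and the hypothesis of Lemma \ref{corrofweights}(ii) holds. Thus $(R,\varphi)$ is the desired $\ell$-weight of $G$.

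The only mild subtlety — and the step I would be most careful with — is the precise identification of $N_X(S)R$ inside $N_G(R)$ and the verification that the relevant index divides $|G/X|$; everything else is bookkeeping with the cyclicity of $G/X$ and the already-established bijection of radical subgroups. I would therefore organize the write-up as: (1) record that $(G,X)$ meets the hypotheses of Lemma \ref{corrofweights} via Proposition \ref{relaofrad} and the cyclicity of $G/X$; (2) deduce the forward direction from Lemma \ref{corrofweights}(i); (3) for the converse, set $R=S\mathcal O_\ell(Z(G))$, use Lemma \ref{gexingz} to see $N_G(R)/N_X(S)R$ is a quotient of the cyclic group $G/X$ of order dividing $\mathrm{gcd}(n,q-\epsilon)$, hence of $\ell'$-order, and apply Lemma \ref{corrofweights}(ii). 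This keeps the proof to a few lines, as the heavy lifting was done in the preceding lemmas.
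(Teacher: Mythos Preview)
Your overall strategy is exactly the paper's: Proposition \ref{relaofrad} gives the bijection on radical subgroups, $G/X$ is cyclic, and then Lemma \ref{corrofweights} does the work. The paper's proof is in fact just the one-line citation ``By Lemma \ref{corrofweights} and Proposition \ref{relaofrad} and Lemma \ref{gexingz}''.

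However, in filling in the verification of the side hypothesis $\ell\nmid|N_G(R)_\psi/N_X(S)R|$ of Lemma \ref{corrofweights}(ii), you make a concrete error: you write $|G/X|=\gcd(n,q-\epsilon)$. This is false; the determinant map gives $G/X\cong\mathfrak Z$, so $|G/X|=q-\epsilon$, and $\ell$ may well divide this (indeed it does whenever $e=1$). What equals $\gcd(n,q-\epsilon)$ is $|Z(X)|$, not $|G/X|$. So the argument ``the index divides $|G/X|=\gcd(n,q-\epsilon)$, which is prime to $\ell$ by hypothesis'' does not go through as stated.

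The correct computation is the one you almost wrote but then abandoned. From $R=S\mathcal O_\ell(Z(G))$ and $R\cap N_X(S)=R\cap X=S$ one gets $N_X(S)R/N_X(S)\cong R/S\cong \mathcal O_\ell(Z(G))/(\mathcal O_\ell(Z(G))\cap X)$. The hypothesis $\ell\nmid\gcd(n,q-\epsilon)=|Z(X)|$ forces $\mathcal O_\ell(Z(G))\cap X\le\mathcal O_\ell(Z(X))=1$, so $|N_X(S)R/N_X(S)|=|\mathcal O_\ell(Z(G))|=(q-\epsilon)_\ell$. Combined with $|N_G(R)/N_X(S)|=|G/X|=q-\epsilon$ from Lemma \ref{gexingz}, this gives $|N_G(R)/N_X(S)R|=(q-\epsilon)_{\ell'}$, an $\ell'$-number; a fortiori $\ell\nmid|N_G(R)_\psi/N_X(S)R|$. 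With this correction your write-up is complete.
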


\begin{rmk}\label{restritowei}
Let $\mathcal W_\ell(G)$ be a complete set of representatives of all $G$-conjugacy classes of $\ell$-weights of $G$. We may assume that for $(R_1,\varphi_1),(R_2,\varphi_2)\in \mathcal W_\ell(G)$, $R_1$ and $R_2$ are $G$-conjugate if and only if $R_1=R_2$.
	
	Now define a equivalence relation on $\mathcal W_\ell(G)$ such that for $(R_1,\varphi_1),(R_2,\varphi_2)\in \mathcal W_\ell(G)$, $(R_1,\varphi_1)\sim(R_2,\varphi_2)$ if and only if $R_1=R_2$ and $\varphi_1=\varphi_2 \eta$ for some $\eta\in\Irr(N_G(R_1)/N_X(R_1))$.
	Then by  Lemma \ref{cliffordthm}, Corollary \ref{relaofradicalconj} and Proposition \ref{weightofslsubyres},
	the set $\{(R\cap X,\psi)\}$, where $(R,\varphi)$ runs through a complete set of representatives of the equivalence classes of $\mathcal W_\ell(G)/\sim$ and $\psi$ runs through $\Irr(N_X(R)\ |\ \varphi)$, is a complete set of representatives of all $X$-conjugacy classes of $\ell$-weights of $X$.
\end{rmk}

\begin{rmk}\label{forweightbelow}
	Let $(R,\varphi)$ be an $\ell$-weight of $G$, $(S,\psi)$ an $\ell$-weight of $X$ such that $S=R\cap X$ and $\varphi\in\Irr(N_G(R)\ | \ \psi)$.
	Let $b=\bl_\ell(\varphi)$, $b_0=\bl_\ell(\psi)$ and $B=b ^G$ and $B_0=b_0^X$.
By Lemma \ref{covers}, if $b$ covers $b_0$, then $B$ covers $B_0$.
	
	Let $B_0$ be an $\ell$-block of $X$.
	Denote by $\mathcal B_0$ the union of the $\ell$-blocks of $X$ which are $G$-conjugate to $B_0$ and ${\mathcal B}$  the union of the $\ell$-blocks of $G$ which cover $B_0$.
	Then
	\begin{itemize}
		\item if $(R,\varphi)$ is an $\ell$-weight of $G$ belonging to ${\mathcal B}$ and $S=R\cap X$, then for every $\psi\in\Irr(N_X(S)\ | \ \varphi)$, $(S,\psi)$ is an $\ell$-weight of $X$ belonging to $\mathcal B_0$, and
		\item if $(S,\psi)$ is an $\ell$-weight of $X$ belonging to $\mathcal B_0$ and $R=S\mathcal O_\ell (Z(G))$, then there exists $\varphi\in\Irr(N_G(R)\ | \ \psi)$ such that $(R,\varphi)$ is an $\ell$-weight of $G$ belonging to ${\mathcal B}$.
	\end{itemize}
\end{rmk}

Let $(R,\varphi)$ be an $\ell$-weight of $G$.
For some $\eta\in\Irr(N_G(R)/N_X(R))$, if $(R,\eta\varphi)$ is also an $\ell$-weight of $G$, then $\mathcal O_\ell(Z(G)) \subseteq\ker\eta$ since $\mathcal O_\ell(Z(G)) \subseteq R$ by Proposition \ref{relaofrad}.
Hence $\eta\in\mathcal O_{\ell'}(\Irr(N_G(R)/N_X(R)))$.

By Lemma \ref{gexingz}, $N_G(R)/N_X(R)\cong G/X\cong \mathfrak Z$~(where $\mathfrak Z$ is defined as in ({\ref{def-frakZ})).
Now we identify $\Irr(N_G(R)/N_X(R))$ with $\Irr(G/X)$.
So in order to compute $\kappa^{N_G(R)}_{N_X(R)}(\varphi)$, it suffices to consider when $\Res^{G}_{N_G(R)}(\hat z)\cdot \varphi=\varphi$ for $z\in\mathcal O_{\ell'}(\mathfrak Z)$.
We often abbreviate $\hat z$ for $\Res^{G}_{N_G(R)}(\hat z)$.

\vspace{2ex}

Now we recall the description of $\ell$-weights of $G$ in \cite{AF90}, \cite{An92}, \cite{An93} and \cite{An94} and give some more notations and conventions.

We denote by $\cF'$ the subset of $\cF$ consisting of polynomials whose roots are of $\ell'$-orders.
By \cite[(3.2)]{Br86}, given any $\Gamma\in\cF'$, there is a unique $\ell$-block $B_\Gamma$ of $G_\Gamma=\GL(m_\Gamma e\ell^{\alpha_\Gamma},\epsilon q)$ with $D_\Gamma=D_{m_\Gamma,\alpha_\Gamma}$ as a defect group.
This $\ell$-block has the label $(e_\Gamma(\Gamma),-)$.
Here $m_\Gamma,\alpha_\Gamma$ are non-negative integers determined by $m_\Gamma e\ell^{\alpha_\Gamma}=e_\Gamma d_\Gamma$ and $(m_\Gamma,\ell)=1$.
Also, note that there is no direct connection between $m_\Gamma$ and $m_\Gamma(s)$.
These results have been proved for odd primes in \cite[(5A)]{FS82}  and for $\ell=2$ on \cite[p.520]{An92} and \cite[p.276]{An93} using the results from \cite{Br86}.
Let $C_\Gamma=C_{G_\Gamma}(D_\Gamma)$ and $N_\Gamma=N_{G_\Gamma}(D_\Gamma)$.
Then $C_\Gamma\cong \GL(m_\Gamma,(\epsilon q)^{d_\Gamma})$.
The polynomial $\Gamma$ also determines a unique $N_\Gamma$-conjugacy classes of pairs $(\fb_\Gamma,\theta_\Gamma)$ where $\fb_\Gamma$ is a root $\ell$-block of $C_\Gamma D_\Gamma=C_\Gamma$ with defect group $D_\Gamma$ and $\theta_\Gamma$ is the canonical character of $\fb_\Gamma$.
The subpair $(D_\Gamma,\fb_\Gamma)$ has the label $(D_\Gamma,s_\Gamma,-)$ as in \cite[(3.2)]{Br86}.
Since $d_{\Gamma}=d_{z.\Gamma}$, $\alpha_{\Gamma}=\alpha_{z.\Gamma}$ and $m_{\Gamma}=m_{z.\Gamma}$, we may assume that $D_\Gamma=D_{z.\Gamma}$, $C_\Gamma=C_{z.\Gamma}$, and $N_\Gamma=N_{z.\Gamma}$.

Let $\Gamma\in\cF'$ and keep the notation of the previous sections.
Let $D_{\Gamma,\gamma,\bc}=D_{m_\Gamma,\alpha_\Gamma,\gamma,\bc}$ be a basic subgroup and let $G_{\Gamma,\gamma,\bc},C_{\Gamma,\gamma,\bc},N_{\Gamma,\gamma,\bc}$ be defined similarly.
Then $C_{\Gamma,\gamma,\bc}=C_\Gamma\otimes I_\gamma \otimes I_{\bc}$.
Let $\theta_{\Gamma,\gamma,\bc}=\theta_\Gamma\otimes I_\gamma \otimes I_{\bc}$, then $\theta_{\Gamma,\gamma,\bc}$ can be viewed as the canonical character of $C_{\Gamma,\gamma,\bc}D_{\Gamma,\gamma,\bc}$ with $D_{\Gamma,\gamma,\bc}$ in the kernel and all canonical characters are of this form.
Note that the equations \cite[(3.2)]{An92} and \cite[(3.1)]{An93} can be written also uniformly in this form (see the remarks before \cite[Prop. 4.2 and 4.3]{LZ18}).
Let $\cR_{\Gamma,\delta}$ be the set of all the basic subgroups of the form $D_{\Gamma,\gamma,\bc}$ with $\gamma+|\bc|=\delta$ and denote $I_\delta=I_\gamma\otimes I_{\bc}$.
Label the basic subgroups in $\cR_{\Gamma,\delta}$ as $D_{\Gamma,\delta,1}$, $D_{\Gamma,\delta,2}$, $\ldots$ and denote the canonical character associated to $D_{\Gamma,\delta,i}$ by $\theta_{\Gamma,\delta,i}$.
It is possible that there exists $\Gamma'\in\cF'$ such that $m_{\Gamma'}=m_\Gamma=:m$ and $\alpha_{\Gamma'}=\alpha_\Gamma=:\alpha$.
In this case, $\cR_{\Gamma,\delta}=\cR_{\Gamma',\delta}$ and naturally we may choose the labeling of $\cR_{\Gamma,\delta}$ and $\cR_{\Gamma',\delta}$ such that $D_{\Gamma,\delta,i}=D_{\Gamma',\delta,i}$ for $i=1,2,\ldots$.
We will denote $D_{m,\alpha,\gamma,\bc}$ as $D_{\Gamma,\delta,i}$ or $D_{\Gamma',\delta,i}$ depending on whether the related canonical character of $C_{m,\alpha}D_{m,\alpha}=C_{m,\alpha}$ considered is $\theta_\Gamma$ or $\theta_{\Gamma'}$.
Set $G_{\Gamma,\delta,i}=\GL(m_\Gamma e \ell^{\alpha+\delta},\epsilon q)$, and denote by $N_{\Gamma,\delta,i}$ and $C_{\Gamma,\delta,i}$ the normalizer and centralizer of $D_{\Gamma,\delta,i}$ in $G_{\Gamma,\delta,i}$ respectively.

For $z\in\mathcal O_{\ell'}(\mathfrak Z)$, $\hat z$ is a linear character of $G_{\Gamma,\delta,i}$.
By the proof of Lemma \ref{detofC},
$\mathcal O_{\ell'}(\mathcal D(G_{\Gamma,\delta,i}))=\mathcal O_{\ell'}(\mathcal D(C_{\Gamma,\delta,i}))$,
so $\hat z$ may be regarded as a character of $C_{\Gamma,\delta,i}$~(by restriction).
Here we need some precise information on $\hat z$.

\begin{rmk}\label{descofhat}
Now we recall the description of the map $\ \hat{}\ $ given in \cite{Br86}.
As pointed in  \cite[note${}^2$ (p.186)]{Br86}, the isomorphism in Equation (\ref{isomorphsimhat}) is not uniquely determined.
Also, the author introduces a set $\mathcal S(G)$ to replace the set of semisimple elements of $G$ in \cite{Br86}.

First, denote by $k$ a subfield of $\overline{\mathbb{Q}}_\ell$ of finite degree over $\mathbb Q_\ell$. Also, assume that $k$ is big enough for all finite groups considered.
Suppose that we have chosen an algebraic closure $\overline{\F}$ of $\F$, an isomorphism $\iota: \mu(\overline{\mathbb Q}_\ell)\to \mathbb Q/\Z$, and an isomorphism $\iota': \overline{\F}^\times\to (\mathbb Q/\Z)_{p'}$.

Let $s$ be a semisimple element of $G$, then $L=C_G(s)=\prod_\Gamma L_\Gamma$ with $L_\Gamma\cong \GL(m_\Gamma(s),(\epsilon q)^{d_\Gamma})$.
If $\F_\Gamma$ denotes the field generated by $Z(L_\Gamma)$ in $\mathrm{End}_{\F}(\F^n)$, the group $Z(L_\Gamma)$ is equal to the subgroup of order $|(\epsilon q)^{d_\Gamma}-1|$ of $\F_\Gamma^\times$.
Every family $\sigma$ of embeddings $\sigma_\Gamma:\F_\Gamma\to \overline{\F}$ over $\F$ is associated to a character $\zeta_\sigma(s)$ of $Z(L)$ with values in $k$ in the following way.
Let $g_\Gamma$ be the particular generator of $Z(L_\Gamma)$ defined by the corresponding embedding of $\F_\Gamma^\times$ in $\mathbb Q/\Z$.
The character $\zeta_\sigma(g)$ is defined by the equation $\iota(\zeta_\sigma(s)(g_i))=\iota'(\sigma_i(s_\Gamma))$.

We denote by $\mathcal S(G)$ the set of pairs $(L,\zeta)$ such that there exists semisimple a element $s$ of $G$ and an
embedding $\F\subseteq \overline{\F}$, $\iota$, $\iota'$, $\sigma=$ such that $L=C_G(s)$ and
 $\zeta=\zeta_\sigma(s)$.
Then by \cite[(4.4)]{Br86}, the $G$-conjugacy classes of $\mathcal S(G)$ are in bijection with the set of $G$-conjugacy classes of semisimple elements of $G$.

If $s=(L,\zeta)\in\mathcal S(G)$, we denote by $\hat s$
the linear character of $L=C_G(s)$ with values in $k$ obtained by composing $\zeta$ with the ~(surjective)~ morphism $\mathrm{det}_L: L\to Z(L)$~(defined in \cite[p.171]{Br86}. Indeed, If $h\in L$, we write $h=\prod_\Gamma h_\Gamma$ corresponding to the decomposition $L=\prod_\Gamma L_\Gamma$.
Also, we identity $Z(L_\Gamma)$ with $\F_{(\epsilon q)^{d_\Gamma}}$.
Then $\mathrm{det}_L(h)=\prod_\Gamma \mathrm{det}_L(h_\Gamma)$, where $\mathrm{det}_L(h_\Gamma)$ is the determinant of the matrix corresponding to $h_\Gamma$ in $\GL(m_\Gamma(s),(\epsilon q)^{d_\Gamma})$ ).

Let $s=(C_G(s),\zeta)\in\mathcal S(G)$, and $X$ an $\ell'$-subgroup of $C_G(s)$.
We set $C=C_G(X)$, and we define an element $s_X=(C_C(s),\zeta_X)$  in the following way:
we may suppose that $C_C(s)=\prod_\Gamma L_\Gamma$, where $L_\Gamma\cong \GL(m_\Gamma(s),(\epsilon q)^{d_\Gamma})$.
Then $Z(C_{L_\Gamma}(s))$ isomorphic to a product of $\GL(1,\F_{\Gamma,i})$ where $\F_{\Gamma,i}$ is a certain extension of $\F_{(\epsilon q)^{d_\Gamma}}$.
For any element $z$ of one such factor, we set then $\zeta_X(z)=\zeta(N_{\F_{\Gamma,i}/\F_{(\epsilon q)^{d_\Gamma}}}(z))$.
The surjectivity of the norm in the finite extensions of finite fields allows then to establish that $(C_C(s),\zeta_X)$ belongs to $S(C)$.
Noting that if $X$ is abelian, the linear character $\hat s_X$ is simply the restriction to $C_C(s)$ of the linear character $\hat s$ to $C_G(s)$.
Also, the map form $\mathcal S(G)$ into $\mathcal S(C)$ which associates $s_X$ to $s$ is surjective.
We often omit the index $X$ in $s_X$.
\end{rmk}

\begin{rmk}\label{forthecompz}
Abbreviate $D=D_{\Gamma,\delta,i}$.	
Now we consider the relationship between $\hat z\in\Irr(G_{\Gamma,\delta,i})$ and $\hat z_D\in\Irr(C_{\Gamma,\delta,i})$ for $z\in\mathcal O_{\ell'}(\mathfrak Z)$.
Choose a particular generator $\eta$ of $\F^\times$.
For $g\in G_{\Gamma,\delta,i}$, if $\mathrm{det}_{G_{\Gamma,\delta,i}}(g)=\eta^k$ for some $k\in \Z$, then $\hat{z}(g)=\iota^{-1}\circ\iota'(z)^k$.
Now we choose an isomorphism $\tau: C_{\Gamma}\to \GL(m_\Gamma,(\epsilon q)^{e\ell^{\alpha_\Gamma}})$.
Let $\eta_d\in \F_{(\epsilon q)^{e\ell^{\alpha_\Gamma}}}^\times$ such that $N_{\F_{(\epsilon q)^{e\ell^{\alpha_\Gamma}}}/\F}(\eta_d)=\eta$.
Let $c\in C_{\Gamma,\delta,i}$ with $c=c_0\otimes I_\delta$ and $c_0\in C_{\Gamma}$.
Suppose that $\mathrm{det}(\tau(c_0))=\eta_d^j$ for some $j\in \Z$, then
$\hat{z}_R(c)=\iota^{-1}\circ\iota'(z)^j$.
Also, $\mathrm{det}_{G_{\Gamma,\delta,i}}(\tau(c))=\eta^{j\ell^\delta}$,
so $\Res^{G_{\Gamma,\delta,i}}_{C_{\Gamma,\delta,i}}(\hat z)(c)=\iota^{-1}\circ\iota'(z)^{j\ell^\delta}$.
So $\Res^{G_{\Gamma,\delta,i}}_{C_{\Gamma,\delta,i}}(\hat z)=\hat z_D^{\ell^\delta}$.

Let $s=s_\Gamma=e_\Gamma(\Gamma)\otimes I_\delta$.
Then $C_{G_{\Gamma,\delta,i}}(s)\cong \GL(e_\Gamma\ell^\delta,(\epsilon q)^{d_\Gamma})$.
Let $\F_\Gamma$ be the field generated by $Z(C_{G_{\Gamma,\delta,i}}(s))$ in $\mathrm{End}_{\F}(\F^{e_\Gamma d_\Gamma})$ and $\sigma:\F_\Gamma \to \overline{\F}$ an embedding of fields.
Let $\xi$ be the particular generator of $\F_{(\epsilon q)^{d_\Gamma}}^\times$ and hence it can be regarded as a generator of $Z(C_{G_{\Gamma,\delta,i}}(s))$.
For $g\in C_{G_{\Gamma,\delta,i}}(s)$, if the determinant of the matrix corresponding to $g$ in $\GL(e_\Gamma\ell^\delta,(\epsilon q)^{d_\Gamma})$ is $\xi^{k'}$, then $\hat s(g)=\iota^{-1}\circ \iota'(\sigma(s))^{k'}$.
It is easy to check that if $N_{\F_{(\epsilon q)^{d_\Gamma}}/\F}(\xi)=\eta$,
then $\Res^{G_{\Gamma,\delta,i}}_{C_{G_{\Gamma,\delta,i}}(s)}(\hat z)\cdot \hat s=\widehat{zs}$.
Now $C_{C_{\Gamma,\delta,i}}(s)\cong \GL(1, \F_{(\epsilon q)^{m_\Gamma e\ell^{\alpha_\Gamma}}})$ is a Coxeter torus of ${C_{\Gamma,\delta,i}}$.
Let $\tau':C_{C_{\Gamma,\delta,i}}(s) \to\F_{(\epsilon q)^{m_\Gamma e\ell^{\alpha_\Gamma}}}^\times$ be an embedding morphism.
Notice that $m_\Gamma e\ell^{\alpha_\Gamma}=e_\Gamma d_\Gamma$, so $\F_{(\epsilon q)^{m_\Gamma e\ell^{\alpha_\Gamma}}}$ is also an extension of $\F_{(\epsilon q)^{d_\Gamma}}$.
Let $c\in C_{C_{\Gamma,\delta,i}}(s)$, then there exists a positive integer $j'$, such that
$N_{\F_{(\epsilon q)^{m_\Gamma e\ell^{\alpha_\Gamma}}}/\F_{(\epsilon q)^{d_\Gamma}}}(\tau'(c))=\xi^{j'}$.
So $\hat s_D(c)=\iota^{-1}\circ \iota'(\sigma(s))^{j'}$.
Also, if $N_{\F_{(\epsilon q)^{d_\Gamma}}/\F}(\xi)=\eta$, then $\Res^{C_{\Gamma,\delta,i}}_{C_{C_{\Gamma,\delta,i}}(s)}(\hat z_D)\cdot \hat s_D=\widehat{zs}_D$.

Now $z$ is an $\ell'$-element, so by the argument above, we can choose suitable $\tau$, $\sigma$ and $\tau'$ such that
$\Res^{G_{\Gamma,\delta,i}}_{C_{C_{\Gamma,\delta,i}}(s)}(\hat z)\cdot \hat s_D=\widehat{zs}_{D}$.
\end{rmk}

\begin{lem}\label{forthethetagamma}
$\Res^{G_{\Gamma,\delta,i}}_{C_{\Gamma,\delta,i}}(\hat z)\cdot\theta_{\Gamma,\delta,i}=\theta_{z.\Gamma,\delta,i}$ for $z\in\mathcal O_{\ell'}(\mathfrak Z)$.
\end{lem}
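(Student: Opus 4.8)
The plan is to reduce the claim to the explicit description of the map $\widehat{\ }$ worked out in Remark \ref{forthecompz}, together with the defining characterisation of $\theta_{\Gamma,\delta,i}$ as the canonical character of the root $\ell$-block $\mathfrak b_\Gamma$ of $C_{\Gamma,\delta,i}$ whose associated subpair has label involving $s_\Gamma=e_\Gamma(\Gamma)\otimes I_\delta$ in the sense of \cite[(3.2)]{Br86}. First I would recall that $\theta_{\Gamma,\delta,i}$ is, up to the identifications set up in Section \ref{defcuspidalpairs} and in Remark \ref{descofhat}, exactly the linear character $\widehat{s}_D$ of $C_{\Gamma,\delta,i}$ attached to the pair $(C_{\Gamma,\delta,i},\zeta)\in\mathcal S(C_{\Gamma,\delta,i})$ coming from $s=s_\Gamma$; this is how the canonical characters of root blocks are produced in \cite{Br86} and in \cite{An92,An93,An94} (the canonical character of a root block is the one that restricts compatibly to the defect group and is governed by the Brauer pair label $(D_\Gamma,s_\Gamma,-)$). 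So the content of the lemma is the identity $\Res^{G_{\Gamma,\delta,i}}_{C_{\Gamma,\delta,i}}(\widehat z)\cdot\widehat{s_\Gamma}_D=\widehat{(z s_\Gamma)}_D$, together with the bookkeeping fact that $z.s_\Gamma = s_{z.\Gamma}$ in $\mathcal S$, hence $\widehat{(zs_\Gamma)}_D$ is the canonical character $\theta_{z.\Gamma,\delta,i}$.

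The key steps, in order: (1) Identify $\theta_{\Gamma,\delta,i}$ with $\widehat{s_\Gamma}_D$ where $s_\Gamma = e_\Gamma(\Gamma)\otimes I_\delta$, quoting the constructions of \cite{Br86} and \cite{An92,An93,An94}; here one uses that the subpair $(D_\Gamma,\mathfrak b_\Gamma)$ has label $(D_\Gamma,s_\Gamma,-)$ and that $\theta_\Gamma$ is the canonical character of $\mathfrak b_\Gamma$, extended via $\theta_{\Gamma,\delta,i}=\theta_\Gamma\otimes I_\delta$ — and the tensor decomposition $C_{\Gamma,\delta,i}=C_\Gamma\otimes I_\delta$ makes this compatible with the $I_\delta$-factors. (2) Apply the last paragraph of Remark \ref{forthecompz}: choosing the embeddings $\tau,\sigma,\tau'$ compatibly, one has $\Res^{G_{\Gamma,\delta,i}}_{C_{C_{\Gamma,\delta,i}}(s)}(\widehat z)\cdot\widehat{s}_D = \widehat{(zs)}_D$, and since $C_{C_{\Gamma,\delta,i}}(s)$ is a Coxeter torus of $C_{\Gamma,\delta,i}$ that meets every coset needed, this identity of restrictions to that torus, together with the fact that both $\widehat z\cdot\theta_{\Gamma,\delta,i}$ and $\theta_{z.\Gamma,\delta,i}$ are \emph{linear} characters of $C_{\Gamma,\delta,i}$ determined by their values on $\det_{C_{\Gamma,\delta,i}}$, forces the global equality. (3) Check that $z.s_\Gamma=s_{z.\Gamma}$, i.e.\ the polynomial-level action $z.\Gamma$ of Section \ref{Brauercharacterofslsu} matches the multiplication-by-$z$ action on $\mathcal S$; this is immediate from the description that the roots of $z.\Gamma$ are the roots of $\Gamma$ multiplied by $z$, and from the compatibility of the norm maps in Remark \ref{descofhat} with this multiplication (recall $z\in\mathcal O_{\ell'}(\mathfrak Z)$ so all the norm-surjectivity arguments go through and $z$ lands in $\mathbb F^\times$ after pullback).

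The main obstacle I expect is step (2): matching the three independent choices of isomorphisms ($\tau:C_\Gamma\to\GL(m_\Gamma,(\epsilon q)^{e\ell^{\alpha_\Gamma}})$, the field embedding $\sigma:\mathbb F_\Gamma\to\overline{\mathbb F}$, and $\tau':C_{C_{\Gamma,\delta,i}}(s)\to\mathbb F^\times_{(\epsilon q)^{m_\Gamma e\ell^{\alpha_\Gamma}}}$) so that all the norm diagrams commute simultaneously — i.e.\ so that $N_{\mathbb F_{(\epsilon q)^{d_\Gamma}}/\mathbb F}(\xi)=\eta$ and $N_{\mathbb F_{(\epsilon q)^{e\ell^{\alpha_\Gamma}}}/\mathbb F}(\eta_d)=\eta$ hold for \emph{compatible} choices of generators $\eta,\eta_d,\xi$. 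The surjectivity of the norm on multiplicative groups of finite fields (invoked already in Remark \ref{descofhat}) guarantees such compatible choices exist; the care needed is only to verify that a single choice serves both the $G_{\Gamma,\delta,i}$-side computation and the $C_{\Gamma,\delta,i}$-side computation, which is exactly the point at which Remark \ref{forthecompz} concludes ``we can choose suitable $\tau$, $\sigma$ and $\tau'$ such that $\Res^{G_{\Gamma,\delta,i}}_{C_{C_{\Gamma,\delta,i}}(s)}(\widehat z)\cdot\widehat{s}_D=\widehat{(zs)}_D$''. Once that is in hand, the lemma is just the translation of this torus-level identity back to $C_{\Gamma,\delta,i}$ via the determinant characterisation of linear characters, plus the observation $z.s_\Gamma=s_{z.\Gamma}$.
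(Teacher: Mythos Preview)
There is a genuine gap in your step (1). You claim that $\theta_{\Gamma,\delta,i}$ is ``exactly the linear character $\widehat{s}_D$ of $C_{\Gamma,\delta,i}$'', and later that both sides of the claimed identity are linear characters of $C_{\Gamma,\delta,i}$. This is false in general. By Remark~\ref{descofhat}, the character $\widehat{s}_D$ is a linear character of $C_{C_{\Gamma,\delta,i}}(s)$, which is the Coxeter torus of $C_{\Gamma,\delta,i}\cong\GL(m_\Gamma,(\epsilon q)^{e\ell^{\alpha_\Gamma}})$; it is \emph{not} a character of $C_{\Gamma,\delta,i}$ itself. The canonical character $\theta_{\Gamma,\delta,i}$ is instead obtained from $\widehat{s}_D$ by Deligne--Lusztig induction: by \cite[Prop.~4.16]{Br86} one has $\theta_{\Gamma,\delta,i}=\pm R^{C_{\Gamma,\delta,i}}_{C_{C_{\Gamma,\delta,i}}(s)}(\widehat{s})$. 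This is linear only when $m_\Gamma=1$, which need not hold (recall $m_\Gamma$ is the $\ell'$-part of $d_\Gamma/\gcd(e,d_\Gamma)$). Consequently your ``determinant characterisation of linear characters'' argument in step~(2) cannot carry the torus-level identity up to $C_{\Gamma,\delta,i}$.

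The paper's proof repairs exactly this point: starting from $\theta_{\Gamma,\delta,i}=\pm R^{C_{\Gamma,\delta,i}}_{C_{C_{\Gamma,\delta,i}}(s)}(\widehat{s})$, one pushes the linear twist $\Res^{G_{\Gamma,\delta,i}}_{C_{\Gamma,\delta,i}}(\widehat z)$ inside the Lusztig induction via \cite[Prop.~12.6]{DM91}, obtaining $\pm R^{C_{\Gamma,\delta,i}}_{C_{C_{\Gamma,\delta,i}}(s)}\bigl(\Res^{G_{\Gamma,\delta,i}}_{C_{C_{\Gamma,\delta,i}}(s)}(\widehat z)\cdot\widehat{s}\bigr)$. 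Then Remark~\ref{forthecompz} supplies $\Res(\widehat z)\cdot\widehat{s}=\widehat{zs}$ on the torus, and since $zs$ has unique elementary divisor $z.\Gamma$ with multiplicity $e_\Gamma\ell^\delta$ one reads off $\theta_{z.\Gamma,\delta,i}$ from the same formula. Your steps~(2) and~(3) are essentially correct at the torus level; what is missing is precisely this Deligne--Lusztig bridge between the torus and $C_{\Gamma,\delta,i}$.
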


\begin{proof}
By \cite[Prop. 4.16]{Br86},
$\theta_{\Gamma,\delta,i}=\pm R^{C_{\Gamma,\delta,i}}_{C_{C_{\Gamma,\delta,i}}(s)}(\hat{s})$,
where $s$ is a semisimple $\ell'$-element of $C_{\Gamma,\delta,i}$ which has only one elementary divisor $\Gamma$ with multiplicity $e_\Gamma\ell^\delta$~(as in Remark \ref{forthecompz}).
Note that $C_{C_{\Gamma,\delta,i}}(s)=C_{C_{\Gamma,\delta,i}}(zs)=C_{C_{z.\Gamma,\delta,i}}(zs)$.
Then $\Res^{G_{\Gamma,\delta,i}}_{C_{\Gamma,\delta,i}}(\hat z)\cdot\theta_{\Gamma,\delta,i}
=\pm \Res^{G_{\Gamma,\delta,i}}_{C_{\Gamma,\delta,i}}(\hat z)\cdot R^{C_{\Gamma,\delta,i}}_{C_{C_{\Gamma,\delta,i}}(s)}(\hat{s})
=\pm R^{C_{\Gamma,\delta,i}}_{C_{C_{\Gamma,\delta,i}}(s)}(\Res^{G_{\Gamma,\delta,i}}_{C_{C_{\Gamma,\delta,i}}(s)}(\hat z)\cdot \hat s)$
by \cite[Prop. 12.6]{DM91}.
By Remark \ref{forthecompz}, we may assume that $\Res^{G_{\Gamma,\delta,i}}_{C_{C_{\Gamma,\delta,i}}(s)}(\hat z)\cdot \hat s=\widehat{zs}$.
Notice that $zs$ is a semisimple $\ell'$-element of $C_{\Gamma,\delta,i}=C_{z.\Gamma,\delta,i}$ which has only one elementary divisor $z.\Gamma$ with multiplicity $e_\Gamma\ell^\delta$.
This completes the proof.
\end{proof}

Let $\sC_{\Gamma,\delta,i}$ be the set of characters of $N_{\Gamma,\delta,i}(\theta_{\Gamma,\delta,i})$ lying over $\theta_{\Gamma,\delta,i}$ and of defect zero as characters of $N_{\Gamma,\delta,i}(\theta_{\Gamma,\delta,i})/D_{\Gamma,\delta,i}$ and $\sC_{\Gamma,\delta}=\bigcup\limits_{i} \sC_{\Gamma,\delta,i}$.
By Clifford theory, this set is in bijection with the set of characters of $N_{\Gamma,\delta,i}$ lying over $\theta_{\Gamma,\delta,i}$ and of defect zero as characters of $N_{\Gamma,\delta,i}/D_{\Gamma,\delta,i}$ for all $i$.
We assume $\sC_{\Gamma,\delta}=\{\psi_{\Gamma,\delta,i,j}\}$ with $\psi_{\Gamma,\delta,i,j}$ a character of $N_{\Gamma,\delta,i}(\theta_{\Gamma,\delta,i})$.
Note that for $\ell=2$, $j$ has only one choice.
Also, we may assume $D_{\Gamma,\delta,i}=D_{{z.\Gamma},\delta,i}$, $N_{\Gamma,\delta,i}=N_{{z.\Gamma},\delta,i}$, and
$C_{\Gamma,\delta,i}=C_{{z.\Gamma},\delta,i}$.
We choose the labeling of $\sC_{\Gamma,\delta}$ and $\sC_{z.\Gamma,\delta}$ such that
\begin{equation}\label{weight:convention-2}
\Res^{G_{\Gamma,\delta,i}}_{N_{\Gamma,\delta,i}}(\hat z)\cdot\psi_{\Gamma,\delta,i,j}=\psi_{z.\Gamma,\delta,i,j}.
\end{equation}

\begin{rmk}
We can make (\ref{weight:convention-2}) because if for some $z\in\mathcal O_{\ell'}(\mathfrak Z)$, $\Res^{G_{\Gamma,\delta,i}}_{C_{\Gamma,\delta,i}}(\hat z)\cdot\theta_{\Gamma,\delta,i}=\theta_{z.\Gamma,\delta,i}$,
then $\Res^{G_{\Gamma,\delta,i}}_{N_{\Gamma,\delta,i}}(\hat z)$ fixes every element of $\sC_{\Gamma,\delta,i}$.
In fact, if $\ell=2$, then $\sC_{\Gamma,\delta,i}$ has only one element by \cite{An92} and \cite{An93}.
If $\ell$ is odd, and we assume that $D_{\Gamma,\delta,i}=R_{m_\Gamma,\alpha_\Gamma,\gamma,\bc}$, then by \cite[p.14]{AF90} and \cite[p.10]{An94},
$N_{\Gamma,\delta,i}/D_{\Gamma,\delta,i}\cong N_{m_\Gamma,\alpha_\Gamma,\gamma}/R_{m_\Gamma,\alpha_\Gamma,\gamma}\times Y_{\bc}/A_{\bc}$, for some subgroups $Y_{\bc}$ and $A_{\bc}$.
Also, all elements of $Y_{\bc}$ and $A_{\bc}$ are permutation matrices and then have determinant $1$.
So we may assume that $|\bc|=0$.
By the construction of $(N_{m_\Gamma,\alpha_\Gamma,\gamma})_{\theta_\Gamma\otimes I_\gamma}$ in \cite{AF90} and \cite{An94},
we may assume that $\gamma=0$ and then $D_{\Gamma,\delta,i}=R_\Gamma$ and $N_{\Gamma,\delta,i}=N_\Gamma$.
By \cite[\S 4]{LZ18}, up to conjugation, $N_\Gamma=C_\Gamma\rtimes\langle P \rangle$, where $P$ is a permutation matrix.
Thus $\Res^{G_{\Gamma,\delta,i}}_{N_{\Gamma,\delta,i}}(\hat z)$ fixes every element of $\sC_{\Gamma,\delta,i}$.
\end{rmk}

We use the notation from \cite[\S 5]{LZ18} now.
Define $i\cW_\ell(G)$ to be the $G$-conjugacy classes of the set
$$\left\{~(s,\lambda,K)~\middle|~
\begin{array}{c}
s~\textrm{is a semisimple $\ell'$-element of}~ G,\\
\lambda=\prod_\Gamma\lambda_\Gamma,~\lambda_\Gamma~\textrm{is the $e_\Gamma$-core of a partition of}~m_\Gamma(s),\\
K=K_\Gamma,~K_\Gamma:\bigcup_\delta\sC_{\Gamma,\delta}\to\{~\ell\textrm{-cores}~\}~\textrm{s.t.}~\\
\sum_{\delta,i,j}\ell^\delta |K_\Gamma(\psi_{\Gamma,\delta,i,j})|=w_\Gamma,
m_\Gamma(s)=|\lambda_\Gamma|+e_\Gamma w_\Gamma.
\end{array}~\right\}.$$
Note that for $\ell=2$, the triple becomes $(s,-,K)$.

A bijection between $\cW_\ell(G)$ and $i\cW_\ell(G)$ has been constructed implicitly in \cite{AF90}, \cite{An92},  \cite{An93} and  \cite{An94} and can be described as follows.
Let $(R,\varphi)$ be an $\ell$-weight of $G$.
Set $C=C_G(R)$ and $N=N_G(R)$.
Then there exists an $\ell$-block $b$ of $CR$ with $R$ a defect group such that $\varphi=\Ind_{N(\theta)}^{N}\psi$ where $\theta$ is the canonical character of $b$ and $\psi$ is a character of $N(\theta)$ lying over $\theta$ and of $\ell$-defect zero as a character of $N(\theta)/R$.
Assume $R=D_0D_+$ with $D_0$ an identity group of degree $n_0$ and $D_+$ a product of basic subgroups.
Note that for $\ell=2$, $R=D_+$.
Then $C,N,\varphi,\theta,\psi,N(\theta)$ can be decomposed accordingly.

First, we have $C_0=N_0=\GL(n_0,\epsilon q)$ and $\varphi_0=\psi_0=\theta_0$ a character of $\GL(n_0,\epsilon q)$ of $\ell$-defect zero.
So it is of the form $\chi_{s_0,\lambda}$ where $s_0$ is a  semisimple $\ell'$-element of $\GL(n_0,\epsilon q)$ and $\lambda=\prod_\Gamma \lambda_\Gamma$ with $\lambda_\Gamma$ a partition of $m_{s_0,\Gamma}$ without $e_\Gamma$-hook which affords the second component of the triple $(s,\lambda,K)$.

Secondly, assume we have the following decomposition $\theta_+=\prod\limits_{\Gamma,\delta,i}\theta_{\Gamma,\delta,i}^{t_{\Gamma,\delta,i}}$,
$D_+=\prod\limits_{\Gamma,\delta,i}D_{\Gamma,\delta,i}^{t_{\Gamma,\delta,i}}.$
Now $\theta_\Gamma$ determines a  semisimple $\ell'$-element with canonical form $e_\Gamma(\Gamma)$ in $G_\Gamma$.
Thus $s=s_0\prod_{\Gamma,\delta,i}(e_\Gamma(\Gamma)\otimes I_\delta)^{t_{\Gamma,\delta,i}}$ is the first component of the triple $(s,\lambda,K)$.
We can view $b$ as an $\ell$-block of $C_G(R)$, then the Brauer pair $(R,b)$ has a label $(R,s,\lambda)$ as in \cite[(3.2)]{Br86}.
Thus $(R,\varphi)$ belongs to an $\ell$-block $B$ of $G$ with label $(s,\lambda)$.
In particular, $\lambda_\Gamma$ is the $e_\Gamma$-core of a partition of $m_{\Gamma}(s)$.

Finally, we have
$N_+(\theta_+)=\prod\limits_{\Gamma,\delta,i}
N_{\Gamma,\delta,i}(\theta_{\Gamma,\delta,i})\wr\fS(t_{\Gamma,\delta,i})$,
$\psi_+=\prod\limits_{\Gamma,\delta,i} \psi_{\Gamma,\delta,i}$
with $\psi_{\Gamma,\delta,i}$ a character of $N_{\Gamma,\delta,i}(\theta_{\Gamma,\delta,i})\wr\fS(t_{\Gamma,\delta,i})$ covering $\theta_{\Gamma,\delta,i}^{t_{\Gamma,\delta,i}}$ and of defect zero as a character of $\left(N_{\Gamma,\delta,i}(\theta_{\Gamma,\delta,i})\wr\fS(t_{\Gamma,\delta,i})\right)/
D_{\Gamma,\delta,i}^{t_{\Gamma,\delta,i}}$.
By Clifford theory, $\psi_{\Gamma,\delta,i}$ is of the form
\begin{equation}\label{weights:psi}
\Ind
_{N_{\Gamma,\delta,i}(\theta_{\Gamma,\delta,i})\wr
	\prod_j\fS(t_{\Gamma,\delta,i,j})}
^{N_{\Gamma,\delta,i}(\theta_{\Gamma,\delta,i})\wr\fS(t_{\Gamma,\delta,i})}
\overline{\prod_j\psi_{\Gamma,\delta,i,j}^{t_{\Gamma,\delta,i,j}}}\cdot
\prod_j\phi_{\lambda_{\Gamma,\delta,i,j}}
\end{equation}
where $t_{\Gamma,\delta,i}=\sum_j t_{\Gamma,\delta,i,j}$, $\overline{\prod_j\psi_{\Gamma,\delta,i,j}^{t_{\Gamma,\delta,i,j}}}$ is an extension of
$\prod_j\psi_{\Gamma,\delta,i,j}^{t_{\Gamma,\delta,i,j}}$ from $N_{\Gamma,\delta,i}(\theta_{\Gamma,\delta,i})^{t_{\Gamma,\delta,i}}$ to $N_{\Gamma,\delta,i}(\theta_{\Gamma,\delta,i})\wr
\prod_j\fS(t_{\Gamma,\delta,i,j})$, $\lambda_{\Gamma,\delta,i,j} \vdash t_{\Gamma,\delta,i,j}$ without $\ell$-hook and $\phi_{\lambda_{\Gamma,\delta,i,j}}$ a character of $\fS(t_{\Gamma,\delta,i,j})$ corresponding to $\lambda_{\Gamma,\delta,i,j}$.
Define $K_\Gamma:\bigcup_\delta\sC_{\Gamma,\delta} \to \{~\ell\textrm{-cores}~\},
\psi_{\Gamma,\delta,i,j} \mapsto \lambda_{\Gamma,\delta,i,j}$.
Then we get the third component $K=\prod_\Gamma K_\Gamma$ of the triple $(s,\lambda,K)$.

Now we define an action of $\mathcal O_{\ell'}(\mathfrak Z)$ on $i\cW_\ell(G)$ by setting $zK=\prod_{\Gamma}(zK)_\Gamma$ where $(zK)_{z.\Gamma}=K_\Gamma$.
For an $\ell$-weight $(R,\varphi)$ of $G$ with label $(s,\lambda,K)^G$, we also write $R=R_{s,\lambda,K}$ and $\varphi=\varphi_{s,\lambda,K}$.
Then by the conventions above, $R_{s,\lambda,K}=R_{zs,z\lambda,zK}$.

By Proposition \ref{gexingz}, $RC_G(R)/SC_X(S)\cong N_G(R)/N_X(S)\cong G/X$.
So we regard $\hat z$ as a character of $N_G(R)$~(or $C_G(R)$)~ for $z\in\mathcal O_{\ell'}(\mathfrak Z)$.

\begin{prop}\label{weightrestritoslsu}
$\hat z\varphi_{s,\lambda,K}=\varphi_{zs,z\lambda,zK}$ for $z\in\mathcal O_{\ell'}(\mathfrak Z)$.
\end{prop}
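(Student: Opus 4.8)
The plan is to trace through the explicit construction of the bijection $\cW_\ell(G)\leftrightarrow i\cW_\ell(G)$ recalled just before the statement, applied simultaneously to $(R,\varphi_{s,\lambda,K})$ and to $(R,\hat z\varphi_{s,\lambda,K})$, and to check that the datum read off from the latter is $(zs,z\lambda,zK)$. First I would record the compatibilities of $\hat z$ with the ingredients. Since $\mathcal O_\ell(Z(G))\subseteq R=R_{s,\lambda,K}$ (Proposition \ref{relaofrad}), $\hat z$ (for $z\in\mathcal O_{\ell'}(\mathfrak Z)$) restricts to a linear $\ell'$-character of $N:=N_G(R)$ which is trivial on $R$ and $G$-invariant; hence $(R,\hat z\varphi)$ is again an $\ell$-weight of $G$ on the same $R$, so $\hat z\varphi_{s,\lambda,K}=\varphi_{s',\lambda',K'}$ for a unique label $(s',\lambda',K')^G$ to be identified, and this is at least consistent with $R_{zs,z\lambda,zK}=R_{s,\lambda,K}=R$ noted earlier. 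Writing $\varphi=\Ind_{N(\theta)}^N\psi$ with $\theta$ the canonical character of the relevant root block $b$ of $CR$ ($C=C_G(R)$): because $\hat z$ is $G$-invariant, $N(\hat z\theta)=N(\theta)$, and the projection formula gives $\hat z\varphi=\Ind_{N(\theta)}^N(\hat z\psi)$; moreover $\hat z\theta$ is the canonical character of the root block $\hat z\otimes b$ (Lemma \ref{per-block} applied inside $CR$, plus the characterization of canonical characters), so the construction applies verbatim to $(R,\hat z\varphi)$ with every ingredient replaced by its $\hat z$-twist.

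Next I would go through the three components. On the identity part $D_0$, the factor $N_0=C_0=\GL(n_0,\epsilon q)$ sits block-diagonally, $\hat z$ restricts to the corresponding linear character of $\GL(n_0,\epsilon q)$, and $\varphi_0=\theta_0=\chi_{s_0,\lambda}$, so $\hat z\varphi_0=\chi_{zs_0,z\lambda}$ by Proposition \ref{restrofordi}; this produces $z\lambda$ as the second component and $zs_0$ as the $D_0$-part of the first. On the $D_+$ part, $\theta_+=\prod_{\Gamma,\delta,i}\theta_{\Gamma,\delta,i}^{t_{\Gamma,\delta,i}}$, and Lemma \ref{forthethetagamma} gives $\hat z\cdot\theta_{\Gamma,\delta,i}=\theta_{z.\Gamma,\delta,i}$; since $\theta_{\Gamma,\delta,i}$ encodes the semisimple element $e_\Gamma(\Gamma)\otimes I_\delta$ and, using $d_\Gamma=d_{z.\Gamma}$ (hence $e_\Gamma=e_{z.\Gamma}$), $\theta_{z.\Gamma,\delta,i}$ encodes $e_{z.\Gamma}(z.\Gamma)\otimes I_\delta=z\cdot(e_\Gamma(\Gamma)\otimes I_\delta)$, the first component of the new triple is $zs$. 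Finally, on $N_+(\theta_+)=\prod_{\Gamma,\delta,i}N_{\Gamma,\delta,i}(\theta_{\Gamma,\delta,i})\wr\fS(t_{\Gamma,\delta,i})$ I would invoke the labelling convention $(\ref{weight:convention-2})$, namely $\hat z\cdot\psi_{\Gamma,\delta,i,j}=\psi_{z.\Gamma,\delta,i,j}$, and feed it into formula $(\ref{weights:psi})$: the integers $t_{\Gamma,\delta,i,j}$, the partitions $\lambda_{\Gamma,\delta,i,j}$ together with $\phi_{\lambda_{\Gamma,\delta,i,j}}$, and the chosen extensions $\overline{\prod_j\psi_{\Gamma,\delta,i,j}^{t_{\Gamma,\delta,i,j}}}$ should all be unchanged except for the relabelling $\Gamma\mapsto z.\Gamma$, so that $\hat z\psi_{\Gamma,\delta,i}=\psi_{z.\Gamma,\delta,i}$ and the resulting $K'$ satisfies $K'_{z.\Gamma}(\psi_{z.\Gamma,\delta,i,j})=\lambda_{\Gamma,\delta,i,j}=K_\Gamma(\psi_{\Gamma,\delta,i,j})$, i.e.\ $K'=zK$ by the definition of the $\mathcal O_{\ell'}(\mathfrak Z)$-action on $i\cW_\ell(G)$. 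Assembling the three components then shows $(R,\hat z\varphi)$ is labelled by $(zs,z\lambda,zK)^G$, which is the assertion.

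The main obstacle will be the last step: one must verify that $\hat z$ interacts with the induction formula $(\ref{weights:psi})$ by \emph{nothing more} than the relabelling $\Gamma\mapsto z.\Gamma$ — in particular that multiplying the chosen extension $\overline{\prod_j\psi_{\Gamma,\delta,i,j}^{t_{\Gamma,\delta,i,j}}}$ by $\hat z$ yields precisely the correspondingly chosen extension for $z.\Gamma$, and that $\hat z$ does not perturb the symmetric-group factors carrying the $\phi_{\lambda_{\Gamma,\delta,i,j}}$ (which would otherwise twist by a sign and conjugate some of the $\ell$-cores). This needs the explicit description of $\hat z$ on these monomial and wreath subgroups via the determinant map (Remarks \ref{descofhat} and \ref{forthecompz}) together with the concrete structure of $N_{\Gamma,\delta,i}(\theta_{\Gamma,\delta,i})$ from \cite{AF90}, \cite{An92}, \cite{An93}, \cite{An94} and \cite{LZ18}, and it should be arranged exactly as the convention $(\ref{weight:convention-2})$ itself is arranged, exploiting the freedom in the choices of $\tau$, $\sigma$, $\tau'$ flagged at the end of Remark \ref{forthecompz}. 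Once this compatibility is secured, everything else is routine bookkeeping via Proposition \ref{restrofordi}, Lemma \ref{forthethetagamma} and $(\ref{weight:convention-2})$.
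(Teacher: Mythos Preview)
Your proposal is correct and follows essentially the same route as the paper's own proof: decompose into the $D_0$-part and the $D_+$-part, apply Proposition~\ref{restrofordi} to $\varphi_0=\chi_{s_0,\lambda}$, apply Lemma~\ref{forthethetagamma} to the canonical characters $\theta_{\Gamma,\delta,i}$, pull $\hat z$ through the induction in formula~(\ref{weights:psi}), and invoke the convention~(\ref{weight:convention-2}) to conclude $K'=zK$. The only notable difference is in how the ``main obstacle'' you flag is dispatched: rather than appealing to the explicit determinant description of $\hat z$ on the wreath subgroups via Remarks~\ref{descofhat} and~\ref{forthecompz}, the paper simply fixes the extension $\overline{\prod_j\psi_{\Gamma,\delta,i,j}^{t_{\Gamma,\delta,i,j}}}$ to be the canonical one of \cite[Lem.~25.5]{Hu98}, for which the identity $\hat z\cdot\overline{\prod_j\psi_{\Gamma,\delta,i,j}^{t_{\Gamma,\delta,i,j}}}=\overline{\prod_j(\hat z\psi_{\Gamma,\delta,i,j})^{t_{\Gamma,\delta,i,j}}}$ holds directly, so the symmetric-group factors $\phi_{\lambda_{\Gamma,\delta,i,j}}$ are untouched and no sign issue arises.
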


\begin{proof}
Let $(R,\varphi)$ be an $\ell$-weight of $G$ corresponding to $(s,\lambda,K)$ and assume $R$ can be decomposed as above.
Let $z\in\mathcal O_{\ell'}(\mathfrak Z)$.
We want to find which triple corresponds to $(R,\hat z\varphi)$.
Assume it be $(s',\lambda',K')$.

Now, $\hat z\varphi=\hat z\varphi_0\times\hat z\varphi_+$.
$\varphi_0$ is of the form $\chi_{s_0,\lambda}$ by construction.
By Proposition \ref{restrofordi}, $\hat z\chi_{s_0,\lambda}=\chi_{zs_0,z.\lambda}$.
Then we have $\lambda'={z.\lambda}$.

Secondly,
by Lemma \ref{forthethetagamma},
$\hat z\theta_{\Gamma,\delta,i}=\theta_{z.\Gamma,\delta,i}$ for $z\in\mathcal O_{\ell'}(\mathfrak Z)$.
Note that
$\hat z\theta_{\Gamma,\delta,i}$ corresponds to $e_\Gamma\ell^\delta(\Gamma)$ and $\theta_{z.\Gamma,\delta,i}$ corresponds to $e_{z.\Gamma}\ell^\delta(z.\Gamma)$.
Up to conjugacy, we have $s'=zs$.

Finally, by the conventions above, we may assume $D_{\Gamma,\delta,i}=D_{{z.\Gamma},\delta,i}$, $N_{\Gamma,\delta,i}=N_{{z.\Gamma},\delta,i}$, and
$C_{\Gamma,\delta,i}=C_{{z.\Gamma},\delta,i}$.
To determine $K'$, we note that $\hat z\psi_+=\prod_{\Gamma,\delta,i}\hat z \psi_{\Gamma,\delta,i}$.
By (\ref{weights:psi}), $\hat z\psi_{\Gamma,\delta,i}$ is
\begin{align*}
&\hat z\Ind
_{N_{\Gamma,\delta,i}(\theta_{\Gamma,\delta,i})\wr
	\prod_j\fS(t_{\Gamma,\delta,i,j})}
^{N_{\Gamma,\delta,i}(\theta_{\Gamma,\delta,i})\wr
	\fS(t_{\Gamma,\delta,i})}
\left(\overline{\prod_j\psi_{\Gamma,\delta,i,j}^{t_{\Gamma,\delta,i,j}}}\right)\cdot
\prod_j\phi_{\lambda_{\Gamma,\delta,i,j}}\\
=&\Ind
_{N_{z.\Gamma,\delta,i}(\theta_{z.\Gamma,\delta,i})\wr
	\prod_j\fS(t_{\Gamma,\delta,i,j})}
^{N_{z.\Gamma,\delta,i}(\theta_{z.\Gamma,\delta,i})\wr
	\fS(t_{\Gamma,\delta,i})}\hat z
\left(\overline{\prod_j\psi_{\Gamma,\delta,i,j}^{t_{\Gamma,\delta,i,j}}}\right)\cdot
\prod_j\phi_{\lambda_{\Gamma,\delta,i,j}}.
\end{align*}
Since $\hat z\theta_{\Gamma,\delta,i}=\theta_{z.\Gamma,\delta,i}$, we have $N_{\Gamma,\delta,i}(\theta_{\Gamma,\delta,i})=
N_{{z.\Gamma},\delta,i}(\theta_{{z.\Gamma},\delta,i})$.
We can fix the way to extend $\prod_j\psi_{\Gamma,\delta,i,j}^{t_{\Gamma,\delta,i,j}}$ as in \cite[Lem. 25.5]{Hu98}, then we have that
$\hat z\left(\overline{\prod_j\psi_{\Gamma,\delta,i,j}^{t_{\Gamma,\delta,i,j}}}\right)
=\overline{\prod_j\left(\hat z\psi_{\Gamma,\delta,i,j}\right)^{t_{\Gamma,\delta,i,j}}}.$
Since $\hat z\psi_{\Gamma,\delta,i,j}=\psi_{z.\Gamma,\delta,i,j}$ by (\ref{weight:convention-2}), $\hat z\psi_{\Gamma,\delta,i}$ would be
$$\Ind
_{N_{{z.\Gamma},\delta,i}(\theta_{{z.\Gamma},\delta,i})\wr
	\prod_j\fS(t_{\Gamma,\delta,i,j})}
^{N_{{z.\Gamma},\delta,i}(\theta_{{z.\Gamma},\delta,i})\wr
	\fS(t_{\Gamma,\delta,i})}
\overline{\prod_j\psi_{{z.\Gamma},\delta,i,j}^{t_{\Gamma,\delta,i,j}}}\cdot
\prod_j\phi_{\lambda_{\Gamma,\delta,i,j}}.$$
Then $K'_{z.\Gamma}=K_\Gamma$ which is just $K'={z. K}$.
Thus we complete the proof.
\end{proof}

Now by Proposition \ref{weightrestritoslsu},
for an $\ell$-weight $(R,\varphi)$ of $G$,
the number of irreducible constituents of $\Res^{N_G(R)}_{N_{X}(R)}\varphi$ can be obtained.

\begin{rmk}\label{weightofslsu}
Analogous to the description of irreducible Brauer characters of $G$ and $X$ in Remark \ref{brasuerofslsu}, now we give an analogous description of $\ell$-weights of $G$ and $X$ by summarizing the argument above.
	
For positive integers $h,w,d$,
we define $$I_d(h):=\{\ (d,k,j)\ |\ 1\le k\le h,\ 1\le j\le \ell^d \ \},$$
$I(h):=\coprod\limits_{d\ge 0}I_d(h)$,
and $$\mathscr A(h,w):=\{\ K:I(h)\to \{  \ell\text{-cores}\}\mid \sum_{d,k,j}\ell^d|K((d,k,j))|=w  \ \}.$$

We call a tuple
\begin{equation}\label{admissibleweighttuple}
(([\sigma_1], m_1, \lambda^{(1)},K^{(1)} ),\dots,([\sigma_a], m_a, \lambda^{(a)},K^{(a)}))
\end{equation}
of tuples an \emph{$(n,\ell)$-admissible weight tuple},
if
\begin{itemize}
\item for every $1\le i\le a$, $\sigma_i\in\overline{\mathbb F}^\times$ is an $\ell'$-element, and
$m_i$ is positive integers
 such that $\lambda^{(i)}$ is an $e_i$-core of some partition of $m_i$ and
$K^{(i)}\in\mathscr A(e_i,w_i)$
where $e_i$ is the multiplicative order of $(\epsilon q)^{\mathrm{deg}(\sigma_i)}$ modulo $\ell$
and $w_i=e_i^{-1}(m_i-|\lambda^{(i)}|)$,
\item $[\sigma_i]\ne[\sigma_j]$ if $i\ne j$, and
\item $\sum\limits^{a}_{i=1}m_i\mathrm{deg}(\sigma_i)=n$.
\end{itemize}

An equivalence class of the $(n,\ell)$-admissible weight tuple
(\ref{admissibleweighttuple})
up to a permutation of  tuples
$$([\sigma_1], m_1, \lambda^{(1)},K^{(1)} ), \ldots, ([\sigma_a], m_a, \lambda^{(a)},K^{(a)})$$
is called an \emph{$(n,\ell)$-admissible weight symbol} and is denoted as $$\mathfrak w=[([\sigma_1], m_1, \lambda^{(1)},K^{(1)} ),\dots,([\sigma_a], m_a, \lambda^{(a)},K^{(a)})].$$
Then by \cite{AF90}, \cite{An92}, \cite{An93} and \cite{An94}, the set of $(n,\ell)$-admissible weight symbols is a labeling set for the $G$-conjugacy classes of $\ell$-weights of $G$.
We denote by $(R_{\mathfrak w},\varphi_{\mathfrak w})$ the $\ell$-weight of $G$ corresponding to the $(n,\ell)$-admissible weight symbol $\mathfrak w$.

The group $\mathcal O_{\ell'}(\mathfrak Z)$ acts on the set of $(n,\ell)$-admissible weight symbols via
\begin{align*}
&z\cdot [([\sigma_1], m_1, \lambda^{(1)},K^{(1)} ),\dots,([\sigma_a], m_a, \lambda^{(a)},K^{(a)})]\\
=&[([z\sigma_1], m_1, \lambda^{(1)},K^{(1)} ),\dots,([z\sigma_a], m_a, \lambda^{(a)},K^{(a)})]
\end{align*}
for $z\in\mathcal O_{\ell'}(\mathfrak Z)$.
We denote by $\kappa(\mathfrak w)$ the order of the stabilizer group in $\mathcal O_{\ell'}(\mathfrak Z)$ of an $(n,\ell)$-admissible weight symbol $\mathfrak w$.

Assume that $\ell\nmid \mathrm{gcd}(n,q-\epsilon)$.
Then by Lemma \ref{cliffordthm} and Proposition \ref{weightrestritoslsu},
 $\kappa^{N_G(R_{\mathfrak w})}_{N_X(R_{\mathfrak w})}(\varphi_{\mathfrak w})=\kappa(\mathfrak w)$~(\emph{i.e.}, $\Res^{N_G(R_{\mathfrak w})}_{N_X(R_{\mathfrak w})}\varphi_{\mathfrak w}$ is a sum of $\kappa(\mathfrak w)$ irreducible constituents).
For two $(n,\ell)$-admissible weight symbols $\mathfrak w$ and $\mathfrak w'$, if they are in the same $\mathcal O_{\ell'}(\mathfrak Z)$-orbit, then
$R_{\mathfrak w}=R_{\mathfrak w'}$ and
the restrictions of $\varphi_{\mathfrak w}$ and $\varphi_{\mathfrak w'}$ to $N_X(R_{\mathfrak w}\cap X)$ are the same.

If moreover, we write the decomposition $\Res^{N_G(R_{\mathfrak w})}_{N_X(R_{\mathfrak w})}\varphi_{\mathfrak w}=\bigoplus^{\kappa(\mathfrak w)}_{j=1} (\varphi_{\mathfrak w})_j$,
then by Remark \ref{restritowei}, the set $\{(R_{\mathfrak w}\cap X,(\varphi_{\mathfrak w})_j)\}$, where $\mathfrak w$ runs through the $\mathcal O_{\ell'}(\mathfrak Z)$-orbit representatives of $(n,\ell)$-admissible weight symbols and $j$ runs through the integers between $1$ and  $\kappa(\mathfrak w)$,
is a complete set of representatives of $X$-conjugacy classes of the $\ell$-weights of $X$.
\end{rmk}

\begin{rmk}\label{weighforblocks}
Let $\mathfrak b=[([\sigma_1], m_1, \lambda^{(1)}),\dots,([\sigma_a], m_a, \lambda^{(a)})]$ be an $(n,\ell)$-admissible block symbol.
Then by \cite{AF90}, \cite{An92},  \cite{An93} and  \cite{An94},
the set of $\ell$-weights $\{\ (R_{\mathfrak w},\varphi_{\mathfrak w})\ \}$, where $\mathfrak w$ runs through the $(n,\ell)$-admissible symbols of the form
$$\mathfrak w=[([\sigma_1], m_1, \lambda^{(1)},K^{(1)} ),\dots,([\sigma_a], m_a, \lambda^{(a)},K^{(a)})],$$
is a complete set of representatives of $G$-conjugacy classes of $\ell$-weights of $B_{\mathfrak b}$.

Assume that $\ell\nmid\mathrm{gcd} (n,q-\epsilon)$.
If we write $\mathcal W_\ell (B_{\mathfrak b})=\{\ (R_1,\varphi_1),\dots,(R_l,\varphi_l) \ \}$,
then by Proposition \ref{weightrestritoslsu}, $\mathcal W_\ell(B_{z \mathfrak b})=\{\ (R_1,\hat z\varphi_1),\dots,(R_l,\hat z\varphi_l) \ \}$ for all $z\in\mathcal O_{\ell'}(\mathfrak Z) $.

Assume that $\ell$ is odd.
Let $b$ be an $\ell$-block of $X$ covered by $B_{\mathfrak b}$,
then the number of $\ell$-weights lying in $b$
of the form $(R_{\mathfrak w}\cap X,\varphi')$ where $\varphi'\in\Irr(N_X(R_{\mathfrak w})\ |\ \varphi_{\mathfrak w})$ is $\kappa(\mathfrak w)/\kappa(\mathfrak b)$.
\end{rmk}

For an $\ell$-block $B$ and $(n,\ell)$-admissible weight symbol $\mathfrak w$,
we say $\mathfrak w$ \emph{belongs to} $B$,
if $(R_{\mathfrak w},\varphi_{\mathfrak w})$ is a $B$-weight.

\vspace{2ex}

\begin{proof}[Proof of Theorem \ref{alpofsl}]
If $\ell=p$, then the assertion holds by \cite{Ca88}. Now we assume that $\ell\ne p$.	
For an $\ell$-block $b$ of $X$, let $B$ be an $\ell$-block associated to $B$.
By Remark \ref{brauerchartoblock}, \ref{weighforblocks} and \cite[(1A)]{AF90},
there is a natural bijection $\mathscr S$ from the $(n,\ell)$-admissible symbols belonging to $B$ onto the $(n,\ell)$-admissible weight symbols belonging to $B$.
For any two $(n,\ell)$-admissible symbols $\mathfrak s$, $\mathfrak s'$ which belong to $B$, by Remark \ref{brasuerofslsu} and \ref{weightofslsu}
and the construction of $\mathscr S$ in \cite[(1A)]{AF90},
we have
\begin{itemize}
\item $\kappa(\mathfrak s)=\kappa(\mathscr S(\mathfrak s))$,
\item $\mathfrak s$ and $\mathfrak s'$ are in the same $\mathcal O_{\ell'}(\mathfrak Z)$-orbit if and only if $\mathscr S(\mathfrak s)$ and $\mathscr S(\mathfrak s')$ are in the same $\mathcal O_{\ell'}(\mathfrak Z)$-orbit.
\end{itemize}
Hence $|\IBr_\ell(b)|=|\mathcal W_\ell(b)|$ by Remark \ref{forweightbelow}.
\end{proof}

\subsection{The unipotent blocks}

\begin{lem}\label{resofweightsuni}
Assume that $\ell\nmid\mathrm{gcd} (n,q-\epsilon)$.
Let $b$ be a unipotent $\ell$-block of $X$ and $B$  the unipotent $\ell$-block of $G$ which covers $b$.
Then $(R,\varphi)\mapsto (R\cap X, \Res^{N_G(R)}_{N_X(R)}\varphi)$ gives a bijection from $\mathcal W_\ell(B)$ to $\mathcal W_\ell(b)$.
\end{lem}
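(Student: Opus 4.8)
The plan is to verify that the stated map is well defined with image in $\mathcal{W}_\ell(b)$, to prove it is injective, and then to conclude surjectivity by a cardinality count; everything will be assembled from results already established. Given $(R,\varphi)\in\mathcal{W}_\ell(B)$, I would set $S=R\cap X$ and first record that $S\in\Rad_\ell(X)$ and $R=S\mathcal{O}_\ell(Z(G))$ (Proposition \ref{relaofrad}), and that $N_X(S)=N_X(R)=N_G(R)\cap X$ with $N_G(R)/N_X(R)\cong G/X$ cyclic (Lemma \ref{gexingz}). The key point is that, $B$ being unipotent, the admissible weight symbol attached to $(R,\varphi)$ by Remark \ref{weightofslsu} has the form $\mathfrak{w}=[([1],n,\lambda,K)]$ (Remark \ref{weighforblocks}); since $[1]=\{1\}$ has trivial stabiliser in $\mathcal{O}_{\ell'}(\mathfrak{Z})$ this gives $\kappa(\mathfrak{w})=1$, so by Remark \ref{weightofslsu} (restriction to the normal subgroup $N_X(R)$ being multiplicity-free) the character $\psi:=\Res^{N_G(R)}_{N_X(R)}\varphi$ is irreducible. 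Then $\psi\in\Irr(N_X(S)\mid\varphi)$, and Proposition \ref{weightofslsubyres} shows $(S,\psi)$ is an $\ell$-weight of $X$. To see it is a $b$-weight I would note that $B$ covers $b$ (it is the unipotent $\ell$-block covering $b$, Lemma \ref{restrforuni}(ii)), so $(R,\varphi)$ belongs to the union $\mathcal{B}$ of the $\ell$-blocks of $G$ covering $b$; by Remark \ref{forweightbelow}, $(S,\psi)$ then belongs to the union $\mathcal{B}_0$ of $\ell$-blocks of $X$ that are $G$-conjugate to $b$. Finally $b$ is $G$-stable, since by Corollary \ref{extofunpofsl} each irreducible Brauer character in $b$ extends to $G\rtimes D\ge G$ and is therefore $G$-invariant; hence $\mathcal{B}_0=b$ and $(S,\psi)\in\mathcal{W}_\ell(b)$. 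The assignment descends to conjugacy classes because $G=XN_G(R)$ (Lemma \ref{detofC}).

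For injectivity, suppose $(R_1,\varphi_1),(R_2,\varphi_2)\in\mathcal{W}_\ell(B)$ have $X$-conjugate images. Using $R_i=(R_i\cap X)\mathcal{O}_\ell(Z(G))$ and conjugating $(R_1,\varphi_1)$ by an element of $X$ (which preserves being a $B$-weight), I would reduce to $R_1=R_2=:R$ with $\Res^{N_G(R)}_{N_X(R)}\varphi_1=\Res^{N_G(R)}_{N_X(R)}\varphi_2$. Lemma \ref{cliffordthm}(i) then gives $\varphi_2=\hat z\,\varphi_1$ for some $\hat z\in\Irr(N_G(R)/N_X(R))$, and $z\in\mathcal{O}_{\ell'}(\mathfrak{Z})$ because $\mathcal{O}_\ell(Z(G))\le R$ lies in the kernels of $\varphi_1$ and $\varphi_2$. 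If $\mathfrak{w}=[([1],n,\lambda,K)]$ is the symbol of $(R,\varphi_1)$, then by Proposition \ref{weightrestritoslsu} the symbol of $(R,\varphi_2)$ is the translate $z\cdot\mathfrak{w}=[([z],n,\lambda,K)]$; since $(R,\varphi_2)$ is a $B$-weight this symbol must again be of the unipotent shape required by Remark \ref{weighforblocks}, forcing $[z]=[1]$, that is $z=1$ and $\varphi_2=\varphi_1$. Hence $(R_1,\varphi_1)$ and $(R_2,\varphi_2)$ are $G$-conjugate.

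Surjectivity I would then get by counting: the blockwise Alperin weight conjecture for $\GL_n(\epsilon q)$ and $\GU_n(q)$ (\cite{AF90}, \cite{An92}, \cite{An93}, \cite{An94}) gives $|\mathcal{W}_\ell(B)|=|\IBr_\ell(B)|$; Lemma \ref{restrforuni}(ii) gives $|\IBr_\ell(B)|=|\IBr_\ell(b)|$; and Theorem \ref{alpofsl} gives $|\IBr_\ell(b)|=|\mathcal{W}_\ell(b)|$. Therefore $|\mathcal{W}_\ell(B)|=|\mathcal{W}_\ell(b)|$, and an injective map between finite sets of equal cardinality is a bijection. The step I expect to require the most care is tracking the block label of a weight after twisting by $\hat z$, via Proposition \ref{weightrestritoslsu} and Remark \ref{weighforblocks}, which is precisely what excludes the nontrivial twists in the injectivity argument; everything else is routine bookkeeping with the preceding lemmas.
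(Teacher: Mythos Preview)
Your proof is correct and is essentially a careful unpacking of the paper's terse argument, which simply cites Remarks \ref{weightofslsu} and \ref{weighforblocks}. Your well-definedness and injectivity arguments are exactly what those remarks encode once one observes that the unipotent weight symbols $\mathfrak w=[([1],n,\lambda,K)]$ have $\kappa(\mathfrak w)=1$.

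The only place you deviate is in surjectivity: you obtain it by the equality $|\cW_\ell(B)|=|\IBr_\ell(B)|=|\IBr_\ell(b)|=|\cW_\ell(b)|$, invoking Theorem \ref{alpofsl}. The paper's implicit route is more direct: by Remark \ref{weightofslsu} every $b$-weight $(S,\psi)$ is $X$-conjugate to some $(R_{\mathfrak w}\cap X,\Res\varphi_{\mathfrak w})$; by Remark \ref{forweightbelow} the corresponding $(R_{\mathfrak w},\varphi_{\mathfrak w})$ lies in some block $\hat z\otimes B$ covering $b$, and replacing $\varphi_{\mathfrak w}$ by $\hat z^{-1}\varphi_{\mathfrak w}$ (same restriction) lands in $\cW_\ell(B)$. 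Your counting argument is valid but slightly circuitous, since Theorem \ref{alpofsl} itself rests on the same combinatorics. Either way, the content is the same.
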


\begin{proof}
By Lemma \ref{restrforuni}, there is a unique unipotent $\ell$-block $B$ of $G$ which covers $b$.	
Then the claim follows by Remark \ref{weightofslsu} and \ref{weighforblocks} immediately.
\end{proof}

\begin{cor} \label{bijection}
Assume that $\ell\nmid \mathrm{gcd}(n,q-\epsilon)$.
If $b$ is a unipotent $\ell$-block of $X$,
then there exists an $\Aut(X)$-equivariant bijection between $\IBr_\ell(b)$ and $\mathcal W_\ell(b)$.
\end{cor}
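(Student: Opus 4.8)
The plan is to reduce the $\Aut(X)$-equivariance to the assertion that $\Aut(X)$ acts \emph{trivially}, i.e.\ fixes every element, on both $\IBr_\ell(b)$ and $\cW_\ell(b)$. Granting this, the bijection between the two sets whose existence is guaranteed by Theorem~\ref{alpofsl} (the blockwise Alperin weight conjecture for $X$ gives $|\IBr_\ell(b)|=|\cW_\ell(b)|$; concretely one may take the bijection induced by $\mathscr S$ from the proof of Theorem~\ref{alpofsl}) is automatically $\Aut(X)$-equivariant, since for such a bijection $\Phi$ and any $a\in\Aut(X)_b=\Aut(X)$ one has $\Phi(\phi^a)=\Phi(\phi)=\Phi(\phi)^a$. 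Throughout, write $B$ for the unique unipotent $\ell$-block of $G$ covering $b$, which exists by Lemma~\ref{restrforuni}(ii).

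First I would record that $\Aut(X)$ fixes $\IBr_\ell(b)$ pointwise. By Corollary~\ref{extofunpofsl} each $\theta\in\IBr_\ell(b)$ extends to $G\rtimes D$, hence is $(G\rtimes D)$-invariant, and since $G\rtimes D$ induces all of $\Aut(X)$ (Section~\ref{notations-and-conventions}) it follows that $\theta$ is $\Aut(X)$-invariant. In particular $\Aut(X)_b=\Aut(X)$, so $\Aut(X)$ does act on $\cW_\ell(b)$.

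The substance is to show that $\Aut(X)$ fixes every $X$-conjugacy class in $\cW_\ell(b)$. Since $\Aut(X)$ is generated by the images of $G$ and of $D$, it is enough to treat these two kinds of automorphisms. For $g\in G$: by Lemma~\ref{resofweightsuni} a $b$-weight has the form $(S,\psi)$ with $S=R\cap X$ and $\psi=\Res^{N_G(R)}_{N_X(R)}\varphi$ for a $B$-weight $(R,\varphi)$ of $G$; by Lemma~\ref{detofrad} one has $\mathcal D(N_G(R))=\mathfrak Z$, whence $G=X\,N_G(R)$, so write $g=nx$ with $n\in N_G(R)$ and $x\in X$. The element $n$ normalizes $R$ and $X$, hence $S$ and $N_X(R)$, and fixes $\varphi\in\Irr(N_G(R))$, hence fixes $\psi=\Res^{N_G(R)}_{N_X(R)}\varphi$; therefore $(S,\psi)^g=(S,\psi)^x$ is $X$-conjugate to $(S,\psi)$, as wanted.

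The $D$-part is where I expect the real work. It suffices to prove that every $d\in D$ fixes the $G$-conjugacy class of every $B$-weight $(R,\varphi)$: if $(R,\varphi)^d=(R,\varphi)^{g_0}$ for some $g_0\in G$, then $a:=dg_0^{-1}\in G\rtimes D$ fixes $R$ and $\varphi$, and since $X\unlhd G\rtimes D$ the same normalization argument as in the previous paragraph shows $a$ fixes $(S,\psi)$; hence $(S,\psi)^d=(S,\psi)^{g_0}$ is $X$-conjugate to $(S,\psi)$ by the $G$-case. To establish this $D$-stability I would use the parametrization recalled in Remark~\ref{weighforblocks}: as $B=B_{\fb}$ is unipotent, its block symbol is $\fb=[([1],n,\lambda)]$ with $\lambda$ an $e$-core of a partition of $n$, and its weights are labelled by the weight symbols $\mathfrak{w}=[([1],n,\lambda,K)]$; the action of $D$ on $G$-conjugacy classes of $\ell$-weights is compatible with its natural action on these symbols, which leaves the data $\lambda,K$ untouched and on the remaining entry acts only through the root $\sigma_1=1$, which is fixed by both $F_p$ and $\gamma$. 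In concrete terms this amounts to checking that all ingredients of the weight character $\varphi$ of a unipotent block, namely the identity part $\chi_{1,\lambda}$, the canonical characters $\theta_{\Gamma,\delta,i}$ attached to $\Gamma=x-1$, the characters $\psi_{\Gamma,\delta,i,j}$, and the symmetric-group characters $\phi_{\lambda_{\Gamma,\delta,i,j}}$, are all $D$-stable (equivalently rational-valued), a verification that rests on the explicit constructions of \cite{AF90}, \cite{An92}, \cite{An93} and \cite{An94}. Combining the two cases, $\Aut(X)$ fixes $\cW_\ell(b)$ pointwise, and the corollary follows.
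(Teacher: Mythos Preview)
Your approach is correct and reaches the same conclusion, but by a somewhat different route than the paper. The paper's proof is a three-line argument: it quotes \cite[Thm.~1.1]{LZ18} for a $D$-equivariant bijection between $\IBr_\ell(B)$ and $\cW_\ell(B)$, and then simply descends to $X$ via the restriction bijections of Lemma~\ref{restrforuni}(ii) and Lemma~\ref{resofweightsuni}, noting that $G\rtimes D$ induces all of $\Aut(X)$. It never asserts (or needs) that the $\Aut(X)$-action on either side is trivial; $D$-equivariance at the $G$-level plus compatibility of restriction with the $G\rtimes D$-action is enough.

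You instead prove the stronger statement that $\Aut(X)$ fixes both $\IBr_\ell(b)$ and $\cW_\ell(b)$ pointwise, so that \emph{any} bijection is equivariant. This is valid, and the $\IBr_\ell$ side is immediate from Corollary~\ref{extofunpofsl}. On the weight side, your $G$-argument is clean, and your $D$-argument ultimately rests on the fact that the $D$-action on the $G$-conjugacy classes of $B$-weights is governed by the action on the label $(1,\lambda,K)$, which is visibly $D$-fixed since the only elementary divisor is $x-1$. That compatibility of the $D$-action with the labels is precisely what \cite[Thm.~1.1]{LZ18} establishes; your sketch (``checking that all ingredients $\chi_{1,\lambda}$, $\theta_{\Gamma,\delta,i}$, $\psi_{\Gamma,\delta,i,j}$, $\phi_{\lambda_{\Gamma,\delta,i,j}}$ are $D$-stable'') is essentially a re-derivation of that result in the unipotent case rather than a citation of it. So your argument is longer and reproves a piece of \cite{LZ18}, but it does yield the extra information that the action is actually trivial, not merely that an equivariant bijection exists.
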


\begin{proof}
Let $B$ be a unipotent $\ell$-block of $G$ which covers $b$.
By \cite[Thm. 1.1]{LZ18}, there exists a $D$-equivariant bijection between $\IBr_\ell( B)$ and $\mathcal W_\ell( B)$.
Then the assertion follows from Lemma \ref{restrforuni} and \ref{resofweightsuni} since
the automorphisms of $X$ induced by $G\rtimes D$ equal  $\Aut(X)$.
\end{proof}

Now note that the universal covering group of a simple group $\PSL_n(\epsilon q)$ is a group isomorphic to $\SL_n(\epsilon q)$, apart from a few exceptions, see \cite[6.1.8]{GLS98}.

\begin{cor} \label{iandii}
Assume that $\ell\nmid \mathrm{gcd}(n,q-\epsilon)$.
Let $b$ be a unipotent $\ell$-block of $X$,
then the conditions (i) and (ii) of Definition \ref{induc} hold for $b$.
\end{cor}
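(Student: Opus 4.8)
The plan is to deduce conditions (i) and (ii) of Definition \ref{induc} directly from the results already assembled in this section, using the Clifford-theoretic machinery of Section \ref{preliminaries}. Recall that $X=\SL_n(\epsilon q)$ is (apart from the finitely many exceptions noted above, which can be checked separately or excluded) the universal $\ell'$-covering group of $\PSL_n(\epsilon q)$ since $\ell\nmid|Z(X)|$, so the hypotheses of Definition \ref{induc} are in force. Let $b$ be a unipotent $\ell$-block of $X$ and let $B$ be the unique unipotent $\ell$-block of $G=\GL_n(\epsilon q)$ covering $b$ (Lemma \ref{restrforuni}(ii)).

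First I would define, for each $Q\in\Rad_\ell(X)$, the subset $\IBr_\ell(b\mid Q)$. By Corollary \ref{relaofradicalconj} and Proposition \ref{relaofrad} we write $Q=R\cap X$ for a unique (up to $G$-conjugacy) $R\in\Rad_\ell(G)$ with $R=Q\mathcal O_\ell(Z(G))$. Using the bijection $\mathscr S$ from $(n,\ell)$-admissible symbols belonging to $B$ to $(n,\ell)$-admissible weight symbols belonging to $B$ (from the proof of Theorem \ref{alpofsl}), together with the fact from \cite[(1A)]{AF90} that each weight symbol determines the radical subgroup $R_{\mathfrak w}$ of its weight, I would set $\IBr_\ell(b\mid Q)$ to be the set of those $\phi\in\IBr_\ell(b)$ whose associated weight symbol $\mathscr S(\mathfrak s)$ has radical part $G$-conjugate to $R$. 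Since $\Res^G_X\colon\IBr_\ell(B)\to\IBr_\ell(b)$ is a bijection (Lemma \ref{restrforuni}(ii)), this is well-defined; property (i)(2) (the disjoint union decomposition over $\Rad_\ell(X)/\!\sim_X$) is then immediate from the fact that $\mathscr S$ is a bijection and from Corollary \ref{relaofradicalconj}. For the $\Aut(X)$-equivariance (i)(1): the $\Aut(X)$-action on $\IBr_\ell(b)$ is induced by $G\rtimes D$ (by \cite[Thm.~2.5.1]{GLS98}), and $D$ acts on $\Rad_\ell(G)$, on weight symbols and on $\IBr_\ell(B)$ compatibly (unipotent characters and Brauer characters are $D$-invariant by Corollary \ref{extenforuG}, and $D$ permutes radical subgroups), so the labelling $\phi\mapsto$ (radical part of $\mathscr S$-image) is $\Aut(X)_b$-equivariant; this gives $\IBr_\ell(b\mid Q)^a=\IBr_\ell(b\mid Q^a)$.

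Next, for condition (ii), I would construct the bijection $\Omega^X_Q\colon\IBr_\ell(b\mid Q)\to\mathrm{dz}_\ell(N_X(Q)/Q,b)$. This is essentially repackaging Lemma \ref{resofweightsuni} and Corollary \ref{bijection}: Corollary \ref{bijection} already provides an $\Aut(X)$-equivariant bijection between $\IBr_\ell(b)$ and $\mathcal W_\ell(b)$, and Lemma \ref{resofweightsuni} identifies $\mathcal W_\ell(b)$ via $(R,\varphi)\mapsto(R\cap X,\Res^{N_G(R)}_{N_X(R)}\varphi)$ with $\mathcal W_\ell(B)$, which in turn corresponds via $\mathscr S^{-1}$ and $\Res^G_X$ to $\IBr_\ell(b)$. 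Restricting this global bijection to the piece indexed by the $G$-class of $R$ gives exactly a bijection $\IBr_\ell(b\mid Q)\to\{$weights in $b$ with radical part $X$-conjugate to $Q\}$, and a $B$-weight $(Q,\psi)$ with $Q$ fixed is precisely an element of $\mathrm{dz}_\ell(N_X(Q)/Q,b)$ by definition of $\mathcal W_\ell(b)$ and of the set $\mathrm{dz}_\ell(N_X(Q)/Q,b)$. The compatibility $\Omega^X_Q(\phi)^a=\Omega^X_{Q^a}(\phi^a)$ for $a\in\Aut(X)_b$ follows because every ingredient ($\mathscr S$, $\Res^G_X$, the weight correspondence of Lemma \ref{resofweightsuni}, and the $D$-equivariant bijection of \cite[Thm.~1.1]{LZ18} underlying Corollary \ref{bijection}) is equivariant for the relevant group actions, which are all induced by $G\rtimes D$.

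The main obstacle will be bookkeeping the compatibility of the various equivariances rather than any single hard estimate: one must check that the $\Aut(X)$-action (equivalently the $G\rtimes D$-action) is tracked coherently through the chain $\IBr_\ell(b)\leftrightarrow\IBr_\ell(B)\leftrightarrow\{\text{weight symbols of }B\}\leftrightarrow\mathcal W_\ell(B)\leftrightarrow\mathcal W_\ell(b)$, and in particular that the choice of $\IBr_\ell(b\mid Q)$ is genuinely independent of the representative $R$ chosen in its $G$-class. Here the key facts are that $G=XN_G(R)$ and $N_G(R)/N_X(R)\cong G/X$ (Lemma \ref{gexingz}, Lemma \ref{detofC}), which guarantee that $G$-conjugacy of radical subgroups of $G$ reduces to $X$-conjugacy and that the restriction-of-characters step is controlled purely by the cyclic quotient $\mathfrak Z$, exactly as in Remark \ref{restritowei}. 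Once these are in place, conditions (i) and (ii) drop out formally. (Condition (iii), the block-theoretic extendibility statement, is deferred to the later sections and is not needed here.)
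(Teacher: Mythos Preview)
Your proposal is correct and follows essentially the same approach as the paper: both extract conditions (i) and (ii) from the $\Aut(X)$-equivariant bijection $\IBr_\ell(b)\to\mathcal W_\ell(b)$ of Corollary \ref{bijection} by partitioning according to the radical component and restricting the bijection. The only difference is packaging: the paper invokes \cite[Lem.~3.8]{Sc15} (equivalently \cite[Lem.~2.10]{Sc16}), which says in general that an $\Aut(X)_b$-equivariant bijection $\IBr_\ell(b)\to\mathcal W_\ell(b)$ automatically yields subsets $\IBr_\ell(b\mid Q)$ and maps $\Omega^X_Q$ satisfying (i) and (ii), whereas you unwind this verification explicitly through the chain $\IBr_\ell(b)\leftrightarrow\IBr_\ell(B)\leftrightarrow\mathcal W_\ell(B)\leftrightarrow\mathcal W_\ell(b)$.
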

\begin{proof}
By Corollary \ref{bijection},
there is an $\Aut(X)$-equivariant bijection $\Omega_b:\IBr_\ell(b)\to\mathcal W_\ell(b)$.
Now for every $Q\in\Rad_\ell(X)$, we set
$$\IBr_\ell(b\ |\ Q):=\bigcup_{\psi\in\Irr^0(N_X(Q),b)}\{ \ \Omega_b^{-1}((Q,\psi))\ \}$$
and define a map
$$\Omega^X_Q:\IBr_\ell(b\ |\ Q)\to \mathrm{dz}_\ell(N_X(Q),b),$$
such that $\phi\mapsto \widetilde\Omega_b(\phi)$,
where $\widetilde\Omega_b(\phi)$ denotes the unique element in $\mathrm{dz}_\ell(N_X(Q),b)$ whose inflation $\psi$ to $N_X(Q)$ satisfies that $\Omega_b(\phi)=(Q,\psi)$.
Then by \cite[Lem. 3.8]{Sc15}~(or \cite[Lem.~2.10]{Sc16}),
the subsets $\IBr_\ell(b\ |\ Q)$ and maps $\Omega^X_Q$ defined here satisfy (i) and (ii) of Definition \ref{induc}.
\end{proof}

\begin{rmk}
In fact, we have a generalisation of Corollary \ref{iandii}.
Assume that $\ell\nmid \mathrm{gcd}(n,q-\epsilon)$.
Suppose that $s$ is a semisimple $\ell'$-element of $G$ such that $zs$ and $s$ are not $G$-conjugate for any $z\in\mathcal O_{\ell'}(\mathfrak Z)$.
Let $B$ be an $\ell$-block of $G$ with label $(s,\lambda)$ and $b$ the $\ell$-block of $X$ covered by $B$.
Then by the same argument, there exists an $\Aut(X)$-equivariant bijection between $\IBr_\ell(b)$ and $\mathcal W_\ell(b)$,
and then the conditions (i) and (ii) of Definition \ref{induc} hold for $b$.
\end{rmk}

To end this section, we give the following result for the general $\ell$-blocks.

\begin{prop}\label{for_general_blocks}
	Let $q=p^f$ be a power of a prime $p$ and $\ell$ a prime different from $p$.
	Assume that $X\in\{\SL_n(q),\SU_n(q)\}$ such that $\mathrm{gcd}(f,2|Z(X)|)=1$,
	$\ell\nmid |Z(X)|$ and $2\nmid |Z(X)|$.
	Then there is a blockwise bijection between the $\ell$-Brauer characters of $X$ and the $\ell$-weights of $X$ which is $\Aut(X)$-equivariant.
	
	In particular, the conditions (i) and (ii) of Definition \ref{induc} hold for any $\ell$-block of $X$.
\end{prop}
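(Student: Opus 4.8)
The plan is to promote the character count from the proof of Theorem~\ref{alpofsl} to an $\Aut(X)$-equivariant bijection, using the block classification of Section~\ref{blocksofslsuchap} and the weight description of Section~\ref{chapweightsofslsu}. Write $d=|Z(X)|=\mathrm{gcd}(n,q-\epsilon)$; the hypotheses say $\ell\nmid d$, $2\nmid d$ and $\mathrm{gcd}(f,2d)=1$, so in particular the standing assumption $\ell\nmid\mathrm{gcd}(n,q-\epsilon)$ of Section~\ref{chapweightsofslsu} holds. Three elementary consequences I would record first. Since $\ell\nmid d$, the argument in the proof of Lemma~\ref{detofC} shows that the Sylow $\ell$-subgroup of $G/X\cong\mathfrak Z$ is realised by scalar matrices, so it acts trivially on $X$; hence $\kappa(\mathfrak s)=\kappa_{\ell'}(\mathfrak s)$ for every $(n,\ell)$-admissible symbol, $\kappa(\mathfrak w)=\kappa_{\ell'}(\mathfrak w)$ for every $(n,\ell)$-admissible weight symbol, and the diagonal automorphisms of $X$ act through the cyclic group $G/\hat X\cong\mathfrak Z/\mathfrak Z^n$ of order $d$ (with $\hat X=XZ(G)$). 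Since $f$ is odd, $D=\langle F_p,\gamma\rangle$ is cyclic of order $2f$ (for unitary groups this is recorded in Section~\ref{notations-and-conventions}; for linear groups it follows from $\gamma^2=1$ and $\mathrm{gcd}(f,2)=1$). In particular $\mathrm{gcd}(|G/\hat X|,|D|)=\mathrm{gcd}(d,2f)=1$.

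Next I would establish a blockwise bijection at the $\GL$-level that is $D$-equivariant and intertwines the twists by $\hat z$, $z\in\mathcal O_{\ell'}(\mathfrak Z)$. This can be done either via the Jordan decomposition of $\ell$-blocks, reducing to unipotent blocks of products of smaller general linear and unitary groups where \cite[Thm.~1.1]{LZ18} supplies a $D$-equivariant bijection $\IBr_\ell(B)\to\mathcal W_\ell(B)$, or directly from the explicit descriptions of \cite{AF90,An92,An93,An94} together with the bijection $\mathscr S$ on $(n,\ell)$-admissible symbols used in the proof of Theorem~\ref{alpofsl}, which acts componentwise — passing each pair $([\sigma_i],\mu^{(i)})$ to its $e_i$-core and $e_i$-quotient datum while leaving the root data $[\sigma_i]$ untouched — and is therefore compatible both with $D$, which permutes the $[\sigma_i]$ via $\sigma\mapsto\sigma^p$ and $\sigma\mapsto\sigma^{-1}$ and at worst transposes the $\mu^{(i)}$, and with the $\mathcal O_{\ell'}(\mathfrak Z)$-action (using Propositions~\ref{restrofordi} and \ref{weightrestritoslsu}). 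The outcome is a bijection $\Omega_G\colon\IBr_\ell(G)\to\mathcal W_\ell(G)$ with $\Omega_G(\IBr_\ell(B))=\mathcal W_\ell(B)$ for every $\ell$-block $B$ of $G$, which is $D$-equivariant and satisfies $\Omega_G(\hat z\phi)=\hat z\,\Omega_G(\phi)$ for all $z\in\mathcal O_{\ell'}(\mathfrak Z)$.

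Then I would descend $\Omega_G$ to $X$. Fix a representative $\phi=\phi_{\mathfrak s}\in\IBr_\ell(G)$ for the combined action of $\mathcal O_{\ell'}(\mathfrak Z)$ (via $\hat z$) and of $D$ on $\IBr_\ell(G)$, put $(R,\varphi)=\Omega_G(\phi)$ and $S=R\cap X$; by Lemma~\ref{gexingz}, $N_G(R)/N_X(S)\cong G/X$, and since $Z(G)\le N_G(R)$ centralises $N_X(S)$ the action on the constituents below factors through $G/\hat X$. By Clifford theory the constituents of $\Res^G_X\phi$ form a transitive $(G/\hat X)$-set of cardinality $\kappa_{\ell'}(\mathfrak s)$, and those of $\Res^{N_G(R)}_{N_X(S)}\varphi$ a transitive $(G/\hat X)$-set of the same cardinality $\kappa_{\ell'}(\mathscr S(\mathfrak s))=\kappa_{\ell'}(\mathfrak s)$; as $G/\hat X$ is cyclic, these two $(G/\hat X)$-sets are isomorphic. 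Moreover $D$ acts on both sets, normalising the $(G/\hat X)$-action, and since $\mathrm{gcd}(d,2f)=1$ a coprimality argument upgrades the isomorphism to one that is simultaneously $(G/\hat X)$- and $D$-equivariant. Transporting $\Omega_G$ through these isomorphisms, coherently along orbits, and using Remark~\ref{restritowei} (with Remarks~\ref{brauerchartoblock} and \ref{weighforblocks}) to see that one obtains precisely $\IBr_\ell(X)$ and $\mathcal W_\ell(X)$ matched blockwise, yields the desired $\Aut(X)$-equivariant blockwise bijection $\Omega\colon\IBr_\ell(X)\to\mathcal W_\ell(X)$. (When no $z\in\mathcal O_{\ell'}(\mathfrak Z)$, $z\ne1$, fixes the underlying semisimple class one has $\kappa=1$, $\Res^G_X$ is itself a bijection, and this is the Remark following Corollary~\ref{iandii}; the genuinely new case is $\kappa>1$.) Finally, conditions (i) and (ii) of Definition~\ref{induc} follow for every $\ell$-block $b$ of $X$ exactly as in Corollary~\ref{iandii}: put $\IBr_\ell(b\mid Q):=\{\Omega^{-1}(Q',\psi):Q'\sim_XQ,\ \psi\in\mathrm{dz}_\ell(N_X(Q)/Q,b)\}$ for $Q\in\Rad_\ell(X)$, let $\Omega^X_Q$ be induced by $\Omega$, and invoke \cite[Lem.~3.8]{Sc15} (or \cite[Lem.~2.10]{Sc16}).

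I expect the descent step to be the main obstacle — more precisely, arranging that the identification of the two constituent sets over $X$ is coherent for two group actions at once: the diagonal automorphisms (through a cyclic group of order dividing $d=|Z(X)|$) and the field–graph automorphisms $D$ (of order $2f$). The count of constituents is already contained in the proof of Theorem~\ref{alpofsl}; what is new is the simultaneous equivariance, and this is precisely what the hypotheses $2\nmid|Z(X)|$ and $\mathrm{gcd}(f,2|Z(X)|)=1$ secure, since together they give $\mathrm{gcd}(d,2f)=1$. Without this coprimality the two actions can interfere and no common equivariant labelling need exist, which is why the conclusion is not expected for arbitrary $n$ and $q$.
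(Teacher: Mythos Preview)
Your approach is essentially the paper's: both build a bijection on the $\GL$-level that is $D$-equivariant and compatible with $\hat z$-twists (the paper's $\mathscr S$, your $\Omega_G$), then descend to $X$ by showing that the $\Out(X)$-stabilisers of a Brauer character $\theta$ and of a matching weight $(S,\psi)$ are conjugate, using the coprimality $\gcd(|Z(X)|,2f)=1$.

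There is one genuine imprecision in your descent step. You assert that ``$D$ acts on both [constituent] sets'', meaning on $\Irr(X\mid\phi)$ and on $\Irr(N_X(S)\mid\varphi)$; but this is false in general: for $d\in D$ with $\phi^d\notin\{\hat z\phi:z\in\mathcal O_{\ell'}(\mathfrak Z)\}$ the set $\Irr(X\mid\phi^d)$ is disjoint from $\Irr(X\mid\phi)$, so $d$ sends constituents of $\phi$ outside your set. What must actually be compared are the full $\Out(X)$-orbits $\Delta_1$ of $\theta$ and $\Delta_2$ of $(S,\psi)$, and the missing ingredient for your ``coprimality argument'' is $|\Delta_1|=|\Delta_2|$ (equivalently $|\Out(X)_\theta|=|\Out(X)_\psi|$). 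This does follow from the $D$- and $\hat z$-equivariance of $\Omega_G$: each $\Delta_i$ is a disjoint union of constituent-sets of equal size $\kappa(\mathfrak s)=\kappa(\mathfrak w)$, indexed by the same $D$-orbit of $\mathcal O_{\ell'}(\mathfrak Z)$-classes upstairs. Once you have equal stabiliser orders together with $\Out(X)_\theta\cap\mathrm{Outdiag}(X)=\Out(X)_\psi\cap\mathrm{Outdiag}(X)$, a Schur--Zassenhaus argument in $\Out(X)=\mathrm{Outdiag}(X)\rtimes D$ (both factors cyclic of coprime order) does force conjugacy; without the order equality, coprimality alone is not enough.

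On the conjugacy step itself the paper takes a slightly different route: it decomposes $\Out(X)\cong(\mathrm{Outdiag}(X)\rtimes C_2)\times C_f$ and proves a tailored Lemma~\ref{conjugateofstab} about subgroups of (dihedral)$\times$(cyclic), which in turn requires the extra verification that $|\langle\mathrm{Outdiag}(X),\gamma\rangle_\theta|=|\langle\mathrm{Outdiag}(X),\gamma\rangle_\psi|$. Your observation that $D$ is already cyclic (since $f$ is odd) lets one bypass this: in $C\rtimes D$ with $C,D$ cyclic and $\gcd(|C|,|D|)=1$, a subgroup is determined up to conjugacy by its order and its intersection with $C$ alone, so the extra check is unnecessary. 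In that sense your packaging, once the gap above is filled, is marginally cleaner than the paper's.
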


For a positive integer, we denote by $C_d$ the cyclic group of order $d$.
We will make use of the following lemma to prove Proposition \ref{for_general_blocks}.

\begin{lem}\label{conjugateofstab}
	Let $B_1$ and $E$ be cyclic groups of order $n_1$ and $n_2$ respectively.
	Suppose that $H=B\times E$ satisfies that either
	\begin{enumerate}
		\item[(i)] $B=B_1$, or
		\item[(ii)] $B=B_1\rtimes C_2$ is isomorphic to a dihedral group of order $2n_1$ and	$n_1$ is odd.
	\end{enumerate}
	Assume that $\mathrm{gcd}(|B|,|E|)=1$.
	Let $H_1$ and $H_2$ be two subgroups of $H$ such that $|H_1|=|H_2|$, $|H_1\cap B|=|H_2\cap B|$ and $H_1\cap B_1=H_2\cap B_1$.	
	Then $H_1$ and $H_2$ are conjugate in $H$.	
\end{lem}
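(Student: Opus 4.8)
The plan is to use the coprimality hypothesis to separate off the factor $E$, reducing the assertion to a conjugacy statement inside the group $B$, and then to settle that by the elementary description of the subgroups of a cyclic group, respectively of a dihedral group of odd degree.

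\textbf{Reduction to $B$.} Since $\gcd(|B|,|E|)=1$, I would first check that every subgroup $K\le H=B\times E$ decomposes as $K=(K\cap B)\times(K\cap E)$. Writing $\pi$ for the set of primes dividing $|B|$, the subgroup $K\cap B=K\cap(B\times 1)$ is a normal $\pi$-subgroup of $K$, while $K/(K\cap B)$ embeds in $H/B\cong E$ and hence has $\pi'$-order; so $K\cap B$ is a normal Hall $\pi$-subgroup of $K$, and symmetrically $K\cap E$ is the complementary normal Hall $\pi'$-subgroup, whence their internal product is a subgroup of order $|K|$ and equals $K$ (the product being direct as $B\cap E=1$). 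Applying this to $H_1$ and $H_2$, the hypotheses $|H_1|=|H_2|$ and $|H_1\cap B|=|H_2\cap B|$ give $|H_1\cap E|=|H_2\cap E|$, whence $H_1\cap E=H_2\cap E=:E_0$ because $E$ is cyclic. It therefore suffices to produce $b\in B$ with $b(H_1\cap B)b^{-1}=H_2\cap B$, since then $bH_1b^{-1}=\bigl(b(H_1\cap B)b^{-1}\bigr)\times E_0=H_2$.

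\textbf{The two cases.} In case (i), $B=B_1$ is cyclic, and $H_1\cap B$, $H_2\cap B$ are subgroups of a cyclic group of equal order, hence equal; nothing more is needed. In case (ii), $B$ is dihedral of order $2n_1$ with $n_1$ odd. Put $C:=H_1\cap B_1=H_2\cap B_1$ (one of our hypotheses), and note $C=(H_i\cap B)\cap B_1$. As $|B:B_1|=2$, the index of $C$ in $H_i\cap B$ is $1$ or $2$, and from $|H_1\cap B|=|H_2\cap B|$ the two indices agree. If the common index is $1$, then $H_1\cap B=C=H_2\cap B$ and $b=1$ works. If it is $2$, then each $H_i\cap B$ meets $B\setminus B_1$; since every element of $B\setminus B_1$ is an involution, choosing $t_i\in(H_i\cap B)\setminus B_1$ one gets $H_i\cap B=C\rtimes\langle t_i\rangle$ with $t_1,t_2$ involutions. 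Because $n_1$ is odd, all involutions of $B$ are conjugate (with $r$ a generator of $B_1$ and $s$ a fixed reflection, conjugation by $r^j$ sends the involution $sr^k$ to $sr^{k-2j}$, and as $j$ varies these run through all reflections since $n_1$ is odd), so there is $b\in B$ with $bt_1b^{-1}=t_2$. Finally $bCb^{-1}=C$, as $C$ is the unique subgroup of $B_1$ of its order and $B_1\unlhd B$; hence $b(H_1\cap B)b^{-1}=C\rtimes\langle t_2\rangle=H_2\cap B$, which completes the reduction and the proof.

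\textbf{Expected obstacle.} The only point needing care is the opening reduction — verifying that a subgroup of $B\times E$ is the direct product of its intersections with the two factors when those factors have coprime order. This is a routine Hall-subgroup (equivalently, Goursat) argument, but it is precisely where the coprimality hypothesis enters; thereafter the argument is a direct inspection of dihedral subgroups, with the oddness of $n_1$ used solely to guarantee that all involutions of $B$ form a single conjugacy class.
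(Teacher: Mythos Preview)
Your proof is correct and follows essentially the same line as the paper's: first use coprimality to split any subgroup of $B\times E$ as the direct product of its intersections with $B$ and $E$ (the paper cites Goursat's lemma via \cite{Th97}, whereas you give an equivalent Hall-subgroup argument), then deduce $H_1\cap E=H_2\cap E$ from cyclicity, and finally conjugate inside $B$. The paper compresses the last step to the single sentence ``since $H_1\cap B_1=H_2\cap B_1$, we have that $H_1\cap B$ and $H_2\cap B$ are conjugate in $B$''; your treatment of the dihedral case---distinguishing index $1$ versus $2$ over $B_1$ and using that all reflections are conjugate when $n_1$ is odd---is exactly the argument behind that sentence, just written out in full.
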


\begin{proof}
	We first recall the result about the subgroups of direct products.
	A subgroup $H_0$ of $H=B\times E$ is determined by a tuple $(\tilde S_1, S_1,  \tilde S_2, S_2, \pi)$, where $ S_1\unlhd\tilde S_1$ are subgroups of $B$, $S_2\unlhd\tilde S_2$ are subgroups of $E$ and $\pi:\tilde S_1/S_1 \to \tilde S_2/S_2$ is a group isomorphism~(see for instance \cite[(1.1)]{Th97}).
	Now $\mathrm{gcd}(|B|,|E|)=1$, so $\tilde S_1=S_1$ and $\tilde S_2=S_2$, and hence $H_0=S_1\times S_2$ is also a direct product.
	Thus $H_1=(H_1\cap B)\times (H_1\cap E)$ and $H_2=(H_2\cap B)\times (H_2\cap E)$.
	
	Now $|H_1\cap B|=|H_2\cap B|$, so $|H_1\cap E|=|H_2\cap E|$.
	Then $H_1\cap E=H_2\cap E$ since $E$ is cyclic.
	Since $H_1\cap B_1=H_2\cap B_1$, we have that $H_1\cap B$ and $H_2\cap B$ are conjugate in $B$. So $H_1$ and $H_2$ are conjugate in $H$.
\end{proof}


\begin{proof}[Proof of Proposition \ref{for_general_blocks}]
	Thanks to \cite[Thm. C]{Sp13}, we can assume that $\ell\ne p$.	
	For any $\theta\in\IBr_\ell(X)$, let $\phi\in\IBr_\ell(G\mid \theta)$ and $(R,\varphi)$ the $\ell$-weight of $G$ corresponding to $\phi$ under the bijection induced by $\mathscr S$~(see the proof of Theorem \ref{alpofsl}). 	
	Let $S=R\cap X$ and $\psi\in\Irr(N_X(S)\mid \varphi)$.
	Now we consider the $G\rtimes D$-orbit of $\theta$~(and $(S,\psi)$, respectively)~ in $\IBr_\ell(X)$~(and $\mathcal W_\ell(X)$, respectively).
	Denote by $\Delta_1$ the $G\rtimes D$-orbit of $\theta$ in $\IBr_\ell(X)$
	and $\Delta_2$ the $\Aut(X)$-orbit of $(S,\psi)$ in $\mathcal W_\ell(X)$.
	By Remark \ref{brasuerofslsu} and \ref{weightofslsu} and the construction of $\mathscr S$~(note that it is $D$-equivariant by \cite[Thm. 1.1]{LZ18}), $\Delta_1$ and $\Delta_2$ have the same cardinality.
	Obviously, $\Aut(X)$ acts on $\Delta_1$~(or $\Delta_2$, respectively) as $\mathrm{Out}(X)$ does.
	Also $|\mathrm{Out}(X)_\theta|=|\mathrm{Out}(X)_\psi|$.
	
	Now we denote by $\mathrm{Outdiag}(X)$ the outer automorphisms induced by $G$ on $X$
	then $\mathrm{Outdiag}(X)\cong C_{\mathrm{gcd}(n,q-\epsilon)}$ is cyclic.
	Thus by Remark \ref{brasuerofslsu} and \ref{weightofslsu}, the stabilizers of $\theta$ and $\psi$ in $\mathrm{Outdiag}(X)$ are the same.
	If $n\ge 3$, by a similar argument of the paragraph above~(replace $\mathrm{Out}(X)$ by $\langle \mathrm{Outdiag}(X),\gamma \rangle$, where $\gamma$ is defined as in Section \ref{notations-and-conventions}), we have $|\langle \mathrm{Outdiag}(X),\gamma \rangle_\theta|=|\langle \mathrm{Outdiag}(X),\gamma \rangle_\psi|$.
	
	Now $$\mathrm{Out}(X)\cong
	\left\{ \begin{array}{ll}
	(\mathrm{Outdiag}(X)\rtimes C_2)\times C_f & \textrm{if}~\ n\ge 3,\\
	\mathrm{Outdiag}(X)\times C_f & \textrm{if}\ \ n=2.
	\end{array} \right.$$
	Thus by Lemma \ref{conjugateofstab}, $\mathrm{Out}(X)_\theta$ and $\mathrm{Out}(X)_\psi$ are conjugate in $\mathrm{Out}(X)$.
	Thus there exists an $\Aut(X)$-equivariant bijection between $\Delta_1$ and $\Delta_2$, hence there exists an $\Aut(X)$-equivariant bijection $\mathscr G$ between $\IBr_\ell(X)$ and $\mathcal W_\ell(X)$.
	Obviously, we can choose the bijection $\mathscr G$ satisfies that
	if $\theta\in\IBr_\ell(X)$, $\phi\in\IBr_\ell(G\mid \theta)$, $(R,\varphi)=\mathscr S(\phi)$, $S=R\cap X$, then
	$\mathscr G(\theta)=(S,\psi)$ for some $\psi\in\Irr(N_X(S)\mid \varphi)$.
	So $\mathscr G$ preserves blocks.
	Moreover, the conditions (i) and (ii) of Definition \ref{induc} hold for any $\ell$-block of $X$~(for details, see the proof of Corollary \ref{iandii}).
\end{proof}

\section{Extendibility of weight characters of unipotent blocks}
\label{extenofcharwei}

In this section, we will prove the following result.

\begin{prop}\label{extensionofunip}
Let $(R,\varphi)$ be an $\ell$-weight of $G$ which belongs to a unipotent $\ell$-block.
Then $\varphi$ extends to $(G\rtimes D)_{R,\varphi}$.
\end{prop}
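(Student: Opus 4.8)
The plan is to mirror the proof of Proposition~\ref{extenofchar}. Write $N:=N_G(R)$ and $M:=(G\rtimes D)_{R,\varphi}$. Since $D$ normalises $G$, we have $N=M\cap G\unlhd M$ with $M/N$ isomorphic to an abelian subgroup of $D$. If $M/N$ is cyclic then, $\varphi$ being $M$-invariant, it extends to $M$ and we are done; this disposes in particular of the unitary case, where $D$ itself is cyclic (and also of the case $n=2$, where $\gamma$ is inner). So we may assume $\epsilon=1$ and that $M/N$ is non-cyclic, whence $M=N\rtimes\langle\gamma,F_p^i\rangle$ for some $i\mid f$. It is worth recording at the outset that, since $(R,\varphi)$ belongs to a unipotent $\ell$-block, the only polynomial of $\cF'$ entering the description of $R$ and $\varphi$ in Section~\ref{chapweightsofslsu} is $\Gamma=x-1$ (for any other $\Gamma$ the semisimple element $e_\Gamma(\Gamma)$ would be nontrivial, contradicting $s=1$); this $\Gamma$ is $D$-stable, and its canonical character $\theta_\Gamma$ is trivial, so the relevant characters $\psi_{\Gamma,\delta,i,j}$ are simply $\ell$-defect-zero characters of the relative Weyl groups $N_{\Gamma,\delta,i}/C_{\Gamma,\delta,i}D_{\Gamma,\delta,i}$.

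Put $M_0:=N\rtimes\langle F_p^i\rangle$. The decisive step is to produce an extension $\tilde\varphi\in\Irr(M_0)$ of $\varphi$ with $\tilde\varphi(F_p^i)\neq 0$. Granting this, the remainder is formal: $\gamma$ normalises $M_0$, commutes with $F_p^i$, and fixes $\varphi$, so $\tilde\varphi^\gamma$ is again an extension of $\varphi$ to $M_0$; writing $\tilde\varphi^\gamma=\eta\tilde\varphi$ with $\eta\in\Irr(M_0/N)\cong\Irr(\langle F_p^i\rangle)$ (Gallagher) and evaluating at the $\gamma$-fixed element $F_p^i$ gives $\eta(F_p^i)=1$, hence $\eta=1$ as $M_0/N$ is generated by the image of $F_p^i$. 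Thus $\tilde\varphi$ is $\gamma$-invariant, so it extends to $M_0\langle\gamma\rangle=M$, yielding the desired extension of $\varphi$.

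For the construction of $\tilde\varphi$ I would use the explicit, $F_p^i$-equivariant description of $(R,\varphi)$ from Section~\ref{chapweightsofslsu}. As $F_p^i$ stabilises $R$, we may take $R=D_0D_+$ in standard form with $D_+$ a product of $F_p^i$-stable basic subgroups, and correspondingly $N=N_0\times N_+$, $\varphi=\varphi_0\times\varphi_+$. The factor $\varphi_0$ is a unipotent character of $N_0=\GL(n_0,\epsilon q)$, which by \cite[Thm.~4.3.1 and Lem.~4.3.2]{Bo99} (exactly as in Proposition~\ref{extenofchar}) has an extension to $N_0\rtimes\langle F_p^i\rangle$ not vanishing at $F_p^i$. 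For $\varphi_+$, recall from (\ref{weights:psi}) that it is induced from characters of wreath products $N_{\Gamma,\delta,i}(\theta_{\Gamma,\delta,i})\wr\fS(t_{\Gamma,\delta,i})$, with base characters $\psi_{\Gamma,\delta,i,j}$ factoring through the relative Weyl groups. Using the structure of these groups and the action of $F_p^i$ on them from \cite[\S4]{LZ18}, one extends each $\psi_{\Gamma,\delta,i,j}$ to $N_{\Gamma,\delta,i}\rtimes\langle F_p^i\rangle$ with nonzero value at $F_p^i$, chooses the extension of $\prod_j\psi_{\Gamma,\delta,i,j}^{t_{\Gamma,\delta,i,j}}$ as in \cite[Lem.~25.5]{Hu98} so that it is $F_p^i$-equivariant, and multiplies by the rational-valued (hence trivially extending) symmetric-group characters $\phi_{\lambda_{\Gamma,\delta,i,j}}$; the product over all $(\delta,i)$ and the induction to $N_+\rtimes\langle F_p^i\rangle$ then give an extension $\tilde\varphi_+$ of $\varphi_+$ with $\tilde\varphi_+(F_p^i)\neq 0$. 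Finally $\tilde\varphi:=\tilde\varphi_0\cdot\tilde\varphi_+$ works, since $\tilde\varphi(F_p^i)=\tilde\varphi_0(F_p^i)\tilde\varphi_+(F_p^i)\neq 0$.

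The main obstacle is precisely this construction of $\tilde\varphi_+$: one must control how $F_p^i$ interacts with the Clifford-theoretic description (\ref{weights:psi}) of the weight character — in particular making simultaneously $F_p^i$-compatible choices of all the intermediate extensions — and verify that a nonvanishing value at $F_p^i$ is produced by each piece and then survives both the wreath-product construction and the induction. Checking that $R$, its basic-subgroup decomposition, and the associated normalizers can be taken $F_p^i$-stable is routine but also has to be spelled out. The reduction to $M_0$, the Gallagher argument, and the elimination of $\gamma$ are, by contrast, purely formal.
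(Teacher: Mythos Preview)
Your reduction to the non-cyclic case and the observation that for unipotent weights all the canonical characters $\theta_{\Gamma,\delta,i}$ are trivial are both correct and match the paper. But the Gallagher-with-nonvanishing strategy, imported from Proposition~\ref{extenofchar}, breaks down here for a structural reason you have not addressed. In Proposition~\ref{extenofchar} the stabiliser is literally $G\rtimes D_\chi$, so the generators $F_p^i$ and $\gamma$ themselves lie in it and commute. By contrast, $(G\rtimes D)_{R,\varphi}$ is \emph{not} $N\rtimes\langle\gamma,F_p^i\rangle$: the original $F_p$ and $\gamma$ do not in general stabilise $R$ (the companion matrix $(\Lambda_\alpha)$ inside $R_{m,\alpha,\gamma}$ moves under $F_p$). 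One must replace them by $g^{(1)}F_p$ and $g^{(2)}\gamma$ with $g^{(k)}\in G$ (this is the content of Lemma~\ref{described}). These new generators commute only modulo $N$, so the key step $\tilde\varphi^{\gamma'}(\tau)=\tilde\varphi(\tau)$ fails: with $\gamma'=g^{(2)}\gamma$ and $\tau=g^{(1)}F_p^i$ one gets $\gamma'^{-1}\tau\gamma'=n\tau$ for some $n\in N$ that need not be trivial, and $\tilde\varphi(n\tau)=\tilde\varphi(\tau)$ is not automatic. Thus you cannot conclude $\eta=1$.

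The obstacle you do flag---producing $\tilde\varphi_+$ with $\tilde\varphi_+(F_p^i)\ne 0$---is also genuine and not handled by your sketch. Bonnaf\'e's result applies to irreducible characters of finite reductive groups, not to defect-zero characters of the relative Weyl groups $N_{\Gamma,\delta,i}/D_{\Gamma,\delta,i}$; and even granting nonvanishing for each $\psi_{\Gamma,\delta,i,j}$, the induction step in (\ref{weights:psi}) can kill the value at a given outer element.

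The paper sidesteps both problems by a different reduction. Instead of seeking nonvanishing at a cyclic generator, it finds (Lemma~\ref{defofdo} and Corollary~\ref{actiononcom}) a subgroup $D_0'\le D'$ that acts \emph{trivially} on each $N_{m,\alpha,\gamma,\bc}/C_{m,\alpha,\gamma,\bc}$ and such that $D'/(D'\cap G)D_0'$ is cyclic. Because the $\theta_+$-part is trivial for unipotent weights, this triviality is exactly what the tailored extension Lemma~\ref{extlemma} needs: one builds an explicit extension of $\varphi$ to $N_G(R)D_0'$ (using only that $\varphi_0$ extends by Proposition~\ref{extenofchar} and that $D_0'$ acts trivially on $N_+/C_+$), checks directly that it is $D'$-invariant, and then extends over the remaining cyclic quotient. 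No nonvanishing and no commutation of lifts are required.
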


We will use the following lemma.

\begin{lem}\label{extlemma}
Suppose that $H$ is a finite group, $C\unlhd H$, $N\unlhd H$, $D_0\le D\le H$, $\chi\in \Irr(N)$ satisfies that
\begin{itemize}
\item $H/N$ is abelian, $H=ND$, $N\cap D_0\le C_1$ and $H/ND_0$ is cyclic,
\item there are normal subgroups $C_0$, $C_1$, $N_0$ and $N_1$ of $H$ such that
$C=C_0\times C_1$, $N=N_0\times N_1$, $C_0= N_0$ and $C_1\le N_1$,
\item  $D_0$ acts trivially on $N_1/C_1$,
\item $N_0D=N_0\rtimes D$,
\item $\chi\in \Irr(N\ |\ \theta)$ where $\theta=\theta_0\times\theta_1$ with $\theta_0\in\Irr(C_0)$ and $\theta_1=1_{C_1}$,
\item  $H_\chi=H$ and $\theta_0$ extends to $N_0\rtimes (D/K)$, where $K$ is the kernel of the action of $D$ on $N_0$.
\end{itemize}
Then $\chi$ extends to $H$.
\end{lem}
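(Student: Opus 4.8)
The plan is to use the internal direct product $N = N_0 \times N_1$ to split $\chi$ and to extend the two pieces to $H$ separately. Write $\chi = \chi_0 \times \chi_1$ with $\chi_0 \in \Irr(N_0)$ and $\chi_1 \in \Irr(N_1)$; since $C_0 = N_0$ and $\chi$ lies over $\theta = \theta_0 \times 1_{C_1}$, one gets $\chi_0 = \theta_0$ and $C_1 \subseteq \ker\chi_1$, and since $N_0, N_1 \unlhd H$ and $H_\chi = H$, both $\chi_0$ and $\chi_1$ are $H$-invariant. It therefore suffices to extend $\alpha := \chi_0 \times 1_{N_1}$ and $\beta := 1_{N_0} \times \chi_1$ to $H$: if $\widetilde\alpha, \widetilde\beta \in \Irr(H)$ are extensions of them, then $\widetilde\alpha\widetilde\beta$ is a character of $H$ whose restriction to $N$ equals the irreducible character $\chi$, hence is an irreducible extension of $\chi$.

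To extend $\beta$, I would pass to $\overline H := H/N_0 C_1$ (note $N_0 C_1 \unlhd H$), where $\beta$ becomes the character $\chi_1$ of $\overline N := N/N_0 C_1 \cong N_1/C_1$. As $D_0$ acts trivially on $N_1/C_1$ and $D_0 \cap N \subseteq C_1$, the image $\overline{D_0}$ of $D_0$ centralises $\overline N$ and intersects it trivially, so $\overline N\,\overline{D_0} = \overline N \times \overline{D_0}$ and $\chi_1$ extends to $\overline N\,\overline{D_0}$ with $\overline{D_0}$ in its kernel. Since $\overline H/\overline N \cong H/N$ is abelian, $\overline N\,\overline{D_0}$ is normal in $\overline H$; using that $D$ is abelian (as it is throughout this paper), $\overline{D_0}$ is normal in $\overline H$ as well, so this distinguished extension is $\overline H$-invariant. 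Because $\overline H / \overline N\,\overline{D_0} \cong H/ND_0$ is cyclic, it extends to $\overline H$ by the standard fact that a stable irreducible character extends over a cyclic quotient; inflation gives $\widetilde\beta \in \Irr(H)$.

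For $\alpha$ I would argue modulo $N_1$, now bringing in the hypotheses $N_0 D = N_0 \rtimes D$ and ``$\theta_0$ extends to $N_0 \rtimes (D/K)$''. Here $\alpha$ becomes $\theta_0$ on $N/N_1 \cong N_0$, and $ND_0/N_1 \cong N_0 \rtimes (D_0/(D_0 \cap N))$ using $D_0 \cap N \subseteq C_1 \subseteq N_1$. Since $D_0 \cap N \subseteq C_1 \subseteq N_1$ centralises $N_0$, one has $D_0 \cap N \subseteq K$, so the given extension of $\theta_0$ to $N_0 \rtimes (D/K)$ restricts to $N_0 \rtimes (D_0 K/K)$ and pulls back along $N_0 \rtimes (D_0/(D_0 \cap N)) \twoheadrightarrow N_0 \rtimes (D_0/(D_0 \cap K))$ to an extension of $\theta_0$; inflating yields an extension of $\alpha$ to $ND_0$. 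After checking this extension is $H$-invariant, one extends it to $H$ across the cyclic quotient $H/ND_0$ (note $ND_0 \unlhd H$ since $H/N$ is abelian), exactly as for $\beta$. Multiplying $\widetilde\alpha$ and $\widetilde\beta$ as in the first paragraph completes the proof.

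The main obstacle is the $H$-invariance of the extensions of $\alpha$ and $\beta$ built over $ND_0$: these are \emph{distinguished} extensions singled out by a prescribed kernel, and one must verify that this characterisation survives conjugation by all of $H$, not merely by $ND_0$. This is where one uses the $H$-normality of the subgroups $N_0, N_1, C_1, ND_0, N_0C_1$ (the last few following from $H/N$ being abelian), the action hypotheses, and — in the setting of this paper, where $D$ is abelian — the normality of $D_0$ in $D$. Once the invariance is secured, everything else (the two reductions, the product-of-extensions step, and the passages across cyclic quotients) is routine.
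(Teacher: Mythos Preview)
Your argument is correct (under the caveat you flag, that $D$ is abelian) but is organised differently from the paper. The paper does not factor $\chi$ into $\alpha$ and $\beta$ and extend each to $H$ separately. Instead it builds a single extension of $\chi$ to $ND_0$ in one stroke: from the hypothesis that $\theta_0$ extends to $N_0\rtimes(D/K)$ it produces a representation $\rho_0'$ of $N_0D$ with $\rho_0'(n_0d)$ depending only on $dK$, then defines $\tilde\rho(n_0n_1d_0):=\rho_0'(n_0d_0)\otimes\rho_1(n_1)$ on $ND_0$ (well-definedness uses $N\cap D_0\le C_1\le\ker\chi_1$; the homomorphism property uses that $D_0$ acts trivially on $N_1/C_1$). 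Invariance under $D$ then comes essentially for free because $\rho_0'$ lives on all of $N_0D$, so conjugation by $g\in D$ is an inner automorphism there, while $\chi_1$ is $H$-invariant; finally one passes over the cyclic quotient $H/ND_0$.

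What your approach buys is conceptual clarity: the two hypotheses ``$D_0$ acts trivially on $N_1/C_1$'' and ``$\theta_0$ extends to $N_0\rtimes(D/K)$'' are each used for exactly one of the factors $\beta$, $\alpha$, and the product-of-extensions step is clean. What the paper's approach buys is that the $D$-invariance of the extension to $ND_0$ is closer to automatic: since $\rho_0'$ is already defined on $N_0D$, one does not need $\overline{D_0}\unlhd\overline H$ or any separate normality argument to pin down a ``distinguished'' extension. In effect the paper hides the work you put into invariance inside the construction of $\rho_0'$. Both routes, however, use that $D$ normalises $D_0$ at the crucial invariance step (the paper when writing ${}^g(n_0d)\in N_0D_0$, you when arguing $\overline{D_0}\unlhd\overline H$); this is not a stated hypothesis of the lemma but holds in the paper's application since there $D$ is abelian, as you correctly note.
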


\begin{proof}
Let $\chi=\chi_0\times\chi_1$ where $\chi_0=\theta_0$ and $\chi_1\in\Irr(N_1)$.
Now $\theta_0$ extends to $N_0\rtimes (D/K)$, so there exist an extension $\chi_0'\in\Irr(N_0D)$ of $\chi_0$ and a representation $\rho_0'$  affording $\chi_0'$
such that
if $n_0\in N_0$, $d,d'\in D$ satisfy that $d$ and $d'$ induce the same automorphism on $N_0$, then $\rho_0'(n_0d)=\rho_0'(n_0d')$.
Let $\tilde\rho_0=\Res^{N_0D}_{N_0D_0}\rho_0'$.

Let $\rho_1:N_1\to\GL_{\chi_1(1)}(\mathbb C)$ a representation of $N_1$ affording $\chi_1$.
Now let $\tilde\rho:ND_0\to\GL_{\chi(1)}(\mathbb C)$ satisfy
$\tilde\rho(n_0n_1d)=\tilde\rho_0(n_0d)\otimes\rho_1(n_1)$ for all $n_0\in N_0$, $n_1\in N_1$ and $d\in D_0$.
Here, $\tilde\rho$ is well-defined.
In fact, if $n_0,n_0'\in N_0$, $n_1,n_1'\in N_1$ and $d,d'\in D_0$ satisfy $n_0n_1d=n_0'n_1'd'$, then $n_0=n_0'$ and there exists $c\in C_1$ such that
$n_1=n_1'c$ and $d=c^{-1}d'$.
Hence $\rho_1(n_1)=\rho_1(n_1')$ since $C_1\le \ker \rho_1$.
Also, by the paragraph above, $\tilde\rho_0(n_0d)=\tilde\rho_0(n_0'd')$.
So $\tilde\rho(n_0n_1d)=\tilde\rho(n_0'n'_1d')$.

We claim that $\tilde\rho$ is a representation of $ND_0$.
In fact,
let $n_0,n_0'\in N_0$, $n_1,n_1'\in N_1$ and $d,d'\in D_0$,
\begin{align*}
\tilde\rho(n_0n_1dn_0'n_1'd')&=\tilde\rho(n_0(^dn_0')n_1(^dn_1')dd')\\
&=\tilde\rho_0(n_0(^dn_0')dd')\otimes\rho_1(n_1(^dn_1'))\\
&=\tilde\rho_0(n_0dn_0'd')\otimes\rho_1(n_1(^dn_1')).
\end{align*}
On the other hand,
$\tilde\rho(n_0n_1d)\tilde\rho(n_0'n_1'd')=\tilde\rho_0(n_0d)\tilde\rho_0(n_0'd')\otimes\rho_1(n_1)\rho_1(n_1')$.
Since $D_0$ acts trivially on $N_1/C_1$, then $^dn_1'=n_1'c$ for some $c\in C_1$.
Hence $\rho_1(n_1(^dn_1'))=\rho_1(n_1)\rho_1(^dn_1')=\rho_1(n_1)\rho_1(n_1')$ since $C_1\le\ker(\rho_1)$.
Thus the claim holds.

Let $g\in D$, $n_0\in N_0$, $n_1\in N_1$, $d\in D_0$,
then
$$\tilde\rho({}^g(n_0n_1d))=
\tilde\rho({}^g((n_0d)(^{d^{-1}}n_1)))
=\tilde\rho_0({}^g(n_0d))\otimes\rho_1(^{gd^{-1}}n_1).$$
Let $\tilde{\chi}$ be the character afforded by $\tilde\rho$,
then $\tilde{\chi}(n_0n_1d)=\tilde{\chi}_0(n_0d)\chi_1(d)$.
Hence
\begin{align*} \tilde\chi^g(n_0n_1d)&=\mathrm{Trace}(\tilde\rho_0({}^g(n_0d)))\mathrm{Trace}(\rho_1(^{gd^{-1}}n_1))\\
&=\tilde{\chi_0}({}^g(n_0d))\chi_1(^{gd^{-1}}n_1)
=\tilde{\chi_0}^g(n_0d)\chi_1^{gd^{-1}}(n_1)\\
&=\tilde{\chi_0}(n_0d)\chi_1(n_1)=\tilde{\chi}(n_0n_1d)
\end{align*}
since $\tilde\chi_0$, $\chi_1$ are $D$-invariant.
Thus $\tilde\chi$ is $D$-invariant.
Then $\tilde{\chi}$ extends to $H$ since $H/ND_0$ is cyclic.
So $\chi$ extends to $H$.
\end{proof}

\vspace{2ex}

First, by the uniqueness of $R_{m,\alpha,\gamma}$ and $R^i_{m,\alpha,\gamma}$ proved in \cite{An92}, \cite{An93} and \cite{An94}, $D$ acts trivially on the set of $G$-conjugacy classes of $\ell$-radical subgroups of $G$.
Denote $\sigma_1=F_p$ and $\sigma_2=\gamma$.
Then $D=\langle\sigma_1,\sigma_2 \rangle$.
So there exist $g^{(k)}\in G$ such that $g^{(k)}\sigma_k\in (G\rtimes D)_{R}$ for $k=1,2$.
Let $D'=\langle g^{(1)}\sigma_1,g^{(2)}\sigma_2 \rangle$.
Then we have the following result by direct calculation.

\begin{lem}\label{described}
With the notations above,
\begin{enumerate}
\item[(i)]  $(G\rtimes D)_{R}=N_G(R)D'$,
\item[(ii)] $D'/D'\cap G\cong D$,
\item[(iii)] $N_G(R)D'/N_G(R)\cong D$.
\end{enumerate}
\end{lem}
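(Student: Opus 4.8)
The plan is to deduce all three statements by elementary manipulations inside the semidirect product $G\rtimes D$, using only that $D=\langle\sigma_1,\sigma_2\rangle$ is abelian, that $G\unlhd G\rtimes D$, and that by the choice of $g^{(1)},g^{(2)}$ the generators $g^{(1)}\sigma_1,g^{(2)}\sigma_2$ — hence all of $D'$ — lie in the stabilizer $(G\rtimes D)_{R}$. I would first record the two standing observations: $N_G(R)=(G\rtimes D)_{R}\cap G$ is normal in $(G\rtimes D)_{R}$ (since $G$ is normal in $G\rtimes D$), so $N_G(R)D'$ is a subgroup of $(G\rtimes D)_{R}$; and the inclusion $N_G(R)D'\subseteq(G\rtimes D)_{R}$ is immediate.

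For the reverse inclusion in (i), I would take $x\in(G\rtimes D)_{R}$ and write it uniquely as $x=gd$ with $g\in G$ and $d\in D$. Since $D$ is abelian and generated by $\sigma_1,\sigma_2$, we have $d=\sigma_1^a\sigma_2^b$ for suitable integers $a,b$. Expanding $(g^{(1)}\sigma_1)^a(g^{(2)}\sigma_2)^b$ and collecting all $G$-parts to the left — this is the one place where commutativity of $D$ is used, so that the $D$-part of the product comes out exactly as $\sigma_1^a\sigma_2^b=d$ — produces an element $g''d\in D'$ with $g''\in G$. Then $x(g''d)^{-1}=g(g'')^{-1}\in G$ lies in $(G\rtimes D)_{R}$, hence in $(G\rtimes D)_{R}\cap G=N_G(R)$, and therefore $x\in N_G(R)\,g''d\subseteq N_G(R)D'$. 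This gives (i).

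For (ii) I would use the projection $G\rtimes D\twoheadrightarrow(G\rtimes D)/G\cong D$: restricted to $D'$ it sends $g^{(k)}\sigma_k\mapsto\sigma_k$, so its image is $\langle\sigma_1,\sigma_2\rangle=D$ and its kernel is $D'\cap G$, whence $D'/(D'\cap G)\cong D$. For (iii), note that $D'\le(G\rtimes D)_{R}$ forces $D'\cap G\le(G\rtimes D)_{R}\cap G=N_G(R)$, while $N_G(R)\le G$ forces $D'\cap N_G(R)\le D'\cap G$; hence $D'\cap N_G(R)=D'\cap G$, and the second isomorphism theorem together with (ii) yields $N_G(R)D'/N_G(R)\cong D'/(D'\cap N_G(R))=D'/(D'\cap G)\cong D$.

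I do not expect a genuine obstacle here: the lemma is a formal consequence of the facts that $D'\subseteq(G\rtimes D)_{R}$ and that $D'$ surjects onto $D$ modulo $G$. The only points needing a little care are the bookkeeping in the expansion $(g^{(1)}\sigma_1)^a(g^{(2)}\sigma_2)^b=g''d$ (which relies on $\sigma_1$ and $\sigma_2$ commuting) and the verification that $N_G(R)D'$ is actually a subgroup, which follows from $N_G(R)\unlhd(G\rtimes D)_{R}$.
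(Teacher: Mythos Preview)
Your argument is correct and is precisely the kind of routine verification the paper has in mind: the paper gives no proof beyond the remark that the lemma ``follows by direct calculation,'' and your write-up supplies exactly those details.
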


If $D$ is cyclic, then Proposition \ref{extensionofunip} holds immediately.
So we will assume that $D$ is not cyclic.
Then $\epsilon=1$,
that is $G=\GL_n(q)$. Let $q=p^f$ for some prime $p$ and integer $f$.
Then $f$ is even.
In particular, if $q$ is odd, then $4\mid q-1$.
Hence, by the description in Section \ref{chapweightsofslsu}, if $\ell=2$,  we always only have ``\textbf{Case 1}'' when considering basic subgroups. Then $D_{m,\alpha,\gamma,\bc}=R_{m,\alpha,\gamma,\bc}$ whenever $\ell$ is odd or $\ell=2$.

One embedding of $Z_\alpha E_\gamma$ can be constructed explicitly as follows (see, \cite{An92} and \cite{An94}).
Let $\xi$ be a fixed $\ell^{a+\alpha}$-th primitive root of unity in $\F_{(\epsilon q)^{e\ell^\alpha}}$ and $\zeta=\xi^{\ell^{a+\alpha-1}}$.
We first let $Z_0=\xi I_\gamma$ with $I_\gamma$ the identity matrix of degree $\ell^\gamma$ and
$$X_0= \diag ( 1,\zeta,\cdots,\zeta^{\ell-1} ),\quad Y_0 = \left[\begin{array}{cc} \zero & 1 \\ I_{\ell-1} & \zero \end{array}\right].$$
We then set $X_{0,j}=I_\ell \otimes \cdots \otimes X_0 \otimes \cdots \otimes I_\ell$ and $Y_{0,j}=I_\ell \otimes \cdots \otimes Y_0 \otimes \cdots \otimes I_\ell$ with $X_0$ and $Y_0$ appearing as the $j$-th components.
Define
$$\begin{array}{cccc} \rho_{\alpha,\gamma,0}:
& Z_\alpha E_\gamma & \longrightarrow & \GL(\ell^\gamma, (\epsilon q)^{e\ell^\alpha}) \\
& z & \longmapsto & Z_0 \\
& x_j & \longmapsto & X_{0,j} \\
& y_j & \longmapsto & Y_{0,j} \end{array}.$$

Now, let $\iota$ be an embedding of $\GL(\ell^\gamma, (\epsilon q)^{e\ell^\alpha})$ into $\GL(e\ell^{\alpha+\gamma},\epsilon q)$ with $\iota(\xi)$ being the companion matrix $(\Lambda_\alpha)$ of the polynomial $\Lambda_\alpha\in\cF$ having $\xi$ as a root.
Then we set $R_{\alpha,\gamma}$ the image of $Z_\alpha E_\gamma$ under $\rho_{\alpha,\gamma}=\iota\circ\rho_{\alpha,\gamma,0}$.	

For later use, we replace $R_{m,\alpha,\gamma}$ and $R_{m,\alpha,\gamma,\bc}$ by one of their conjugates.
Now define
$$Z_{m,0}=I_{(m)}\otimes Z_0,\ \ X_{m,0,j}=I_{(m)}\otimes X_{0,j},\ \ Y_{m,0,j}=I_{(m)}\otimes Y_{0,j}.$$
Define
$$\rho_{m,\alpha,\gamma,0}:Z_\alpha E_\gamma \to \GL(m\ell^\gamma, (\epsilon q)^{e\ell^\alpha})$$
in the same way as $\rho_{\alpha,\gamma,0}$ with $Z_0,X_{0,j},Y_{0,j}$ replaced by $Z_{m,0},X_{m,0,j},Y_{m,0,j}$.
Denote still by $\iota$ the embedding of $\GL(m\ell^\gamma, (\epsilon q)^{e\ell^\alpha})$ into $\GL(me\ell^{\alpha+\gamma},\epsilon q)$ and $\rho_{m,\alpha,\gamma}=\iota\circ\rho_{m,\alpha,\gamma,0}$.
Then we set $R_{m,\alpha,\gamma}$ the image of $\rho_{m,\alpha,\gamma}$.
Finally, we set $R_{m,\alpha,\gamma,\bc}=R_{m,\alpha,\gamma}\wr A_\bc$.

Now we give some precise information for $g^{(1)},g^{(2)}$ above.
Indeed, by \cite[Prop. 4.2 and 4.3]{LZ18},
if there is a decomposition $R=R_0\times R_1\times\cdots \times R_u$ where $R_0$ is a trivial group and $R_i\cong R_{m_i,\alpha_i,\gamma_i,\bc_i}$ ~($i\ge 1$) is a basic subgroup,
then $g^{(k)}$ is blockwise diagonal corresponding to the decomposition
$g^{(k)}=\mathrm{diag}(g^{(k)}_0,g^{(k)}_1,\dots, g^{(k)}_u )$ where $g^{(k)}_0$ is identity matrix and
$g^{(k)}_i=g_{m_i,\alpha_i}\otimes I_{\gamma_i}\otimes I_{\bc_i}$ with $g_{m_i,\alpha_i}\in G_{m_i\alpha_i}$
such that $g^{(k)}_i\sigma_k$ fixes $R_i$ for all $k=1,2$ and $0\le i\le u$.
Obviously, the action of $g^{(k)}_i\sigma_k$ on $G_{m_i,\alpha_i}\otimes I_\gamma\otimes I_{\bc}$,
$C_{m_i,\alpha_i}\otimes I_\gamma\otimes I_{\bc}$, and $N_{m_i,\alpha_i}\otimes I_\gamma\otimes I_{\bc}$
is just as the actions of
$g^{(k)}_{m_i,\alpha_i}\sigma_k$ on $G_{m_i,\alpha_i}$, $C_{m_i,\alpha_i}$ and $N_{m_i,\alpha_i}$, respectively, for all $k=1,2$ and $0\le i\le u$.
We also regard the actions above as the actions of $g^{(k)}\sigma_k$ ~( $k=1,2$).

\begin{lem}\label{defofdo}
With the notations above,
there exists a subgroup $D_0'$ of $D'$ independent of $m$, $\alpha$ and $\gamma$,
such that
$D_0'$ acts trivially on  $R_{m,\alpha,\gamma}$
and $D'/(D'\cap G)D_0'$ is cyclic.
In particular, $D_0'$ acts trivially on $N_{m,\alpha,\gamma}/C_{m,\alpha,\gamma}$.
\end{lem}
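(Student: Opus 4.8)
The plan is to make the action of $D'$ on $R_{m,\alpha,\gamma}$ explicit on the standard generators $Z_{m,0},X_{m,0,j},Y_{m,0,j}$, and then to take $D_0'$ to be the kernel of the resulting homomorphism $D'\to\Aut(R_{m,\alpha,\gamma})$; the content is that the image $\overline{D_0}$ of this kernel in $D$ is the same subgroup for every $m,\alpha,\gamma$, and that $D/\overline{D_0}$ is cyclic. Recall that $q=p^f$ with $f$ even, so $\sigma_1=F_p$ has order $f$, $\sigma_2=\gamma$ has order $2$, and $D=\langle\sigma_1\rangle\times\langle\sigma_2\rangle$; by Lemma \ref{described} the natural map $D'/(D'\cap G)\to D$ is an isomorphism, and $D'/(D'\cap G)D_0'\cong D/\overline{D_0}$, so everything reduces to identifying $\overline{D_0}$.

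I would then compute the automorphisms $\theta_k$ of $R_{m,\alpha,\gamma}$ induced by $g^{(k)}\sigma_k$ from the explicit form of $g^{(1)},g^{(2)}$ in \cite[Prop. 4.2 and 4.3]{LZ18}. Since $\iota(\xi)=(\Lambda_\alpha)$ has entries in $\F_q$ while $\sigma_1$ raises entries to the $p$-th power, $\sigma_1$ multiplies the eigenvalues of $Z_{m,0}$ by $p$; conjugating back into $R_{m,\alpha,\gamma}$ by $g^{(1)}$ and using that $C_{m,\alpha,\gamma}\cong\GL(m,q^{e\ell^\alpha})$ (so that $N_{m,\alpha,\gamma}$ contains an element inducing the Galois action $z\mapsto z^q$ on the cyclic centre $Z_\alpha=\langle Z_{m,0}\rangle$), one sees $g^{(1)}$ can be chosen so that $\theta_1$ acts on $Z_\alpha$ by $z\mapsto z^p$, with a correspondingly controlled effect on $X_{m,0,j},Y_{m,0,j}$. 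Similarly, as $\gamma$ inverts scalar matrices, $g^{(2)}$ can be chosen so that $\theta_2$ acts on $Z_\alpha$ by $z\mapsto z^{-1}$ and on the extraspecial part $E_\gamma$ by a fixed involution. Since $D'/(D'\cap G)\cong D$ is abelian, $\theta_1^{f}$, $\theta_2^{2}$ and $[\theta_1,\theta_2]$ are induced by elements of $D'\cap G$, so $D/\overline{D_0}$ is an abelian quotient of $D\cong C_f\times C_2$; reading off the formulas for $\theta_1,\theta_2$ one checks that this quotient is cyclic — equivalently that, modulo the automorphisms realised by $D'\cap G$, the $\gamma$-type automorphism $\theta_2$ becomes a power of $\theta_1$. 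Uniformity in $m,\alpha,\gamma$ holds because the recipe for $g^{(1)},g^{(2)}$ and hence for $\theta_1,\theta_2$ is the same for all basic subgroups in \cite{LZ18}. Finally, the last assertion is immediate: $N_{m,\alpha,\gamma}/C_{m,\alpha,\gamma}$ embeds into $\Aut(R_{m,\alpha,\gamma})$, so if $d\in D_0'$ fixes $R_{m,\alpha,\gamma}$ pointwise then $n^d$ and $n$ induce the same automorphism of $R_{m,\alpha,\gamma}$ for each $n\in N_{m,\alpha,\gamma}$, whence $n^dC_{m,\alpha,\gamma}=nC_{m,\alpha,\gamma}$.

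The step I expect to be the main obstacle is the cyclicity check: one needs the induced automorphisms $\theta_1,\theta_2$ precisely enough on all of $Z_{m,0},X_{m,0,j},Y_{m,0,j}$, and one must argue uniformly in $\alpha$ even though the action on $Z_\alpha$ is taken modulo the growing modulus $\ell^{a+\alpha}$; this forces one to use the freedom to adjust $g^{(1)}$ and $g^{(2)}$ by the Galois-type elements of $N_{m,\alpha,\gamma}$ and to keep track of the coset of $D'\cap G$ in which the adjusted elements lie. The remaining verifications are routine computations with the explicit generators.
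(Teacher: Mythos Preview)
Your plan to compute the action on the generators matches the paper's starting point: citing \cite[Lem.~4.1]{LZ18}, the paper records that $g^{(k)}\sigma_k$ acts on $B=\langle Z,X_j\rangle$ by $x\mapsto x^{h_k}$ (with $h_1=p$, $h_2=-1$) and trivially on $H=\langle Y_j\rangle$. The gap is in your choice of $D_0'$. Taking $D_0'$ to be the \emph{full kernel} of $D'\to\Aut(R_{m,\alpha,\gamma})$ and asserting that its image $\overline{D_0}$ in $D$ is independent of $(m,\alpha,\gamma)$ does not work, and you already sense why: since $Z$ has order $\ell^{a+\alpha}$ and $(g^{(1)}\sigma_1)^i(g^{(2)}\sigma_2)^j$ sends $Z\mapsto Z^{p^i(-1)^j}$, membership in the kernel forces $p^i(-1)^j\equiv1\pmod{\ell^{a+\alpha}}$, a condition that becomes strictly stronger as $\alpha$ grows. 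So the full kernel is not uniform in $\alpha$. Your proposed remedy of adjusting $g^{(k)}$ by Galois-type elements of $N_{m,\alpha,\gamma}$ cannot repair this: such elements lie in $G$, so they only change representatives within a coset of $D'\cap G$ and cannot enlarge $\overline{D_0}$.

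The paper does not take a kernel at all. With $r$ the multiplicative order of $p$ modulo $\ell$, it simply puts $D_0'=\langle(g^{(1)}\sigma_1)^r\rangle$ when $r$ is odd and $D_0'=\langle(g^{(1)}\sigma_1)^{r/2}g^{(2)}\sigma_2\rangle$ when $r$ is even; this recipe visibly depends only on $r$, not on $(m,\alpha,\gamma)$. Its image in $D\cong C_f\times C_2$ is $\langle\sigma_1^r\rangle$ (with $r$ odd, so the quotient $C_{\gcd(f,r)}\times C_2$ is cyclic since $\gcd(f,r)$ is odd) respectively $\langle\sigma_1^{r/2}\sigma_2\rangle$ (and one checks directly that $D$ modulo this is cyclic of order $\gcd(f,r)$). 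The chosen generator acts as the identity on each $X_j$ and $Y_j$ because $p^r\equiv1\pmod\ell$ (respectively $-p^{r/2}\equiv1\pmod\ell$, since $p^{r/2}\equiv-1\pmod\ell$ when $r$ is even), and this is what drives the final clause on $N_{m,\alpha,\gamma}/C_{m,\alpha,\gamma}$. In short, the missing idea in your sketch is that one should \emph{exhibit} a single cyclic $D_0'$ tailored to $r$, rather than characterise it as a kernel whose size one cannot control uniformly.
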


\begin{proof}
Denote $Z=\iota(Z_{m,0})$, $X_j=\iota(X_{m,0,j})$, $Y_j=\iota(Y_{m,0,j})$ and $B=\langle Z,X_j\mid j=1,\ldots,\gamma\rangle$, $H=\langle Y_j\mid j=1,\ldots,\gamma\rangle$.
Then $R_{m,\alpha,\gamma}=B\rtimes H$.	
By the proof of \cite[Lem. 4.1]{LZ18}, for $k=1,2$,
$$g^k\sigma_k(x)=\left\{ \begin{array}{ll} x^{h_k}, & \text{if}\ x\in B\\ x, & \text{if}\ x\in H \end{array} \right.$$
where $h_1=p$ and $h_2=-1$.

Now let $r$ be the multiplicative order of $p$ modulo $\ell$.
We take $D_0'=\langle (g^1\sigma_1)^r \rangle$ when $r$ is odd, and $D_0'=\langle (g^1\sigma_1)^{r/2}g^2\sigma_2,  \rangle$ when $r$ is even.
Then $D_0'$ acts trivially on $R_{m,\alpha,\gamma}$ and $D'/(D'\cap G)D_0'$ is cyclic.
\end{proof}

\begin{cor}\label{actiononcom}
With the notations above, $D_0'$ acts trivially on $N_{m,\alpha,\gamma,\bc}/C_{m,\alpha,\gamma,\bc}$.
\end{cor}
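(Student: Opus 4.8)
The plan is to produce a generating set of $N_{m,\alpha,\gamma,\bc}$ each of whose elements is fixed by $D_0'$ modulo $C_{m,\alpha,\gamma,\bc}$. Since $C_{m,\alpha,\gamma,\bc}\unlhd N_{m,\alpha,\gamma,\bc}$, the elements of $N_{m,\alpha,\gamma,\bc}$ fixed by $D_0'$ modulo $C_{m,\alpha,\gamma,\bc}$ form a subgroup, so this will be enough. First I would recall, from \cite{An92}, \cite{An93} and \cite{An94} (see also \cite[\S4]{LZ18} and the discussion before Lemma \ref{defofdo}), the relevant structure: $R_{m,\alpha,\gamma,\bc}=R_{m,\alpha,\gamma}\wr A_\bc$ lies in $G_{m,\alpha,\gamma}\wr\fS(\ell^{|\bc|})\le G_{m,\alpha,\gamma,\bc}$; the centraliser $C_{m,\alpha,\gamma,\bc}=C_{m,\alpha,\gamma}\otimes I_\bc$ is the diagonal copy of $C_{m,\alpha,\gamma}$ inside the base group $G_{m,\alpha,\gamma}^{\ell^{|\bc|}}$; and $N_{m,\alpha,\gamma,\bc}$ is generated by $R_{m,\alpha,\gamma,\bc}$ together with the diagonal copy $N_{m,\alpha,\gamma}\otimes I_\bc$ and the group $Y_\bc$ of permutation matrices normalising $A_\bc$, with $N_{m,\alpha,\gamma,\bc}/R_{m,\alpha,\gamma,\bc}\cong (N_{m,\alpha,\gamma}/R_{m,\alpha,\gamma})\times(Y_\bc/A_\bc)$.

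Next I would record the shape of the relevant automorphisms. By the description of $g^{(1)}$ and $g^{(2)}$ recalled before Lemma \ref{defofdo}, each $g^{(k)}$ is block-diagonal with $\ell^{|\bc|}$ equal blocks $g^{(k)}_{m,\alpha}\otimes I_\gamma$, i.e.\ it is a ``constant diagonal'' element of $G_{m,\alpha,\gamma}\wr\fS(\ell^{|\bc|})$. Hence conjugation by $g^{(k)}$ commutes with block permutations, acts on the base group coordinate-wise (each coordinate exactly as $g^{(k)}_{m,\alpha}\otimes I_\gamma$ acts on the corresponding copy of $G_{m,\alpha,\gamma}$), and acts on the diagonal copies $R_{m,\alpha,\gamma}\otimes I_\bc$ and $N_{m,\alpha,\gamma}\otimes I_\bc$ exactly as $g^{(k)}_{m,\alpha}\otimes I_\gamma$ acts on $R_{m,\alpha,\gamma}$ and $N_{m,\alpha,\gamma}$. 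The automorphisms $\sigma_k$ (recall $\sigma_1=F_p$ and $\sigma_2=\gamma$) fix every permutation matrix and act entrywise, hence also coordinate-wise and compatibly with these identifications. Consequently every element of $D'=\langle g^{(1)}\sigma_1,g^{(2)}\sigma_2\rangle$, in particular the (cyclic) generator $d$ of $D_0'$, fixes every permutation matrix of $\fS(\ell^{|\bc|})$, and acts on $R_{m,\alpha,\gamma}\otimes I_\bc$ and $N_{m,\alpha,\gamma}\otimes I_\bc$ (and coordinate-wise on $R_{m,\alpha,\gamma}^{\ell^{|\bc|}}$) just as it acts on $R_{m,\alpha,\gamma}$ and $N_{m,\alpha,\gamma}$.

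Then I would conclude by checking the three kinds of generators. The matrices in $Y_\bc$ (and $A_\bc$) are permutation matrices, hence fixed by $d$. For $R_{m,\alpha,\gamma,\bc}=R_{m,\alpha,\gamma}\wr A_\bc$: Lemma \ref{defofdo} gives that $D_0'$ acts trivially on $R_{m,\alpha,\gamma}$, so by the coordinate-wise action $d$ is trivial on $R_{m,\alpha,\gamma}^{\ell^{|\bc|}}$, and it fixes $A_\bc$, so $D_0'$ acts trivially on $R_{m,\alpha,\gamma,\bc}$. For $N_{m,\alpha,\gamma}\otimes I_\bc$: the final assertion of Lemma \ref{defofdo} gives that $D_0'$ is trivial on $N_{m,\alpha,\gamma}/C_{m,\alpha,\gamma}$, so for $n\in N_{m,\alpha,\gamma}$ we may write $d(n)=nc$ with $c\in C_{m,\alpha,\gamma}$; then, using the identifications above and $(nc)\otimes I_\bc=(n\otimes I_\bc)(c\otimes I_\bc)$, we get $d(n\otimes I_\bc)=(n\otimes I_\bc)(c\otimes I_\bc)$ with $c\otimes I_\bc\in C_{m,\alpha,\gamma}\otimes I_\bc=C_{m,\alpha,\gamma,\bc}$. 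Hence $D_0'$ fixes a generating set of $N_{m,\alpha,\gamma,\bc}$ modulo $C_{m,\alpha,\gamma,\bc}$, proving the claim.

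The hard part will be the first step: extracting from \cite{An92}, \cite{An93} and \cite{An94} a sufficiently explicit description of $N_{m,\alpha,\gamma,\bc}$ — as generated by $R_{m,\alpha,\gamma,\bc}$, the diagonal $N_{m,\alpha,\gamma}\otimes I_\bc$ and the permutation matrices $Y_\bc$, with $C_{m,\alpha,\gamma,\bc}$ being exactly the diagonal copy of $C_{m,\alpha,\gamma}$ — and verifying carefully that the block-constant shape of $g^{(k)}$ really does reduce the $D_0'$-action on these pieces to the one-copy action already handled in Lemma \ref{defofdo}. Once that reduction is set up, the remaining verifications are routine.
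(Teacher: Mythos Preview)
Your proposal is correct and follows essentially the same approach as the paper: both arguments use the structure $N_{m,\alpha,\gamma,\bc}/R_{m,\alpha,\gamma,\bc}\cong (N_{m,\alpha,\gamma}/R_{m,\alpha,\gamma})\times (Y_\bc/A_\bc)$ from \cite{AF90} and \cite{An92}, observe that $Y_\bc$ consists of permutation matrices (hence is fixed by $D_0'$), and reduce the $N_{m,\alpha,\gamma}$-part to Lemma~\ref{defofdo}. Your write-up is more explicit than the paper's in that you check $D_0'$ is trivial on $R_{m,\alpha,\gamma,\bc}$ itself (not just on the quotient by it) in order to pass from triviality modulo $R_{m,\alpha,\gamma,\bc}C_{m,\alpha,\gamma,\bc}$ to triviality modulo $C_{m,\alpha,\gamma,\bc}$, but this extra step is immediate from Lemma~\ref{defofdo} and the permutation-matrix observation, and the underlying argument is the same.
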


\begin{proof}
For $\bc=(c_1,\dots,c_t)$, we have
$N_{m,\alpha,\gamma,\bc}=N_{m,\alpha,\gamma}/R_{m,\alpha,\gamma}\otimes Y_{\bc}$	
by \cite{AF90} and \cite{An92}.
Here $ Y_{\bc}$ is the normalizer of $A_{\bc}$ in $\mathfrak S(\ell^{|\bc|})$ and then consists of permutation matrices.
By Lemma \ref{defofdo}, $D_0'$ acts trivially on $N_{m,\alpha,\gamma}/R_{m,\alpha,\gamma}C_{m,\alpha,\gamma}$.
Hence $D_0'$ acts trivially on $N_{m,\alpha,\gamma,\bc}/R_{m,\alpha,\gamma,\bc}C_{m,\alpha,\gamma,\bc}$
since $C_{m,\alpha,\gamma,\bc}=C_{m,\alpha,\gamma}\otimes I_{\bc}$.
\end{proof}

\begin{proof}[Proof of Proposition \ref{extensionofunip}]
By the argument after Lemma \ref{described}, we may assume that $\epsilon=1$ and $q=p^f$ with $f$ even.
Suppose that $G=\GL_n(q)=\GL(V)$, where $V$ is a vector space of dimension $n$ over $\F$.
By \cite[(4A)]{AF90} and \cite[(2B)]{An92}, $R=R_0\times R_+$ where $R_0$ is an identity group and $R_+$ is a direct product of basic subgroups.
Let $V=V_0\times V_+$ be the corresponding decomposition of $V$, such that $V_0$ is the underlying space of $R_0$ and $V_+$ is the underlying space of $R_+$.
Note that if $\ell=2$, then $\mathrm{dim}(V_0)=0$.
Then $C_G(R)=C_0\times C_+$ and $N_G(R)=N_0\times N_+$,
where $C_0=N_0=\GL(V_0)$, $C_+=C_{\GL(V_+)}(R_+)$, $N_+=N_{\GL(V_+)}(R_+)$.
Let $\theta\in\Irr(RC_G(R)\ | \ \varphi)$, then $\theta=\theta_0\times\theta_+$, where $\theta_0\in\Irr(R_0C_0)$ and $\theta_+\in\Irr(R_+C_+)$.
We write $\varphi=\varphi_0\times \varphi_1$, with $\varphi_0\in\Irr(N_0)$ and $\varphi_+\in\Irr(N_+)$.
Obviously, $\varphi_0=\theta_0$.

We write $R_+=R_1^{b_1}\times\cdots \times R_u^{b_u}$ as a direct product of basic subgroups, where $R_i$ appears $b_i$ times as a component of $R_+$.
Let $C_i=C_{\GL(V_i)}(R_i)$,  $N_i=N_{\GL(V_i)}(R_i)$,
where $V_i$ is the underlying space of $R_i$.
Then $C_+=C_1^{b_1}\times\cdots \times C_u^{b_u}$ and $\theta_+=\prod_{i=1}^{u}\prod^{v_i}_{j=1}\theta_{ij}^{b_{ij}},$
where $\theta_{i1},\dots,\theta_{iv_i}$ are distinct irreducible characters of $C_iR_i$ trivial on $R_i$ and $\theta_{ij}$ occurs $b_{ij}$ times as a factor in $\theta$.

Now $(R,\varphi)$ belongs to a unipotent $\ell$-block, so for all $1\le i\le u$, $1\le j \le v_i$, $\theta_{ij}$ has the form $\theta_{\Gamma,\delta,k}$ for some $\delta$ and $k$,
where $\Gamma=x-1$.
By the construction of $\theta_\Gamma$, we have $\theta_\Gamma=1_{C_\Gamma}$ when $\Gamma=x-1$~(since $\theta_\Gamma=\pm R^{C_\Gamma}_{C_{C_\Gamma}(1)}(\hat 1)=1_{C_\Gamma}$ by \cite[(4.16)]{Br86}).
Hence $\theta_{ij}=1_{C_iR_i}$ for all $1\le i\le u$, $1\le j \le v_i$.
Also, $\varphi_0=\theta_0$ is a unipotent character of $N_0=C_0$.
By Proposition \ref{extenofchar}, $\varphi_0$ extends to $N_0\rtimes D$.

Now $N_+(\theta_+)=\prod_{i=1^u}\prod_{j=1}^{v_i}(N_i(\theta_{ij})\wr \mathfrak S(b_{ij}))$.
By the argument above, $\theta_{ij}$ is the trivial character hence is invariant under $N_i$.
By Corollary \ref{actiononcom}, $D_0'$ acts trivially on $N_i/C_i$.
So $D_0'$ acts trivially on $N_+/C_+$ since $D_0'$ acts trivially on every $\mathfrak S(b_{ij})$.
Also, $N_G(R)D'/N_G(R)D'_0$ is cyclic since $D'/(D'\cap G)D_0'$ is cyclic by Lemma \ref{defofdo}.
Hence $\varphi$ extends to $(G\rtimes D)_R$ by Lemma \ref{extlemma}.
This completes the proof.
\end{proof}

\begin{cor}\label{extensofbrauer}
Assume that $\ell\nmid\mathrm{gcd}(n,q-\epsilon)$.
Let $(Q,\psi)$ be an $\ell$-weight of $X$ which belongs to a unipotent $\ell$-block.
Then $\psi$ extends to $(G\rtimes D)_{Q}$.	
\end{cor}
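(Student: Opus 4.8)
The plan is to deduce Corollary~\ref{extensofbrauer} from Proposition~\ref{extensionofunip} by relating $\ell$-weights of $X$ to $\ell$-weights of $G$ via Remark~\ref{weightofslsu} and Lemma~\ref{resofweightsuni}. First I would pick an $\ell$-weight $(R,\varphi)$ of $G$ belonging to a unipotent $\ell$-block $B$ with $R\cap X=Q$ (possible after replacing $Q$ by a conjugate, using Corollary~\ref{relaofradicalconj} and Proposition~\ref{relaofrad}, so that $R=Q\mathcal O_\ell(Z(G))$), and with $\psi\in\Irr(N_X(Q)\mid\varphi)$; by Lemma~\ref{resofweightsuni} and Lemma~\ref{restrforuni} this covers the situation of any unipotent $X$-weight, since $B$ is the unique unipotent $\ell$-block of $G$ covering the unipotent block $b$ of $X$ containing $(Q,\psi)$, and restriction of weight characters induces a bijection $\mathcal W_\ell(B)\to\mathcal W_\ell(b)$.

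The key step is to run the standard Clifford-theoretic extension argument with the subgroup chain $N_X(Q)\unlhd N_G(R)\unlhd (G\rtimes D)_{R}$. By Proposition~\ref{extensionofunip}, $\varphi$ extends to $(G\rtimes D)_{R,\varphi}$; write $\tilde\varphi$ for such an extension. Since by Lemma~\ref{gexingz} we have $N_G(R)/N_X(Q)\cong G/X\cong\mathfrak Z$ is abelian (in fact cyclic of order $\gcd(n,q-\epsilon)$, which is coprime to $\ell$ here), and $\Res^{N_G(R)}_{N_X(Q)}\varphi$ is multiplicity-free with $\psi$ one of its constituents, I would use Gallagher's theorem to control the constituents: every character of $N_G(R)$ over $\psi$ is of the form $\hat z\varphi$ for $z\in\mathcal O_{\ell'}(\mathfrak Z)$, and $\psi$ itself extends to $N_G(R)_\psi$. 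The point is then to check $N_X(Q)_\psi^{\,G\rtimes D}$-invariance of $\psi$ or, more precisely, to produce an extension of $\psi$ to $(G\rtimes D)_{Q}$ directly. Here I would observe $(G\rtimes D)_{Q}=N_X(Q)\cdot(G\rtimes D)_{R}$ up to the relevant stabilizers, combine the $N_G(R)$-extendibility of $\psi$ (Clifford theory over the cyclic quotient) with the $D$-extendibility coming from $\tilde\varphi$, and patch the two extensions over $(G\rtimes D)_{R}$ via a Gallagher-type argument; the necessary compatibility is that $\tilde\varphi$ restricted to $N_G(R)$ is an extension of $\varphi$, so its constituents over $\psi$ differ by a character of $N_G(R)/N_X(Q)$, which has $\ell'$-order, hence one can twist $\tilde\varphi$ by a linear $\ell'$-character of $(G\rtimes D)_R/N_X(Q)$ to land on an extension of $\psi$.

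The main obstacle I anticipate is the patching step: showing that the extension $\tilde\varphi$ of $\varphi$ to $(G\rtimes D)_{R,\varphi}$ can be chosen (or twisted) so that its restriction to the subgroup $(G\rtimes D)_{Q,\psi}$ containing $N_X(Q)$ is genuinely an extension of $\psi$ rather than of some other constituent $\hat z\varphi$ of $\Res^{N_G(R)}_{N_X(Q)}\varphi$. Since the obstruction lives in a group of $\ell'$-order (the cohomology/character group attached to $N_G(R)/N_X(Q)\cong\mathfrak Z$ modulo its $\ell$-part, or rather its $\mathcal O_{\ell'}$-part), and since linear $\ell'$-characters of $(G\rtimes D)_R$ trivial on $N_G(R)$ would be needed to adjust, one must verify that the relevant character group is large enough; this is where Lemma~\ref{detofrad}, giving $G=XN_G(R)$ and $\mathcal D(N_G(R))=\mathfrak Z$, together with the structure of $(G\rtimes D)_R/N_G(R)\cong D$ from Lemma~\ref{described}, should be invoked. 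Once this bookkeeping is settled, the conclusion follows: $\psi$ extends to $(G\rtimes D)_{Q}$, and since $(G\rtimes D)_{Q}$ contains $(G\rtimes D)_{Q,\psi}$ with cyclic quotient $(G\rtimes D)_{Q}/(G\rtimes D)_{Q,\psi}$ acting, one further extends across that cyclic quotient if needed, completing the proof.
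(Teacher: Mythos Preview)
You are overcomplicating the argument because you miss the full strength of Lemma~\ref{resofweightsuni}. That lemma does not merely say that some constituent of $\Res^{N_G(R)}_{N_X(Q)}\varphi$ is an $X$-weight character; it says the bijection $\mathcal W_\ell(B)\to\mathcal W_\ell(b)$ is given by $(R,\varphi)\mapsto(R\cap X,\Res^{N_G(R)}_{N_X(R)}\varphi)$. In other words, for unipotent weights the restriction $\Res^{N_G(R)}_{N_X(Q)}\varphi$ is already \emph{irreducible} and equals $\psi$. There is no decomposition into several constituents, no need to pick one out, and hence no obstruction of the kind you anticipate.

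Once you use this, the proof collapses to two lines, which is exactly the paper's argument. Since $R=Q\mathcal O_\ell(Z(G))$ with $\mathcal O_\ell(Z(G))$ characteristic in $G\unlhd G\rtimes D$, one has $(G\rtimes D)_Q=(G\rtimes D)_R$. The proof of Proposition~\ref{extensionofunip} actually shows $\varphi$ extends to the full $(G\rtimes D)_R$ (so $(G\rtimes D)_{R,\varphi}=(G\rtimes D)_R$). Any such extension, restricted further to $N_X(Q)$, gives back $\psi=\Res^{N_G(R)}_{N_X(Q)}\varphi$; hence the extension of $\varphi$ to $(G\rtimes D)_R=(G\rtimes D)_Q$ is already an extension of $\psi$.

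Your Clifford/patching outline with twists by linear $\ell'$-characters of $(G\rtimes D)_R/N_X(Q)$ is not obviously salvageable as stated (that quotient need not be abelian, and the required twist need not exist), but the point is moot: the ``main obstacle'' you flag simply does not arise, because there is only one constituent.
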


\begin{proof}
By Lemma \ref{resofweightsuni}, there is an $\ell$-weight $(R,\varphi)$ of $G$ in a unipotent $\ell$-block of $G$,
such that $Q=R\cap X$ and $\psi=\Res^{N_G(R)}_{N_X(Q)}\varphi$.
So $(G\rtimes D)_{R,\varphi}\le (G\rtimes D)_{Q,\psi}$.
Note that $(G\rtimes D)_{R,\varphi}=(G\rtimes D)_{R}$ and $(G\rtimes D)_{R}=(G\rtimes D)_{Q}$.
So $(G\rtimes D)_{Q,\psi}=(G\rtimes D)_{R}$.
Now by Proposition \ref{extensionofunip}, $\varphi$ extends to $(G\rtimes D)_{R}$, then $\psi$ extends to $(G\rtimes D)_{Q}$.
\end{proof}

\section{Proof of Theorem \ref{ibawmain}}
\label{proofof1.3}

Now we consider the condition (iii) of Definition \ref{induc}.

\begin{prop}\label{iii1-3}
Assume that $\ell\nmid\mathrm{gcd}(n,q-\epsilon)$ and $n\ge 3$.
Let $b$ be a unipotent $\ell$-block of $X$,
then the subsets $\IBr_\ell(b\ |\ Q)$ and maps $\Omega^X_Q$, for every $Q\in\Rad_\ell(X)$, defined as in the proof of Corollary \ref{iandii}, satisfy
Definition \ref{induc} (iii)(1)-(3) for $\phi\in\IBr_\ell(b,Q)$, $A:=A(\phi, Q)=(G\rtimes D)/\mathcal O_\ell(Z(G))Z$ with $Z=Z(X)\cap\ker(\phi)$.
\end{prop}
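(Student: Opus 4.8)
The plan is to verify conditions (iii)(1), (iii)(2) and (iii)(3) of Definition \ref{induc} for this specific choice of $A$ (condition (iii)(4) is not part of the present proposition and will be handled afterwards in the proof of Theorem \ref{ibawmain}), the main tools being the extendibility results of Section \ref{extenofcharwei} together with the elementary fact that a normal $\ell$-subgroup acts trivially on every simple module in characteristic $\ell$. For (iii)(1): since $\phi$ lies in a unipotent block it is trivial on $Z(X)$, so $Z=Z(X)\cap\ker\phi=Z(X)$ is a $D$-stable $\ell'$-subgroup, while $\mathcal O_\ell(Z(G))$ is a normal $\ell$-subgroup of $G\rtimes D$; hence $\mathcal O_\ell(Z(G))Z\unlhd G\rtimes D$ and $A$ is well defined. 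From $X\cap\mathcal O_\ell(Z(G))Z=Z$ one gets $\overline X=X/Z\unlhd A$; by Corollary \ref{extofunpofsl} the Brauer character $\phi$ extends to $G\rtimes D$, hence is $\Aut(X)$-invariant and $\Aut(X)_\phi=\Aut(X)$; using $n\ge 3$ and $\Aut(X)\cong G/Z(G)\rtimes D$, the preimage in $G\rtimes D$ of $C_A(\overline X)$ is $C_{G\rtimes D}(X)=Z(G)$, so $A/C_A(\overline X)\cong(G\rtimes D)/Z(G)\cong\Aut(X)=\Aut(X)_\phi$, and $C_A(\overline X)$ is the image of $Z(G)$, which is of order prime to $\ell$ (the Sylow $\ell$-subgroup of $Z(G)$ having been factored out) and equals $Z(A)$.

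For (iii)(2) I would take an extension $\hat\phi\in\IBr_\ell(G\rtimes D)$ of $\phi$ furnished by Corollary \ref{extofunpofsl}. The simple module affording $\hat\phi$ is killed by the normal $\ell$-subgroup $\mathcal O_\ell(Z(G))$ and, since $Z\le\ker\phi$, also by $Z$; so $\mathcal O_\ell(Z(G))Z$ lies in its kernel and $\hat\phi$ deflates to a character $\tilde\phi\in\IBr_\ell(A)$, whose restriction to $\overline X$ is exactly the Brauer character of $\overline X$ associated with $\phi$. Thus $\tilde\phi$ is the required extension.

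For (iii)(3), the pair $(Q,\Omega^X_Q(\phi))$ is a weight of the unipotent block $b$, so Corollary \ref{extensofbrauer} provides an extension $\widehat\Omega\in\Irr((G\rtimes D)_Q)$ of the inflation of $\Omega^X_Q(\phi)$ to $N_X(Q)$, where $(G\rtimes D)_Q=N_{G\rtimes D}(Q)$. The restriction $\widehat\Omega^\circ$ to $\ell'$-elements is irreducible, because its restriction to the normal subgroup $N_X(Q)$ is the inflated irreducible Brauer character $\Omega^X_Q(\phi)^\circ$; and $\widehat\Omega^\circ$ has the normal $\ell$-subgroups $Q$ and $\mathcal O_\ell(Z(G))$, as well as $Z$, in its kernel, so it deflates to $\tilde\phi'\in\IBr_\ell(N_A(\overline Q))$ once one checks that $N_A(\overline Q)$ is the image of $N_{G\rtimes D}(Q)$ in $A$; here I would use $\overline Q\cong Q$ together with the identification $N_{G\rtimes D}(Q)=N_{G\rtimes D}(Q\mathcal O_\ell(Z(G)))$ coming from Proposition \ref{relaofrad} and Lemma \ref{gexingz}. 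The step I expect to be most delicate is precisely this last piece of bookkeeping — pinning down $N_A(\overline Q)$ exactly and confirming that the extension supplied by Corollary \ref{extensofbrauer} extends the inflation of $\Omega^X_Q(\phi)^\circ$ and not merely some Brauer character lying above it; the deflation arguments to $A$ and to $N_A(\overline Q)$ are then routine once the relevant kernels are located.
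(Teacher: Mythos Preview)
Your proposal is correct and follows essentially the same approach as the paper's proof: verify (iii)(1) directly from the structure of $A$, use Corollary \ref{extofunpofsl} for (iii)(2), and use Corollary \ref{extensofbrauer} for (iii)(3), deflating by $\mathcal O_\ell(Z(G))Z$ in each case. The only minor difference is that for (iii)(3) the paper first extends the ordinary character $\psi$ and then passes to its Brauer character (invoking the proof of Corollary \ref{extensofbrauer} to see that $\mathcal O_\ell(Z(G))$ lies in the kernel of the ordinary extension), whereas you pass to the Brauer character $\widehat\Omega^\circ$ first and use that normal $\ell$-subgroups automatically lie in the kernel of any Brauer character; both routes are fine.
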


\begin{proof}
Now $\overline X=X/Z$.
It is easy to check (1) of Definition \ref{induc} (iii).
For (2), by Corollary \ref{extofunpofsl},
we have an extension $\phi'\in\IBr_\ell(G\rtimes D)$ of $\phi$.
Then $\mathcal O_\ell(Z(G))\le \mathcal O_\ell(G\rtimes D)\le \ker(\phi')$.
Also, $Z\le \ker(\phi')$.
Let $\tilde \phi$ be the Brauer character of $A$ associated with $\phi'$, then $\tilde \phi$ is an extension of the $\ell$-Brauer character of $\overline X$ associated with $\phi$.

For (3), we let $\psi \in \Irr(N_X(Q))$ be the inflation of $\Omega^X_Q(\phi)$ to $N_X(Q)$ and $\overline{\psi}$ be the character of $N_{\overline{X}}(\overline{Q})=\overline{N_X(Q)}$ associated with $\varphi$.
Moreover, we have $(G\rtimes D)_{Q,\psi}=N_{(G\rtimes D)_\phi}(Q)$ by \cite[Lem. 9.16]{Sc15} since $Z(G)=Z(G\rtimes D)$ and $\Aut(X)=({G}\rtimes D)/Z({G})$.
Now by Corollary \ref{extensofbrauer},
$\psi \in \Irr(N_X(Q))$ extends to a character $\tilde\psi$ of $(G\rtimes D)_{Q,\psi}=(G\rtimes D)_{Q}$,
then there exists an extension of $\overline{\psi}$ to $N_A(\overline{Q})=N_{G\rtimes D}(Q)/ \mathcal{O}_\ell(Z({G}))Z$ since $\mathcal{O}_\ell(Z({G}))\subseteq \ker(\tilde\psi)$ by the proof of Corollary \ref{extensofbrauer}.
Then $\overline{\psi}^\circ$ extends to $N_A(\overline{Q})$. This completes the proof.
\end{proof}

For condition (4) of Definition \ref{induc}(iii), we have:

\begin{lem}
	\label{Corr}
Keep the hypotheses and setup of Proposition \ref{iii1-3},
let $(S,\varphi)$ be an $\ell$-weight of $X$. Denote by $\phi'$ the inflation of $\Omega_S^X(\phi)^0$ viewed as $\ell$-Brauer character in $\mathrm{IBr}_\ell(N_{\overline{X}}(\overline{S})/\overline{S})$ to $N_{\overline{X}}(\overline{S})$. Let $\tilde{\phi'}$ be an extension of $\phi'$ to $N_A(\overline{S})$. Then there exists an extension $\tilde{\phi}\in \mathrm{IBr}_\ell(A)$ of $\phi$ to $A$ satisfying	$$\mathrm{bl}_\ell(\mathrm{Res}^{N_A(\overline{S})}_{N_J(\overline{S})}\tilde{\phi'})^J = \mathrm{bl}_\ell(\mathrm{Res}^{A}_{J}\tilde{\phi})$$
	for any $\overline{X}\leq J\leq A$.
\end{lem}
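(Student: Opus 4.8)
The plan is to transport the statement from $A=(G\rtimes D)/\mathcal{O}_\ell(Z(G))Z$ and $\overline X=X/Z$ back to $G\rtimes D$ and $X$. Since $\mathcal{O}_\ell(Z(G))$ is a central $\ell$-subgroup lying in the kernel of every $\ell$-Brauer character occurring and $Z$ is a central $\ell'$-subgroup, deflation does not affect block theory; identifying $\overline S$ with the image of a $D$-stable radical $\ell$-subgroup $S$ of $X$ (up to $G$-conjugacy), one has $N_A(\overline S)=N_{G\rtimes D}(S)/\mathcal{O}_\ell(Z(G))Z$. Thus it suffices to find $\phi^{+}\in\IBr_\ell(G\rtimes D)$ extending $\phi$ such that
\[
\bl_\ell\!\big(\Res^{N_{G\rtimes D}(S)}_{N_L(S)}\psi^{+}\big)^{L}=\bl_\ell\!\big(\Res^{G\rtimes D}_{L}\phi^{+}\big)\qquad(X\le L\le G\rtimes D),
\]
where $\psi^{+}\in\IBr_\ell(N_{G\rtimes D}(S))$ is the lift of the given $\tilde{\phi'}$, an extension of the irreducible Brauer character $\psi^{\circ}$ with $\psi\in\Irr(N_X(S))$ the inflation of $\Omega^X_S(\phi)$. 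Every block on the right is defined since $\Res^{G\rtimes D}_{L}\phi^{+}$ extends the $L$-invariant $\phi$, hence is irreducible, and the induced blocks on the left are defined by the Brauer correspondence because $S\unlhd N_{G\rtimes D}(S)$ and $C_{G\rtimes D}(S)\le N_{G\rtimes D}(S)$; moreover, by Lemma~\ref{covers} (applied to $X\unlhd L$, $N_X(S)=X\cap N_L(S)$) and the weight condition $\bl_\ell(\Omega^X_S(\phi))^{X}=b$ from Corollary~\ref{iandii}, both sides cover the block $b$ of $X$.

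First I would remove the dependence on $\tilde{\phi'}$. Since $G=XN_G(S)$ (Lemmas~\ref{detofC} and~\ref{gexingz}) and $G\rtimes D=GD'$ with $D'\le N_{G\rtimes D}(S)=N_{G\rtimes D}(R)$ (Lemma~\ref{described}, for $R=S\mathcal{O}_\ell(Z(G))$), one gets $G\rtimes D=XN_{G\rtimes D}(S)$, so restriction gives an isomorphism $\IBr_\ell((G\rtimes D)/X)\to\IBr_\ell(N_{G\rtimes D}(S)/N_X(S))$. Any two extensions of $\psi^{\circ}$ differ by a linear $\ell$-Brauer character $\mu$ of $N_{G\rtimes D}(S)/N_X(S)$; replacing $\phi^{+}$ by $\phi^{+}\hat\mu$, where $\hat\mu\in\IBr_\ell((G\rtimes D)/X)$ restricts to $\mu$, multiplies both sides of the displayed equality by the same linear character $\Res^{G\rtimes D}_{L}\hat\mu^{\circ}$ of $L$ (by Lemma~\ref{per-block} and the standard fact that inducing blocks commutes with tensoring by a linear $\ell'$-character inflated from an overgroup, combined with $\Res^{G\rtimes D}_{L}$). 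Hence it is enough to verify the equality for one convenient $\phi^{+}$ and one convenient extension — call it again $\psi^{+}$ — of $\psi^{\circ}$.

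For that I would use the unipotent weight of $G$ underneath $(S,\Omega^X_S(\phi))$. By Lemma~\ref{resofweightsuni} there is an $\ell$-weight $(R,\varphi_G)$ of $G$ belonging to the unipotent block $B$ of $G$ covering $b$, with $R=S\mathcal{O}_\ell(Z(G))$, $N_G(R)=N_G(S)$, $N_{G\rtimes D}(R)=N_{G\rtimes D}(S)$, $N_L(R)=N_L(S)$, and $\psi=\Res^{N_G(R)}_{N_X(R)}\varphi_G$. By Proposition~\ref{extensionofunip}, $\varphi_G$ extends to $\widetilde\varphi_G\in\Irr(N_{G\rtimes D}(R))$; set $\psi^{+}:=(\widetilde\varphi_G)^{\circ}$, which extends $\psi^{\circ}$ and is irreducible (an $\ell$-Brauer character restricting to the irreducible $\psi^{\circ}$ on a normal subgroup is itself irreducible, by Clifford theory). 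On the other side let $\phi_G\in\IBr_\ell(B)$ be the preimage of $\phi$ under the bijection of Lemma~\ref{restrforuni}(ii), and let $\phi^{+}\in\IBr_\ell(G\rtimes D)$ be an extension of $\phi_G$ (Corollary~\ref{extenforuG}), which restricts to $\phi$ on $X$; write $\widehat B:=\bl_\ell(\phi^{+})$, a block of $G\rtimes D$ covering $B$. Using $\psi^{+}=(\widetilde\varphi_G)^{\circ}$ and $N_L(S)=N_L(R)$, the displayed equality becomes $\bl_\ell(\Res^{N_{G\rtimes D}(R)}_{N_L(R)}\widetilde\varphi_G)^{L}=\bl_\ell(\Res^{G\rtimes D}_{L}\phi^{+})$ for $X\le L\le G\rtimes D$; for each $L$ I would run the same two steps as for $L=G\rtimes D$: Lemma~\ref{covers} (with $L\cap G\unlhd L$, $N_L(R)\le L$) and the $G$-level weight relation $\bl_\ell(\varphi_G)^{G}=B$ show that $\bl_\ell(\Res_{N_L(R)}\widetilde\varphi_G)^{L}$ covers $\bl_\ell(\Res^{G\rtimes D}_{L\cap G}\phi^{+})=\Res_{L\cap G}B$, and since $\widetilde\varphi_G$ and $\phi^{+}$ are extensions of characters of $N_G(R)$, resp.\ $G$, sitting in the block $B$, the two candidates carry the same linear-character label over the abelian quotient $L/(L\cap G)$, forcing equality.

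The main obstacle is exactly this last point: block induction from $N_{G\rtimes D}(R)$ (and from $N_L(R)$) a priori only produces \emph{some} block covering $B$ (resp.\ $\Res_{L\cap G}B$), whereas several such blocks exist, permuted transitively by the linear $\ell'$-characters of the abelian group $D$. I expect to resolve it by reducing, via Clifford theory over the normal subgroup $G$, to the case $L=G\rtimes D$, where it amounts to identifying $\bl_\ell(\widetilde\varphi_G)^{G\rtimes D}$ with $\widehat B$; there one compares, on both the $\varphi_G$-side and the $\phi_G$-side, the stabilizer in $(G\rtimes D)/G\le D$ of the relevant character and tracks how block induction interacts with twists by linear characters of $(G\rtimes D)/G$, exactly as in the removal of the dependence on $\tilde{\phi'}$ above. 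Keeping all of these linear-character twists over $X$, $G$, $G\rtimes D$ and the various normalizers synchronized is the real technical burden; the block-theoretic inputs are Lemmas~\ref{covers}, \ref{per-block}, \ref{fromordtomod} and standard facts about the Brauer correspondence.
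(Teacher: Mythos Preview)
Your setup and the reduction to a single convenient pair $(\psi^{+},\phi^{+})$ are correct: the lift to $G\rtimes D$ is harmless, $G\rtimes D=XN_{G\rtimes D}(S)$ holds exactly as you say, and the twist argument by linear $\ell'$-characters of $(G\rtimes D)/X$ does transport the block equality from one extension of $\psi^{\circ}$ to another. The choice of $\psi^{+}=(\widetilde\varphi_G)^{\circ}$ coming from the $G$-level unipotent weight is also the natural one.

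The gap is precisely the one you flag yourself, and it is real. Knowing that $\bl_\ell(\Res_{N_L(R)}\widetilde\varphi_G)^{L}$ covers the block of $L\cap G$ containing $\Res_{L\cap G}\phi_G$ does not pin it down, and your proposed ``reduction via Clifford theory over $G$ to the case $L=G\rtimes D$'' is not a reduction: the equality for $L=G\rtimes D$ does not formally imply the equality for smaller $L$, because restricting a block of $G\rtimes D$ to $L$ and then comparing with an induced block from $N_L(R)$ involves two unrelated operations. Tracking the linear-character labels over all intermediate $L$ simultaneously would amount to reproving, by hand and in this special case, the coherence results of \cite{KS15}, and your sketch does not do this.

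The paper bypasses this entirely. Instead of checking all $L$, it invokes \cite[Lem.~2.4 and 2.5(a)]{KS15}: since $A/\overline{X}$ is solvable and $A=\overline{X}N_A(\overline{S})$, it suffices to verify the block equality for a \emph{single} $J$ with $J/\overline{X}$ a Hall $\ell'$-subgroup of $A/\overline{X}$. Because $\ell\nmid\mathrm{gcd}(n,q-\epsilon)$, the subgroup $\overline{G}/\overline{X}$ is already $\ell'$, so one can choose the Hall subgroup $\hat{A}$ to contain $\overline{G}$. At that point one works over $\overline{G}$ rather than $\overline{X}$: the restriction $\hat{\phi'}:=\Res_{N_{\overline{G}}(\overline{S})}\tilde{\phi'}$ induces a block of $\overline{G}$ covering $b$, and Lemma~\ref{restrforuni} provides the extension $\hat{\phi}$ of $\phi$ lying in exactly that block. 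Now the quotient $A/\overline{G}\cong D$ is abelian, and the remaining step (from $\overline{G}$ up to $\hat{A}$) is handled by the argument of \cite[Prop.~9.21]{Sc15}. This two-step reduction --- first to a Hall $\ell'$-subgroup via \cite{KS15}, then passing through $\overline{G}$ --- is what makes the synchronization problem disappear; your direct approach would need a substitute for it.
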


\begin{proof}
Since $A/\overline{X}$ is solvable, all Hall $\ell'$-subgroups of $A/\overline{X}$ are conjugate and every $\ell'$-element of $A$ is contained in some $J$ such that $J/\overline{X}$ is a Hall $\ell'$-subgroup of $A/\overline{X}$. Then by \cite[Lem 2.4 and 2.5(a)]{KS15}, to prove this proposition, it suffices to prove that $A=\overline{X}N_A(\overline{S})$ and that the proposition holds for certain~(thus every) $\overline{X}\leq J\leq A$ such that $J/\overline{X}$ is a Hall $\ell'$-subgroup of $A/\overline{X}$~(for details, see the proof of \cite[Prop. 9.21]{Sc15}).

First, let $R=S\mathcal O_\ell(Z(G))$, then by Proposition \ref{relaofrad}, $R$ is an $\ell$-radical subgroup of $G$ such that $R\cap X=S$.
As pointed in Section \ref{extenofcharwei}~(by the uniqueness of $R_{m,\alpha,\gamma}$ and $R^i_{m,\alpha,\gamma}$ proved in \cite{An92}, \cite{An93} and \cite{An94}), $G\rtimes D$ acts trivially on the $G$-conjugacy classes of $\ell$-radical subgroups of $G$.
Hence $G\rtimes D=GN_{G\rtimes D}(R)$ by Frattini's argument .
Then $A=\overline{X}N_A(\overline{S})$ since
$N_A(\overline{S})=N_{G\rtimes D}(S)/ \mathcal{O}_\ell(Z({G}))Z$ and $N_{G\rtimes D}(S)=N_{G\rtimes D}(R)$.

Now $(\tilde{G}\rtimes D)_\phi=\tilde{G}\rtimes D$.
Let $\overline{G}:=G/Z\mathcal{O}_\ell(Z({G}))$, then
$A=(G\rtimes D)_\phi/Z\mathcal{O}_\ell(Z({G})) =\overline{{G}}\rtimes D$.
Since $\ell\nmid\mathrm{gcd}(n,q-\epsilon)$,
$\overline{{G}}/\overline{X} \cong G/X\mathcal{O}_\ell(Z({G}))$ is an $\ell'$-group.
Thus there is a unique Hall $\ell'$-subgroup $\hat{A}/\overline{X}$ of $A/\overline{X}$ such that $\overline{G} \leq \hat{A}$.
Let $\hat{\phi'}=\mathrm{Res}^{N_A(\overline{R})}_{N_{\overline{{G}}}(\overline{R})}\tilde{\phi'}$, then $\mathrm{bl}(\hat{\phi'})^{\overline{{G}}}$ covers $\mathrm{bl}(\phi')^{\overline{X}}=\mathrm{bl}(\phi)$.
Note that $\phi$ extends to $\hat{\phi} \in \mathrm{IBr}_\ell(\mathrm{bl}(\hat{\phi'})^{\overline{{G}}})$
with $A_{\hat{\phi}}=A_\phi=A$ by Lemma \ref{restrforuni} and Corollary \ref{extensofbrauer}.
Replacing $\overline{X}$, $N_{\overline{X}}(\overline{R})$, $\phi$, $\phi'$ by $\overline{{G}}$, $N_{\overline{{G}}}(\overline{R})$, $\hat\phi$, $\hat\phi'$ respectively and noting that $A/\overline{{G}}$ is abelian, we can use the same arguments as in the first paragraph of the proof of \cite[Prop. 9.21]{Sc15} to prove that the proposition holds for $\hat{A}$. By the remarks at the beginning of the proof, the proposition holds for general $\overline{X} \leq J \leq A$.
\end{proof}

\begin{proof}[Proof of Theorem \ref{ibawmain}]
If $\ell=p$, then the assertion holds by \cite[Thm. C]{Sp13}. Now we assume that $\ell\ne p$.
Now the case when $n\ge 3$ is completely solved by our results in
Corollary \ref{iandii}, Proposition \ref{iii1-3} and Lemma \ref{Corr}.
Now we assume that $n=2$.
By Corollary \ref{iandii}, it suffices to check condition (iii) of Definition \ref{induc}.
Note that, if we define $D:=\langle F_p\rangle $ for this case, then it is easy to see that Proposition \ref{iii1-3} and Lemma \ref{Corr} also hold by the same argument. This completes the proof.
\end{proof}

\section*{Acknowledgements}

The author would like to express his gratitude to Gunter Malle for his support, useful conversations, keen advice he has provided and careful reading an earlier version of this paper and numerous helpful comments. Special thanks also go to Britta Sp\"ath, Conghui Li and Zhenye Li for fruitful discussions. Moreover, the author is grateful to the referee for useful comments and suggestions.

\end{document}